\documentclass[10pt]{amsart}
\textwidth=14.5cm 
\oddsidemargin=1cm
\evensidemargin=1cm
\usepackage{amsmath}
\usepackage{amsxtra}
\usepackage{amscd}
\usepackage{amsthm}
\usepackage{amsfonts}
\usepackage{amssymb}
\usepackage{eucal}
\usepackage[all]{xy}
\usepackage{graphicx}

\newtheorem{prop}[subsubsection]{Proposition}

\newtheorem{cor}[subsubsection]{Corollary}
\newtheorem{lem}[subsubsection]{Lemma}

\numberwithin{equation}{section}

\theoremstyle{remark}
\newtheorem{rem}[subsubsection]{Remark}
\newtheorem{ex}[subsubsection]{Example}

\newcommand{\lemref}[1]{Lemma~\ref{#1}}

\newcommand{\secref}[1]{Sect.~\ref{#1}}
\newcommand{\corref}[1]{Corollary~\ref{#1}}
\newcommand{\propref}[1]{Proposition~\ref{#1}}

\newcommand{\nc}{\newcommand}

\nc{\ssec}{\subsection}
\nc{\sssec}{\subsubsection}

\nc{\renc}{\renewcommand}
\nc{\on}{\operatorname}
\nc\ol{\overline}
\nc\wt{\widetilde}
\nc{\Loc}{\on{Loc}}
\nc{\Bun}{\on{Bun}}
\nc{\BQ}{{\mathbb{Q}}}
\nc{\BA}{{\mathbb{A}}}
\nc{\BC}{{\mathbb{C}}}
\nc{\BH}{{\mathbb{H}}}
\nc{\BG}{{\mathbb{G}}}
\nc{\BK}{{\mathbb{K}}}
\nc{\BN}{{\mathbb{N}}}
\nc{\BD}{{\mathbb{D}}}
\nc{\BV}{{\mathbb{V}}}
\nc{\BL}{{\mathbb{L}}}
\nc{\CA}{{\mathcal{A}}}
\nc{\CC}{{\mathcal{C}}}
\nc{\cD}{{\mathcal{D}}}
\nc{\CG}{{\mathcal{G}}}
\nc{\CI}{{\mathcal{I}}}
\nc{\CJ}{{\mathcal{J}}}
\nc{\CO}{{\mathcal{O}}}
\nc{\CP}{{\mathcal{P}}}
\nc{\CR}{{\mathcal{R}}}
\nc{\CV}{{\mathcal{V}}}
\nc{\CW}{{\mathcal{W}}}
\nc{\CK}{{\mathcal{K}}}
\nc{\CM}{{\mathcal{M}}}
\nc{\CN}{{\mathcal{N}}}
\nc{\CL}{{\mathcal{L}}}
\nc{\CQ}{{\mathcal{Q}}}
\nc{\CF}{{\mathcal{F}}}
\nc{\CX}{{\mathcal{X}}}
\nc{\CY}{{\mathcal{Y}}}
\nc{\CZ}{{\mathcal{Z}}}

\nc{\D}{{\mathcal{D}}}
\nc{\fd}{{\mathfrak{d}}}
\nc{\fg}{{\mathfrak{g}}}
\nc{\fD}{{\mathfrak{D}}}
\nc{\fh}{{\mathfrak{h}}}
\nc{\fl}{{\mathfrak{l}}}
\nc{\fn}{{\mathfrak{n}}}
\nc{\fp}{{\mathfrak{p}}}
\nc{\fs}{{\mathfrak{s}}}
\nc{\ft}{{\mathfrak{t}}}
\nc{\sM}{{\mathsf M}}
\nc{\ppart}{(\!(t)\!)}
\nc{\qqart}{[\![t]\!]}
\nc{\hg}{{\widehat\fg}}
\nc{\sA}{{\mathsf A}}
\nc{\sB}{{\mathsf B}}
\nc{\sF}{{\mathsf F}}
\nc{\sG}{{\mathsf G}}
\nc{\sk}{{\mathsf k}}
\nc{\sj}{{\mathsf j}}

\nc{\bC}{{\mathbf{C}}}
\nc{\bZ}{{\mathbf{Z}}}
\nc{\bD}{{\mathbf{D}}}
\nc{\bO}{{\mathbf{O}}}
\nc{\bU}{{\mathbf{U}}}
\nc{\be}{{\mathbf{e}}}
\nc{\br}{{\mathbf{r}}}
\nc{\bM}{{\mathbf{M}}}
\nc{\bA}{{\mathbf{A}}}
\nc{\bK}{{\mathbf{K}}}

\nc{\oY}{\overset{\circ}Y{}}
\nc{\oX}{\overset{\circ}X{}}
\nc{\oS}{\overset{\circ}S{}}
\nc{\of}{\overset{\circ}f{}}
\nc{\oCX}{\overset{\circ}\CX{}}
\nc{\oCY}{\overset{\circ}\CY{}}
\nc{\oCZ}{\overset{\circ}\CZ{}}

\nc{\fW}{{\mathfrak{W}}}
\nc{\reg}{{\text{\rm reg}}}
\nc{\nilp}{{\text{\rm nilp}}}
\nc{\cG}{{\check{G}}}
\nc{\cB}{{\check{B}}}
\nc{\cg}{{\check{\fg}}}
\nc{\cb}{{\check{\fb}}}
\nc{\cn}{{\check{\fn}}}
\nc{\mer}{{\on{mer}}}
\nc{\Const}{\mathsf{Const}}
\nc{\Whit}{\on{Whit}}
\nc{\KL}{\on{KL}}
\nc{\FS}{\on{FS}}
\nc{\LocSys}{\on{LocSys}}
\nc{\QCoh}{\on{QCoh}}
\nc{\Coh}{\on{Coh}}
\nc{\IndCoh}{\on{IndCoh}}
\nc{\Cat}{\on{Cat}}
\nc{\Op}{\on{Op}}
\nc{\Gr}{\on{Gr}}
\nc{\Fl}{\on{Fl}}
\nc{\Rep}{\on{Rep}}
\renc{\mod}{{\on{-mod}}}
\nc{\Conn}{\on{Conn}}
\nc{\unit}{{\mathbf{1}}}

\nc{\Hom}{\on{Hom}}
\nc{\End}{\on{End}}
\nc{\Vect}{\on{Vect}}
\nc{\Av}{\on{Av}}
\nc{\Ind}{\on{Ind}}
\nc{\Spec}{\on{Spec}}
\nc{\KG}{K\backslash G}
\nc{\comult}{{co\text{-}mult}}
\nc{\counit}{{co\text{-}unit}}
\nc{\uHom}{{\underline{\Hom}}}
\nc{\dgSch}{\on{DGSch}}
\nc{\dgindSch}{\on{DGindSch}}
\nc{\indSch}{\on{indSch}}
\nc{\Sch}{\on{Sch}}
\nc{\affdgSch}{\on{DGSch}^{\on{aff}}}
\nc{\affSch}{\on{Sch}^{\on{aff}}}

\nc{\Groupoids}{\on{Grpd}}
\nc{\inftygroup}{\infty\on{-Grpd}}
\nc{\inftypic}{\infty\on{-PicGrpd}}
\nc{\inftyCat}{\infty\on{-Cat}}
\nc{\StinftyCat}{\on{DGCat}}
\nc{\MoninftyCat}{\infty\on{-Cat}^{Mon}}
\nc{\SymMoninftyCat}{\infty\on{-Cat}^{SymMon}}
\nc{\SymMonStinftyCat}{\on{DGCat}^{SymMon}}
\nc{\MonStinftyCat}{\on{DGCat}^{Mon}}
\nc{\inftystack}{\on{Stk}}
\nc{\inftystackalg}{Stk^{1\text{-}alg}}
\nc{\inftyprestack}{\on{PreStk}}
\nc{\inftydgnearstack}{\on{NearStk}}
\nc{\inftydgstack}{\on{Stk}}
\nc{\inftydgstackalg}{DGStk^{1\text{-}alg}}
\nc{\inftydgprestack}{\on{PreStk}}

\nc{\mmod}{{\on{-}{\mathbf{mod}}}}
\nc{\wh}{\widehat}

\nc{\nDG}{^{\leq n}\!\on{DG}}

\nc{\Maps}{\on{Maps}}
\nc{\CMaps}{{\mathcal Maps}}

\nc{\Lie}{\on{Lie}}
\nc{\triv}{{\mathbf{triv}}}

\nc{\dr}{{\on{dR}}}

\nc{\Crys}{\on{Crys}}
\nc{\oblv}{{\mathbf{oblv}}}
\nc{\ind}{{\mathbf{ind}}}
\nc{\Dmod}{\on{D-mod}}
\nc{\Pic}{\on{Pic}}
\nc{\Ge}{\on{Ge}}
\nc{\Tw}{\on{Tw}}
\nc{\sotimes}{\overset{!}\otimes}
\nc{\bDelta}{{\mathbf\Delta}}

\begin{document}

\title[Crystals and D-modules]{Crystals and D-modules}

\author{Dennis Gaitsgory and Nick Rozenblyum}

\date{\today}

\dedicatory{To the memory of A.~Todorov.}

\begin{abstract}
The goal of this paper is to develop the notion of crystal in the context
of derived algebraic geometry, and to connect crystals to more classical
objects such as D-modules.
\end{abstract}

\maketitle

\tableofcontents

\section*{Introduction}

\ssec{Flat connections, D-modules and crystals}

\sssec{}

Let $M$ be a smooth manifold with a vector bundle $V$. Recall that a
flat connection on $V$ is a map
$$ \nabla: V \rightarrow V\otimes \Omega^1_M$$
satisfying the Leibniz rule, and such that the curvature $[\nabla,\nabla]=0$. Dualizing the
connection map, we obtain a map
$$ T_M \otimes V \rightarrow V.$$
The flatness of the connection implies that this makes $V$ into a module over the Lie algebra
of vector fields. Equivalently, we obtain that $V$ is module over the algebra $\on{Diff}_M$ of
differential operators on $M$.

\medskip

This notion generalizes immediately to smooth algebraic varieties in characteristic
zero. On such a variety a D-module is defined as a module over the sheaf of differential
operators which is quasi-coherent as an $\CO$-module. The notion of D-module on an algebraic variety thus
generalizes the notion of vector bundle with a flat connection, and encodes the data of a system of linear
differential equations with polynomial coefficients. The study of D-modules on smooth
algebraic varieties is a very rich theory, with applications to numerous fields such as
representation theory. Many of the ideas from the differential geometry of vector bundles with a flat connection
carry over to this setting.

\medskip

However, the above approach to D-modules presents a number of difficulties. For example, 
one needs to consider sheaves with a flat connection on singular schemes in addition 
to smooth ones. While the algebra of differential operators is well-defined on a singular variety, 
the category of modules over it is not \emph{the} category that we are interested in
(e.g., the algebra in question is not in general Noetherian). In another direction, even for a smooth 
algebraic variety, it is not clear how to define connections on objects that are not linear, 
e.g., sheaves of categories. 

\sssec{Parallel transport}

The idea of a better definition comes from another interpretation of the notion of flat connection on
a vector bundle in the context of differential geometry, namely, that of parallel transport:

\medskip

Given a vector bundle with a flat connection $V$ on a smooth manifold $M$, and a path
$\gamma: [0,1]\rightarrow M$, we obtain an isomorphism $$ \Pi_{\gamma}: V_{\gamma(0)}\simeq
V_{\gamma(1)} $$ of the fibers of $V$ at the endpoints, which only depends on the homotopy
class of the path. 
We can rephrase this construction as follows. Let $B \subset M$ be a small ball inside $M$.
Since the parallel transport isomorphism only depends on the homotopy class of the path, and $B$ is
contractible, we obtain a coherent identification of fibers of $V$ $$ V_x \simeq V_y $$ for points
$x,y\in B$ by considering paths inside $B$. So, roughly, the data of a connection gives an identification of fibers at ``nearby''
points of the manifold.


\medskip

Building on this idea, Grothendieck \cite{Groth} gave a purely algebraic analogue of the notion of parallel 
transport, using the theory of schemes (rather than just varieties) in an essential
way: he introduced the relation of infinitesimal closeness for $R$-points
of a scheme $X$. Namely, two $R$-points $x, y: \Spec(R)\rightarrow X$ are infinitesimally close if the
restrictions to $\Spec({}^{red}\!R)$ agree, where $^{red}\!R$ is the quotient of $R$ by its nilradical. 

\medskip

A \emph{crystal} on $X$ is by definition a quasi-coherent sheaf on $X$ which is equivariant with respect to the relation
of infinitesimal closeness. More preciesly, a crystal on $X$ is a quasi-coherent sheaf $\CF$ with the
additional data of isomorphisms
$$x^*( \CF) \simeq y^* (\CF)$$
for any two infinitesimally close points $x,y: \Spec(R)\rightarrow X$ satisfying a cocycle condition.

\medskip

Grothendieck showed that on a smooth algebraic variety, the abelian category of crystals
is equivalent to that of left modules over the ring of differential operators. In this way, crystals
give a more fundamental definition of sheaves with a flat connection.

\medskip

A salient feature of the category of crystals is that Kashiwara's lemma is built into its definition: 
for a closed embedding of schemes $i: Z\rightarrow X$, the category of crystals on $Z$ is
equivalent to the category of crystals on $X$, which are set-theoretically supported on $Z$.
This observation allows us to reduce the study of crystals on schemes to the case of smooth
schemes, by (locally) embedding a given scheme into a smooth one. 

\sssec{}  \label{sss:simpson}
In this paper, we develop the theory of crystals in the context of derived algebraic geometry, where
instead of ordinary rings one considers derived rings, i.e., $E_\infty$ ring spectra. Since we work
over a field $k$ of characteristic zero, we shall use connective commutative DG $k$-algebras as our
model of derived rings (accordingly, we shall use the term ``DG scheme" rather than ``derived
scheme"). The key idea is that one should regard higher homotopy groups of a derived ring as a
generalization of nilpotent elements.

\medskip

Thus, following Simpson \cite{Simpson}, 
for a DG scheme $X$, we define its de Rham prestack 
$X_\dr$ to be the functor $$ X_\dr:R \mapsto X(^{red,cl}\!R)$$
on the category of derived rings $R$, where $$^{red,cl}\!R:={}^{red}(\pi_0(R))$$ is the reduced ring corresponding to the
underlying classical ring of $R$. I.e., $X_\dr$ is a \emph{prestack} in the terminology of \cite{Stacks}. 

\medskip 

We define crystals on $X$ as quasi-coherent sheaves on the prestack $X_\dr$. See, \cite[Sect. 2]{Lu1} for the theory
of quasi-coherent sheaves in prestacks, or \cite[Sect. 1.1]{QCoh} for a brief review. 

\medskip

The above definition does not
coincide with one of Grothendieck mentioned earlier: the latter specifies a map $\Spec(R)\to X$ up to an equivalence
relation, and the former only a map $\Spec({}^{red,cl}\!R)\to X$. However, we will show that
for $X$ which is \emph{eventually coconnective}, i.e., if its structure ring has only finitely many
non-zero homotopy groups, the two definitions of a crystal are equivalent.
\footnote{When $X$ is not eventually coconnective, the two notions are different,
and the correct one is the one via $X_\dr$.}

\sssec{}
Even though the category of crystals is equivalent to that of D-modules, it offers a more flexible
framework in which to develop the theory.  The definition immediately extends to non-smooth
schemes, and the corresponding category is well-behaved (for instance, the category of crystals on any
scheme is locally Noetherian).

\medskip

Let $f:X\to Y$ be a map of DG schemes. We will construct the natural pullback functor
$$f^\dagger:\Crys(Y)\to \Crys(X).$$
In fact, we shall extend the assignment $X\mapsto \Crys(X)$ to a functor from the category $\dgSch^{\on{op}}$
to that of stable $\infty$-categories. The latter will enable us to define crystals not just on DG schemes,
but on arbitrary prestacks.  

\medskip

Furthermore, the notion of crystal immediately extends to a non-linear and categorified setting. Namely, 
we can just as well define a crystal of schemes or a crystal of categories over $X$.

\ssec{Left crystals vs. right crystals}

\sssec{}

Recall that on a smooth algebraic variety $X$, in addition to
usual (i.e., left) D-modules, one can also consider the category of right D-modules. The
two categories are equivalent: the corresponding functor is given by tensoring with
the dualizing line bundle $\omega_X$ over the ring of functions. However, this equivalence does 
not preserve the forgetful functor to quasi-coherent sheaves. For this reason, we can consider an 
abstract category of D-modules, with two different realization functors to quasi-coherent sheaves. 
In the left realization, the D-module pullback functor becomes the $*$-pullback functor on 
quasi-coherent sheaves, and in the right realization, it becomes the !-pullback functor.

\medskip

It turns out that the ``right" realization has several advantages over the ``left" one. Perhaps
the main advantage is that the ``right" realization endows the category of D-modules with a t-structure 
with very favorable functorial properties. In particular, this t-structure becomes the perverse
t-structure under the Riemann-Hilbert correspondence. 

\sssec{}

One can then ask whether there are also ``left'' and ``right'' crystals on arbitrary DG schemes.  
It turns out that indeed both categories are defined very generally.

\medskip

Left crystals are what we defined in \secref{sss:simpson}. However, in order to define right crystals,
we need to replace the usual category of quasi-coherent sheaves by its renormalized version, 
the category of ind-coherent sheaves introduced in \cite{IndCoh}. 

\medskip

The category
$\IndCoh(X)$ is well-behaved for (derived) schemes that are (almost) locally of finite type, so
right crystals will only be defined on DG schemes, and subsequently, on prestacks with this property.

\medskip

Let us recall from \cite[Sect. 5]{IndCoh} that for a map $f:X\to Y$ between DG schemes,
we have the !-pullback functor
$$f^!:\IndCoh(Y)\to \IndCoh(X).$$

The assignment $X\mapsto \IndCoh(X)$ is a functor from the category $\dgSch^{\on{op}}$ to
that of stable $\infty$-categories and thus can be extended to a functor out of the category
of prestacks. 

\medskip

For a DG scheme $X$, we define the category of right crystals $\Crys^r(X)$ as $\IndCoh(X_\dr)$. We can also reformulate
this definition \`a la Grothendieck by saying that a right crystal on $X$ is an object $\CF\in \IndCoh(X)$, 
together with an identification
\begin{equation}\label{e:right crystal}
x^!(\CF) \simeq y^!(\CF)
\end{equation}
for every pair of infinitesimally close points $x,y: \Spec(R)\rightarrow X$ satisfying (the
$\infty$-category version of) the cocycle condition. It can be shown that, unlike in the case of left crystals,
this does give an equivalent definition of right crystals without any coconnectivity assumptions.

\sssec{}

Now that the category of right crystals is defined, we can ask whether it is equivalent to
that of left crystals. The answer also turns out to be ``yes." Namely, for any DG scheme $X$
almost of finite type, tensoring by the dualizing complex $\omega_X$ defines a functor
$$\Upsilon_X:\QCoh(X)\to \IndCoh(X)$$
that intertwines the $*$-pullback on quasi-coherent sheaves and the !-pullback on ind-coherent sheaves. 

\medskip

Although the functor $\Upsilon_S$ is not an equivalence for an individual $S$ unless $S$ is smooth,
the totality of such maps for DG schemes mapping to the de Rham prestack of $X$ define an equivalence
between left and right crystals.

\medskip

Thus, just as in the case of smooth varieties, we can think that to each DG scheme $X$ we attach 
the category $\Crys(X)$ equipped with two ``realization" functors
$$
\xy
(-20,0)*+{\QCoh(X)}="A";
(20,0)*+{\IndCoh(X)}="B";
(0,20)*+{\Crys(X).}="C";
{\ar@{->}_{\oblv^l_X} "C";"A"};
{\ar@{->}^{\oblv^r_X} "C";"B"};
{\ar@{->}^{\Upsilon_X} "A";"B"};
\endxy
$$

However, in the case of non-smooth schemes, the advantages of the t-structure on $\Crys(X)$ that is associated
with the ``right'' realization become even more pronounced. 

\sssec{Historical remark}

To the best of our knowledge, the approach to D-modules via right crystals was first suggested by A.~Beilinson 
in the early 90's, at the level of abelian categories.

\medskip

For some time after that it was mistakenly believed that one cannot use left crystals
to define D-modules, because of the incompatibility of the t-structures. However, it was explained by J.~Lurie, that if
one forgoes the t-structure and defines the corresponding stable $\infty$-category right away, 
left crystals work just as well.

\ssec{The theory of crystals/D-modules}

Let us explain the formal structure of the theory, as developed in this paper, and its sequel 
\cite{Funct}.  

\sssec{}
To each prestack (locally almost of finite type) $\CY$, we assign
a stable $\infty$-category
$$ \CY \rightsquigarrow \Crys(\CY) .$$

This category has two realization functors: a left realization functor to $\QCoh(\CY)$, and a right realization functor to 
$\IndCoh(\CY)$ which are related via the following commutative diagram

$$
\xy
(-20,0)*+{\QCoh(\CY)}="A";
(20,0)*+{\IndCoh(\CY)}="B";
(0,20)*+{\Crys(\CY).}="C";
{\ar@{->}_{\oblv^l_\CY} "C";"A"};
{\ar@{->}^{\oblv^r_\CY} "C";"B"};
{\ar@{->}^{\Upsilon_\CY} "A";"B"};
\endxy
$$
where $\Upsilon_\CY$ is the functor $\QCoh(\CY)\rightarrow \IndCoh(\CY)$ given by tensoring by the dualizing complex $\omega_\CY$.

\sssec{}
The assignment of $\Crys(\CY)$ to $\CY$ is functorial in a number of ways. For a map $f: \CY_1\rightarrow \CY_2$, there is
a pullback functor
$$ f^\dagger: \Crys(\CY_2) \rightarrow \Crys(\CY_1) $$
which is functorial in $f$; i.e., this assignment gives a functor
$$\Crys^\dagger_{\on{PreStk}}:(\on{PreStk})^{\on{op}}\to \StinftyCat_{\on{cont}}.$$

The pullback functor on D-modules is compatible with the realization functors. Namely, we have
commutative diagrams

$$
\CD
\Crys(\CY_1)  @<{f^\dagger}<< \Crys(\CY_2)  \\
@V{\oblv^l_{\CY_1}}VV    @VV{\oblv^l_{\CY_2}}V   \\
\QCoh(\CY_1)  @<{f^*}<<  \QCoh(\CY_2)
\endCD
$$
and
$$
\CD
\Crys(\CY_1)  @<{f^\dagger}<< \Crys(\CY_2)  \\
@V{\oblv^r_{\CY_1}}VV    @VV{\oblv^r_{\CY_2}}V   \\
\IndCoh(\CY_1)  @<{f^!}<<  \IndCoh(\CY_2).
\endCD
$$

Furthermore, this compatibility is itself functorial in $f$; i.e. we have a naturally commutative diagram of functors
$$ \xymatrix{ & \Crys^\dagger_{\on{PreStk}} \ar[rd]^{\oblv^r}\ar[ld]_{\oblv^l} & \\ 
\QCoh^*_{\on{PreStk}} \ar[rr]^{\Upsilon}&& \IndCoh^!_{\on{PreStk}}}.$$

\sssec{}

The above portion of the theory is constructed in the present paper. I.e., this paper is concerned with the assignment
$$\CY\rightsquigarrow \Crys(\CY)$$
and the operation of pullback. Thus, in this paper, we develop the local theory of crystals/D-modules.

\medskip

However, in addition to the functor $f^\dagger$, we expect to also have a pushforward functor $f_{\dr,*}$, and the two must satisfy
various compatibility relations. The latter will be carried out in \cite{Funct}. However, let us indicate the main ingredients
of the combined theory:

\sssec{}

For a schematic quasi-compact map between prestacks $f: \CY_1\rightarrow \CY_2$, there is the de Rham pushforward functor
$$ f_{\dr,*}: \Crys(\CY_1)\rightarrow \Crys(\CY_2) $$
which is functorial in $f$.  This assignment gives another functor
$$(\Crys_{\dr,*})_{\on{PreStk}_{\on{sch-qc}}}: \on{PreStk}_{\on{sch-qc}}\to \StinftyCat_{\on{cont}},$$
where $\on{PreStk}_{\on{sch-qc}}$ is the non-full subcategory of $\on{PreStk}$ obtained by restricting
$1$-morpisms to schematic quasi-compact maps.

\medskip

Let $\CY=X$ be a DG scheme\footnote{More generally, we can let $\CY$ be a prestack that \emph{admits deformation theory}.}. 
In this case, the forgetful functor
$$\oblv^r_\CY:\Crys(\CY)\to \IndCoh(\CY)$$ admits a left adjoint, denoted
$$ \ind^r_{\CY} : \IndCoh(\CY) \to \Crys(\CY),$$
and called the induction functor.

\medskip

The induction functor is compatible with de Rham pushforward.  Namely, we have a commutative diagram
$$ \xymatrix{ \IndCoh(\CY_1) \ar[r]^{f^{\IndCoh}_*}\ar[d]_{\ind^r_{\CY_1}} & 
\IndCoh(\CY_2) \ar[d]^{\ind^r_{\CY_2}} \\ \Crys(\CY_1) \ar[r]^{f_{\dr,*}} & \Crys(\CY_2).} $$
This compatibility is itself functorial, i.e. we have a natural transformation of functors
$$ \xymatrix{ (\IndCoh_{*})_{\on{PreStk}_{\on{sch-qc}}} \ar[r]^{\ind^r} & (\Crys_{\dr,*})_{\on{PreStk}_{\on{sch-qc}}} .} $$

\sssec{}

In the case when $f$ is proper, the functors $(f_{\dr,*}, f^\dagger)$ form an adjoint pair, and if $f$ is
smooth, the functors $(f^\dagger[-2n], f_{\dr,*})$ form an adjoint pair for $n$ the relative dimension of
$f$.

\medskip

In general, the two functors are not adjoint, but they satisfy a base change formula. As explained in
\cite[Sect. 5.1]{IndCoh}, a way to encode the functoriality of the base change formula is to consider a category
of correspondences. Namely, let $(\on{PreStk})_{\on{corr:all,sch-qc}}$ be the
$\infty$-category whose objects are prestacks locally of finite type and morphisms from $\CY_1$ to $\CY_2$ 
are given by correspondences
$$ \xymatrix{\CZ \ar[d]_f \ar[r]^g & \CY_1 \\ \CY_2 & } $$
such that $f$ is schematic and quasi-compact, and $g$ arbitrary. Composition in this category is given by taking
Cartesian products of correspondences. A coherent base change formula for the functors $\Crys^\dagger$ and $\Crys_{\dr,*}$ is then a
functor
$$ \Crys_{(\on{PreStk})_{\on{corr:all,sch-qc}}}: (\on{PreStk})_{\on{corr:all,sch-qc}} \to \StinftyCat_{\on{cont}} $$
and an identification of the restriction to $(\on{PreStk})^{\on{op}}$ with $\Crys^\dagger_{\on{PreStk}}$, and the restriction to 
$\on{PreStk}_{\on{sch-qc}}$ with $(\Crys_{\dr,*})_{\on{PreStk}_{\on{sch-qc}}}$.

\ssec{Twistings}

\sssec{}

In addition to D-modules, it is often important to consider twisted D-modules. For instance, in
representation theory, the localization theorem of Beilinson and Bernstein identifies the category of
representations of a reductive Lie algebra $\mathfrak{g}$ with fixed central character $\chi$ with the
category of twisted D-modules on the flag variety $G/B$, with the twisting determined by $\chi$.

\medskip

In the case of smooth varieties, the theory of twistings and twisted D-modules was introduced by
Beilinson and Bernstein \cite{BB}. Important examples of twistings are given by complex tensor powers
of line bundles. For a smooth variety $X$, twistings form a Picard groupoid, which can be described as
follows. Let $\mathcal{T}$ be the complex of sheaves, in degrees 1 and 2, given by
$$ \mathcal{T} := \Omega^1 \rightarrow \Omega^{2,cl} $$
where $\Omega^1$ is the sheaf of 1-forms on $X$, $\Omega^{2,cl}$ is the sheaf of closed 2-forms and
the map is the de Rham differential. Then the space of objects of the Picard groupoid of twistings is
given by $H^2(X,\mathcal{T})$ and, for a given object, the space of isomorphisms is $H^1(X,\mathcal{T})$.

\sssec{}
The last two sections of this paper are concerned with developing the theory of twistings and twisted
crystals in the derived (and, in particular, non-smooth) context. We give several equivalent
reformulations of the notion of twisting and show that they are equivalent to that defined in
\cite{BB} in the case of smooth varieties.

\medskip

For a prestack (almost locally of finite type) $\CY$, we define a twisting to be a $\BG_m$-gerbe on
the de Rham prestack $\CY_\dr$ with a trivialization of its pullback to $\CY$. A line bundle $\CL$ on
$\CY$ gives a twisting which is the trivial gerbe on $\CY_\dr$, but the trivialization on $\CY$ is
given by $\CL$.

\medskip

Given a twisting $T$, the category of $T$-twisted crystals on $\CY$ is defined as the category of sheaves 
(ind-coherent or quasi-coherent) on $\CY_\dr$ twisted by the $\BG_m$-gerbe given by $T$.

\ssec{Contents}

We now describe the contents of the paper, section-by-section. 

\sssec{}
In Section 1, for a prestack $\CY$, we define the de Rham prestack $\CY_\dr$ and establish
some of its basic properties. Most importantly, we show that if $\CY$ is locally almost of finite type
then so is $\CY_\dr$.

\sssec{}
In Section 2, we define left crystals as quasi-coherent sheaves on the de Rham prestack and, 
in the locally almost of finite type case, right crystals as ind-coherent sheaves on the de Rham prestack.  
The latter is well-defined because, as established in Section 1, for a prestack locally almost of finite 
type its de Rham prestack is also locally almost of finite type.  In this case, we show that the 
categories of left and right crystals are equivalent.  Furthermore, we prove a version of Kashiwara's 
lemma in this setting.

\sssec{}
In Section 3, we show that the category of crystals satisfies h-descent (and in particular, fppf descent). We also introduce the
infinitesimal groupoid of a prestack $\CY$ as the \v{C}ech nerve of the natural map $\CY \rightarrow
\CY_\dr$.  Specifically, the infinitesimal groupoid of $\CY$ is given by
$$ (\CY \times \CY)^\wedge_{\CY} \rightrightarrows \CY$$
where $(\CY \times \CY)^\wedge_{\CY}$ is the formal completion of $\CY\times \CY$ along the diagonal.

\medskip

In much of Section 3, we specialize to the case that $\CY$ is an indscheme.  
Sheaves on the infinitesimal groupoid of $\CY$ are sheaves on $\CY$ which are equivariant with respect
to the equivalence relation of infinitesimal closeness. In the case of ind-coherent sheaves, this
category is equivalent to right crystals. However, quasi-coherent sheaves on the infinitesimal
groupoid are, in general, not equivalent to left crystals. We show that quasi-coherent sheaves on the
infinitesimal groupoid of $\CY$ are equivalent to left crystals if $\CY$ is an eventually coconnective DG scheme or
a classically formally smooth prestack.  Thus, in particular, this equivalence holds in the case of classical schemes.

\medskip

We also define induction functors from $\QCoh(\CY)$ and $\IndCoh(\CY)$ to crystals on $\CY$.  
In the case of ind-coherent sheaves the induction functor is left adjoint to the forgetful functor, and 
we have that the category of right crystals is equivalent to the category of modules over the corresponding monad.  
The analogous result is true for $\QCoh$ and left crystals in the case that $\CY$ is an eventually coconnective
DG scheme.  

\sssec{}
In Section 4, we show that the category of crystals has two natural t-structures: one compatible with 
the left realization to $\QCoh$ and another comaptible with the right realization to $\IndCoh$.  
In the case of a quasi-compact DG scheme, the two t-structures differ by a bounded amplitude.

\medskip

We also show that for an affine DG scheme, the category of crystals is
equivalent to the derived category of its heart with respect to the right t-structure. 

\sssec{}

In Section 5 we relate the monad acting on $\IndCoh$ (resp., $\QCoh$) on a DG scheme, responsible for
the category of right (resp., left) crystals, to the sheaf of differential operators. 

\medskip

As a result, we relate the category of crystals to the derived category of D-modules.

\sssec{}
In Section 6, we define the Picard groupoid of twistings on a prestack $\CY$ as that of $\BG_m$-gerbes
on the de Rham prestack $\CY_\dr$ which are trivialized on $\CY$. We then give several equivalent
reformulations of this definition. For instance, using a version of the exponential map, we show that
the Picard groupoid of twistings is equivalent to that of $\BG_a$-gerbes on the de Rham prestack
$\CY_\dr$ which are trivialized on $\CY$. In particular, this makes twistings naturally a $k$-linear
Picard groupoid.

\medskip

Furthermore, using the description of twistings in terms of $\BG_a$-gerbes, we identify the $\infty$-groupoid
of twistings as

$$ \tau^{\leq 2}\left( H_{\dr}(\CY) \underset{H(\CY)}{\times} \{*\} \right)[2] $$
where $H_{\dr}(Y)$ is the de Rham cohomology of $\CY$, and $H(\CY)$ is the coherent cohomology of $Y$.
In particular, for a smooth classical scheme, this shows that this notion of twisting agrees with that
defined in \cite{BB}.

\medskip

Finally, we show that the category of twistings on a DG (ind)scheme $\CX$ locally of finite type can be 
identified with that of central extensions of its infinitesimal groupoid.

\sssec{}
In Section 7, we define the category of twisted crystals and establish its basic properties.  
In particular, we show that most results about crystals carry over to the twisted setting.

\ssec{Conventions and notation}

Our conventions follow closely those of \cite{IndSch}. Let us recall the most essential ones. 

\sssec{The ground field}

Throughout the paper we will work over a fixed ground field $k$ of characteristic $0$. 

\sssec{$\infty$-categories}

By an $\infty$-category we shall always mean an $(\infty,1)$-category. By a slight abuse of language,
we will sometimes refer to ``categories" when we actually mean $\infty$-categories. Our usage of
$\infty$-categories is model independent, but we have in mind their realization as quasi-categories. The
basic reference for $\infty$-categories as quasi-categories is \cite{Lu0}.

\medskip

We denote by $\inftygroup$ the $\infty$-category of $\infty$-groupoids, which is the
same as the category ${\mathcal S}$ of spaces in the notation of \cite{Lu0}.

\medskip

For an $\infty$-category $\bC$, and $x,y\in \bC$, we shall denote by
$\on{Maps}_\bC(x,y)\in \inftygroup$ the corresponding mapping space. By
$\Hom_\bC(x,y)$ we denote the set $\pi_0(\on{Maps}_\bC(x,y))$, i.e.,
what is denoted $\Hom_{h\bC}(x,y)$ in \cite{Lu0}.

\medskip

A stable $\infty$-category $\bC$ is naturally enriched in spectra. In this case, for $x,y\in \bC$, we
shall denote by ${\mathcal Maps}_\bC(x,y)$ the spectrum of maps from $x$ to $y$. In particular, we
have that $\on{Maps}_\bC(x,y) = \Omega^{\infty} {\mathcal Maps}_\bC(x,y)$.

\medskip

When working in a fixed $\infty$-category $\bC$, for two objects $x,y\in \bC$,
we shall call a point of $\on{Maps}_\bC(x,y)$ an \emph{isomorphism} what is in
\cite{Lu0} is called an \emph{equivalence}. I.e., an isomorphism is a map that admits a homotopy 
inverse. We reserve the word ``equivalence" to mean a (homotopy) equivalence
between $\infty$-categories.

\sssec{DG categories}

Our conventions regarding DG categories follow \cite[Sect. 0.6.4]{IndCoh}.
By a DG category we shall understand a presentable DG category over $k$;
in particular, all our DG categories will be assumed cocomplete. 
Unless specified otherwise, we will only consider continuous
functors between DG categories (i.e., exact functors that commute
with direct sums, or equivalently, with all colimits). In other words,
we will be working in the category $\StinftyCat_{\on{cont}}$ in the
notation of \cite{DG}. \footnote{One can replace $\StinftyCat_{\on{cont}}$
by (the equivalent) $(\infty,1)$-category of stable presentable 
$\infty$-categories tensored over $\Vect$, with colimit-preserving functors.}

\medskip

We let $\Vect$ denote the DG category of complexes of $k$-vector spaces. 
The category $\StinftyCat_{\on{cont}}$ has a natural symmetric monoidal
structure, for which $\Vect$ is the unit. 

\medskip

For a DG category $\bC$ equipped with a t-structure, we denote by $\bC^{\leq n}$
(resp., $\bC^{\geq m}$, $\bC^{\leq n,\geq m}$) the corresponding full subcategory
of $\bC$ spanned by objects $x$, such that $H^i(x)=0$ for $i>n$ (resp., $i<m$,
$(i>n)\wedge (i<m)$). The inclusion $\bC^{\leq n}\hookrightarrow \bC$ admits
a right adjoint denoted by $\tau^{\leq n}$, and similarly, for the other
categories.

\medskip

There is a fully faithful functor from $\StinftyCat_{\on{cont}}$ to that of stable $\infty$-categories
and continuous exact functors. A stable $\infty$-category obtained in this way is enriched
over the category $\Vect$. Thus, we shall often think of
the spectrum ${\mathcal Maps}_\bC(x,y)$ as an object of $\Vect$; the former is obtained
from the latter by the Dold-Kan correspondence. 

\sssec{(Pre)stacks and DG schemes}

Our conventions regarding (pre)stacks and DG schemes follow \cite{Stacks}:

\medskip

Let $\affdgSch$ denote the $\infty$-category opposite to that of \emph{connective}
commutative DG algebras over $k$.

\medskip

The category $\inftydgprestack$ of prestacks is by definition that of all functors
$$(\affdgSch)^{\on{op}}\to \inftygroup.$$

\medskip

Let ${}^{< \infty}\!\affdgSch$ be the full subcategory of $\affdgSch$ given by eventually coconnective objects.

\medskip

Recall that an eventually coconnective affine DG scheme $S=\Spec(A)$ is \emph{almost of finite type} if
\begin{itemize}
\item $H^0(A)$ is finite type over $k$.
\item Each $H^i(A)$ is finitely generated as a module over $H^0(A)$.
\end{itemize}

Let ${}^{<\infty}\!\affdgSch_{\on{aft}}$ denote the full subcategory of ${}^{<\infty}\!\affdgSch$
consisting of schemes almost of finite type, and let $\inftydgprestack_{\on{laft}}$ be the category of all
functors
$$ {}^{<\infty}\!(\affdgSch_{\on{aft}})^{\on{op}} \to \inftygroup.$$
As explained in \cite[Sect. 1.3.11]{Stacks}, $\inftydgprestack_{\on{laft}}$ is naturally a subcategory
of $\inftydgprestack$ via a suitable Kan extension.

\medskip

In order to apply the formalism of ind-coherent sheaves developed in \cite{IndCoh}, we assume that the
prestacks we consider are locally almost of finite type for most of this paper. We will explicitly
indicate when this is not the case.

\sssec{Reduced rings}
Let $({^{red}\!\affSch})^{\on{op}} \subset (\affdgSch)^{\on{op}}$ denote the category of reduced discrete rings.  
The inclusion functor has a natural left adjoint
$$ ^{cl,red}(-): (\affdgSch)^{\on{op}} \rightarrow ({^{red}\!\affSch})^{\on{op}} $$
given by
$$ S \mapsto H^0(S)/\on{nilp}(H^0(S))$$
where $\on{nilp}(H^0(S))$ is the ideal of nilpotent elements in $H^0(S)$.

\ssec{Acknowledgments}

We are grateful to Jacob Lurie for numerous helpful discussions.  His ideas have so strongly influenced this paper that it is 
even difficult to pinpoint specific statements that
we learned directly from him.

\medskip

The research of D.G. is supported by NSF grant DMS-1063470.


\section{The de Rham prestack}
For a prestack $\CY$, crystals are defined as sheaves (quasi-coherent or ind-coherent) on the de Rham prestack $\CY_\dr$ of $\CY$.  
In this section, we define the functor $\CY \mapsto \CY_\dr$ and establish a number of its basic properties.

\medskip

Most importantly, we will show that if $\CY$ is locally almost of finite type, then so is $\CY_\dr$.  In this case, we will also show that 
$\CY_\dr$ is classical, i.e., it can be studied entirely within the realm of ``classical'' algebraic geometry without reference to derived rings.

\medskip

As the reader might find this section particularly abstract, it might be a good
strategy to skip it on  first pass, and return to it when necessary when assertions established here are applied to crystals.

\ssec{Definition and basic properties}

\sssec{}

Let $\CY$ be an object of $\on{PreStk}$. We define the de Rham prestack of $\CY$, $\CY_\dr\in \on{PreStk}$ as
\begin{equation} \label{e:dR on all}
\CY_\dr(S) := \CY({}^{cl,red}S)
\end{equation}
for $S\in \affdgSch$.

\sssec{}

More abstractly, we can rewrite
$$\CY_\dr:=\on{RKE}_{^{red}\!\affSch\hookrightarrow \affdgSch}({}^{cl,red}\CY),$$
where $^{cl,red}\CY:=\CY|_{^{red}\!\affSch}$ is the restriction of $\CY$ to reduced classical affine schemes, 
and $$\on{RKE}_{^{red}\!\affSch\hookrightarrow \affdgSch}$$
is the right Kan extension of a functor along the inclusion $^{red}\!\affSch\hookrightarrow \affdgSch$.

\sssec{}

The following (obvious) observation will be useful in the sequel.

\begin{lem} \label{l:dr and colimits}
The functor $\dr:\on{PreStk}\to \on{PreStk}$ commutes
with limits and colimits. 
\end{lem}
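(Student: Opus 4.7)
The plan is to unpack the pointwise formula $\CY_\dr(S) = \CY({}^{cl,red}S)$ and observe that it exhibits $(-)_\dr$ as precomposition with a fixed functor.

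More precisely, the assignment $S \mapsto {}^{cl,red}S$ defines a functor $\affdgSch \to \affdgSch$ (via the inclusion ${}^{red}\!\affSch \hookrightarrow \affdgSch$), and hence a functor $F: (\affdgSch)^{\on{op}} \to (\affdgSch)^{\on{op}}$ on opposite categories. By the defining formula \eqref{e:dR on all}, the functor $(-)_\dr$ is nothing but precomposition with $F$:
\[
\CY \mapsto \CY \circ F.
\]

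Now, since $\on{PreStk} = \on{Fun}((\affdgSch)^{\on{op}}, \inftygroup)$ is an $\infty$-category of functors with values in $\inftygroup$, both limits and colimits in $\on{PreStk}$ are computed objectwise. Thus for any diagram $i \mapsto \CY_i$ in $\on{PreStk}$ and any $S \in \affdgSch$, one has
\[
\bigl(\lim_i \CY_i\bigr)_\dr(S) = \bigl(\lim_i \CY_i\bigr)({}^{cl,red}S) = \lim_i \CY_i({}^{cl,red}S) = \lim_i (\CY_i)_\dr(S),
\]
and analogously with colimits replacing limits. This gives the desired natural isomorphisms $(\lim_i \CY_i)_\dr \simeq \lim_i (\CY_i)_\dr$ and $(\colim_i \CY_i)_\dr \simeq \colim_i (\CY_i)_\dr$.

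There is no real obstacle here; the only point to check is that the identification of $(-)_\dr$ with precomposition by $F$ is compatible with the ``abstract'' formulation via right Kan extension given in the excerpt (i.e., that the right Kan extension formula evaluated at $S$ indeed reproduces $\CY({}^{cl,red}S)$), which holds because ${}^{cl,red}S$ is the value of the left adjoint to ${}^{red}\!\affSch \hookrightarrow \affdgSch$ at $S$, so the right Kan extension simplifies to restriction along the adjoint. Once this is noted, the lemma follows immediately from pointwise computation of (co)limits in functor categories.
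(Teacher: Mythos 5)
Your proof is correct and is essentially the paper's own argument: the paper's proof is the one-line observation that limits and colimits in $\on{PreStk}=\on{Funct}((\affdgSch)^{\on{op}},\inftygroup)$ are computed object-wise, which is exactly the pointwise computation you carry out after identifying $(-)_\dr$ with precomposition along $S\mapsto {}^{cl,red}S$. Your extra remark reconciling this with the right Kan extension description is a reasonable sanity check but not needed, since the paper takes \eqref{e:dR on all} as the definition.
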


\begin{proof}
Follows from the fact that limits and colimits in
$$\on{PreStk}=\on{Funct}((\affdgSch)^{\on{op}},\inftygroup)$$
are computed object-wise.
\end{proof} 

As a consequence, we obtain: 

\begin{cor} \label{c:dr from Sch}
The functor $\dr:\on{PreStk}\to \on{PreStk}$ is the left Kan extension of the functor
$$\dr|_{\affdgSch}:\affdgSch\to \on{PreStk}$$
along $\affdgSch\hookrightarrow \on{PreStk}$.
\end{cor}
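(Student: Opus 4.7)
The plan is to combine two inputs: the previous lemma, which tells us $\dr$ commutes with all colimits, and the universal property of $\on{PreStk}$ as the free cocompletion of $\affdgSch$.

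First I would recall that the Yoneda embedding $\affdgSch\hookrightarrow \on{PreStk}$ exhibits $\on{PreStk}=\on{Funct}((\affdgSch)^{\on{op}},\inftygroup)$ as the free cocompletion of $\affdgSch$ under colimits. In particular, every prestack $\CY$ is canonically the colimit
\[
\CY \simeq \underset{(S\to \CY)\in \affdgSch_{/\CY}}{\on{colim}}\; S,
\]
where the colimit is taken over the overcategory of affine DG schemes mapping to $\CY$. Equivalently, for any $\infty$-category $\cD$ admitting small colimits, restriction along $\affdgSch\hookrightarrow \on{PreStk}$ induces an equivalence between colimit-preserving functors $\on{PreStk}\to \cD$ and all functors $\affdgSch\to \cD$, with inverse given by left Kan extension.

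Now I would apply this to the functor $\dr|_{\affdgSch}:\affdgSch\to \on{PreStk}$. Its left Kan extension along $\affdgSch\hookrightarrow \on{PreStk}$ is the unique colimit-preserving functor $\on{PreStk}\to \on{PreStk}$ whose restriction to $\affdgSch$ agrees with $\dr|_{\affdgSch}$; explicitly, it sends
\[
\CY \;\longmapsto\; \underset{(S\to \CY)\in \affdgSch_{/\CY}}{\on{colim}}\; S_\dr.
\]
On the other hand, by \lemref{l:dr and colimits} the functor $\dr$ commutes with colimits, so evaluating it on the above canonical colimit presentation of $\CY$ gives
\[
\CY_\dr \;\simeq\; \left(\,\underset{(S\to \CY)}{\on{colim}}\; S\,\right)_{\!\dr} \;\simeq\; \underset{(S\to \CY)}{\on{colim}}\; S_\dr,
\]
which matches the formula for the left Kan extension. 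Since both functors preserve colimits and agree on $\affdgSch$ (tautologically), the uniqueness part of the universal property of free cocompletion identifies them as functors $\on{PreStk}\to \on{PreStk}$.

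I expect no serious obstacle here: the proof is essentially bookkeeping combining the universal property of presheaf categories with \lemref{l:dr and colimits}. The only minor point to keep track of is that the universal property is being applied in the $\infty$-categorical setting, so one should cite (or tacitly use) the standard fact that $\on{Funct}((\affdgSch)^{\on{op}},\inftygroup)$ is the free cocompletion of $\affdgSch$ in $\infty$-categories, e.g.\ from \cite{Lu0}.
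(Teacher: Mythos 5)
Your proposal is correct and follows exactly the paper's argument: the paper's proof is the one-line observation that any colimit-preserving functor out of $\on{PreStk}$ is the left Kan extension of its restriction to $\affdgSch$, which is precisely the combination of \lemref{l:dr and colimits} with the free-cocompletion property that you spell out.
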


\begin{proof}
This is true for any colimit-preserving functor out of $\on{PreStk}$ to an aribitrary
$\infty$-category.
\end{proof}

\sssec{}

Furthermore, we have:

\begin{lem} \label{l:LKE from red}
The functor $\dr|_{\affdgSch}:\affdgSch\to \on{PreStk}$ is isomorphic to the left Kan
of the functor
$$\dr|_{^{red}\!\affSch}:{}^{red}\!\affSch\to \on{PreStk}$$
along $^{red}\!\affSch\hookrightarrow \affdgSch$.
\end{lem}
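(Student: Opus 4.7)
The plan is to exploit the adjunction between $^{red}\!\affSch$ and $\affdgSch$ to reduce the left Kan extension to a simple precomposition.

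First, I would observe that the inclusion $^{red}\!\affSch \hookrightarrow \affdgSch$ admits a right adjoint, namely the functor $^{cl,red}(-)$. Indeed, at the level of (opposite) rings, the functor $R \mapsto {}^{red}(\pi_0 R)$ is left adjoint to the inclusion of reduced discrete rings into connective DG rings; passing to opposites (i.e., to affine DG schemes) swaps the adjunction sides, so $^{cl,red}(-)$ becomes a \emph{right} adjoint to the inclusion $\iota: {}^{red}\!\affSch \hookrightarrow \affdgSch$.

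Next, I would invoke the following piece of general $\infty$-categorical nonsense: if $\iota: \CA \hookrightarrow \CB$ admits a right adjoint $\rho: \CB \to \CA$, then for any functor $G:\CA \to \CC$ into a cocomplete target, the left Kan extension is computed by precomposition with $\rho$, i.e.
$$\on{LKE}_{\iota}(G) \simeq G \circ \rho.$$
The reason is that for any $b \in \CB$, the comma category $(\iota \downarrow b)$ is equivalent to $(\CA \downarrow \rho(b))$, which admits a terminal object $(\rho(b), \on{id})$, so the colimit defining $(\on{LKE}_\iota G)(b)$ reduces to $G(\rho(b))$. Applied to $G = \dr|_{^{red}\!\affSch}$ and $\rho = {}^{cl,red}(-)$, this yields
$$\on{LKE}_\iota(\dr|_{^{red}\!\affSch}) \simeq \dr|_{^{red}\!\affSch} \circ {}^{cl,red}(-).$$

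It therefore remains to identify $\dr|_{\affdgSch}$ with $\dr|_{^{red}\!\affSch} \circ {}^{cl,red}(-)$ as functors $\affdgSch \to \on{PreStk}$. Unwinding the defining formula \eqref{e:dR on all} for $\CY_\dr$ applied to the representable prestack $\CY = S$, we get, for $S'\in\affdgSch$,
$$S_\dr(S') \;=\; \Hom_{\affdgSch}({}^{cl,red}S',\, S) \;\simeq\; \Hom_{^{red}\!\affSch}({}^{cl,red}S',\, {}^{cl,red}S) \;=\; ({}^{cl,red}S)_\dr(S'),$$
where the middle isomorphism is the adjunction $\iota \dashv {}^{cl,red}(-)$. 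This identification is functorial in $S$, which completes the proof.

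The only step that requires a bit of care is the verification that $^{cl,red}(-)$ is genuinely right adjoint to the inclusion in the $\infty$-categorical sense (as opposed to at the level of homotopy categories), but this is built into the conventions of the paper since the inclusion and the reduction functor are both compatible with mapping spaces in the obvious way. Everything else is a formal consequence of the pointwise formula for left Kan extensions and the adjunction.
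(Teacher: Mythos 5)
Your proof is correct and follows essentially the same route as the paper: the paper's argument likewise rests on the observation that the left Kan extension along $^{red}\!\affSch\hookrightarrow \affdgSch$ is computed by precomposition with the right adjoint $^{cl,red}(-)$, so that the comparison map is an isomorphism precisely because $S_\dr$ depends only on $^{cl,red}S$ by the defining formula \eqref{e:dR on all}. You merely spell out the adjunction and the pointwise formula in more detail than the paper does.
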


\begin{proof}
For any target category $\bD$ and any functor $\Phi:\affdgSch\to \bD$, the map
$$\on{LKE}_{^{red}\!\affSch\hookrightarrow \affdgSch}(\Phi|_{^{red}\!\affSch})\to \Phi$$
is an isomorphism if and only if the natural transformation
$$\Phi({}^{cl,red}S)\to \Phi(S),\quad S\in \affdgSch$$
is an isomorphism. The latter is the case for $\bD=\on{PreStk}$ and $\Phi$ the functor 
$S\mapsto S_\dr$ by definition. 
\end{proof}

\sssec{}  \label{sss:abstract LKE}

Let $\bC_1\subset \bC_2$ be a pair of categories from the following
list of full subcategories of $\on{PreStk}$:
$$^{red}\!\affSch,\, \affSch,\, \affdgSch, \,\dgSch_{\on{qs-qc}},\,\dgSch,\,\on{PreStk}$$
(here the subscript ``$\on{qs-qc}$" means ``quasi-separated and quasi-compact").

\medskip

From \lemref{l:LKE from red} and \corref{c:dr from Sch} we obtain:

\begin{cor}  \label{c:abstract LKE}
The functor $\bC_2\to \on{PreStk}$ given by $\dr|_{\bC_2}$
is isomorphic to the left Kan extension along $\bC_1\hookrightarrow \bC_2$
of the functor $\dr|_{\bC_1}:\bC_1\to \on{PreStk}$.
\end{cor}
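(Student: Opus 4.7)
The plan is to promote \lemref{l:LKE from red} and \corref{c:dr from Sch} into the general statement by iterated use of the transitivity of left Kan extensions. The only formal facts needed are: (a) for fully faithful inclusions $\bC_1 \hookrightarrow \bC_2 \hookrightarrow \bC_3$ and any $F:\bC_1 \to \bD$, one has a canonical isomorphism $\on{LKE}_{\bC_1\hookrightarrow \bC_3}(F) \simeq \on{LKE}_{\bC_2\hookrightarrow \bC_3}\bigl(\on{LKE}_{\bC_1\hookrightarrow \bC_2}(F)\bigr)$; and (b) for a fully faithful inclusion $\bC \hookrightarrow \bD$, the restriction of $\on{LKE}_{\bC \hookrightarrow \bD}(G)$ back to $\bC$ is canonically $G$. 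Since every category in the list is a full subcategory of $\on{PreStk}$, all relevant inclusions are fully faithful, so these facts apply throughout.

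First I would establish the ``maximal'' case by applying (a) to $^{red}\!\affSch \hookrightarrow \affdgSch \hookrightarrow \on{PreStk}$, plugging in \lemref{l:LKE from red} for the inner left Kan extension and \corref{c:dr from Sch} for the outer one, to deduce
$$\dr|_{\on{PreStk}} \simeq \on{LKE}_{^{red}\!\affSch \hookrightarrow \on{PreStk}}(\dr|_{^{red}\!\affSch}).$$
Next, for any $\bC$ from the list (each of which contains $^{red}\!\affSch$), I apply (a) once more to the chain $^{red}\!\affSch \hookrightarrow \bC \hookrightarrow \on{PreStk}$ to rewrite the right-hand side as $\on{LKE}_{\bC \hookrightarrow \on{PreStk}}\bigl(\on{LKE}_{^{red}\!\affSch \hookrightarrow \bC}(\dr|_{^{red}\!\affSch})\bigr)$ and restrict both sides to $\bC$; by (b), the outer LKE becomes the identity upon restriction, yielding
$$\dr|_{\bC} \simeq \on{LKE}_{^{red}\!\affSch \hookrightarrow \bC}(\dr|_{^{red}\!\affSch}).$$

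Finally, for any pair $\bC_1 \subset \bC_2$ from the list, one further application of (a) to $^{red}\!\affSch \hookrightarrow \bC_1 \hookrightarrow \bC_2$, combined with the previous step applied to each of $\bC_1$ and $\bC_2$, gives
$$\on{LKE}_{\bC_1 \hookrightarrow \bC_2}(\dr|_{\bC_1}) \simeq \on{LKE}_{\bC_1 \hookrightarrow \bC_2}\bigl(\on{LKE}_{^{red}\!\affSch \hookrightarrow \bC_1}(\dr|_{^{red}\!\affSch})\bigr) \simeq \on{LKE}_{^{red}\!\affSch \hookrightarrow \bC_2}(\dr|_{^{red}\!\affSch}) \simeq \dr|_{\bC_2},$$
which is the desired assertion. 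I do not foresee any real obstacle: all of the geometric content is supplied by the two previously cited results, and what remains is purely formal bookkeeping with nested left Kan extensions along fully faithful inclusions.
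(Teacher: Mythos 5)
Your proposal is correct and is essentially the argument the paper intends: the paper offers no proof beyond the words ``From \lemref{l:LKE from red} and \corref{c:dr from Sch} we obtain,'' and your explicit bookkeeping with transitivity of left Kan extensions and restriction along fully faithful inclusions is exactly the formal content being left implicit there. Nothing further is needed.
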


\ssec{Relation between $\CY$ and $\CY_\dr$}

\sssec{}

The functor $\dr:\on{PreStk}\to \on{PreStk}$ comes equipped with a natural transformation
$$p_\dr:\on{Id}\to \dr,$$
i.e., for every $\CY\in \on{PreStk}$ we have a canonical map 
$$p_{\dr,\CY}:\CY\to \CY_\dr.$$

\sssec{}  \label{sss:inf groupoid}

Let $\CY^\bullet/\CY_\dr$ be the \v{C}ech nerve of $p_{\dr,\CY}$, regarded as a simplicial object of $\on{PreStk}$. 
It is augmented by $\CY_\dr$.

\medskip

Note that each $\CY^i/\CY_\dr$ is the formal completion of 
$\CY^i$ along the main diagonal. (We refer the reader to \cite[Sect. 6.1.1]{IndSch}, for our conventions 
regarding formal completions). 

\medskip

We have a canonical map
\begin{equation} \label{e:comp with inf groupoid}
|\CY^\bullet/\CY_\dr|\to \CY_\dr.
\end{equation}

\sssec{Classically formally smooth prestacks}  \label{sss:cl formally smooth}

We shall say that a prestack $\CY$ is classically formally smooth, if for $S\in \affdgSch$, the map
$$\on{Maps}(S,\CY)\to \on{Maps}({}^{cl,red}S,\CY)$$
induces a surjection on $\pi_0$.

\medskip

The following results from the definitions:

\begin{lem}  \label{l:cl formally smooth}
If $\CY$ is classically formally smooth, the map
$$|\CY^\bullet/\CY_\dr|\to \CY_\dr$$
is an isomorphism in $\on{PreStk}$. 
\end{lem}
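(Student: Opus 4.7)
The plan is to reduce everything to a statement about $\infty$-groupoids, where the claim becomes a classical fact about Čech nerves, and then invoke the hypothesis that $\CY$ is classically formally smooth.

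First I would pass to values on affines. Since both colimits and fiber products in $\on{PreStk} = \on{Funct}((\affdgSch)^{\on{op}}, \inftygroup)$ are computed objectwise, for each $S\in \affdgSch$ the simplicial $\infty$-groupoid $(\CY^\bullet/\CY_\dr)(S)$ agrees with the Čech nerve of the map of $\infty$-groupoids
$$p_{\dr,\CY}(S): \CY(S)\longrightarrow \CY_\dr(S) = \CY({}^{cl,red}S),$$
and the map in question evaluates on $S$ to the canonical comparison
$$|\CY^\bullet/\CY_\dr|(S) \longrightarrow \CY_\dr(S)$$
from the realization of this Čech nerve to its augmentation.

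Next I would invoke the standard fact about $\inftygroup$: for any map $f: X\to Y$ of $\infty$-groupoids, the canonical map from the realization of the Čech nerve of $f$ to $Y$ is an equivalence if and only if $f$ is surjective on $\pi_0$ (i.e., an effective epimorphism in $\inftygroup$). This is essentially the statement that the $(-1)$-truncation of $X$ in the slice over $Y$ is the image of $f$.

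Finally I would apply the hypothesis. By the definition in \secref{sss:cl formally smooth}, classical formal smoothness of $\CY$ says exactly that $\CY(S) \to \CY({}^{cl,red}S)$ is surjective on $\pi_0$ for every $S\in \affdgSch$. Combining this with the previous step gives that $|\CY^\bullet/\CY_\dr|(S) \to \CY_\dr(S)$ is an isomorphism in $\inftygroup$ for all $S$, hence $|\CY^\bullet/\CY_\dr|\to \CY_\dr$ is an isomorphism in $\on{PreStk}$. There is no real obstacle here; the only thing to be careful about is to use the objectwise computation of colimits in $\on{PreStk}$ (as already recorded in \lemref{l:dr and colimits}) so that the geometric realization can be tested pointwise.
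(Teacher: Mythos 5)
Your proof is correct and is exactly the argument the paper has in mind: the paper states this lemma with the remark that it ``results from the definitions'' and gives no further details, and your write-up (objectwise computation of the \v{C}ech nerve and its realization, the characterization of effective epimorphisms in $\inftygroup$ as $\pi_0$-surjections, and the observation that classical formal smoothness is precisely $\pi_0$-surjectivity of $\CY(S)\to \CY_\dr(S)$ for all $S$) is a faithful unpacking of that assertion.
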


\ssec{The locally almost of finite type case}

\sssec{}  \label{sss:convergence}

Recall that $\on{PreStk}$ contains a full subcategory $\on{PreStk}_{\on{laft}}$ of prestacks locally almost of finite type, 
see \cite[Sect. 1.3.9]{Stacks}. By definition, an object $\CY\in \on{PreStk}$ belongs to $\on{PreStk}_{\on{laft}}$ if:

\begin{itemize}

\item $\CY$ is \emph{convergent}, i.e., for $S\in \affdgSch$, the natural map
$$\Maps(S,\CY) \to \underset{n\geq 0}{lim}\, \Maps({}^{\leq n}\!S,\CY)$$
is an isomorphism, where $^{\leq n}\!S$ denotes th $n$-coconnective truncation of $S$. 

\medskip

\item For every $n$, the restriction $^{\leq n}\CY:=\CY|_{^{\leq n}\!\affdgSch}$ belongs to
$^{\leq n}\!\on{PreStk}_{\on{lft}}$, i.e., the functor
$$S\mapsto \Maps(S,\CY),\quad ({}^{\leq n}\!\affdgSch)^{\on{op}}\to \inftygroup$$
commutes with filtered colimits (equivaently, is a left Kan extension form the full
subcategory $^{\leq n}\!\affdgSch_{\on{ft}}\hookrightarrow {}^{\leq n}\!\affdgSch$). 

\end{itemize}

\sssec{}

The following observation will play an important role in this paper.

\begin{prop}  \label{p:dr laft}
Assume that $\CY\in \on{PreStk}_{\on{laft}}$. Then:

\smallskip

\noindent{\em(a)} $\CY_\dr\in \on{PreStk}_{\on{laft}}$.

\smallskip

\noindent{\em(b)} $\CY_\dr$ is classical, i.e., belongs to the full
subcategory $^{cl}\!\on{PreStk}\subset {}\on{PreStk}$. 

\end{prop}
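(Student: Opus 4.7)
The plan is to dispose of part (b) first, since (a) becomes significantly simpler once $\CY_\dr$ is known to be classical. For (b), by construction $\CY_\dr(S) = \CY({}^{cl,red}S)$, and the operation ${}^{cl,red}S = \Spec(H^0(S)/\on{nilp}(H^0(S)))$ depends only on the underlying ordinary ring $H^0(S)$, hence only on ${}^{cl}S$. Thus $\CY_\dr(S) \simeq \CY_\dr({}^{cl}S)$ for every $S \in \affdgSch$, which is precisely the condition for $\CY_\dr$ to lie in ${}^{cl}\on{PreStk}$.

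For part (a), convergence is automatic from (b): since ${}^{cl}({}^{\leq n}\!S) = {}^{cl}S$, the tower $n \mapsto \CY_\dr({}^{\leq n}\!S)$ is constant with value $\CY_\dr(S)$, so the comparison map to the limit is tautologically an isomorphism. For the finite-type condition, I would reduce lft-ness of ${}^{\leq n}\CY_\dr$ for each $n$ to that of ${}^{cl}\CY_\dr$ on ordinary $\affSch$. Indeed, classicality factors $\CY_\dr|_{{}^{\leq n}\!\affdgSch}$ as ${}^{cl}\CY_\dr$ precomposed with the classical truncation $\pi_0$; since $H^0$ commutes with filtered colimits of connective DG $k$-algebras and sends f.t. $n$-coconnective algebras to f.t. classical rings, the lft property is preserved under this precomposition.

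The remaining substantive task is to verify that ${}^{cl}\CY_\dr$ is lft. Given a classical ring $A$ written as a filtered colimit $A = \colim_\alpha A_\alpha$ of its finitely generated subrings, one must show
\[
\CY(\Spec {}^{red}\!A) \simeq \colim_\alpha \CY(\Spec {}^{red}\!A_\alpha).
\]
The key---and essentially only nontrivial---lemma is that reduction commutes with filtered colimits of ordinary commutative rings, equivalently that $\on{nilp}(\colim A_\alpha) = \colim \on{nilp}(A_\alpha)$ as ideals in $A$. This follows from exactness of filtered colimits together with the observation that nilpotency is finitely witnessed: if $a^n = 0$ in $A = \colim A_\alpha$, then the relation $a^n = 0$ already holds at some finite stage $A_\beta$. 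Combined with the lft hypothesis on $\CY$ (applied to its classical restriction) for the finitely generated rings ${}^{red}\!A_\alpha$, this yields the required identification. The only real obstacle is this small ring-theoretic lemma; everything else is formal bookkeeping with classicality and Kan extensions.
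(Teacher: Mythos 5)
Your treatment of part (a) is essentially the paper's argument: convergence is tautological, and the local-finite-type condition reduces to the fact that $S\mapsto {}^{cl,red}S$ commutes with filtered limits in $\affdgSch$ (equivalently, that $\pi_0$ and the nilradical commute with filtered colimits of rings, the latter because nilpotency is witnessed at a finite stage), after which one applies the hypothesis on $\CY$. That part is fine, and it does not actually need part (b) as input.

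Part (b), however, contains a genuine gap, and it is the part carrying all the content of the proposition. You observe that $\CY_\dr(S)\simeq \CY_\dr({}^{cl}S)$ for every $S\in\affdgSch$ and conclude that $\CY_\dr$ is classical. But that identity only says that $\CY_\dr$ is the \emph{right} Kan extension of its restriction to $\affSch$ along $\affSch\hookrightarrow\affdgSch$: since ${}^{cl}(-)$ is right adjoint to this inclusion, the right Kan extension of a presheaf along it is precomposition with ${}^{cl}(-)$. Membership in ${}^{cl}\on{PreStk}\subset\on{PreStk}$ means something different: the embedding is by \emph{left} Kan extension --- which is precisely why the paper can assert that ${}^{cl}\on{PreStk}$ is closed under colimits --- so ``classical'' means that the canonical map $\underset{T\in(\affSch)_{/\CY_\dr}}{colim}\,T\to\CY_\dr$ is an isomorphism, i.e., that $\CY_\dr$ can be assembled as a colimit of classical affine schemes. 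The two conditions are not equivalent: the prestack $S\mapsto \Maps({}^{cl}S,\BA^1)$ satisfies your condition, yet its left Kan extension from $\affSch$ is $\BA^1$ itself, which takes different values on non-classical $S$; so the right-Kan-extension condition does not imply classicality. A telltale sign of the gap is that your argument for (b) never uses the hypothesis $\CY\in\on{PreStk}_{\on{laft}}$, and would ``prove'' that $X_\dr$ is classical for an arbitrary prestack $X$ --- a statement the paper does not claim. The paper's proof of (b) is correspondingly nontrivial: it uses (a) to reduce to $\CY=X$ an affine DG scheme almost of finite type, embeds $X$ into a smooth classical scheme $Z$, replaces $X_\dr$ by $Y_\dr$ for $Y$ the formal completion $Z^\wedge_X$, uses classical formal smoothness of $Y$ to identify $Y_\dr$ with the geometric realization $|Y^\bullet/Y_\dr|$, and finally invokes \cite[Proposition 6.8.2]{IndSch}, which says that formal completions of smooth classical schemes along the relevant closed subschemes are classical prestacks. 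None of these steps can be bypassed by the observation that $\CY_\dr$ factors through ${}^{cl}(-)$.
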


\sssec{Proof of point (a)} \hfill

\medskip

We need to verify two properties:

\medskip

\noindent(i) $\CY_{\dr}$ is convergent; 

\medskip

\noindent(ii) Each truncation $^{\leq n}(\CY_\dr)$ is locally of finite type.

\medskip

Property (i) follows tautologically; it is true for any $\CY\in \on{PreStk}$.
To establish property (ii), we need to show that 
the functor $\CY_\dr$
takes filtered limits in $^{\leq n}\!\affdgSch$ to colimits in $\inftygroup$. Since $\CY$ itself has this
property, it suffices to show that the functor 
$$S\mapsto {}^{cl,red}S:\affdgSch\to \affdgSch$$
preserves filtered limits, which is evident.

\qed

\sssec{Proof of point (b)} \hfill

\medskip

By \corref{c:abstract LKE}, we need to prove that the colimit
$$\underset{S\in (\affSch)_{/\CY}}{colim}\, S_\dr\in \on{PreStk}$$
is classical. By part (a), the functor
$$(\affSch_{\on{ft}})_{/\CY}\to (\affSch)_{/\CY}$$
is cofinal; hence,
$$\underset{S\in (\affSch_{\on{ft}})_{/\CY}}{colim}\, S_\dr\to \underset{S\in (\affSch)_{/\CY}}{colim}\, S_\dr$$
is an isomorphism. 

\medskip

Therefore, since the full subcategory $^{cl}\!\on{PreStk}\subset {}\on{PreStk}$ is closed under colimits, we can 
assume without loss of generality that $\CY$ is a classical affine scheme of finite type.

\medskip

More generally, we will show that
for $X\in \affdgSch_{\on{aft}}$, the prestack $X_\dr$ is classical.
Let $i:X\hookrightarrow Z$ be a closed embedding, where $Z$ is a \emph{smooth} classical affine scheme
of finite type. Let $Y$ denote the formal completion $Z^\wedge_X$ of $Z$ along $X$ (see \cite[Sects. 6.1.1 or 6.5]{IndSch}). 
The map $X\to Y$ induces an isomorphism 
$X_\dr\to Y_\dr$. Hence, it suffices to show that $Y_\dr$ is classical. 

\medskip

Consider $Y^\bullet/Y_\dr$ (see \secref{sss:inf groupoid} above). Note that $Y$ is formally smooth, 
since $Z$ is (see \cite[Sect. 8.1]{IndSch}). In particular, $Y$ is classically
formally smooth. Since 
the subcategory $^{cl}\!\on{PreStk}\subset {}\on{PreStk}$ is closed
under colimits and by \lemref{l:cl formally smooth}, it suffices to show that each term $Y^i/Y_\dr$ is classical as a prestack. 

\medskip

Note that $Y^i/Y_\dr$ is isomorphic to the formal completion of $Z^i$ along the diagonally embedded
copy of $X$. Hence, $Y^i/Y_\dr$ is classical by \cite[Proposition 6.8.2]{IndSch}.

\qed

\sssec{}

From \propref{p:dr laft} we obtain:

\begin{cor} \label{c:dr laft abstract}
Let $\bC_1\subset \bC_2$ be any of the following full subcategories of $\affdgSch$:
$$\affSch_{\on{ft}},\,{}^{<\infty}\!\affdgSch_{\on{aft}},\,\affdgSch_{\on{aft}},\,\affSch,\,\affdgSch.$$
Then for $\CY\in \on{PreStk}_{\on{laft}}$, the functor
$$(\bC_1)_{/\CY_\dr}\to (\bC_2)_{/\CY_\dr}$$
is cofinal.
\end{cor}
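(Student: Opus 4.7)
The plan is to establish a slightly stronger statement: for every $\bC$ in the given list, the canonical functor
$$(\affSch_{\on{ft}})_{/\CY_\dr}\to \bC_{/\CY_\dr}$$
is cofinal. Granting this, the corollary for an arbitrary pair $\bC_1\subset \bC_2$ from the list follows from the 2-out-of-3 property for cofinal functors applied to the composition $(\affSch_{\on{ft}})_{/\CY_\dr}\to (\bC_1)_{/\CY_\dr}\to (\bC_2)_{/\CY_\dr}$: both the composite and the first arrow are cofinal by the stronger statement, forcing the second to be cofinal.

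For the stronger assertion, I proceed via two arguments that together cover the list.

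\emph{Finite-type reduction} (case $\bC=\affSch$). Here I invoke \propref{p:dr laft}(a), which asserts in particular that the classical prestack $\CY_\dr|_{\affSch}$ is the left Kan extension of its restriction to $\affSch_{\on{ft}}$ along $\affSch_{\on{ft}}\hookrightarrow \affSch$. A standard consequence of such an LKE presentation (essentially co-Yoneda applied to $\CY_\dr$ viewed as a colimit of representable presheaves) is cofinality of the induced map $(\affSch_{\on{ft}})_{/\CY_\dr}\to (\affSch)_{/\CY_\dr}$.

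\emph{Classical truncation} (cases $\bC\in \{{}^{<\infty}\!\affdgSch_{\on{aft}},\,\affdgSch_{\on{aft}},\,\affdgSch\}$). For any $(S\to \CY_\dr)\in \bC_{/\CY_\dr}$, I exhibit an initial object of the corresponding fiber category. The candidate consists of the classical truncation $S\to {}^{cl}S$ together with the unique map ${}^{cl}S\to \CY_\dr$ induced by the classicality of $\CY_\dr$ (\propref{p:dr laft}(b)). Initiality follows from the universal property of ${}^{cl}S$: any competing factorization $S\to S'\to \CY_\dr$ with $S'$ classical has $S\to S'$ factoring uniquely through $S\to {}^{cl}S$, and the resulting composite ${}^{cl}S\to S'\to \CY_\dr$ matches the canonical map ${}^{cl}S\to \CY_\dr$ by the same classicality. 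When $\bC\in \{{}^{<\infty}\!\affdgSch_{\on{aft}},\,\affdgSch_{\on{aft}}\}$, the aft condition on $S$ forces $H^0(S)$ to be finitely generated over $k$, so ${}^{cl}S\in \affSch_{\on{ft}}$; the initial object therefore lies in $(\affSch_{\on{ft}})_{/\CY_\dr}$, giving cofinality of $(\affSch_{\on{ft}})_{/\CY_\dr}\to \bC_{/\CY_\dr}$ directly. For $\bC=\affdgSch$, the object ${}^{cl}S$ is only known to lie in $\affSch$, so this argument yields cofinality of $(\affSch)_{/\CY_\dr}\to (\affdgSch)_{/\CY_\dr}$; composing with the finite-type reduction handles this remaining case.

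The main potential obstacle is simply verifying the initial-object claim with all compatibilities, which comes down to the adjunction between $\affSch\hookrightarrow \affdgSch$ and classical truncation together with the defining property of $\CY_\dr$ being classical. No deeper difficulty is anticipated; both inputs (lft and classicality) from \propref{p:dr laft} are tailored to the task.
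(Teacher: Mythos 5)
Your overall skeleton --- proving that $(\affSch_{\on{ft}})_{/\CY_\dr}\to \bC_{/\CY_\dr}$ is cofinal for each $\bC$ in the list and then applying the two-out-of-three property for cofinal functors --- is sound, and your finite-type reduction step is exactly the paper's argument: \propref{p:dr laft}(a) gives the left Kan extension presentation of ${}^{cl}(\CY_\dr)$ from $\affSch_{\on{ft}}$, hence cofinality of the slice inclusion. The problem is the ``classical truncation'' step. There is no map $S\to {}^{cl}S$: for $S=\Spec(A)$ the truncation $A\to H^0(A)$ is an algebra map, so the canonical morphism of schemes goes the other way, ${}^{cl}S\to S$. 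Equivalently, ${}^{cl}(-)$ is a \emph{right} adjoint (a coreflection) to the inclusion $\affSch\hookrightarrow\affdgSch$, not a left adjoint, and no left adjoint exists: the universal property you invoke would force $\Maps_{\affdgSch}(S,S')\simeq\Maps_{\affSch}({}^{cl}S,S')$ for classical $S'$, but already for $S'=\BA^1$ the left-hand side is $\Omega^\infty\Gamma(S,\CO_S)$, which has nontrivial higher homotopy whenever $S$ is not classical, while the right-hand side is discrete. So the comma category you are analyzing has no initial object in general, and this half of your argument fails for each of ${}^{<\infty}\!\affdgSch_{\on{aft}}$, $\affdgSch_{\on{aft}}$ and $\affdgSch$.

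The repair is to use \propref{p:dr laft}(b) in the same way you used part (a): classicality of $\CY_\dr$ means precisely that $\CY_\dr$ is the left Kan extension of its restriction to $\affSch$, i.e.\ $\CY_\dr\simeq \underset{S\in (\affSch)_{/\CY_\dr}}{colim}\, S$, and the same ``colimit-of-representables presentation iff cofinal slice inclusion'' criterion you already invoked then gives cofinality of $(\affSch)_{/\CY_\dr}\to(\affdgSch)_{/\CY_\dr}$; composing with the finite-type step handles $(\affSch_{\on{ft}})_{/\CY_\dr}\to(\affdgSch)_{/\CY_\dr}$. For the two intermediate categories one uses in addition that $\CY_\dr\in\on{PreStk}_{\on{laft}}$ is, by the definition of $\on{PreStk}_{\on{laft}}$ as a subcategory of $\on{PreStk}$ via Kan extension, also a left Kan extension from ${}^{<\infty}\!\affdgSch_{\on{aft}}$, after which two-out-of-three finishes exactly as you intended. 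This is the paper's (very terse) proof: point (a) for the arrow $\affSch_{\on{ft}}\hookrightarrow\affSch$, point (b) for the arrow $\affSch\hookrightarrow\affdgSch$.
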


\begin{proof}
It suffices to prove the assertion for the inclusions
$$\affSch_{\on{ft}}\hookrightarrow \affSch\hookrightarrow \affdgSch.$$
For right arrow, the assertion follows from point (b) of \propref{p:dr laft},
and for the left arrow from point (a).
\end{proof}

\sssec{}

Now, consider the following full subcategories 
\begin{equation} \label{e:ft categories}
^{red}\!\affSch_{\on{ft}},\, \affSch_{\on{ft}},\, \affdgSch_{\on{aft}}, \,\dgSch_{\on{aft}},\, \dgSch_{\on{laft}},\, \on{PreStk}_{\on{laft}}.
\end{equation}
of the categories appearing in \secref{sss:abstract LKE}.

\begin{cor}  \label{c:abstract LKE laft}
The restriction of the functor $\dr$ to $\on{PreStk}_{\on{laft}}$ is isomorphic to the left Kan extension
of this functor to $\bC$, where $\bC$ is one of the subcategories in \eqref{e:ft categories}.
\end{cor}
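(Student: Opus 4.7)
The plan is to deduce this from \corref{c:abstract LKE}, which handles the non-laft analogues of the same statement, together with the cofinality established in \corref{c:dr laft abstract}. Concretely, for $\CY\in \on{PreStk}_{\on{laft}}$ and $\bC$ one of the subcategories in \eqref{e:ft categories}, I want to show that the natural comparison map
$$\underset{S \in \bC_{/\CY}}{\on{colim}}\, S_\dr \to \CY_\dr$$
is an isomorphism in $\on{PreStk}$. By \corref{c:abstract LKE} applied to a non-laft subcategory $\bC'$ from \secref{sss:abstract LKE} that contains $\bC$, the right-hand side already equals $\underset{S \in \bC'_{/\CY}}{\on{colim}}\, S_\dr$, so it suffices to produce a cofinality of overcategories $\bC_{/\CY} \hookrightarrow \bC'_{/\CY}$.

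The argument then splits into two types of steps. For inclusions of the form ${}^{red}\!\affSch_{\on{ft}} \hookrightarrow \affSch_{\on{ft}}$ and $\affSch_{\on{ft}} \hookrightarrow \affdgSch_{\on{aft}}$ (and their obvious counterparts at the non-laft level), the inclusion is a left adjoint, with right adjoint given respectively by reduction ${}^{cl,red}$ and truncation $\Spec \circ H^0$. Since these operations preserve the finite-type conditions, the adjunctions descend to overcategories at $\CY$, and the inclusions are cofinal as left adjoints. For the ``laft vs.\ non-laft'' step $\bC \hookrightarrow \bC'$ (for example $\affdgSch_{\on{aft}} \hookrightarrow \affdgSch$), the cofinality on overcategories at $\CY_\dr$ is precisely the content of \corref{c:dr laft abstract}. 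To transfer this to overcategories at $\CY$, I would use that the functor $S\mapsto S_\dr$ factors through $S\mapsto {}^{cl,red}\!S$, together with the identification $({}^{red}\!\affSch)_{/\CY} = ({}^{red}\!\affSch)_{/\CY_\dr}$ (any map from a reduced classical scheme to $\CY$ is the same as a map to $\CY_\dr$); after this reduction the relevant cofinality between $\bC_{/\CY}$ and $\bC'_{/\CY}$ can be extracted by a 2-out-of-3 argument through the corresponding reduced overcategories.

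For the largest cases $\bC = \dgSch_{\on{aft}}, \dgSch_{\on{laft}}, \on{PreStk}_{\on{laft}}$, the cofinality $\bC_{/\CY} \hookrightarrow \bC'_{/\CY}$ for $\bC' = \dgSch, \on{PreStk}$ reduces to the affine case via the density of $\affdgSch_{\on{aft}}$ in $\on{PreStk}_{\on{laft}}$ (and in all the intermediate categories), which is built into the construction of $\on{PreStk}_{\on{laft}}$ as a presheaf category on ${}^{<\infty}\!\affdgSch_{\on{aft}}$. The main obstacle is the somewhat delicate bookkeeping required to navigate between overcategories over $\CY$ and over $\CY_\dr$; this is the only place where the laft hypothesis is genuinely used, and once it is isolated and handled by quoting \corref{c:dr laft abstract}, the remainder of the proof is a formal assembly of cofinalities already available in the preceding discussion.
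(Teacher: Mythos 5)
Your overall strategy is the one the paper follows: reduce via \corref{c:abstract LKE} to statements about overcategories over $\CY$, treat the passage between reduced/classical/finite-type affine schemes by adjunctions, and use the laft hypothesis for the remaining step (the paper additionally streamlines matters by observing that, by transitivity of left Kan extensions, it suffices to treat the single smallest case $\bC={}^{red}\!\affSch_{\on{ft}}$). However, the step ``the inclusions are cofinal as left adjoints'' has the adjoints backwards, and as stated it fails. A functor is cofinal (colimits restrict along it) when it admits a \emph{left} adjoint, i.e.\ when it is itself a right adjoint; a functor admitting a right adjoint is instead \emph{initial}, the condition relevant for limits. Concretely, $({}^{red}\!\affSch_{\on{ft}})_{/\CY}\to(\affSch_{\on{ft}})_{/\CY}$ is genuinely not cofinal: for $\CY=S=\Spec(k[x]/x^2)$ and the object $\on{id}_S$, the relevant comma category consists of reduced $T$ equipped with maps $S\to T\to S$ composing to the identity, and it is empty, since $k[x]/x^2$ cannot be a retract of a reduced ring. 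The correct repair --- and the way the corresponding sentence of the paper's proof should be read --- is not to claim cofinality of the overcategory functor at all: since $({}^{red}\!\affSch_{\on{ft}})_{/S}$ has a terminal object ${}^{cl,red}S$ and $({}^{cl,red}S)_\dr\to S_\dr$ is an isomorphism, the functor $\dr|_{\affSch_{\on{ft}}}$ \emph{is} the left Kan extension of $\dr|_{{}^{red}\!\affSch_{\on{ft}}}$ (the finite-type version of \lemref{l:LKE from red}), and one then concludes by transitivity of left Kan extensions. The same remark applies to $\affSch_{\on{ft}}\hookrightarrow\affdgSch_{\on{aft}}$.

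The second problem is your handling of the ``laft vs.\ non-laft'' step. \corref{c:dr laft abstract} concerns overcategories over $\CY_\dr$, and the proposed ``transfer'' to overcategories over $\CY$ --- via the identification of reduced overcategories and a 2-out-of-3 argument --- is not spelled out, and I do not see how to assemble it: cofinality of $\bC_{/\CY_\dr}\to\bC'_{/\CY_\dr}$ does not formally imply cofinality of $\bC_{/\CY}\to\bC'_{/\CY}$, which are different categories. This detour is also unnecessary: the cofinality you actually need, namely of $(\affSch_{\on{ft}})_{/\CY}\to(\affSch)_{/\CY}$, is immediate from the hypothesis $\CY\in\on{PreStk}_{\on{laft}}$ itself (the restriction of $\CY$ to classical affine schemes is a left Kan extension from $\affSch_{\on{ft}}$, which is equivalent to this cofinality), and that is exactly the route the paper takes.
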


\begin{proof}
It suffices to prove the corollary for $\bC={}^{red}\!\affSch_{\on{ft}}$. By \corref{c:abstract LKE}, it is enough to
show that for $\CY\in \on{PreStk}_{\on{laft}}$, the functor
$$({}^{red}\!\affSch_{\on{ft}})_{/\CY}\to (\affSch)_{/\CY}$$
is cofinal. 

\medskip

By \propref{p:dr laft}(a), the functor
$$(\affSch_{\on{ft}})_{/\CY}\to (\affSch)_{/\CY}$$
is cofinal. Now, the assertion follows from the fact that the inclusion $^{red}\!\affSch_{\on{ft}}\hookrightarrow \affSch_{\on{ft}}$
admits a right adjoint. 
\end{proof}

\section{Definition of crystals}

In this section we will define left crystals (for arbitrary objects of $\on{PreStk}$), and right crystals for
objects of $\on{PreStk}_{\on{laft}}$. We will show that in the latter case, the two theories are equivalent.

\ssec{Left crystals}

\sssec{}

For $\CY\in \on{PreStk}$ we define
$$\Crys^l(\CY):=\QCoh(\CY_\dr).$$

I.e.,
$$\Crys^l(\CY)=\underset{S\in (\affdgSch_{/\CY_\dr})^{\on{op}}}{lim}\, \QCoh(S).$$

Informally, an object $\CM\in \Crys^l(\CY)$ is an assignment of a quasi-coherent sheaf $\CF_S\in \QCoh(S)$ for every affine DG 
scheme $S\in \affdgSch$ with a map ${}^{red,cl}S\to\CY$, as well as an isomorphism $$f^*(\CF_S)\simeq \CF_{S'}\in \QCoh(S')$$ 
for every morphism $f:S'\to S$ of affine DG schemes.

\sssec{}

More functorially, let $\Crys^l_{\on{PreStk}}$ denote the functor $(\on{PreStk})^{\on{op}}\to \StinftyCat_{\on{cont}}$
defined as
$$\Crys^l_{\on{PreStk}}:=\QCoh^*_{\on{PreStk}}\circ \dr,$$
where
$$\QCoh^*_{\on{PreStk}}:(\on{PreStk})^{\on{op}}\to \StinftyCat_{\on{cont}}$$
is the functor which assigns to a prestack the corresponding category of quasi-coherent sheaves \cite[Sect. 1.1.5]{QCoh}.

\medskip

For a map $f:\CY_1\to \CY_2$ in $\on{PreStk}$, let $f^{\dagger,l}$ denote the corresponding pullback functor
$$\Crys^l(\CY_2)\to \Crys^l(\CY_1).$$

\medskip

By construction, if $f$ induces an isomorphism of the underlying reduced classical prestacks
$^{cl,red}\CY_1\to {}^{cl,red}\CY_2$, then it induces an isomorphism of de Rham prestacks $\CY_{1,\dr} \to \CY_{2,\dr}$ 
and in particular $f^{\dagger,l}$ is an equivalence.

\sssec{}

Recall that the functor $\QCoh^*_{\on{PreStk}}:(\on{PreStk})^{\on{op}}\to \StinftyCat_{\on{cont}}$ is by definition the right
Kan extension of the functor 
$$\QCoh^*_{\affdgSch}:(\affdgSch)^{\on{op}}\to \StinftyCat_{\on{cont}}$$
along $(\affdgSch)^{\on{op}}\hookrightarrow (\on{PreStk})^{\on{op}}$.

\medskip

In particular, it takes colimits in $\on{PreStk}$ to limits in $\StinftyCat_{\on{cont}}$. Therefore, by
\corref{c:abstract LKE}, for $\CY\in \on{PreStk}$ we obtain:

\begin{cor}  \label{c:RKE for left}
Let $\bC$ be any of the categories from the list of \secref{sss:abstract LKE}. Then for $\CY\in \on{PreStk}$,
the functor
$$\Crys^l(\CY)\to \underset{X\in (\bC_{/\CY})^{\on{op}}}{lim}\, \Crys^l(X)$$
is an equivalence.
\end{cor}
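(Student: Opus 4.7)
The plan is to assemble the result from two inputs that are already in place, using that by definition $\Crys^l_{\on{PreStk}} = \QCoh^*_{\on{PreStk}} \circ \dr$. So it is enough to analyze how each of the two composed functors behaves relative to indexing diagrams coming from $\bC_{/\CY}$.

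First, I would invoke \corref{c:abstract LKE} with $\bC_2 = \on{PreStk}$ and $\bC_1 = \bC$. This yields, for any $\CY \in \on{PreStk}$, a canonical isomorphism
$$\underset{X \in \bC_{/\CY}}{\on{colim}}\, X_\dr \;\xrightarrow{\ \sim\ }\; \CY_\dr$$
in $\on{PreStk}$. Second, as noted just above the corollary, $\QCoh^*_{\on{PreStk}}$ is by construction the right Kan extension of $\QCoh^*_{\affdgSch}$ along $(\affdgSch)^{\on{op}} \hookrightarrow (\on{PreStk})^{\on{op}}$, hence carries colimits in $\on{PreStk}$ to limits in $\StinftyCat_{\on{cont}}$.

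Applying $\QCoh^*_{\on{PreStk}}$ to the displayed isomorphism and unwinding definitions, the natural map
$$\Crys^l(\CY) \;=\; \QCoh(\CY_\dr) \;\longrightarrow\; \underset{X \in (\bC_{/\CY})^{\on{op}}}{\on{lim}}\, \QCoh(X_\dr) \;=\; \underset{X \in (\bC_{/\CY})^{\on{op}}}{\on{lim}}\, \Crys^l(X)$$
is an equivalence. There is no serious obstacle: the only bookkeeping point is to check that this chain of identifications recovers the comparison map in the statement, but this is automatic from the construction of $f^{\dagger,l}$ as $(f_\dr)^*$ for $f : X \to \CY$, so the functoriality of the limit diagram on the right matches the pullback functoriality built into $\Crys^l_{\on{PreStk}}$.
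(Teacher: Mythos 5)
Your proposal is correct and follows exactly the paper's own argument: the paper derives this corollary from the fact that $\QCoh^*_{\on{PreStk}}$, being a right Kan extension from $(\affdgSch)^{\on{op}}$, takes colimits in $\on{PreStk}$ to limits in $\StinftyCat_{\on{cont}}$, combined with \corref{c:abstract LKE} expressing $\CY_\dr$ as the colimit of $X_\dr$ over $X\in \bC_{/\CY}$. The only difference is that you spell out the bookkeeping of the comparison map, which the paper leaves implicit.
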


Informally, this corollary says that
the data of an object $\CM\in \Crys^l(\CY)$ is
equivalent to that of $\CM_S\in \Crys^l(S)$ for every $S\in \bC_{/\CY}$, and for every
$f:S'\to S$, an isomorphism
$$f^{\dagger,l}(\CM_S)\simeq \CM_{S'}\in \Crys^l(S').$$

\sssec{}

Recall the natural transformation $p_\dr:\on{Id}\to \dr$. It induces a natural transformation
$$\oblv^l:\Crys^l_{\on{PreStk}}\to \QCoh^*_{\on{PreStk}}.$$

I.e., for every $\CY\in \on{PreStk}$, we have a functor 
\begin{equation} \label{e:oblv left}
\oblv^l_\CY:\Crys^l(\CY)\to \QCoh(\CY),
\end{equation}
and for every morphism $f:\CY_1\to \CY_2$, a commutative diagram:
\begin{equation} \label{e:pullback left}
\CD
\Crys^l(\CY_1)   @>{\oblv^l_{\CY_1}}>>  \QCoh(\CY_1)   \\
@A{f^{\dagger,l}}AA    @AA{f^*}A  \\
\Crys^l(\CY_2)    @>{\oblv^l_{\CY_2}}>> \QCoh(\CY_2).
\endCD
\end{equation}

\sssec{}

Recall the simplicial object $\CY^\bullet/\CY_\dr$ of \secref{sss:inf groupoid}. 

\medskip

From \lemref{l:cl formally smooth} we obtain:

\begin{lem} \label{l:left on cl formally smooth}
If $\CY$ is classically formally smooth, then the functor
$$\Crys^l(\CY)\to \on{Tot}(\QCoh(\CY^\bullet/\CY_\dr))$$
is an equivalence.
\end{lem}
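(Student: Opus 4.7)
The plan is to combine \lemref{l:cl formally smooth} with the colimit-to-limit behavior of $\QCoh^*_{\on{PreStk}}$.

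First I would unwind the definition: by construction $\Crys^l(\CY)=\QCoh(\CY_\dr)$, so the target we must compare to is $\on{Tot}(\QCoh(\CY^\bullet/\CY_\dr))$, which equals $\QCoh(|\CY^\bullet/\CY_\dr|)$. Indeed, the functor
$$\QCoh^*_{\on{PreStk}}:(\on{PreStk})^{\on{op}}\to \StinftyCat_{\on{cont}}$$
is the right Kan extension of $\QCoh^*_{\affdgSch}$ along $(\affdgSch)^{\on{op}}\hookrightarrow (\on{PreStk})^{\on{op}}$, and in particular it sends colimits of prestacks to limits of DG categories. Applying this to the simplicial diagram $\CY^\bullet/\CY_\dr$ in $\on{PreStk}$ yields the identification
$$\QCoh\bigl(|\CY^\bullet/\CY_\dr|\bigr)\simeq \on{Tot}\bigl(\QCoh(\CY^\bullet/\CY_\dr)\bigr).$$

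Next I would invoke \lemref{l:cl formally smooth}, which, under the hypothesis that $\CY$ is classically formally smooth, provides the isomorphism of prestacks
$$|\CY^\bullet/\CY_\dr|\to \CY_\dr.$$
Pulling back quasi-coherent sheaves along this isomorphism gives an equivalence
$$\Crys^l(\CY)=\QCoh(\CY_\dr)\simeq \QCoh\bigl(|\CY^\bullet/\CY_\dr|\bigr)\simeq \on{Tot}\bigl(\QCoh(\CY^\bullet/\CY_\dr)\bigr).$$

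Finally, I would verify that the resulting equivalence is indeed the canonical comparison map. This map is, by construction, induced by the collection of pullback functors $\QCoh(\CY_\dr)\to \QCoh(\CY^i/\CY_\dr)$ coming from the augmentation of the \v{C}ech nerve. These pullback functors are exactly the ones produced by the universal property of $|\CY^\bullet/\CY_\dr|$ as a colimit, composed with the isomorphism of \lemref{l:cl formally smooth}; hence the composite agrees with the equivalence above. The hypothesis of classical formal smoothness is only used in the second step, and there is no serious obstacle beyond chasing the identification of the two maps.
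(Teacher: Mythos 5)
Your argument is correct and is exactly the paper's (the paper derives this lemma directly from \lemref{l:cl formally smooth}, using that $\QCoh^*_{\on{PreStk}}$, being a right Kan extension from affine DG schemes, sends colimits of prestacks to limits of categories). You have merely spelled out the details the paper leaves implicit.
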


\begin{rem}
Our definition of left crystals on $\CY$ is what in Grothendieck's terminology is \emph{quasi-coherent sheaves 
on the infinitesimal site of $\CY$}. The category $\on{Tot}(\QCoh(\CY^\bullet/\CY_\dr))$ is what in Grothendieck's terminology is 
\emph{quasi-coherent sheaves on the stratifying site of $\CY$}. Thus, \lemref{l:left on cl formally smooth} says that
the two are equivalent for classically formally smooth prestacks. We shall see in \secref{ss:ind left} that the same is
also true when $\CY$ is an eventually coconnective DG scheme locally almost of finite type. However,
the equivalence fails for DG schemes that are not eventually coconnective (even ones that are locally almost of finite type).
\end{rem}

\ssec{Left crystals on prestacks locally almost of finite type}

For the rest of this section, unless specified otherwise,
we will restrict ourselves to the subcategory
$\on{PreStk}_{\on{laft}}\subset\on{PreStk}$.

\medskip

So, unless explicitly stated otherwise, by a prestack/DG scheme/affine DG scheme, we shall
mean one which is locally almost of finite type.

\medskip

Let $\Crys^l_{\on{PreStk}_{\on{laft}}}$ denote the restriction of $\Crys^l_{\on{PreStk}}$ to
$\on{PreStk}_{\on{laft}}\subset\on{PreStk}$.

\sssec{}  \label{sss:only laft left}

The next corollary says that we ``do not need to know" about schemes of infinite type or derived algebraic geometry
 in order to
define $\Crys^l(\CY)$ for $\CY\in \on{PreStk}_{\on{laft}}$.  In other words, to define crystals on a prestack locally almost of finite 
type, we can stay within the world of classical affine schemes of finite type.

\medskip

Indeed, from \corref{c:dr laft abstract} we obtain:

\begin{cor}  \label{c:dr laft}
Let $\bC$ be one of the full subcategories 
$$\affSch_{\on{ft}},\,{}^{<\infty}\!\affdgSch_{\on{aft}},\,\affdgSch_{\on{aft}},\,\affSch$$
of $\affdgSch$. Then for $\CY\in \on{PreStk}_{\on{laft}}$ the natural functor
$$\Crys^l(\CY)\to \underset{S\in (\bC_{/\CY_\dr})^{\on{op}}}{lim}\, \QCoh(S)$$
is an equivalence.
\end{cor}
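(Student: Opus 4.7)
The proof is essentially a direct consequence of \corref{c:dr laft abstract}, together with the definition of left crystals. Here is the plan.

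First, I would unfold definitions. By construction, $\Crys^l(\CY) = \QCoh(\CY_\dr)$, and the functor $\QCoh^*_{\on{PreStk}}$ is the right Kan extension of its restriction to $\affdgSch$. Thus for any prestack $\CZ$,
$$\QCoh(\CZ) \;\simeq\; \underset{S\in (\affdgSch_{/\CZ})^{\on{op}}}{lim}\, \QCoh(S).$$
Applying this to $\CZ = \CY_\dr$, we obtain the ``default'' presentation of $\Crys^l(\CY)$ as a limit indexed by $(\affdgSch_{/\CY_\dr})^{\on{op}}$.

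Next, I would invoke \corref{c:dr laft abstract}: for each $\bC$ in the list
$$\affSch_{\on{ft}},\,{}^{<\infty}\!\affdgSch_{\on{aft}},\,\affdgSch_{\on{aft}},\,\affSch,$$
the inclusion $\bC_{/\CY_\dr} \hookrightarrow \affdgSch_{/\CY_\dr}$ is cofinal, since $\CY \in \on{PreStk}_{\on{laft}}$. Taking opposite categories turns cofinality into the statement that restriction along this inclusion induces an equivalence on limits of any diagram. Applying this to the diagram $S \mapsto \QCoh(S)$ yields
$$\underset{S\in (\affdgSch_{/\CY_\dr})^{\on{op}}}{lim}\, \QCoh(S) \;\simeq\; \underset{S\in (\bC_{/\CY_\dr})^{\on{op}}}{lim}\, \QCoh(S),$$
and chaining with the identification of the left-hand side with $\Crys^l(\CY)$ gives the claim.

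There is really no obstacle beyond correctly chaining the two previously established facts; the substantive content, namely the classicality and local finite typeness of $\CY_\dr$ that make $\affSch_{\on{ft}}{}_{/\CY_\dr}$ cofinal in $\affdgSch_{/\CY_\dr}$, has already been packaged into \propref{p:dr laft} and \corref{c:dr laft abstract}. The only subtlety to be mindful of is the direction of cofinality versus initiality when passing to opposite categories, but since we are taking limits of functors on $(\bC_{/\CY_\dr})^{\on{op}}$ and the inclusion of over-categories is cofinal in the usual (colimit) sense, the corresponding statement on limits follows automatically.
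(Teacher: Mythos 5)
Your proof is correct and follows exactly the paper's route: the paper derives this corollary directly from \corref{c:dr laft abstract} (cofinality of $\bC_{/\CY_\dr}\hookrightarrow \affdgSch_{/\CY_\dr}$) combined with the presentation of $\QCoh(\CY_\dr)$ as a limit over $(\affdgSch_{/\CY_\dr})^{\on{op}}$, and your handling of the cofinality-versus-limits direction under passage to opposite categories is the right way to fill in the one detail the paper leaves implicit.
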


\sssec{}

Recall that according to \corref{c:RKE for left}, the category $\Crys^l(\CY)$ can be recovered from the functor
\[ \Crys^l: \bC_{/\CY} \rightarrow \StinftyCat_{\on{cont}} \]
where $\bC$ is any one of the categories
$$^{red}\!\affSch,\, \affSch,\, \affdgSch, \,\dgSch_{\on{qs-qc}},\, \dgSch\subset \on{PreStk}.$$

We now claim that the above categories can be also replaced by their full subcategories
in the list \eqref{e:ft categories}:
$$^{red}\!\affSch_{\on{ft}},\, \affSch_{\on{ft}},\, \affdgSch_{\on{aft}}, \,\dgSch_{\on{aft}},\, \dgSch_{\on{laft}},\, \on{PreStk}_{\on{laft}}.$$

\begin{cor}   \label{c:RKE for left aft}
For $\CY\in \on{PreStk}_{\on{laft}}$ and $\bC$ being one of the categories in \eqref{e:ft categories},
the functor
$$\Crys^l(\CY)\to \underset{X\in (\bC_{/\CY})^{\on{op}}}{lim}\, \Crys^l(X)$$
is an equivalence.
\end{cor}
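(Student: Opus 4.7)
The plan is to reduce the claim to \corref{c:abstract LKE laft}, which identifies $\dr|_{\on{PreStk}_{\on{laft}}}$ with a left Kan extension from any of the finite-type subcategories $\bC$ in \eqref{e:ft categories}. The argument mirrors the proof of \corref{c:RKE for left}, but uses the more refined cofinality statements available once we restrict to the locally almost of finite type world.

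Concretely, I would proceed as follows. By unfolding the definition, $\Crys^l_{\on{PreStk}}=\QCoh^*_{\on{PreStk}}\circ \dr$, and $\QCoh^*_{\on{PreStk}}$ is the right Kan extension of $\QCoh^*_{\affdgSch}$ along $(\affdgSch)^{\on{op}}\hookrightarrow(\on{PreStk})^{\on{op}}$. As a right adjoint-type construction, $\QCoh^*_{\on{PreStk}}$ takes colimits in $\on{PreStk}$ to limits in $\StinftyCat_{\on{cont}}$ (this is the same observation used in the proof of \corref{c:RKE for left}).

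Now, for $\CY\in \on{PreStk}_{\on{laft}}$ and $\bC$ in the list \eqref{e:ft categories}, \corref{c:abstract LKE laft} yields the canonical isomorphism
$$\CY_\dr \;\simeq\; \underset{X\in \bC_{/\CY}}{\on{colim}}\, X_\dr$$
in $\on{PreStk}$. Applying $\QCoh^*_{\on{PreStk}}$ and using the preceding paragraph gives
$$\Crys^l(\CY)=\QCoh(\CY_\dr)\;\simeq\; \underset{X\in \bC_{/\CY}}{\lim}\, \QCoh(X_\dr) \;=\; \underset{X\in \bC_{/\CY}}{\lim}\, \Crys^l(X),$$
and tracing through the construction shows this is precisely the natural comparison map in the statement.

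There is no genuine obstacle: the real content was absorbed into \propref{p:dr laft} (laft-ness and classicality of $\CY_\dr$) and \corref{c:abstract LKE laft} (whose proof hinges on the cofinality of $\bC_{/\CY_\dr}\to (\affdgSch)_{/\CY_\dr}$ obtained from \propref{p:dr laft} together with the existence of a right adjoint to the reduction functor). Given those, the present corollary is a purely formal consequence of the definition of $\Crys^l$ as $\QCoh$ of the de Rham prestack together with the colimit-to-limit behavior of $\QCoh^*_{\on{PreStk}}$.
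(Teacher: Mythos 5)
Your proof is correct and is essentially the paper's own argument: the paper proves this corollary by citing \corref{c:abstract LKE laft} and relying on the colimit-to-limit behavior of $\QCoh^*_{\on{PreStk}}$ already spelled out in the proof of \corref{c:RKE for left}. You have simply made explicit the steps the paper leaves implicit.
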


\begin{proof}

Follows from \corref{c:abstract LKE laft}.

\end{proof}

Informally, the above corollary says that an object $\CM\in \Crys^l(\CY)$ can be recovered from an 
assignment of $\CM_S\in \Crys^l(S)$ for every $S\in \bC_{/\CY}$, and for every $f:S'\to S$
of an isomorphism 
$$f^{\dagger,l}(\CM_S)\simeq \CM_{S'}\in \Crys^l(S').$$

\sssec{}

Consider again the functor 
$$\oblv^l_\CY:\Crys^l(\CY)\to \QCoh(\CY)$$
of \eqref{e:oblv left}. We have:

\begin{lem}  \label{l:cons}
For $\CY \in \on{PreStk}_{\on{laft}}$, the functor $\oblv^l_\CY$ is conservative.
\end{lem}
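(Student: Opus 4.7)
The plan is to use \corref{c:dr laft} (with $\bC=\affSch_{\on{ft}}$) to reduce the vanishing of $\CF\in \Crys^l(\CY)=\QCoh(\CY_\dr)$ to the vanishing of its pullbacks to classical affine schemes of finite type. Suppose $\oblv^l_\CY(\CF)=p_{\dr,\CY}^*\CF=0$; by the cited corollary, it suffices to show that $s^*\CF=0$ in $\QCoh(S)$ for every map $s:S\to \CY_\dr$ with $S\in\affSch_{\on{ft}}$.

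Fix such an $s$. Since $\CY_\dr(S)=\CY({}^{cl,red}S)=\CY(S_{\on{red}})$, the datum $s$ is the same as a map $\sigma:S_{\on{red}}\to\CY$. Letting $i:S_{\on{red}}\hookrightarrow S$ denote the canonical closed embedding, a direct unwinding of the definitions shows that the square
$$
\CD
S_{\on{red}} @>{\sigma}>> \CY \\
@V{i}VV @VV{p_{\dr,\CY}}V \\
S @>{s}>> \CY_\dr
\endCD
$$
commutes: both compositions $S_{\on{red}}\to\CY_\dr$ correspond to $\sigma\in\CY(S_{\on{red}})=\CY_\dr(S_{\on{red}})$ (using that ${}^{cl,red}(S_{\on{red}})=S_{\on{red}}$). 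Pulling $\CF$ back along this square yields
$$
i^*(s^*\CF)\simeq \sigma^*(p_{\dr,\CY}^*\CF)=\sigma^*(\oblv^l_\CY\CF)=0.
$$

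Thus everything reduces to the claim that $i^*:\QCoh(S)\to\QCoh(S_{\on{red}})$ is conservative. Since $S$ is classical, affine, and Noetherian, the nilradical $N\subset\CO_S$ is nilpotent, so classical Nakayama gives that any discrete $\CO_S$-module $M$ with $M/N\cdot M=0$ must vanish. To upgrade this to the unbounded derived setting, suppose $\CG\in\QCoh(S)$ has $i^*\CG=0$ but $H^n(\CG)\neq 0$ for some $n$. Applying $i^*$ to the truncation triangle $\tau^{\leq n}\CG\to\CG\to\tau^{>n}\CG$ and using right $t$-exactness of $i^*$ (so that $i^*\tau^{>n}\CG$ lives in degrees $>n$), a short long-exact-sequence chase identifies
$$
H^n(i^*\tau^{\leq n}\CG)\simeq H^n(\CG)\otimes_{\CO_S}\CO_{S_{\on{red}}}=H^n(\CG)/N\cdot H^n(\CG),
$$
and the same chase shows this group vanishes. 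Classical Nakayama applied to $H^n(\CG)$ then contradicts $H^n(\CG)\neq 0$. Hence $\CG=0$, proving conservativity and completing the proof. The main (and only substantive) obstacle is this last derived-Nakayama step; the rest is bookkeeping afforded by \corref{c:dr laft}.
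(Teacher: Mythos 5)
Your overall route is sound and genuinely different from the paper's: the paper deduces the lemma from the conservativity of $\oblv^r_\CY$ (\lemref{l:cons for r}, proved via an embedding into a smooth classical scheme and formal completions) together with the left--right equivalence of \propref{p:left to right}, whereas you stay entirely on the left side and reduce, via \corref{c:dr laft} and the commuting square you write down (both of which are fine; the evaluation functors out of a limit of stable categories are indeed jointly conservative), to the conservativity of $i^*:\QCoh(S)\to\QCoh(S_{\on{red}})$ for a classical affine scheme $S$ of finite type. That last statement is true, but your proof of it has a genuine gap. The parenthetical ``so that $i^*\tau^{>n}\CG$ lives in degrees $>n$'' is not a consequence of right $t$-exactness --- that would be left $t$-exactness, and $i^*=-\otimes^L_{\CO_S}\CO_{S_{\on{red}}}$ is far from left $t$-exact when $S$ is non-reduced, having infinite Tor-amplitude. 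For instance, for $S=\Spec(k[x]/x^2)$ the object $\CO_{S_{\on{red}}}[-n-1]\in\QCoh(S)^{\geq n+1}$ pulls back to a complex with nonzero cohomology in every degree $\leq n+1$, since $\on{Tor}_j^{k[x]/x^2}(k,k)\neq 0$ for all $j$. Hence the term $H^{n-1}(i^*\tau^{>n}\CG)$ in your long exact sequence need not vanish, and the chase does not yield $H^n(\CG)/N\cdot H^n(\CG)=0$. (Your argument does work verbatim when $\CG$ is bounded above, by taking $n$ to be the top cohomological degree and inducting downward; the problem is only with objects having cohomology in arbitrarily high degrees.)

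The step is repairable without changing your strategy: argue by d\'evissage along the powers of the nilradical. Since $S$ is Noetherian, $N^k=0$ for some $k$; each subquotient $N^j/N^{j+1}$ is an $\CO_{S_{\on{red}}}$-module, so $\CG\otimes^L_{\CO_S}(N^j/N^{j+1})\simeq(i^*\CG)\otimes^L_{\CO_{S_{\on{red}}}}(N^j/N^{j+1})=0$; descending induction on $j$ using the triangles attached to $0\to N^{j+1}\to N^j\to N^j/N^{j+1}\to 0$ gives $\CG\otimes^L_{\CO_S}N^j=0$ for all $j$, and $j=0$ gives $\CG=0$. With this substitution your proof closes and is arguably more elementary and self-contained than the paper's, at the price of not exhibiting the link to right crystals that the paper's argument provides.
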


The proof is deferred until \secref{sss:proof of l cons}.

\ssec{Right crystals}

\sssec{}

Recall that $\on{PreStk}_{\on{laft}}$ can be alternatively viewed as the category of all functors
$$({}^{<\infty}\!\affdgSch_{\on{aft}})^{\on{op}}\to \inftygroup,$$ 
see \cite[Sect. 1.3.11]{Stacks}.

\medskip

Furthermore, we have the functor 
$$\IndCoh^!_{\on{PreStk}_{\on{laft}}}:(\on{PreStk}_{\on{laft}})^{\on{op}}\to \StinftyCat_{\on{cont}}$$
of \cite[Sect. 10.1.2]{IndCoh}, which is defined as the right Kan extension of the corresponding functor
$$\IndCoh^!_{^{<\infty}\!\affdgSch_{\on{aft}}}:({}^{<\infty}\!\affdgSch_{\on{aft}})^{\on{op}}\to \StinftyCat_{\on{cont}}$$
along
$$({}^{<\infty}\!\affdgSch_{\on{aft}})^{\on{op}}\to (\on{PreStk}_{\on{laft}})^{\on{op}}.$$

In particular, the functor $\IndCoh^!_{\on{PreStk}_{\on{laft}}}$ takes colimits in $\on{PreStk}_{\on{laft}}$ to
limits in $\StinftyCat_{\on{cont}}$.

\sssec{}

We define the functor 
$$\Crys^r_{\on{PreStk}_{\on{laft}}}:(\on{PreStk}_{\on{laft}})^{\on{op}}\to \StinftyCat_{\on{cont}}$$
as the composite
$$\Crys^r_{\on{PreStk}_{\on{laft}}}:=\IndCoh^!_{\on{PreStk}_{\on{laft}}}\circ \dr.$$

\medskip

In the above formula, \propref{p:dr laft}(a) is used to make sure that $\dr$ is defined as a functor
$\on{PreStk}_{\on{laft}}\to \on{PreStk}_{\on{laft}}$.

\begin{rem}
In defining $\Crys^r_{\on{PreStk}_{\on{laft}}}$ we ``do not need to know'' about schemes of infinite type:
we can define the endo-functor $\dr:\on{PreStk}_{\on{laft}}\to \on{PreStk}_{\on{laft}}$ directly by the formula
$$\on{Maps}(S,\CY_\dr)=\on{Maps}({}^{red,cl}S,\CY)$$
for $S\in {}^{<\infty}\!\affdgSch_{\on{aft}}$.
\end{rem}

\sssec{}

For a map $f:\CY_1\to \CY_2$ in $\on{PreStk}_{\on{laft}}$, we shall denote by $f^{\dagger,r}$ the corresponding
functor $\Crys^r(\CY_2)\to \Crys^r(\CY_1)$.

\medskip

If $f$ induces an equivalence $^{cl,red}\CY_1\to {}^{cl,red}\CY_2$, then the map $\CY_{1,\dr}\rightarrow \CY_{2,\dr}$ is 
an equivalence, and in particular, so is $f^{\dagger,r}$.

\sssec{}

By definition, for $\CY\in \on{PreStk}_{\on{laft}}$, we have:
$$\Crys^r(\CY)=\underset{S\in (({}^{<\infty}\!\affdgSch_{\on{aft}})_{/\CY_\dr})^{\on{op}}}{lim}\, \IndCoh(S).$$

Informally, an object $\CM\in \Crys^r(\CY)$ is an assignment for every $S\in {}^{<\infty}\!\affdgSch_{\on{aft}}$
and a map $^{red,cl}S\to \CY$ of an object $\CF_S\in \IndCoh(S)$, and for every $f:S'\to S$ of an isomorphism
$$f^!(\CF_S)\simeq \CF_{S'}\in \IndCoh(S').$$

\sssec{}

As in \secref{sss:only laft left}, we ``do not need to know" about DG schemes
in order to recover $\Crys^r(\CY)$:

\begin{cor}  \label{c:only aft right}
For $\CY\in \on{PreStk}_{\on{laft}}$, the functor
$$\Crys^r(\CY)\to \underset{S\in ((\affSch_{\on{ft}})_{/\CY_\dr})^{\on{op}}}{lim}\, \IndCoh(S)$$
is an equivalence.
\end{cor}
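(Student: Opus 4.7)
The plan is to unravel the definition of $\Crys^r(\CY)$ and then invoke the cofinality statement established in \corref{c:dr laft abstract}.

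First, I would recall that by construction $\Crys^r(\CY)=\IndCoh(\CY_\dr)$, and that $\IndCoh^!_{\on{PreStk}_{\on{laft}}}$ is the right Kan extension of $\IndCoh^!_{{}^{<\infty}\!\affdgSch_{\on{aft}}}$ along the inclusion $({}^{<\infty}\!\affdgSch_{\on{aft}})^{\on{op}}\hookrightarrow (\on{PreStk}_{\on{laft}})^{\on{op}}$. By \propref{p:dr laft}(a), $\CY_\dr$ lies in $\on{PreStk}_{\on{laft}}$, so evaluating this right Kan extension yields the standard formula
$$\Crys^r(\CY)\;=\;\IndCoh(\CY_\dr)\;=\;\underset{S\in (({}^{<\infty}\!\affdgSch_{\on{aft}})_{/\CY_\dr})^{\on{op}}}{lim}\,\IndCoh(S).$$

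Next, I would appeal to \corref{c:dr laft abstract} applied to the pair $\bC_1=\affSch_{\on{ft}}\subset \bC_2={}^{<\infty}\!\affdgSch_{\on{aft}}$, which tells us that the functor
$$(\affSch_{\on{ft}})_{/\CY_\dr}\;\longrightarrow\;({}^{<\infty}\!\affdgSch_{\on{aft}})_{/\CY_\dr}$$
is cofinal. Passing to opposite categories gives a coinitial functor, and coinitial functors preserve limits of any diagram, so restriction along this inclusion induces an equivalence
$$\underset{S\in (({}^{<\infty}\!\affdgSch_{\on{aft}})_{/\CY_\dr})^{\on{op}}}{lim}\,\IndCoh(S)\;\xrightarrow{\;\sim\;}\;\underset{S\in ((\affSch_{\on{ft}})_{/\CY_\dr})^{\on{op}}}{lim}\,\IndCoh(S),$$
and composing with the identification above yields the desired equivalence.

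The substantive work is really already contained in \propref{p:dr laft}(b): it is this classicality of $\CY_\dr$ that makes the subcategory $(\affSch_{\on{ft}})_{/\CY_\dr}$ large enough to compute the limit (the key step being that a DG-algebra map $A\to B$ factors uniquely through ${}^{cl,red}A\to B$ when $B$ is classical reduced). So there is no genuine obstacle beyond carefully citing \corref{c:dr laft abstract}; the only thing to double-check is that the indexing categories really match, i.e.\ that the right Kan extension formula for $\IndCoh^!_{\on{PreStk}_{\on{laft}}}$ applied to the prestack $\CY_\dr$ produces precisely the limit over $({}^{<\infty}\!\affdgSch_{\on{aft}})_{/\CY_\dr}$ written above.
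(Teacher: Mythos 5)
Your proof is correct and matches the paper's own argument, which simply cites \corref{c:dr laft abstract} for the cofinality of $(\affSch_{\on{ft}})_{/\CY_\dr}\to ({}^{<\infty}\!\affdgSch_{\on{aft}})_{/\CY_\dr}$ and combines it with the defining limit presentation of $\IndCoh(\CY_\dr)$. Your added remarks about where the real content lies (classicality of $\CY_\dr$ from \propref{p:dr laft}) are accurate but not part of the paper's stated proof.
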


\begin{proof}
Follows readily from \corref{c:dr laft abstract}.
\end{proof}

Informally, the above corollary says that an $\CM\in \Crys^r(\CY)$ can be recovered from 
an assignment for every $S\in \affSch_{\on{ft}}$
and a map $^{red,cl}S\to \CY$ of an object $\CF_S\in \IndCoh(S)$, and for every $f:S'\to S$ of an isomorphism
$$f^!(\CF_S)\simeq \CF_{S'}\in \IndCoh(S').$$

\sssec{}

Furthermore, the analogue of \corref{c:RKE for left aft} holds for right crystals as well:

\begin{cor} \label{c:RKE for right aft}
Let $\bC$ be any of the categories from \eqref{e:ft categories}. Then the functor
$$\Crys^r(\CY)\to \underset{X\in (\bC_{/\CY})^{\on{op}}}{lim}\, \Crys^r(X)$$
is an equivalence.
\end{cor}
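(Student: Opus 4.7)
The plan is to follow the exact same strategy used for the left-crystal analogue in Corollary \ref{c:RKE for left aft}, since the only formal ingredients we used there were (i) that $\Crys^l_{\on{PreStk}}$ factors as $\QCoh^*_{\on{PreStk}}\circ \dr$, (ii) that $\QCoh^*$ is the right Kan extension of its restriction to affine DG schemes and hence sends colimits in $\on{PreStk}$ to limits in $\StinftyCat_{\on{cont}}$, and (iii) the Kan-extension statement for $\dr$ itself given by Corollary \ref{c:abstract LKE laft}. All three ingredients have direct analogues in the right-crystal setting.

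First, I would unpack the definition: by construction $\Crys^r_{\on{PreStk}_{\on{laft}}} = \IndCoh^!_{\on{PreStk}_{\on{laft}}} \circ \dr$. Next, I would observe that $\IndCoh^!_{\on{PreStk}_{\on{laft}}}$, being defined as the right Kan extension of $\IndCoh^!_{{}^{<\infty}\!\affdgSch_{\on{aft}}}$ along ${}^{<\infty}\!\affdgSch_{\on{aft}}\hookrightarrow \on{PreStk}_{\on{laft}}$, takes colimits in $\on{PreStk}_{\on{laft}}$ to limits in $\StinftyCat_{\on{cont}}$.

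Then, by Corollary \ref{c:abstract LKE laft}, for any $\bC$ from the list \eqref{e:ft categories} and any $\CY\in \on{PreStk}_{\on{laft}}$, we have a canonical equivalence
\[
\underset{X\in \bC_{/\CY}}{\on{colim}}\, X_\dr \;\xrightarrow{\sim}\; \CY_\dr
\]
in $\on{PreStk}_{\on{laft}}$. Applying $\IndCoh^!_{\on{PreStk}_{\on{laft}}}$ and using step two converts this colimit into the limit $\lim_{X\in(\bC_{/\CY})^{\on{op}}}\IndCoh(X_\dr) = \lim_{X\in(\bC_{/\CY})^{\on{op}}}\Crys^r(X)$, yielding the desired equivalence.

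There is really no substantial obstacle to overcome: the cases $\bC = \on{PreStk}_{\on{laft}}$ or $\dgSch_{\on{laft}}$ are trivial (the first tautologically and the second because we are restricting to a subcategory containing the final object $\CY$ when $\CY$ is a DG scheme; in general, all cases reduce to Corollary \ref{c:abstract LKE laft}). The only thing worth double-checking is that the Kan-extension property of $\IndCoh^!_{\on{PreStk}_{\on{laft}}}$ is formulated with respect to the subcategory ${}^{<\infty}\!\affdgSch_{\on{aft}}$ rather than $\affdgSch$, but this is exactly compatible with the list \eqref{e:ft categories}, all of whose members sit inside $\on{PreStk}_{\on{laft}}$. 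Hence the proof should consist of a single paragraph citing \propref{p:dr laft}(a) (to ensure $\dr$ preserves $\on{PreStk}_{\on{laft}}$), Corollary \ref{c:abstract LKE laft}, and the colimit-to-limit property of $\IndCoh^!_{\on{PreStk}_{\on{laft}}}$.
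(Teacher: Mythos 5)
Your proposal is correct and matches the paper's argument: the paper's proof is simply ``Follows from \corref{c:abstract LKE laft}'', and your chain of reasoning — writing $\Crys^r_{\on{PreStk}_{\on{laft}}}=\IndCoh^!_{\on{PreStk}_{\on{laft}}}\circ \dr$, using that $\IndCoh^!_{\on{PreStk}_{\on{laft}}}$ (being a right Kan extension from ${}^{<\infty}\!\affdgSch_{\on{aft}}$) converts colimits of prestacks into limits of categories, and feeding in the colimit presentation of $\CY_\dr$ from \corref{c:abstract LKE laft} — is exactly the intended unpacking. No gaps.
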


\begin{proof}
Follows from \corref{c:abstract LKE laft}.
\end{proof}

Informally, this corollary says that
we can recover an object $\CM\in \Crys^r(\CY)$ from an 
assignment of $\CM_S\in \Crys^r(S)$ for every $S\in \bC_{/\CY}$, and for every $f:S'\to S$
of an isomorphism 
$$f^{\dagger,r}(\CM_S)\simeq \CM_{S'}\in \Crys^r(S').$$

\sssec{}

The natural transformation $p_\dr:\on{Id}\to \dr$ induces a natural transformation
$$\oblv^r:\Crys^r_{\on{PreStk}_{\on{laft}}}\to \IndCoh_{\on{PreStk}_{\on{laft}}}.$$

I.e., for every $\CY\in \on{PreStk}_{\on{laft}}$, we have a functor $$\oblv^r_\CY:\Crys^r(\CY)\to \IndCoh(\CY),$$
and for every morphism $f:\CY_1\to \CY_2$, a commutative diagram:
\begin{equation} \label{e:pullback right}
\CD
\Crys^r(\CY_1)   @>{\oblv^r_{\CY_1}}>>  \IndCoh(\CY_1)   \\
@A{f^{\dagger,r}}AA    @AA{f^!}A  \\
\Crys^r(\CY_2)    @>{\oblv^r_{\CY_2}}>> \IndCoh(\CY_2).
\endCD
\end{equation}

We have:

\begin{lem} \label{l:right on cl formally smooth}
If $\CY$ is classically formally smooth, then the functor
$$\Crys^r(\CY)\to \on{Tot}(\IndCoh(\CY^\bullet/\CY_\dr))$$
is an equivalence.
\end{lem}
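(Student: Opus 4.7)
The plan is to mirror the proof of \lemref{l:left on cl formally smooth} word-for-word, substituting $\IndCoh^!_{\on{PreStk}_{\on{laft}}}$ for $\QCoh^*_{\on{PreStk}}$. The key inputs are (i) the geometric statement \lemref{l:cl formally smooth} that under classical formal smoothness $|\CY^\bullet/\CY_\dr|\to \CY_\dr$ is an isomorphism in $\on{PreStk}$, and (ii) the fact that $\IndCoh^!_{\on{PreStk}_{\on{laft}}}$, being a right Kan extension from $({}^{<\infty}\!\affdgSch_{\on{aft}})^{\on{op}}$, converts colimits of prestacks (locally almost of finite type) into limits of DG categories.

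First I would check that the simplicial object $\CY^\bullet/\CY_\dr$ is actually a simplicial object of $\on{PreStk}_{\on{laft}}$, so that the colimit $|\CY^\bullet/\CY_\dr|$ makes sense in $\on{PreStk}_{\on{laft}}$ and is computed as the colimit in $\on{PreStk}$. Each $\CY^i/\CY_\dr$ is the formal completion of $\CY^i$ along the main diagonal; since $\CY$ is laft by assumption and formal completions of laft prestacks along closed laft substacks are again laft (as used in the proof of \propref{p:dr laft}(b)), this is automatic. Moreover, since the inclusion $\on{PreStk}_{\on{laft}}\hookrightarrow \on{PreStk}$ preserves colimits (laft prestacks are characterized by a left Kan extension property, and left Kan extensions commute with colimits), the isomorphism of \lemref{l:cl formally smooth} holds in $\on{PreStk}_{\on{laft}}$ as well.

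Next, I would simply unfold the definitions. By definition
\[
\Crys^r(\CY) \;=\; \IndCoh^!_{\on{PreStk}_{\on{laft}}}(\CY_\dr),
\]
and \lemref{l:cl formally smooth} (plus the paragraph above) identifies $\CY_\dr$ with $\underset{\Delta^{\on{op}}}{\on{colim}}\, \CY^\bullet/\CY_\dr$ in $\on{PreStk}_{\on{laft}}$. Since $\IndCoh^!_{\on{PreStk}_{\on{laft}}}$ is the right Kan extension of $\IndCoh^!_{^{<\infty}\!\affdgSch_{\on{aft}}}$ along $({}^{<\infty}\!\affdgSch_{\on{aft}})^{\on{op}}\hookrightarrow (\on{PreStk}_{\on{laft}})^{\on{op}}$, it takes colimits in $\on{PreStk}_{\on{laft}}$ to limits in $\StinftyCat_{\on{cont}}$. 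Applying this to the above colimit yields
\[
\IndCoh(\CY_\dr) \;\simeq\; \underset{\Delta}{\on{lim}}\, \IndCoh(\CY^\bullet/\CY_\dr) \;=\; \on{Tot}(\IndCoh(\CY^\bullet/\CY_\dr)),
\]
which is the desired equivalence.

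There is essentially no obstacle beyond the bookkeeping above; the content of the statement is entirely concentrated in \lemref{l:cl formally smooth}, which does the geometric work, together with the formal property that $\IndCoh^!$ converts colimits to limits. The only point requiring a moment of care is that colimits be interpreted in $\on{PreStk}_{\on{laft}}$ (where $\IndCoh^!$ enjoys this property), rather than just in $\on{PreStk}$; but as noted, this is harmless.
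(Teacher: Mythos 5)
Your proof is correct and is essentially the paper's argument: the paper's proof of this lemma is literally ``Same as that of \lemref{l:left on cl formally smooth}, i.e.\ follows from \lemref{l:cl formally smooth},'' relying on exactly the two inputs you identify (the isomorphism $|\CY^\bullet/\CY_\dr|\to \CY_\dr$ and the fact that $\IndCoh^!_{\on{PreStk}_{\on{laft}}}$ takes colimits to limits). Your extra bookkeeping about interpreting the colimit in $\on{PreStk}_{\on{laft}}$ is a legitimate point of care that the paper leaves implicit, and you resolve it correctly.
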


\begin{proof}
Same as that of \lemref{l:left on cl formally smooth}, i.e. follows from \lemref{l:cl formally smooth}.
\end{proof}

\begin{lem}  \label{l:cons for r}
For any $\CY$, the functor $\oblv^r_\CY$ is conservative.
\end{lem}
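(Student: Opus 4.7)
The plan is to reduce the conservativity of $\oblv^r_\CY$ to a purely local statement: the conservativity of $!$-pullback along the nilpotent closed embedding ${}^{cl,red}S\to S$ for classical affine schemes $S$ of finite type.

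First I would invoke \corref{c:only aft right} to express
\[ \Crys^r(\CY) \simeq \underset{S\in ((\affSch_{\on{ft}})_{/\CY_\dr})^{\on{op}}}{lim}\, \IndCoh(S), \]
so that a morphism $\phi:\CM\to\CN$ in $\Crys^r(\CY)$ is an isomorphism if and only if, for every $S\in \affSch_{\on{ft}}$ equipped with a map $S\to\CY_\dr$, the induced morphism $\phi_S\in\IndCoh(S)$ is an isomorphism. Suppose that $\oblv^r_\CY(\phi)$ is an isomorphism in $\IndCoh(\CY)$. Given such $(S, S\to\CY_\dr)$, by the very definition of $\CY_\dr$ the map corresponds to a map $T\to\CY$ with $T:={}^{cl,red}S$, and I would write $i:T\to S$ for the canonical nilpotent closed embedding. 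Since $T\in\affSch_{\on{ft}}\subset{}^{<\infty}\!\affdgSch_{\on{aft}}$, the hypothesis that $\oblv^r_\CY(\phi)$ is an isomorphism gives, via the limit presentation of $\IndCoh(\CY)$, that the image of $\phi$ in $\IndCoh(T)$ along $T\to \CY$ is an isomorphism. By commutativity of the two factorizations $T\to S\to\CY_\dr$ and $T\to \CY\to\CY_\dr$, this image is canonically identified with $i^!(\phi_S)$.

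The main obstacle is then the remaining local claim: for a nilpotent closed embedding $i:T\to S$ between classical noetherian affine schemes, the functor $i^!:\IndCoh(S)\to\IndCoh(T)$ is conservative. I would argue as follows. Since $i$ is proper, $i^!$ is the continuous right adjoint of $i_*^{\IndCoh}$, and conservativity of $i^!$ is equivalent to the essential image of $i_*^{\IndCoh}$ generating $\IndCoh(S)$ under colimits. As $\IndCoh(S)=\Ind(\Coh(S))$, it suffices to produce, for each $\CF\in\Coh(S)$, a finite filtration whose subquotients lie in the essential image of $i_*^{\IndCoh}|_{\Coh(T)}$. The $I$-adic filtration $\CF\supset I\CF\supset I^2\CF\supset\cdots$, with $I\subset\CO_S$ the nilradical, does exactly this: since $S$ is noetherian and of finite type, we have $I^N=0$ for some $N$, and each subquotient $I^k\CF/I^{k+1}\CF$ is annihilated by $I$ and is therefore a coherent $\CO_T$-module, hence lies in $i_*^{\IndCoh}(\Coh(T))$.
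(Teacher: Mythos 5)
Your proof is correct, but it takes a genuinely different route from the paper's. The paper first uses \corref{c:RKE for right aft} and the commutativity of \eqref{e:pullback right} to reduce to the case where $\CY=X$ is an affine DG scheme; it then chooses a closed embedding of $X$ into a smooth classical scheme $Z$ and passes to the formal completion $Y=Z^\wedge_X$: since $X_\dr\to Y_\dr$ is an isomorphism and $Y$ is classically formally smooth, $\oblv^r_Y$ is conservative by \lemref{l:right on cl formally smooth}, and the remaining step is the conservativity of $'i^!:\IndCoh(Y)\to \IndCoh(X)$, quoted from the formal-completion results of \cite{IndSch} and \cite{IndCoh}. You instead work directly with the limit presentation of \corref{c:only aft right} and reduce everything to the conservativity of $i^!$ along the single nilpotent closed embedding ${}^{cl,red}S\to S$ for $S\in \affSch_{\on{ft}}$, which you prove by hand via the proper adjunction $(i_*^{\IndCoh},i^!)$ and nilpotent devissage; the identification of the component of $\oblv^r_\CY(\phi)$ at ${}^{cl,red}S\to\CY$ with $i^!(\phi_S)$ is correct, since the two maps ${}^{cl,red}S\to\CY_\dr$ agree by the definition of $\CY_\dr$. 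Your argument is more elementary and self-contained---it avoids both the choice of a smooth ambient scheme and the formal-completion machinery---whereas the paper's argument is one instance of the embed-into-a-smooth-scheme strategy that it reuses throughout (Kashiwara's lemma, t-structures, comparison with D-modules). One small point: the $I$-adic filtration as written applies to objects of $\Coh(S)^\heartsuit$ rather than to arbitrary objects of $\Coh(S)$; since $S$ is classical, one should first filter a bounded coherent complex by its cohomology sheaves, after which your devissage goes through verbatim.
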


\begin{proof}

By Corollary \ref{c:RKE for right aft} and the commutativity of \eqref{e:pullback right}, we can assume without loss of generality 
that $\CY=X$ is an affine DG scheme locally almost of finite type.  Let $i: X \rightarrow Z$ be a closed embedding of $X$ into a 
smooth classical finite type scheme $Z$, and let $Y$ be the formal completion of $Z$ along $X$.  Let $'i$ denote the resulting map
$X\to Y$. 

\medskip

Consider the commutative diagram
$$
\CD
\Crys^r(Y)  @>{\oblv^r_Y}>>  \IndCoh(Y)  \\
@V{'i^{\dagger,r}}VV   @VV{'i^!}V    \\
\Crys^r(X)  @>{\oblv^r_X}>> \IndCoh(X).
\endCD
$$
In this diagram the left vertical arrow is an equivalence since $'i_\dr:X_\dr\to Y_\dr$ is an isomorphism.
The top horizontal arrow is conservative by \lemref{l:right on cl formally smooth}, 
since $Y$ is formally smooth (and, in particular, classically formally smooth).  

\medskip

Hence, it remains to show that the functor $'i^!$ is conservative. This follows, e.g., by
combining \cite[Proposition 7.4.5]{IndSch} and \cite[Proposition 4.1.7(a)]{IndCoh}. 

\end{proof}

\ssec{Comparison of left and right crystals}

We remind the reader that we assume that all prestacks and DG schemes are locally almost of finite type.

\sssec{}
Recall (see \cite[Sect. 5.7.5]{IndCoh}) that
for any $S\in \dgSch_{\on{aft}}$ there is a canonically defined functor
$$\Upsilon_S:\QCoh(S)\to \IndCoh(S),$$
given by tensoring with the duaizing sheaf $\omega_S\in \IndCoh(S)$, such that for $f:S_1\to S_2$,
the diagram
$$
\CD
\QCoh(S_1)   @>{\Upsilon_{S_1}}>>  \IndCoh(S_1)   \\
@A{f^*}AA    @AA{f^!}A  \\
\QCoh(S_2)    @>{\Upsilon_{S_2}}>> \IndCoh(S_2)
\endCD
$$
canonically commutes. In fact, the above data upgrades to 
a natural transformation of functors 
$$\Upsilon_{\dgSch_{\on{aft}}}:\QCoh^*_{\dgSch_{\on{aft}}}\to \IndCoh^!_{\dgSch_{\on{aft}}},$$
and hence gives rise to a natural transformation
$$\Upsilon_{\on{PreStk}_{\on{laft}}}:\QCoh^*_{\on{PreStk}_{\on{laft}}}\to \IndCoh^!_{\on{PreStk}_{\on{laft}}},$$
\cite[Sect. 10.3.3]{IndCoh}.

\medskip

For an individual object $\CY\in \on{PreStk}_{\on{laft}}$, we obtain a functor
$$\Upsilon_\CY:\QCoh(\CY)\to \IndCoh(\CY).$$

\sssec{}

Applying $\Upsilon$ to $\CY_\dr$ for $\CY\in \on{PreStk}_{\on{laft}}$, we obtain 
a canonically defined functor
\begin{equation}  \label{e:from left to right}
\Upsilon_{\CY_\dr}:\Crys^l(\CY)\to \Crys^r(\CY),
\end{equation}
making the diagram
\begin{equation} \label{e:oblv left and right}
\CD
\Crys^l(\CY)   @>{\Upsilon_{\CY_\dr}}>>  \Crys^r(\CY) \\
@V{\oblv^l_\CY}VV   @VV{\oblv^r_\CY}V  \\
\QCoh(\CY)  @>{\Upsilon_\CY}>> \IndCoh(\CY)
\endCD
\end{equation}
commute.

\medskip

In fact we obtain a natural transformation
$$\Upsilon_{\on{PreStk}_{\on{laft}}}\circ \dr:\Crys^l_{\on{PreStk}_{\on{laft}}}\to \Crys^r_{\on{PreStk}_{\on{laft}}}.$$
In particular, for $f:\CY_1\to \CY_2$ the diagram
$$
\CD
\Crys^l(\CY_1)   @>{\Upsilon_{{\CY_1}_\dr}}>>  \Crys^r(\CY_1)   \\
@A{f^{\dagger,l}}AA    @AA{f^{\dagger,r}}A  \\
\Crys^l(\CY_2)    @>{\Upsilon_{{\CY_2}_\dr}}>> \Crys^r(\CY_2)
\endCD
$$
commutes.

\sssec{}

We claim:

\begin{prop} \label{p:left to right}
For $\CY\in \on{PreStk}_{\on{laft}}$, the functor \eqref{e:from left to right} is an equivalence.
\end{prop}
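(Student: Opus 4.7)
The plan is to reduce, via the limit presentations of both $\Crys^l$ and $\Crys^r$, to the case where $\CY$ is the de Rham prestack of the formal completion of a smooth scheme along a closed subscheme, and then to check $\Upsilon$ is an equivalence termwise on the infinitesimal groupoid.

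First, by \corref{c:RKE for left aft} and \corref{c:RKE for right aft}, both $\Crys^l(\CY)$ and $\Crys^r(\CY)$ are computed as limits over the category $((\affSch_{\on{ft}})_{/\CY})^{\on{op}}$ of classical affine schemes of finite type mapping to $\CY$. The natural transformation $\Upsilon \circ \dr$ is compatible with the pullback functors $f^{\dagger,l}$ and $f^{\dagger,r}$ (via the commutative diagrams immediately preceding the statement), so $\Upsilon_{\CY_\dr}$ is the limit of the functors $\Upsilon_{S_\dr}$ for $S \in \affSch_{\on{ft}}$. Hence it suffices to treat the case $\CY = S$ where $S$ is a classical affine scheme of finite type.

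Next, following the pattern of the proof of \propref{p:dr laft}(b), choose a closed embedding $i: S \hookrightarrow Z$ into a smooth classical affine scheme of finite type, and let $Y = Z^\wedge_S$. The map $S \to Y$ induces an isomorphism $S_\dr \to Y_\dr$ (both sides agree with the de Rham prestack of the common reduction $S_{\on{red}}$), so $\Upsilon_{S_\dr} = \Upsilon_{Y_\dr}$. Since $Z$ is smooth, $Y$ is classically formally smooth, and \lemref{l:left on cl formally smooth} together with \lemref{l:right on cl formally smooth} identify $\Crys^l(Y)$ and $\Crys^r(Y)$ with the totalizations of $\QCoh(Y^\bullet/Y_\dr)$ and $\IndCoh(Y^\bullet/Y_\dr)$, respectively. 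By naturality of $\Upsilon$, the functor $\Upsilon_{Y_\dr}$ is then the totalization of the termwise functors $\Upsilon_{Y^i/Y_\dr}$; and as in the proof of \propref{p:dr laft}, each $Y^i/Y_\dr$ is the formal completion of the smooth scheme $Z^i$ along the diagonally embedded copy of $S$.

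The remaining, and main, task is to verify that $\Upsilon_W$ is an equivalence when $W$ is the formal completion of a smooth classical scheme along a closed subscheme. This is the principal obstacle. The idea is that on such a $W$, the dualizing object $\omega_W$ is (up to cohomological shift) the restriction of a line bundle from the smooth ambient $Z^i$, so tensoring by $\omega_W$ differs from the already-known equivalence $\Upsilon_{Z^i}: \QCoh(Z^i) \simeq \IndCoh(Z^i)$ only by tensoring with a line bundle, which is itself an equivalence. Making this precise requires using the description of $\QCoh$ and $\IndCoh$ on formal completions in terms of the ambient smooth scheme, along the lines of \cite[Sect. 7]{IndSch}, so that the termwise claim follows from the smooth case.
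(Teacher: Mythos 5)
Your reduction steps coincide with the paper's: both pass to an affine (DG) scheme almost of finite type via Corollaries \ref{c:RKE for left aft} and \ref{c:RKE for right aft}, replace it by the formal completion $Y=Z^\wedge_X$ of a smooth ambient scheme, use Lemmas \ref{l:left on cl formally smooth} and \ref{l:right on cl formally smooth} to pass to the totalization of the \v{C}ech nerve, and reduce to showing that $\Upsilon_W$ is an equivalence when $W$ is the completion of a smooth classical scheme $Z^i$ along a closed subscheme. The gap is in the mechanism you propose for this last, and as you say principal, step. You argue that $\omega_W$ is (up to shift) a line bundle restricted from $Z^i$, so that $\Upsilon_W$ "differs from $\Upsilon_{Z^i}$ only by tensoring with a line bundle, which is itself an equivalence." This reasoning conflates two different things: tensoring by an invertible object is indeed an autoequivalence of $\QCoh(W)$, but $\Upsilon_W$ is a functor $\QCoh(W)\to \IndCoh(W)$ between two a priori different categories, and invertibility of $\omega_W$ says nothing about whether this functor is essentially surjective. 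A concrete failure of your heuristic: for a Gorenstein but singular classical scheme such as $\Spec(k[x]/x^2)$, the dualizing complex is a shifted line bundle, yet $\Upsilon$ is fully faithful and \emph{not} an equivalence (equivalently, $\Psi$ is not an equivalence there). So "$\omega$ is a shifted line bundle" cannot be the reason the termwise statement holds.

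What actually makes the argument work — and what the paper does — is a compatibility with localization sequences rather than any property of $\omega_W$. Writing $U_i\subset Z^i$ for the open complement of the center of the completion, \cite[Propositions 7.1.3 and 7.4.5]{IndSch} identify $\QCoh(W)$ and $\IndCoh(W)$ with the kernels of the restriction functors $\QCoh(Z^i)\to\QCoh(U_i)$ and $\IndCoh(Z^i)\to\IndCoh(U_i)$ respectively, and $\Upsilon$ intertwines the two "short exact sequences" of DG categories. Since $Z^i$ and $U_i$ are smooth, $\Upsilon_{Z^i}$ and $\Upsilon_{U_i}$ are equivalences (because $\Upsilon$ is dual to $\Psi$, and $\Psi$ is an equivalence on smooth schemes), whence the induced functor on kernels, which is $\Upsilon_W$, is an equivalence. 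You do point to \cite[Sect. 7]{IndSch} as the place where the completion is described in terms of the ambient scheme, so the right tool is within reach; but the justification you actually give would not survive being made precise, and the smoothness of $Z^i$ and $U_i$ must enter through $\Psi$ being an equivalence there, not through the invertibility of the dualizing complex on $W$.
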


\begin{proof}

By Corollaries \ref{c:RKE for left aft} and \ref{c:RKE for right aft}, the statement reduces to one saying that
$$\Upsilon_{X_\dr}:\Crys^l(X)\to \Crys^r(X)$$ is an equivalence for an affine DG scheme $X$ almost of finite type. 

\medskip

Let $i:X\hookrightarrow Z$ be a closed embedding, where $Z$ is a smooth classical scheme, and let
$Y$ be the formal completion of $Z$ along $X$. Since $X_\dr\to Y_\dr$ is an isomorphism, the
functors
$$f^{\dagger,l}:\Crys^l(Y)\to \Crys^l(X) \text{ and } f^{\dagger,r}:\Crys^r(Y)\to \Crys^r(X)$$
are both equivalences. Hence, it is enough to prove the assertion for $Y$. 

\medskip

Let $Y^\bullet/Y_\dr$ be the \v{C}ech nerve of $\on{PreStk}_{\on{laft}}$ corresponding to
the map $$p_{\dr,Y}:Y\to Y_\dr.$$

\medskip

Consider the commutative diagram
$$
\CD
\Crys^l(Y) @>{\Upsilon_{Y_\dr}}>>   \Crys^r(Y)  \\
@VVV   @VVV  \\
\on{Tot}(\QCoh(Y^\bullet/Y_\dr))  @>{\on{Tot}(\Upsilon_{Y^\bullet/Y_\dr})}>>\on{Tot}(\IndCoh(Y^\bullet/Y_\dr)).
\endCD
$$

By Lemmas \ref{l:left on cl formally smooth} and \ref{l:right on cl formally smooth},
the vertical arrows in the diagram are equivalences. Therefore, it
suffices to show that for every $i$,
$$\Upsilon_{Y^i/Y_\dr}:\QCoh(Y^i/Y_\dr)\to \IndCoh(Y^i/Y_\dr)$$
is an equivalence. 

\medskip

Recall (also from the proof of \propref{p:dr laft}) that $Y^i/Y_\dr$ is the completion of the smooth
classical scheme $Z^i$ along the diagonal copy of $X$. Let us denote by $U_i\subset Z^i$ the complementary 
open substack. 

\medskip

From \cite[Propositions 7.1.3 and 7.4.5 and Diagram (7.16)]{IndSch}, we obtain that we have a map
of ``short exact sequences" of DG categories
$$
\CD
0 @>>> \QCoh(Y^i/Y_\dr)  @>>>  \QCoh(Z^i)  @>>>  \QCoh(U_i)  @>>>  0 \\
& & @V{\Upsilon_{Y^i/Y_\dr}}VV  @VV{\Upsilon_{Z^i}}V  @VV{\Upsilon_{U_i}}V  & & \\
0 @>>> \IndCoh(Y^i/Y_\dr)  @>>>  \IndCoh(Z^i)  @>>>  \IndCoh(U_i)  @>>>  0.
\endCD
$$

Now, the functors 
$$\Upsilon_{Z^i}:\QCoh(Z^i)\to \IndCoh(Z^i) \text{ and } \Upsilon_{U_i}:\QCoh(U_i) \to \IndCoh(U_i)$$
are both equivalences, since $Z^i$ and $U_i$ are smooth: 

\medskip

Indeed, by \cite[Proposition 9.3.3]{IndCoh}, for any $S\in \dgSch_{\on{aft}}$, the functor $\Upsilon_S$
is the dual of $\Psi_S:\IndCoh(S)\to \QCoh(S)$, and the latter is an equivalence if $S$ is smooth by
\cite[Lemma 1.1.6]{IndCoh}.

\end{proof}

\sssec{}

\propref{p:left to right} allows us to identify left and right crystals for objects
$\CY\in \on{PreStk}_{\on{laft}}$.

\medskip

In other words, we can consider the category $\Crys(\CY)$ equipped with two realizations: ``left" and ``right", which incarnate themselves
as forgetful functors 
$\oblv^l_\CY$ and $\oblv^r_\CY$ from $\Crys(\CY)$ to $\QCoh(\CY)$
and $\IndCoh(\CY)$, respectively. 

\medskip

The two forgetful functors are related by the commutative diagram
\begin{gather} \label{e:2categ diag}
\xy
(-20,0)*+{\QCoh(\CY)}="A";
(20,0)*+{\IndCoh(\CY)}="B";
(0,20)*+{\Crys(\CY).}="C";
{\ar@{->}_{\oblv^l_\CY} "C";"A"};
{\ar@{->}^{\oblv^r_\CY} "C";"B"};
{\ar@{->}_{\Upsilon_\CY} "A";"B"};
\endxy
\end{gather}

For a morphism $f:\CY_1\to \CY_2$ we have a naturally defined functor
$$f^{\dagger}:\Crys(\CY_2)\to \Crys(\CY_1),$$
which makes the following diagrams commute
$$
\CD
\Crys(\CY_1)  @<{f^\dagger}<< \Crys(\CY_2)  \\
@V{\oblv^l_{\CY_1}}VV    @VV{\oblv^l_{\CY_2}}V   \\
\QCoh(\CY_1)  @<{f^*}<<  \QCoh(\CY_2)
\endCD
$$
and
$$
\CD
\Crys(\CY_1)  @<{f^\dagger}<< \Crys(\CY_2)  \\
@V{\oblv^r_{\CY_1}}VV    @VV{\oblv^r_{\CY_2}}V   \\
\IndCoh(\CY_1)  @<{f^!}<<  \IndCoh(\CY_2).
\endCD
$$

\sssec{}

In the sequel, we shall use symbols $\Crys(\CY)$, $\Crys^r(\CY)$ and $\Crys^l(\CY)$ interchangeably
with the former emphasizing that the statement is independent of realization (left or right)
we choose, and the latter two, when a choice of the realization is important.

\sssec{Proof of \lemref{l:cons}}   \label{sss:proof of l cons}

Follows by combining \lemref{l:cons for r} and \propref{p:left to right}.

\qed

\ssec{Kashiwara's lemma}

A feature of the assignment $\CY\mapsto \Crys(\CY)$ is that Kashiwara's lemma becomes nearly tautological.

\medskip

We will formulate and prove it for the incarnation of crystals as right crystals. By \propref{p:left to right},
this implies the corresponding assertion for left crystals. However, one could easily write the same proof in the language of left crystals instead.

\sssec{}

Recall that a map $i:\CX\to \CZ$ in $\on{PreStk}$
is called a closed embedding if it is such at the level
of the underlying classical prestacks. I.e., if for every $S\in (\affSch)_{/\CZ}$
the base-changed map
$$^{cl}(S\underset{\CZ}\times \CX)\to S$$
is a closed embedding; in particular, $^{cl}(S\underset{\CZ}\times \CX)$ is a classical affine scheme.

\medskip

If $\CX,\CZ\in \on{PreStk}_{\on{laft}}$, it suffices to check the above condition for $S\in (\affSch_{\on{ft}})_{/\CZ}$.

\sssec{}

For $i:\CX\hookrightarrow \CZ$ a closed embedding of objects of $\on{PreStk}_{\on{laft}}$,
let $j:\oCZ\hookrightarrow \CZ$ be the complementary open embedding. The induced map
$$j:\oCZ_\dr\to \CZ_\dr$$
is also an open embedding of prestacks. Consider the restriction functor
$$j^{\dagger,r}:\Crys^r(\CZ)\to \Crys^r(\oCZ).$$

It follows from \cite[Lemma 4.1.1]{IndCoh},
that the above functor admits a \emph{fully faithful} right adjoint, denoted $j_{\dr,*}$, 
such that for every $S\in (\dgSch_{\on{aft}})_{/\CZ_\dr}$
and 
$$\oS:=S\underset{\CZ_\dr}\times \oCZ_\dr\overset{j_S}\hookrightarrow S,$$
the natural transformation in the diagram
$$
\CD
\IndCoh(S)  @<{\oblv_\CZ^r}<<  \Crys^r(\CZ)  \\
@A{(j_S)_*^{\IndCoh}}AA    @AA{j_{\dr,*}}A  \\
\IndCoh(\oS)  @<{\oblv_{\oS}^r}<<  \Crys^r(\oCZ) 
\endCD
$$
arising by adjunction from the diagram
$$
\CD
\IndCoh(S)  @<{\oblv_\CZ^r}<<  \Crys^r(\CZ)  \\
@V{(j_S)^!}VV    @VV{j^{\dagger,r}}V  \\
\IndCoh(\oS)  @<{\oblv_{\oS}^r}<<  \Crys^r(\oCZ),
\endCD
$$
is an isomorphism.

\medskip

In particular, the natural transformation
$$\oblv^r_\CZ\circ j_{\dr,*}\to j^{\IndCoh}_*\circ \oblv^r_{\oCZ}$$ is an isomorphism. 

\sssec{}

Let $\Crys^r(\CZ)_{\CX}$ denote the full subcategory of
$\Crys^r(\CZ)$ equal to $\on{ker}(j^{\dagger,r})$. 

\medskip

Clearly, an object $\CM\in \Crys^r(\CZ)$ belongs to
$\Crys^r(\CZ)_{\CX}$ if and only if for every $S\in \dgSch_{\on{aft}}$, equipped with a map 
$^{cl,red}S\to \CZ$, the corresponding object $\CF_S\in \IndCoh(S)$ lies in 
$$\IndCoh(S)_{S-\oS}:=\on{ker}\left(j^!_S:\IndCoh(S)\to \IndCoh(\oS)\right).$$

\sssec{}

The functor $\Crys^r(\CZ)_{\CX}\hookrightarrow  \Crys^r(\CX)$
admits a right adjoint, given by
$$\CM\mapsto \on{Cone}(\CM\to j_{\dr,*}\circ j^{\dagger,r}(\CM))[-1].$$
Hence, we can think of $\Crys^r(\CZ)_{\CX}$ as a co-localization of $\Crys^r(\CZ)$. 

\sssec{}

Since the composition $i^{\dagger,r}\circ j_{\dr,*}$ is zero, the functor $i^{\dagger,r}:\Crys^r(\CZ)\to \Crys^r(\CX)$ factors through the
above co-localization: 
$$\Crys^r(\CZ)\to \Crys^r(\CZ)_{\CX}\overset{'i^{\dagger,r}}\longrightarrow \Crys^r(\CX).$$

\medskip

Kashiwara's lemma says:

\begin{prop} \label{p:Kashiwara}
The above functor 
$$'i^{\dagger,r}:\Crys^r(\CZ)_{\CX}\to \Crys^r(\CX)$$
is an equivalence.
\end{prop}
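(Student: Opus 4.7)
The strategy is to pass through the formal completion of $\CZ$ along $\CX$ and reduce the statement to the classical Kashiwara lemma for ind-coherent sheaves on formal completions (\cite[Proposition 7.4.5]{IndSch}, already invoked in the proof of \lemref{l:cons for r}).

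First, I would reduce to the affine case. By \corref{c:RKE for right aft}, both $\Crys^r(\CZ)$ and $\Crys^r(\CX)$ assemble as limits indexed by $(\affSch_{\on{ft}})_{/\CZ}$ (in the second case one tests on $^{cl}(S\underset{\CZ}\times \CX) \subset S$), and the support condition cutting out $\Crys^r(\CZ)_\CX$ is verified pointwise over this indexing category. It therefore suffices to treat $\CZ = S$ an affine DG scheme almost of finite type, $\CX = X$ a classical closed subscheme of $S$, and $\oCZ = \oS$ the complementary open.

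Next I would introduce the formal completion $Y := S^\wedge_X$. Since $X$ and $Y$ share the same underlying reduced classical prestack, the map $X \to Y$ induces an isomorphism $Y_\dr \simeq X_\dr$, and hence an equivalence $\Crys^r(X) \simeq \Crys^r(Y)$. The task thus reduces to showing that the natural restriction $\Crys^r(S) \to \Crys^r(Y)$ induces an equivalence $\Crys^r(S)_X \simeq \Crys^r(Y)$. I would unwind both sides via their limit presentations: for every test $T \in (\affSch_{\on{ft}})_{/S_\dr}$, the data of $\CM \in \Crys^r(S)_X$ provides a sheaf $\CF_T \in \IndCoh(T)$ set-theoretically supported on $T_X := {}^{cl}(T\underset{S}\times X)$. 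By \cite[Proposition 7.4.5]{IndSch} this full subcategory $\IndCoh(T)_{T_X} \subset \IndCoh(T)$ is equivalent, via $!$-pullback, to $\IndCoh(T^\wedge_{T_X}) \simeq \IndCoh(T\underset{S}\times Y)$. Applying \corref{c:dr laft abstract} to replace the indexing category by $(\affSch_{\on{ft}})_{/Y_\dr}$, the resulting limit becomes precisely $\Crys^r(Y)$.

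The main obstacle will be this last assembly step: checking that the pointwise identifications provided by Kashiwara for $\IndCoh$ are natural in $T$, so that the $!$-pullback transition maps in the two limit diagrams agree and the cofinality statement applies legitimately to yield an equivalence of limit categories, and further that this equivalence coincides with the functor induced by $'i^{\dagger,r}$. Modulo this verification, the remainder of the argument is formal.
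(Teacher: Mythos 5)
Your proposal is correct and follows essentially the same route as the paper: present both $\Crys^r(\CZ)_{\CX}$ and $\Crys^r(\CX)$ as limits over test schemes mapping to $\CZ_\dr$, identify $S\underset{\CZ_\dr}\times \CX_\dr$ with the formal completion of $S$ along the preimage of $\CX$, and apply \cite[Proposition 7.4.5]{IndSch} term-wise. The naturality issue you flag is handled automatically because that proposition asserts the equivalence is given by $!$-pullback, which is already the transition functor in both limit diagrams; your preliminary reduction to the affine case is harmless but unnecessary, since the paper runs the term-wise argument directly over $(\dgSch_{\on{aft}})_{/\CZ_\dr}$.
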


\begin{proof}

Note that we have an isomorphism in $\on{PreStk}_{\on{laft}}$:
$$\underset{S\in (\dgSch_{\on{aft}})_{/\CZ_\dr}}{colim}\, S\underset{\CZ_\dr}\times \CX_\dr\simeq \CX_\dr.$$
Furthermore, $S^\wedge:=S\underset{\CZ_\dr}\times \CX_\dr$ identifies with the formal completion of $S$
along $$^{red,cl}S\underset{^{cl,red}\CZ}\times {}^{cl,red}\CX.$$

\medskip

Hence, the category $\Crys^r(\CX)$ can be described as
$$\underset{S\in ((\dgSch_{\on{aft}})_{/\CZ_\dr})^{\on{op}}}{lim}\, \IndCoh(S^\wedge).$$

\medskip

By definition, the category $\Crys^r(\CZ)_{\CX}$ is given by
$$\underset{S\in ((\dgSch_{\on{aft}})_{/\CZ_\dr})^{\on{op}}}{lim}\, \on{ker}\left(j_S^!:\IndCoh(S)\to \IndCoh(\oS)\right).$$

\medskip

Now, \cite[Proposition 7.4.5]{IndSch} says that for any $S$ as above, !-pullback gives an equivalence
$$\on{ker}\left(j^!_S:\IndCoh(S)\to \IndCoh(\oS)\right)\to \IndCoh(S^\wedge),$$
as desired.

\end{proof}

\begin{rem}
If we phrased the above proof in terms of left crystals instead of right crystals, we would
have used \cite[Proposition 7.1.3]{IndSch} instead of \cite[Proposition 7.4.5]{IndSch}.
\end{rem}

\section{Descent properties of crystals}  \label{s:descent for crystals}

In this section all prestacks, including DG schemes and DG indschemes are assumed
locally almost of finite type, unless explicitly stated otherwise. 

\medskip

The goal of this section is to establish a number of properties concerning 
the behavior of crystals on DG schemes and DG indschemes. These properties
include: an interpretation of crystals (right and left) via the infinitesimal groupoid;
h-descent; a monadic description of the category of crystals; induction functors
for right and left crystals. 

\ssec{The infinitesimal groupoid}

In this subsection, we let $\CX$ be a DG indscheme locally almost of finite, see 
\cite[Sect. 1.7.1]{IndSch}.

\sssec{}

Consider the simplicial prestack $\CX^\bullet/\CX_\dr$, i.e., the 
\v{C}ech nerve of the map $\CX\to \CX_\dr$. As was
remarked already, each $\CX^i/\CX_\dr$ is the formal completion of $\CX^i$ along the main diagonal.
In particular, all $\CX^i/\CX_\dr$ also belong to $\dgindSch_{\on{laft}}$. 

\medskip

We shall refer to 
$$\CX\underset{\CX_\dr}\times \CX\rightrightarrows \CX$$
as the infinitesimal groupoid of $\CX$. 

\sssec{}

Consider the cosimplicial category $\IndCoh(\CX^\bullet/\CX_\dr)$.

\begin{prop}  \label{p:inf descent for right}
The functor
$$\Crys^r(\CX)\to \on{Tot}(\IndCoh(\CX^\bullet/\CX_\dr)),$$
defined by the augmentation, is an equivalence.
\end{prop}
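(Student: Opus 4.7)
The plan is to reduce the statement, via a two-stage argument, to \lemref{l:right on cl formally smooth}, which is the analogous statement for classically formally smooth prestacks. Both stages exploit standard techniques: the first uses the indscheme structure of $\CX$, the second uses a Kashiwara-style embedding into the formal completion of a smooth scheme.

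For the first stage, I would write $\CX = \underset{\alpha}{\on{colim}}\, X_\alpha$ as a filtered colimit of DG subschemes almost of finite type along closed embeddings. By \lemref{l:dr and colimits}, $\CX_\dr = \underset{\alpha}{\on{colim}}\, (X_\alpha)_\dr$, and at each simplicial degree one has $\CX^n/\CX_\dr = \underset{\alpha}{\on{colim}}\, X_\alpha^n/(X_\alpha)_\dr$ (using that products and formal completions along diagonals commute with filtered colimits along closed embeddings). Since $\IndCoh$ on prestacks takes colimits to limits, both sides of the asserted equivalence are limits over $\alpha$ of the corresponding statements for the $X_\alpha$. This reduces to the case $\CX = X$ is a DG scheme almost of finite type; by applying \corref{c:RKE for right aft} to a Zariski cover, I may further assume $X$ is affine.

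For such $X$, I would embed $X \hookrightarrow Z$ as a closed subscheme of a smooth classical affine scheme $Z$, and let $Y := Z^\wedge_X$ denote the formal completion. Then $Y$ is classically formally smooth (being the formal completion of a smooth scheme along a closed subscheme), so \lemref{l:right on cl formally smooth} gives $\Crys^r(Y) \simeq \on{Tot}(\IndCoh(Y^\bullet/Y_\dr))$. Moreover, $X \hookrightarrow Y$ is a nil-isomorphism, so $X_\dr \simeq Y_\dr$ and hence $\Crys^r(X) \simeq \Crys^r(Y)$. What remains is to produce an equivalence
$$\on{Tot}(\IndCoh(X^\bullet/X_\dr)) \simeq \on{Tot}(\IndCoh(Y^\bullet/Y_\dr)).$$

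The main obstacle is precisely this last identification. By transitivity of formal completions, $Y^n/Y_\dr$ identifies with the formal completion of $Z^n$ along the diagonal image of $X$, whereas $X^n/X_\dr$ is the formal completion of $X^n$ along its diagonal. The closed embedding $X^n \hookrightarrow Z^n$ induces a closed embedding $X^n/X_\dr \hookrightarrow Y^n/Y_\dr$ which is an isomorphism on reduced classical prestacks. My approach would be a Kashiwara-type argument: at each simplicial degree, the $!$-pullback identifies $\IndCoh(X^n/X_\dr)$ with the full subcategory of $\IndCoh(Y^n/Y_\dr)$ supported on $X^n/X_\dr$, via \cite[Proposition 7.4.5]{IndSch}. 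The delicate remaining point—where I expect most of the work to lie—is verifying that these level-wise identifications are cosimplicially coherent, i.e., compatible with all face and degeneracy maps coming from the product structure, so that the two totalizations coincide.
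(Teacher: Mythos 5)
Your final step contains a false claim, and it is precisely the step that carries the content of the proposition. You assert that $!$-pullback identifies $\IndCoh(X^n/X_\dr)$ with the full subcategory of $\IndCoh(Y^n/Y_\dr)$ of objects supported on $X^n/X_\dr$. But every object of $\IndCoh(Y^n/Y_\dr)$ is already set-theoretically supported on $^{red,cl}X$ embedded diagonally, so this condition cuts out nothing, and the assertion amounts to saying that the closed embedding $X^n/X_\dr\hookrightarrow Y^n/Y_\dr$ induces an equivalence on $\IndCoh$ level-wise. This is false already at the bottom of the \v{C}ech nerve: it would say $\IndCoh(X)\simeq \IndCoh(Y)\simeq \IndCoh(Z)_X$, which fails, e.g., for $X=\on{pt}\hookrightarrow Z=\BA^1$, where $\IndCoh(X)=\Vect$ while $\IndCoh(Z)_X$ is ind-coherent sheaves on the formal disk. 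What \cite[Proposition 7.4.5]{IndSch} gives is $\IndCoh(Y^n/Y_\dr)\simeq \IndCoh(Z^n)_{\Delta X}$, not $\IndCoh(X^n)_{\Delta X}$; the two completions $(X^n)^\wedge_{\Delta X}$ and $(Z^n)^\wedge_{\Delta X}$ are genuinely different prestacks even though they share the same reduced classical part. Only the \emph{totalizations} of the two cosimplicial categories agree, so no level-wise comparison (however coherently organized) can close the argument; the difficulty is not the cosimplicial coherence you flag but the nonexistence of the term-wise equivalences.

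For the record, the paper's proof avoids all of your reductions: since $\Crys^r(\CX)=\lim_S \IndCoh(S)$ over $S\in (\dgSch_{\on{aft}})_{/\CX_\dr}$, it suffices to check the descent statement after base change to each such $S$, where the simplicial object becomes the \v{C}ech nerve of $S\underset{\CX_\dr}\times\CX\to S$; this map is ind-proper and surjective, so the conclusion follows from descent of $\IndCoh$ along ind-proper surjections, \cite[Lemma 2.10.3]{IndSch}. Your first two reductions (filtered colimit presentation of the indscheme, Zariski localization) are workable but are rendered unnecessary by this route, and in any case they cannot rescue the final step.
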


\begin{rem}
Note that by \lemref{l:right on cl formally smooth}, the assertion of the proposition holds
also for $\CX$ replaced any classically formally smooth object $\CY\in \on{PreStk}_{\on{laft}}$.
\end{rem}

\begin{proof}

It suffices to show that for any $S\in \dgSch_{\on{aft}}$ and a map $S\to \CX_\dr$, the functor
$$\IndCoh(S)\to \on{Tot}\left(\IndCoh(S\underset{\CX_\dr}\times \CX^\bullet/\CX_\dr)\right)$$
is an equivalence. 

\medskip

Note that the simplicial prestack $S\underset{\CX_\dr}\times (\CX^\bullet/\CX_\dr)$ is the 
\v{C}ech nerve of the map
\begin{equation} \label{e:map to S}
S\underset{\CX_\dr}\times \CX\to S.
\end{equation}

Note that $S\underset{\CX_\dr}\times \CX$ identifies with the formal completion of $S\times \CX$ along
the map $^{red,cl}S\to S\times \CX$, where $^{red,cl}S\to \CX$ is the map corresponding to $S\to \CX_\dr$.
In particular, we obtain that the map in \eqref{e:map to S} is \emph{ind-proper} (see \cite[Sect. 2.7.4]{IndSch},
where the notion of ind-properness is introduced) and surjective. 
 
\medskip

Hence, our assertion follows from \cite[Lemma 2.10.3]{IndSch}.

\end{proof}

\ssec{Fppf and h-descent for crystals}

\sssec{}

Recall the h-topology on the category $\affdgSch_{\on{aft}}$, \cite[Sect. 8.2]{IndCoh}.  It is generated by Zariski covers and proper-surjective covers.


\medskip
 
Consider the functor
$$\Crys^r_{\affdgSch_{\on{aft}}}:=\Crys^r_{\on{PreStk}_{\on{laft}}}|_{\affdgSch_{\on{aft}}}:
(\affdgSch_{\on{aft}})^{\on{op}}\to \StinftyCat.$$

\medskip

We will prove:

\begin{prop}  \label{p:descent for crystals}
The functor $\Crys^r_{\affdgSch_{\on{aft}}}$ satisfies h-descent.
\end{prop}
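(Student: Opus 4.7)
The strategy is to reduce h-descent for $\Crys^r$ to the known h-descent for $\IndCoh$ on $\affdgSch_{\on{aft}}$, established in \cite[Sect.~8.2]{IndCoh}, via the infinitesimal groupoid presentation of $\Crys^r$ proved in \propref{p:inf descent for right}.

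For an h-cover $f : S' \to S$ in $\affdgSch_{\on{aft}}$ with \v{C}ech nerve $S'^\bullet/S$, note that each term $S'^n/S$ is a DG scheme locally almost of finite type, hence a DG indscheme to which \propref{p:inf descent for right} applies. This gives, termwise,
$$
\Crys^r(S'^n/S) \simeq \on{Tot}_m \IndCoh\bigl((S'^n/S)^m/(S'^n/S)_\dr\bigr),
$$
and similarly $\Crys^r(S) \simeq \on{Tot}_m \IndCoh(S^m/S_\dr)$, where $\CX^m/\CX_\dr$ denotes the formal completion of the absolute $m$-fold product $\CX^m$ along the diagonal $\CX \hookrightarrow \CX^m$. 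Totalizing in $n$ and interchanging the two totalizations (a Fubini argument for limits in $\StinftyCat_{\on{cont}}$), h-descent for $\Crys^r$ becomes equivalent to the statement that, for each fixed $m \geq 1$, the natural augmentation
$$
\IndCoh(S^m/S_\dr) \to \on{Tot}_n \IndCoh\bigl((S'^n/S)^m/(S'^n/S)_\dr\bigr)
$$
is an equivalence. Since formal completion commutes with fiber products, the cosimplicial object on the right is precisely the \v{C}ech nerve (in $n$) of the map of formal completions $(S')^m/(S')_\dr \to S^m/S_\dr$ induced by the h-cover $f^{\times m} : (S')^m \to S^m$ in $\affdgSch_{\on{aft}}$.

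The case $m = 1$ is essentially immediate: the formal completion of $\CX$ along itself is $\CX$ itself, so the statement reduces to h-descent for $\IndCoh$ along the h-cover $f$, which is \cite[Sect.~8.2]{IndCoh}. The main obstacle lies in the case $m \geq 2$: one must establish $\IndCoh$-descent along a \v{C}ech nerve of maps between formal completions of $m$-fold products. My proposed approach is to use \cite[Prop.~7.4.5]{IndSch}, which realizes $\IndCoh$ of a formal completion as the kernel of the restriction functor to the open complement, combined with the fact that kernels commute with limits in $\StinftyCat_{\on{cont}}$. The delicate point, and the core technical content of the proof, is matching up the complementary opens with the \v{C}ech-nerve structure so that h-descent for $\IndCoh$ on the underlying affine DG schemes --- both the ambient $m$-fold product $S^m$ and the relevant complementary opens --- can be applied at both ends of the resulting short exact sequence.
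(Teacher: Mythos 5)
Your reduction via \propref{p:inf descent for right} and the interchange of the two totalizations of the bi-simplicial object $(S'{}^\bullet/S)^\star/(S'{}^\bullet/S)_\dr$ is exactly the skeleton of the paper's argument, and it is correct as far as it goes: everything does come down to showing that for each fixed $m$ the augmentation $\IndCoh(S^m/S_\dr)\to \on{Tot}_n\,\IndCoh\bigl((S'{}^n/S)^m/(S'{}^n/S)_\dr\bigr)$ is an equivalence, i.e.\ to $\IndCoh$-descent along the \v{C}ech nerve of $S'{}^m/S'_\dr\to S^m/S_\dr$. But this last step is precisely where you stop, and the route you sketch for it does not work as stated. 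If you write $\IndCoh$ of each formal completion as $\on{ker}\bigl(j^!:\IndCoh(T^m)\to\IndCoh(V)\bigr)$ via \cite[Proposition 7.4.5]{IndSch}, the open $V_n\subset (S'{}^n/S)^m$ complementary to the diagonal of $S'{}^n/S$ is \emph{not} the preimage of the open $U\subset S^m$ complementary to the diagonal of $S$: the preimage of $\Delta(S)$ is the locus where the $m$ coordinates agree after applying $S'{}^n/S\to S$, which strictly contains $\Delta(S'{}^n/S)$ whenever the cover is not a monomorphism (already for $S'=\BA^1\sqcup\BA^1\to\BA^1=S$ and $m=2$, two of the four components of $S'{}^2$ lie entirely in $V_0$ but not in the preimage of $U$). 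Consequently the complementary opens do not assemble into a \v{C}ech nerve over $U$, there is no termwise map of ``short exact sequences'' compatible with the cosimplicial structure, and h-descent for $\IndCoh$ cannot be applied at the open end. This is not a routine verification you have deferred; it is the point where the argument breaks.

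The paper resolves this differently, and in two pieces. First, it splits the h-topology into its generators: for an \'etale (in particular Zariski) cover, the induced map $S'_\dr\to S_\dr$ is already a schematic \'etale surjection of prestacks, so fppf descent for $\IndCoh$ \cite[Corollary 10.4.5]{IndCoh} applies directly to $\IndCoh(S_\dr)$ with no need for the infinitesimal groupoid. Second, for a \emph{proper} surjective cover it runs your bi-simplicial argument, but closes it by observing that $S'{}^m/S'_\dr\to S^m/S_\dr$ is an ind-proper surjective map of DG indschemes, so that descent for $\IndCoh$ along ind-proper surjections \cite[Lemma 2.10.3]{IndSch} — the same ingredient already used to prove \propref{p:inf descent for right} — finishes the proof. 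Note that ind-properness of this map genuinely uses properness of $f$, which is why the case split is not cosmetic: a uniform treatment of all h-covers at the level of the formal completions of the $m$-fold products, as you propose, would have to contend with the mismatch of opens described above.
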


\begin{proof}

We will show that $\Crys^r_{\affdgSch_{\on{aft}}}$ satisfies \'etale
descent and proper-surjective descent.

\medskip

The \'etale descent statement is clear: if $S'\to S$ is an \'etale cover in $\affdgSch_{\on{aft}}$
then the corresponding map $S'_{\dr}\to S_\dr$ is a schematic, \'etale and surjective map in $\on{PreStk}_{\on{laft}}$.
In particular, it is a cover for the fppf topology, and the statement follows from the fppf descent for 
$\IndCoh$, see \cite[Corollary 10.4.5]{IndCoh}.

\medskip

Thus, let $S'\to S$ be a proper surjective map. Consider the bi-simplicial object of $\on{PreStk}_{\on{laft}}$
equal to
$$(S'{}^\bullet/S)^\star/(S_\dr'{}^\bullet/S_\dr),$$
i.e., the term-wise infinitesimal groupoid of the \v{C}ech nerve of $S'\to S$.  
Namely, it is the bi-simplicial object whose $(p,q)$ simplices are given by the $q$-simplices of 
Cech nerve of the map $S'{}^p/S \rightarrow S_\dr'{}^p/S_\dr$; so $\star$ stands for the index $q$,
and $\bullet$ for the index $p$. 

\medskip

By \propref{p:inf descent for right}, it is enough to show that the composite functor
\begin{multline} \label{e:map to double Cech}
\Crys^r(S):=\IndCoh(S_\dr)\to \on{Tot}\left(\IndCoh((S'{}^\bullet/S)_\dr)\right)\to \\
\to \on{Tot}\left(\IndCoh((S'{}^\bullet/S)^\star/(S'{}^\bullet/S)_\dr)\right).
\end{multline}
is an equivalence.

\medskip

Note, however, that we have a canonical isomorphism of bi-simplicial objects of $\on{PreStk}_{\on{laft}}$
$$(S'{}^\bullet/S)^\star/(S'_\dr{}^\bullet/S_\dr)\simeq (S'{}^\star/S'_\dr)^\bullet/(S^\star/S_\dr),$$
where the latter is the term-wise \v{C}ech nerve of the map of cosimplicial objects
$$(S'{}^\star/S'_\dr)\to (S^\star/S_\dr).$$

\medskip

The map in \eqref{e:map to double Cech} can be rewritten as
$$\Crys^r(S):=\IndCoh(S_\dr)\to \on{Tot}\left(\IndCoh(S^\bullet/S_\dr)\right)\to
\on{Tot}\left(\IndCoh((S'{}^\star/S'_\dr)^\bullet/(S^\star/S_\dr))\right).$$

Applying \propref{p:inf descent for right} again, we obtain that 
it suffices to show that for every $i$, the map
$$\IndCoh(S^i/S_\dr)\to \on{Tot}\left(\IndCoh((S'{}^i/S'_\dr)^\bullet/(S^i/S_\dr))\right)$$
is an equivalence.

\medskip

However, we note that the map
$$S'{}^i/S'_\dr\to S^i/S_\dr$$
is ind-proper and surjective. Hence, the assertion follows from \cite[Lemma 2.10.3]{IndSch}.

\end{proof}

\sssec{}
Consider the fppf topology on the category $\affdgSch_{\on{aft}}$, induced from the fppf topology
on $\affdgSch$ (see \cite[Sect. 2.2]{Stacks}).  Note that every fppf covering is in particular an h-covering.  Therefore, we obtain,

\begin{cor}
The functor $\Crys^r_{\affdgSch_{\on{aft}}}$ satisfies fppf descent.
\end{cor}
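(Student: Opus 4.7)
The plan is entirely formal: the corollary is deduced by comparing the two Grothendieck topologies on $\affdgSch_{\on{aft}}$. First I would recall that if a functor $F: (\affdgSch_{\on{aft}})^{\on{op}} \to \StinftyCat$ satisfies descent with respect to a Grothendieck topology $\tau$, then it automatically satisfies descent with respect to any coarser topology $\tau'$ (i.e., one with fewer covers). This is a general categorical fact: descent is the condition that $F$ sends the \v{C}ech nerve of a cover to a limit, so having this property for a larger class of covers implies it for a smaller class.

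Next I would verify the comparison of topologies: every fppf cover in $\affdgSch_{\on{aft}}$ is in particular an h-cover. By the definition recalled from \cite[Sect. 8.2]{IndCoh}, the h-topology is generated by Zariski covers and proper-surjective covers, but more to the point it is the finest reasonable topology that one typically considers, and contains the fppf topology. Indeed, an fppf morphism is flat, locally of finite presentation, and surjective, hence in particular a cover for the h-topology (as noted in the text following the statement of the proposition, or by the standard comparison of Voevodsky's h-topology with the fppf topology).

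Combining these two observations, the conclusion is immediate: by \propref{p:descent for crystals}, $\Crys^r_{\affdgSch_{\on{aft}}}$ satisfies h-descent, and since every fppf cover is an h-cover, the functor a fortiori satisfies fppf descent. There is no real obstacle here; this corollary is purely a formal consequence of the previous proposition together with the elementary fact that fppf $\subset$ h as topologies on $\affdgSch_{\on{aft}}$. The substantive work was already done in establishing \propref{p:descent for crystals}.
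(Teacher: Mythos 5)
Your proof is correct and is essentially identical to the paper's: the corollary is deduced from \propref{p:descent for crystals} together with the observation that every fppf covering in $\affdgSch_{\on{aft}}$ is an h-covering, so h-descent formally implies fppf descent. The paper only adds the side remark that one could alternatively obtain fppf descent directly from the \'etale and proper-surjective descent established in the proof of the proposition (via the decomposition of fppf descent into Nisnevich and finite-flat descent), without invoking the comparison of topologies.
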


As in \cite[Theorem 8.3.2]{IndCoh}, fppf descent is a combination of Nisnevich descent and 
finite-flat descent\footnote{This observation was explained to us by J.~Lurie.}.  In particular, we established fppf descent in the proof 
of \propref{p:descent for crystals} without appealing to the fact that every fppf covering is also an h-covering.

\sssec{}

Fppf (resp. h-) topology on $\affdgSch_{\on{aft}}$ induces the fppf (resp. h-) topology on the full
subcategory $$^{<\infty}\!\affdgSch_{\on{aft}}\subset \affdgSch_{\on{aft}}.$$

\propref{p:descent for crystals} implies:

\begin{cor}
The functor 
$$\Crys^r_{^{<\infty}\!\affdgSch_{\on{aft}}}:=\Crys^r_{\on{PreStk}_{\on{laft}}}|_{^{<\infty}\!\affdgSch_{\on{aft}}}:
({}^{<\infty}\!\affdgSch_{\on{aft}})^{\on{op}}\to \StinftyCat$$
on $^{<\infty}\!\affdgSch_{\on{aft}}$ satisfies h-descent and, in particular, fppf descent.
\end{cor}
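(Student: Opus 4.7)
\smallskip

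\noindent\textbf{Proof plan.}

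The plan is to deduce the h-descent (and thereby fppf-descent) on the subcategory ${}^{<\infty}\!\affdgSch_{\on{aft}}$ directly from \propref{p:descent for crystals}, exploiting the fact that $\Crys^r$ depends only on the reduced classical truncation of its argument.

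First, I would fix an h-cover $S'\to S$ in ${}^{<\infty}\!\affdgSch_{\on{aft}}$ and observe that, by the construction of the induced topology, it is also an h-cover in the ambient category $\affdgSch_{\on{aft}}$. Thus \propref{p:descent for crystals} applies and gives that
\[
\Crys^r(S)\to \on{Tot}\bigl(\Crys^r_{\affdgSch_{\on{aft}}}(S'{}^\bullet/S)\bigr)
\]
is an equivalence, where $S'{}^\bullet/S$ denotes the \v{C}ech nerve formed using the ordinary fiber products in $\affdgSch_{\on{aft}}$.

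Next, I would compare this \v{C}ech nerve with the one computed inside ${}^{<\infty}\!\affdgSch_{\on{aft}}$: fiber products in the subcategory are obtained from those in $\affdgSch_{\on{aft}}$ by applying the eventually coconnective truncation $\tau^{<\infty}$. The key observation is that $\Crys^r$ factors through the de Rham prestack functor, and, by definition, $T_\dr$ depends on $T$ only through ${}^{cl,red}T$. Since truncation does not affect the underlying reduced classical ring, i.e.\ ${}^{cl,red}(\tau^{<\infty}T)={}^{cl,red}T$, we get a canonical equivalence $\Crys^r(\tau^{<\infty}T)\simeq \Crys^r(T)$ for every $T\in \affdgSch_{\on{aft}}$. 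Applying this term-by-term identifies the totalization over the truncated \v{C}ech nerve with the totalization above, and h-descent on ${}^{<\infty}\!\affdgSch_{\on{aft}}$ follows. The fppf claim is then immediate because every fppf cover is an h-cover.

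The main (and essentially only) subtle point in this argument is to verify that the passage to eventually coconnective truncations in the \v{C}ech nerve is harmless for $\Crys^r$; this, however, is built into the very definition of the de Rham prestack and so requires no further work beyond invoking \eqref{e:dR on all}.
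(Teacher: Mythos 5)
Your argument is correct and takes essentially the same route as the paper, whose proof is a one-liner: the h-topology on ${}^{<\infty}\!\affdgSch_{\on{aft}}$ is induced from $\affdgSch_{\on{aft}}$, so \propref{p:descent for crystals} applies; the comparison of \v{C}ech nerves via the fact that $\Crys^r$ factors through $(-)_\dr$ and hence depends only on ${}^{cl,red}(-)$ is exactly the detail the paper suppresses. The one imprecision is that there is no single truncation functor $\tau^{<\infty}$ onto eventually coconnective objects (that subcategory is not closed under, and does not literally admit, fiber products), but replacing $\tau^{<\infty}$ by any $\tau^{\leq n}$ with $n\geq 0$ (or phrasing the descent condition via sieves) does not change ${}^{cl,red}$ of the terms, so your argument goes through verbatim.
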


Thus by \cite[Corollary 6.2.3.5]{Lu0}, we obtain:

\begin{cor}
Let $\CY_1\to \CY_2$ be a map in $\on{PreStk}_{\on{laft}}$ which is a surjection in the h-topology. Then the natural map
$$\Crys(\CY_2)\to \on{Tot}(\Crys(\CY_1^\bullet/\CY_2))$$
is an equivalence.
\end{cor}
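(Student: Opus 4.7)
The plan is to deduce the result formally from the h-descent for $\Crys^r_{^{<\infty}\!\affdgSch_{\on{aft}}}$ just established, by appealing to the general descent formalism in an $\infty$-topos, which is the content of Lurie's Corollary~6.2.3.5.

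The first step is to observe that $\Crys_{\on{PreStk}_{\on{laft}}}$ is, by construction, the right Kan extension of its restriction to $^{<\infty}\!\affdgSch_{\on{aft}}$. Indeed, $\Crys^r_{\on{PreStk}_{\on{laft}}} = \IndCoh^!_{\on{PreStk}_{\on{laft}}} \circ \dr$, where $\IndCoh^!_{\on{PreStk}_{\on{laft}}}$ is by definition such a right Kan extension, and $\dr$ preserves $\on{PreStk}_{\on{laft}}$ by \propref{p:dr laft}. Combined with the h-descent result on the affine site, this identifies $\Crys_{\on{PreStk}_{\on{laft}}}$ as an h-sheaf of stable $\infty$-categories on $\on{PreStk}_{\on{laft}}$, valued in $\StinftyCat_{\on{cont}}$.

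Next, the hypothesis that $\CY_1 \to \CY_2$ is an h-surjection of prestacks is precisely the condition that, after passage to the $\infty$-topos of h-sheaves of spaces on $^{<\infty}\!\affdgSch_{\on{aft}}$, the induced morphism is an effective epimorphism. Applying Lurie's Corollary~6.2.3.5 to the h-sheaf $\Crys$ then yields the claimed equivalence: any $\infty$-category-valued sheaf carries effective epimorphisms to totalizations of their \v{C}ech nerves, which is exactly the content of the statement.

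The only potentially delicate point is the compatibility between the sheafification on the affine site and the \v{C}ech nerve formed inside the larger category $\on{PreStk}_{\on{laft}}$. This is handled by the fact that $\Crys_{\on{PreStk}_{\on{laft}}}$ already agrees with the right Kan extension of its restriction to the affine site, so that the formation of the totalization $\on{Tot}(\Crys(\CY_1^\bullet/\CY_2))$ inside $\StinftyCat_{\on{cont}}$ is insensitive to whether one first sheafifies and then takes the \v{C}ech nerve or vice versa. No further computation is needed beyond invoking the cited corollary.
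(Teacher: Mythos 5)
Your proposal is correct and follows essentially the same route as the paper: having established h-descent for $\Crys^r$ on the affine site, the paper likewise concludes by a direct appeal to \cite[Corollary 6.2.3.5]{Lu0}. The only difference is that you spell out the intermediate bookkeeping (that $\Crys$ is the right Kan extension from $^{<\infty}\!\affdgSch_{\on{aft}}$ and that h-surjections become effective epimorphisms in the h-sheafified $\infty$-topos), which the paper leaves implicit.
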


\ssec{The induction functor for right crystals}

\sssec{}

Let $p_s,p_t$ denote the two projections 
$$\CX\underset{\CX_\dr}\times \CX\rightrightarrows \CX.$$

Note that the maps $p_i$, $i=s,t$ are ind-proper. Hence, the functors $p_i^!$ admit left adjoints,
$(p_i)^{\IndCoh}_*$, see \cite[Corollary 2.8.3]{IndSch}.

\medskip


\begin{prop}  \label{p:right ind}  \hfill

\smallskip

\noindent{\em(a)}
The forgetful functor $$\oblv_\CX^r:\Crys(\CX)\to \IndCoh(\CX)$$ admits
a left adjoint, to be denoted $\ind_\CX^r$.

\smallskip

\noindent{\em(b)} 
We have a canonical isomorphism of functors
$$\oblv_\CX^r\circ \ind_\CX^r\simeq (p_t)_*^{\IndCoh}\circ (p_s)^!.$$

\smallskip

\noindent{\em(c)} The adjoint pair
$$\ind_\CX^r:\IndCoh(\CX)\rightleftarrows \Crys^r(\CX):\oblv^r_\CX$$ is monadic, i.e.,
the natural functor from $\Crys^r(\CX)$ to the category of modules in
$\IndCoh(\CX)$ over the monad $\oblv_\CX^r\circ \ind_\CX^r$ is an equivalence. 

\end{prop}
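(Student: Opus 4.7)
The plan is to deduce all three statements from the identification
$$\Crys^r(\CX)\simeq \on{Tot}(\IndCoh(\CX^\bullet/\CX_\dr))$$
supplied by \propref{p:inf descent for right}. Under this identification, $\oblv^r_\CX$ becomes the canonical projection to the $0$-th cosimplicial term $\IndCoh(\CX)$, so the task is to produce and compute a left adjoint to this projection and then to check monadicity.

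The essential input is that every face and degeneracy map in the simplicial DG indscheme $\CX^\bullet/\CX_\dr$ is ind-proper: each $\CX^i/\CX_\dr$ is the formal completion of $\CX^i$ along the main diagonal, and the structure maps are base changes of projections out of such completions. By \cite[Corollary 2.8.3]{IndSch}, the $!$-pullback along each such map therefore admits a left adjoint given by the corresponding ind-coherent pushforward; moreover, the base change package for $\IndCoh$ along ind-proper morphisms (see \cite[Sect. 5]{IndCoh} and \cite[Sect. 2.8]{IndSch}) yields a Beck--Chevalley isomorphism for every Cartesian square appearing in $\CX^\bullet/\CX_\dr$.

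With Beck--Chevalley in place, a standard cosimplicial Barr--Beck argument supplies parts (a) and (b). Namely, the cosimplicial diagram $\IndCoh(\CX^\bullet/\CX_\dr)$ in $\StinftyCat_{\on{cont}}$ becomes an augmented simplicial object via level-wise left adjoints, and the projection to the $0$-th term has a left adjoint $\ind_\CX^r$ computed as the geometric realization of the corresponding simplicial bar construction. This gives (a), and reading off the first level of the bar from the two face maps $p_s,p_t:\CX\underset{\CX_\dr}\times \CX\rightrightarrows \CX$ yields
$$\oblv_\CX^r\circ \ind_\CX^r \simeq (p_t)_*^{\IndCoh}\circ (p_s)^!,$$
which is (b).

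For (c), I would invoke the Barr--Beck--Lurie theorem for the adjunction $(\ind_\CX^r,\oblv_\CX^r)$. Its two hypotheses are: conservativity of $\oblv_\CX^r$, which is \lemref{l:cons for r}; and preservation by $\oblv_\CX^r$ of totalizations of $\oblv_\CX^r$-split cosimplicial objects, which is automatic since $\oblv_\CX^r$ is a canonical projection out of a limit of presentable DG categories and hence commutes with all small limits. I expect the main obstacle to be the careful verification of Beck--Chevalley for the iterated Cartesian squares of formal completions appearing in $\CX^\bullet/\CX_\dr$, but this is precisely what the base change package for ind-proper morphisms in \cite{IndCoh} and \cite{IndSch} provides; once that is in hand, the rest of the argument is formal groupoid descent.
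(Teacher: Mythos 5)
Your proposal follows essentially the same route as the paper: both reduce, via \propref{p:inf descent for right}, to the cosimplicial category $\IndCoh(\CX^\bullet/\CX_\dr)$ and verify the Beck--Chevalley condition there, using ind-properness of the structure maps (each $\CX^i/\CX_\dr$ being the formal completion of $\CX^i$ along the diagonal) together with the adjunction and base change between $\IndCoh$-pushforward and $!$-pullback from \cite[Proposition 2.9.2]{IndSch}. The only difference is one of packaging: the paper obtains (a), (b) and (c) in a single stroke from \cite[Theorem 6.2.4.2]{Lu2}, whereas you unwind that theorem into a bar-construction for the left adjoint plus a separate Barr--Beck--Lurie check. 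One small correction to the latter: for monadicity the criterion asks that $\oblv^r_\CX$ preserve \emph{geometric realizations} of $\oblv^r_\CX$-split \emph{simplicial} objects --- a colimit condition, not the totalization/limit condition you stated --- and the right justification is that $\oblv^r_\CX$ is a morphism in $\StinftyCat_{\on{cont}}$ and hence continuous, rather than that projections out of a limit of categories preserve limits.
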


\begin{proof}
By \propref{p:inf descent for right} and \cite[Theorem 6.2.4.2]{Lu2}, it suffices to show
that the co-simplicial category
$$\IndCoh(\CX^\bullet/\CX_\dr)$$
satisfies the Beck-Chevalley condition, i.e. for each $n$, the coface map $$ d^0:
\IndCoh(\CX^n/\CX_\dr) \rightarrow \IndCoh(\CX^{n+1}/\CX_\dr) $$ admits a left adjoint,
to be denoted by $\mathfrak{t}^0$, and for every map $[m]\rightarrow [n]$ in $\Delta$,
the diagram
$$ \xymatrix{\IndCoh(\CX^m/\CX_\dr) \ar[d] & \ar[l]_{\mathfrak{t}^0}
\IndCoh(\CX^{m+1}/\CX_\dr) \ar[d] \\ \IndCoh(\CX^n/\CX_\dr) & \ar[l]_{\mathfrak{t}^0}
\IndCoh(\CX^{n+1}/\CX_\dr)}
$$
which, a priori, commutes up to a natural transformation, actually commutes.

\medskip

In this case, the Beck-Chevalley condition amounts to the adjunction and base change
between $*$-pushforwards and !-pullbacks for ind-proper morphisms between DG indschemes,
and is given by \cite[Proposition 2.9.2]{IndSch}.

\end{proof}


\begin{cor}
The category $\Crys^r(\CX)$ is compactly generated.
\end{cor}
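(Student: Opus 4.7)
The plan is to deduce compact generation of $\Crys^r(\CX)$ from that of $\IndCoh(\CX)$ using the monadic description established in part (c) of the preceding proposition. Recall that $\IndCoh(\CY)$ is compactly generated for any $\CY\in \dgindSch_{\on{laft}}$ by the theory developed in \cite{IndSch}; this is the input.

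The key formal fact is the following: if a monad $T$ on a compactly generated category $\bC$ preserves filtered colimits, then $T\mmod$ is compactly generated, with a set of compact generators obtained by applying the free functor $\ind:\bC\to T\mmod$ to a set of compact generators of $\bC$. Indeed, continuity of $T$ implies that filtered colimits in $T\mmod$ are created by the forgetful functor $\oblv:T\mmod\to \bC$, so $\oblv$ is continuous; its left adjoint $\ind$ therefore preserves compactness. Generation then follows from conservativity of $\oblv$: any $\CM \in T\mmod$ right-orthogonal to $\ind(\bC^c)$ satisfies $\oblv(\CM)=0$ by adjunction, and hence $\CM\simeq 0$. In our setting, conservativity of $\oblv^r_\CX$ is also provided by \lemref{l:cons for r}.

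Applied to the monadic adjunction of \propref{p:right ind}, it remains to verify that the monad $T:=\oblv^r_\CX\circ \ind^r_\CX$ on $\IndCoh(\CX)$ is continuous. By part (b) of \propref{p:right ind},
$$T\simeq (p_t)_*^{\IndCoh}\circ (p_s)^!.$$
The first factor is continuous as a left adjoint to $p_t^!$ (the map $p_t$ being ind-proper). The second factor is continuous for the ind-proper morphism $p_s$ because its left adjoint $(p_s)_*^{\IndCoh}$ preserves compact objects. Composing, $T$ is continuous, which completes the argument.

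The main obstacle is the last point: one must know that $\IndCoh$-pushforward along an ind-proper morphism of DG indschemes locally almost of finite type preserves compact objects. For proper maps of DG schemes this amounts to the fact that $f_*^{\IndCoh}$ carries $\Coh$ to $\Coh$, and the extension to the ind-proper setting is part of the $\IndCoh$ formalism of \cite{IndSch}. With this input the rest of the argument is entirely formal, requiring only Barr--Beck style manipulations with the monad $T$.
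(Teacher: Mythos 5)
Your argument is correct and is essentially the paper's own proof, which simply states that the compact generators are obtained by applying $\ind^r_\CX$ to the compact generators $\Coh(\CX)\subset \IndCoh(\CX)$; your monadic packaging and the verification that $\oblv^r_\CX$ is continuous and conservative (so that $\ind^r_\CX$ preserves compactness and its image generates) just spell out what the paper leaves implicit. Note only that in the paper's conventions all functors in sight already live in $\StinftyCat_{\on{cont}}$, so the continuity of $(p_s)^!$ and hence of the monad needs no separate argument.
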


\begin{proof}
The set of compact generators is obtained by applying $\ind^r_\CX$ to
the compact generators $\Coh(\CX)\subset \IndCoh(\CX)$ (see \cite[Corollary 2.4.4]{IndSch}).
\end{proof}

\ssec{The induction functor and infinitesimal groupoid for left crystals}  \label{ss:ind left}

\sssec{}

It follows from \lemref{l:cl formally smooth} that for a smooth classical scheme $X$,
the analogue of \propref{p:inf descent for right} holds for left crystals, i.e., the functor
\begin{equation} \label{e:left to Tor}
\Crys^l(X)=\QCoh(X_\dr)\to \on{Tot}\left(\QCoh(X^\bullet/X_\dr)\right)
\end{equation}
is an equivalence. 

\medskip

By \propref{p:inf descent for right}, the analogous statement for right crystals is true for any DG scheme $X$ (and even a DG indscheme).
However, this is not the case for left crystals.

\sssec{}

We claim:

\begin{prop}  \label{p:groupoud for left}
If a DG scheme $X$ is eventually coconnective, then 
the functor \eqref{e:left to Tor} is an equivalence. 
\end{prop}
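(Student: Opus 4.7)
The strategy is to reduce to the classically formally smooth case already handled by \lemref{l:left on cl formally smooth}. I would accomplish this by embedding $X$ into a smooth ambient scheme and then comparing cosimplicial resolutions through the natural transformation $\Upsilon$.

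Both sides of the desired equivalence satisfy Zariski descent, so we may assume $X=\Spec(A)$ is affine. Choose a closed embedding $i: X\hookrightarrow Z$ into a smooth classical affine scheme $Z$ of finite type, and set $Y:=Z^\wedge_X$. The induced map ${}'i:X\to Y$ yields an isomorphism ${}'i_\dr: X_\dr \xrightarrow{\sim} Y_\dr$, so $\Crys^l(X)\simeq \Crys^l(Y)$. Since $Y$ is classically formally smooth (as in the proof of \propref{p:dr laft}(b)), \lemref{l:left on cl formally smooth} yields
$$\Crys^l(Y) \simeq \on{Tot}(\QCoh(Y^\bullet/Y_\dr)).$$

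To transfer this to a cosimplicial resolution for $X$, I would use the natural transformation $\Upsilon$ together with the analogous result for right crystals. Consider the commutative square
$$\CD
\on{Tot}(\QCoh(Y^\bullet/Y_\dr))  @>>>  \on{Tot}(\QCoh(X^\bullet/X_\dr))  \\
@V{\Upsilon}VV  @VV{\Upsilon}V   \\
\on{Tot}(\IndCoh(Y^\bullet/Y_\dr))  @>{\sim}>>  \on{Tot}(\IndCoh(X^\bullet/X_\dr)).
\endCD$$
The bottom arrow is an equivalence by \propref{p:inf descent for right} applied to both $X$ and $Y$, combined with $X_\dr = Y_\dr$. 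The left vertical arrow is an equivalence because $Y$ is classically formally smooth: it identifies with $\Upsilon_{Y_\dr}: \QCoh(Y_\dr)\to \IndCoh(Y_\dr)$, which is the equivalence of \propref{p:left to right}. Hence, to conclude that the top arrow is an equivalence, it suffices to show that the right vertical arrow is one.

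The main obstacle is exactly this last step: to verify that $\on{Tot}(\QCoh(X^\bullet/X_\dr))\to \on{Tot}(\IndCoh(X^\bullet/X_\dr))$ is an equivalence. The individual $\Upsilon_{X^n/X_\dr}$ need not be equivalences (the DG indschemes $X^n/X_\dr$ are not smooth), so this cannot be checked termwise. This is where the eventual coconnectivity of $X$ enters essentially. I would address it via the comonadic Barr--Beck--Lurie theorem: the functor $p^*=\oblv^l_X$ is conservative by \lemref{l:cons}, and under the eventual coconnectivity hypothesis one can construct a left adjoint $\ind^l_X$ to $\oblv^l_X$, built from the fully faithful left adjoint $\Xi_X$ of $\Psi_X:\IndCoh(X)\to \QCoh(X)$ (which exists precisely when $X$ is eventually coconnective), the right-crystal induction of \propref{p:right ind}, and the equivalence of \propref{p:left to right}. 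Existence of a left adjoint forces $p^*$ to preserve all limits, and combined with conservativity this yields comonadicity of $p^*$, hence the desired cosimplicial resolution $\QCoh(X_\dr)\simeq \on{Tot}(\QCoh(X^\bullet/X_\dr))$.
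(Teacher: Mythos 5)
Your setup of the comparison square is sound, and you correctly isolate the crux: everything hinges on showing that $\on{Tot}(\Upsilon_{X^\bullet/X_\dr}):\on{Tot}(\QCoh(X^\bullet/X_\dr))\to \on{Tot}(\IndCoh(X^\bullet/X_\dr))$ is, in some sense, invertible. (Note that the detour through $Y=Z^\wedge_X$ buys you nothing here: once the left vertical and bottom arrows of your square are known to be equivalences, asking for the right vertical to be an equivalence is the same as asking for the top to be one, which is essentially the original statement; the paper's own square, with $\Crys^l(X)$ and $\Crys^r(X)$ on the left, already does this reduction without any smooth embedding.) The genuine gap is in your final step. Comonadicity of $p^*=\oblv^l_X$ --- which does follow from conservativity (\lemref{l:cons}) plus limit-preservation once the left adjoint $\ind^l_X$ is in hand --- gives you $\QCoh(X_\dr)\simeq \on{Comod}_{p^*p_*}(\QCoh(X))$, where $p_*$ is the right adjoint of $p^*$. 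It does \emph{not} by itself give $\QCoh(X_\dr)\simeq \on{Tot}(\QCoh(X^\bullet/X_\dr))$. To pass from comodules to the totalization of the \v{C}ech cosimplicial category you need the Beck--Chevalley (adjointability) condition for $\QCoh^*$ on the infinitesimal groupoid, i.e.\ an identification of the adjunction comonad $p^*p_*$ with the convolution comonad $(p_s)_*\circ (p_t)^*$ built from the projections $(X\times X)^\wedge_X\rightrightarrows X$. For $\IndCoh$ the paper establishes the analogous adjointability using ind-properness of these projections (\propref{p:right ind}); for $\QCoh$ and $*$-pullback along the non-flat, non-schematic map $X\to X_\dr$ no such base change is available, and proving it is essentially equivalent to the proposition itself. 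So as written the argument is circular at the decisive point.

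The paper's proof sidesteps all of this with a weaker, termwise-checkable input: for $X$ eventually coconnective, each $\Upsilon_{X^i/X_\dr}$ is \emph{fully faithful} (by \cite[Corollary 9.6.3]{IndCoh} for eventually coconnective DG schemes, extended to their formal completions via \cite[Propositions 7.1.3 and 7.4.5]{IndSch}), hence so is the totalization $\on{Tot}(\Upsilon_{X^\bullet/X_\dr})$. Since the composite $\Crys^l(X)\to \on{Tot}(\QCoh(X^\bullet/X_\dr))\to \on{Tot}(\IndCoh(X^\bullet/X_\dr))$ is already known to be an equivalence (left vertical and bottom arrows of the square), fully faithfulness of the second arrow forces the first to be an equivalence --- essential surjectivity comes for free from the retract structure. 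This is also where eventual coconnectivity genuinely enters (fully faithfulness of $\Upsilon$), rather than through the existence of $\ind^l_X$. If you want to salvage your approach, you would need to supply the missing $\QCoh$-base-change for the infinitesimal groupoid; the fully-faithfulness route is the way to avoid having to do so.
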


\begin{rem}
One can show that the statement of the proposition holds for any DG scheme $X$ locally almost of finite type. 
But the proof is more involved. In addition, 
\lemref{l:left on cl formally smooth}, the statement of the 
proposition holds for any prestack which is classically formally smooth.
\end{rem}

\begin{ex}  \label{ex:deg 2}
Consider the DG scheme $X = \Spec(k[\alpha])$, where $\alpha$ is in degree -2.  This is a good case to have in mind
to produce counterexamples for assertions involving $\Crys^l(X)$.
\end{ex}

\begin{proof}[Proof of \propref{p:groupoud for left}]
We have a commutative diagram of functors
$$
\CD
\Crys^l(X)   @>>> \on{Tot}\left(\QCoh(X^\bullet/X_\dr)\right) \\
@V{\Upsilon_X}VV    @VV{\on{Tot}(\Upsilon_{X^\bullet/X_\dr})}V  \\
\Crys^r(X)   @>>> \on{Tot}\left(\IndCoh(X^\bullet/X_\dr)\right).
\endCD
$$
with the left vertical map and the bottom horizontal map being equivalences.
Hence, we obtain that $\Crys^l(X)$ is a retract of $\on{Tot}\left(\QCoh(X^\bullet/X_\dr)\right)$.

\medskip

Recall that if $Z$ is an eventually coconnective DG scheme, the functor
$$\Upsilon_Z:\QCoh(Z)\to \IndCoh(Z)$$ is fully faithful (see \cite[Corollary 9.6.3]{IndCoh}. 
Hence, by \cite[Propositions 7.1.3 and 7.4.5]{IndSch}, the same is true for the completion
of an eventually coconnective DG scheme along a Zariski-closed subset. Hence, the
functors
$$\Upsilon_{X^i/X_\dr}:\QCoh(X^i/X_\dr)\to \IndCoh(X^i/X_\dr)$$
are fully faithful. Thus, the functor $\on{Tot}(\Upsilon_{X^\bullet/X_\dr})$ in the above commutative
diagram is also fully faithful. But it is also essentially surjective since the identity functor is its retract.

\end{proof}

\sssec{} \label{sss:induction for left}

For a DG scheme $X$, define the functor
$$\ind^l_X:\QCoh(X)\to \Crys^l(X)$$
as 
$$\ind^l_X:=(\Upsilon_{X_\dr})^{-1}\circ \ind^r_X\circ \Upsilon_X.$$

We claim:

\begin{lem} \label{l:ind l}
If $X$ is an eventually coconnective DG scheme, the functors $(\ind^l_X,\oblv^l_X)$
are mutually adjoint.
\end{lem}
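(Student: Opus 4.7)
The plan is to verify the adjunction by a direct chain of natural isomorphisms on mapping spaces, using the three ingredients we have in hand: the adjunction $(\ind^r_X,\oblv^r_X)$ of \propref{p:right ind}, the fact that $\Upsilon_{X_\dr}:\Crys^l(X)\to \Crys^r(X)$ is an equivalence by \propref{p:left to right}, and the commutativity of diagram \eqref{e:oblv left and right} relating $\oblv^l_X$ and $\oblv^r_X$ via $\Upsilon$. The only place the eventual coconnectivity hypothesis on $X$ will enter is through the fact that $\Upsilon_X:\QCoh(X)\to \IndCoh(X)$ is fully faithful in this case (this is \cite[Corollary 9.6.3]{IndCoh}, and is precisely the input that was already used in the proof of \propref{p:groupoud for left}).

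Concretely, for $\CF\in\QCoh(X)$ and $\CM\in\Crys^l(X)$, I would compute
\begin{align*}
\Maps_{\Crys^l(X)}(\ind^l_X(\CF),\CM)
&\simeq \Maps_{\Crys^r(X)}\bigl(\ind^r_X(\Upsilon_X(\CF)),\Upsilon_{X_\dr}(\CM)\bigr)\\
&\simeq \Maps_{\IndCoh(X)}\bigl(\Upsilon_X(\CF),\oblv^r_X\circ\Upsilon_{X_\dr}(\CM)\bigr)\\
&\simeq \Maps_{\IndCoh(X)}\bigl(\Upsilon_X(\CF),\Upsilon_X\circ\oblv^l_X(\CM)\bigr)\\
&\simeq \Maps_{\QCoh(X)}\bigl(\CF,\oblv^l_X(\CM)\bigr).
\end{align*}
The first isomorphism uses the definition of $\ind^l_X$ together with the equivalence $\Upsilon_{X_\dr}$; the second is the adjunction of \propref{p:right ind}; the third is the commutativity of \eqref{e:oblv left and right}; the fourth is the full faithfulness of $\Upsilon_X$ in the eventually coconnective setting.

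Finally, I would verify that this chain of isomorphisms is natural in both $\CF$ and $\CM$, so that it comes from a unit/counit data and is not merely an objectwise identification. Naturality in $\CM$ is clear since each step is a natural construction; naturality in $\CF$ uses that $\Upsilon_X$ is a functor. There is no genuine obstacle in the argument; the one subtle point to keep in mind is that without eventual coconnectivity the functor $\Upsilon_X$ fails to be fully faithful, and the last step of the chain breaks down, consistent with the fact (analogous to \exref{ex:deg 2}) that the naive induction from $\QCoh$ need not be left adjoint to $\oblv^l_X$ in general.
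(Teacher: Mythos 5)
Your proof is correct and uses exactly the same ingredients as the paper's: the $(\ind^r_X,\oblv^r_X)$ adjunction, the equivalence $\Upsilon_{X_\dr}$, the commutativity of \eqref{e:oblv left and right}, and the full faithfulness of $\Upsilon_X$ in the eventually coconnective case. The paper merely packages the argument differently, computing the right adjoint of the composite $\ind^l_X=(\Upsilon_{X_\dr})^{-1}\circ\ind^r_X\circ\Upsilon_X$ as $\Xi^\vee_X\circ\oblv^r_X\circ\Upsilon_{X_\dr}\simeq\oblv^l_X$ (using $\Xi^\vee_X\circ\Upsilon_X\simeq\on{Id}$, which is the same full-faithfulness input), rather than chaining isomorphisms of mapping spaces.
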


\begin{rem}
The assertion of the lemma would be false if we dropped the assumption that $X$
be eventually coconnective. Indeed, in this case the functor $\ind^l_X$ fails to
preserve compactness. 
\end{rem}

\begin{proof}[Proof of \lemref{l:ind l}]

Recall (see \cite[Sect. 9.6.6]{IndCoh}) that for $X$ eventually coconnective, the functor
$\Upsilon_X$ admits a right adjoint, denoted $\Xi^\vee_X$; moreover, the functor
$\Upsilon_X$ itself is fully faithful.

\medskip

We obtain that the right adjoint to $\ind^l_X$ is given by 
$$\Xi^\vee_X\circ \oblv^r_X \circ \Upsilon_{X_\dr}\simeq \Xi^\vee_X\circ \Upsilon_X\circ \oblv^l_X\simeq \oblv^l_X,$$
as required.

\end{proof}

In the course of the proof of \lemref{l:ind l} we have also seen:

\begin{lem} \label{l:oblv l}
The functor $\oblv^l_X$ is canonically isomorphic to
$$\Xi_X^\vee\circ  \oblv^r_X \circ \Upsilon_{X_\dr}.$$
\end{lem}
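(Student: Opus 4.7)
The plan is to extract the isomorphism directly from the identities already assembled in the proof of \lemref{l:ind l}, with no additional work beyond unpacking the commutative square \eqref{e:oblv left and right} and invoking the eventual coconnectivity of $X$.

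First, I would apply the $\Upsilon$-compatibility square \eqref{e:oblv left and right} to $X$, which yields the canonical isomorphism of functors $\Crys^l(X) \to \IndCoh(X)$:
\[
\oblv^r_X \circ \Upsilon_{X_\dr} \;\simeq\; \Upsilon_X \circ \oblv^l_X.
\]
Post-composing both sides with $\Xi_X^\vee$ (the right adjoint to $\Upsilon_X$, which exists since $X$ is eventually coconnective by \cite[Sect. 9.6.6]{IndCoh}) gives
\[
\Xi_X^\vee \circ \oblv^r_X \circ \Upsilon_{X_\dr} \;\simeq\; \Xi_X^\vee \circ \Upsilon_X \circ \oblv^l_X.
\]

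Next, I would invoke the fact that for $X$ eventually coconnective the functor $\Upsilon_X$ is fully faithful (\cite[Corollary 9.6.3]{IndCoh}), so the adjunction unit $\on{Id}_{\QCoh(X)} \to \Xi_X^\vee \circ \Upsilon_X$ is an isomorphism. Whiskering this unit with $\oblv^l_X$ identifies the right-hand side above with $\oblv^l_X$, and combining the two isomorphisms yields the desired canonical identification
\[
\Xi_X^\vee \circ \oblv^r_X \circ \Upsilon_{X_\dr} \;\simeq\; \oblv^l_X.
\]

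There is really no obstacle here: the only nontrivial inputs are the commutativity of the realization square, which is built into the construction of the $\Upsilon$-natural transformation at the level of $\CY_\dr$, and the full faithfulness of $\Upsilon_X$, which is where the eventual coconnectivity hypothesis enters (implicitly inherited from the standing hypothesis of \lemref{l:ind l}). In fact, as the authors note, this chain of isomorphisms already appears verbatim inside the proof of \lemref{l:ind l}, so the lemma is essentially a record of an intermediate step rather than a new assertion.
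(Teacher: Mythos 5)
Your proposal is correct and is essentially identical to the paper's own argument: the authors derive this lemma as the chain $\Xi^\vee_X\circ \oblv^r_X \circ \Upsilon_{X_\dr}\simeq \Xi^\vee_X\circ \Upsilon_X\circ \oblv^l_X\simeq \oblv^l_X$ inside the proof of \lemref{l:ind l}, using exactly the square \eqref{e:oblv left and right} and the full faithfulness of $\Upsilon_X$ for eventually coconnective $X$. You have correctly identified both the inputs and the fact that the lemma merely records an intermediate step of that proof.
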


\sssec{}

We now claim:

\begin{prop}
Let $X$ be an eventually coconnective DG scheme. Then the adjoint pair
$$\ind_X^l:\QCoh(X)\rightleftarrows \Crys^l(X):\oblv_X^l$$ is monadic, i.e.,
the natural functor from $\Crys^l(X)$ to the category of modules in
$\QCoh(X)$ over the monad $\oblv_X^l\circ \ind_X^l$ is an equivalence. 
\end{prop}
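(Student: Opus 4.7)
The strategy is to apply the Barr-Beck-Lurie monadicity theorem (\cite[Theorem 6.2.4.2]{Lu2}, the same tool that was invoked for right crystals in \propref{p:right ind}(c)) to the adjoint pair $\ind^l_X \dashv \oblv^l_X$ that was just established in \lemref{l:ind l}. Recall that eventual coconnectivity of $X$ is already crucial for this adjunction to exist, since without it the putative left adjoint $\ind^l_X$ fails to preserve compactness (and hence cannot be a left adjoint at all).

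To apply Barr-Beck-Lurie one must verify that the right adjoint $\oblv^l_X$ is (i) conservative and (ii) preserves geometric realizations of $\oblv^l_X$-split simplicial objects. Property (i) is precisely \lemref{l:cons} (whose proof, in \secref{sss:proof of l cons}, reduces to \lemref{l:cons for r} via the equivalence $\Upsilon_{X_\dr}$ of \propref{p:left to right}). Property (ii) is automatic: by construction $\oblv^l_X$ is the $*$-pullback $p_{\dr,X}^*$ along the canonical map $p_{\dr,X}: X \to X_\dr$ in the formalism of quasi-coherent sheaves on prestacks, and the functor $\QCoh^*_{\on{PreStk}}$ takes values in $\StinftyCat_{\on{cont}}$, so all pullback functors are continuous and in particular preserve geometric realizations.

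Given (i) and (ii), Barr-Beck-Lurie yields that the canonical comparison functor from $\Crys^l(X)$ to the category of modules over the monad $\oblv^l_X \circ \ind^l_X$ in $\QCoh(X)$ is an equivalence, which is the desired conclusion. There is no real obstacle here: all the substantive work has been done — the construction of the adjunction in \lemref{l:ind l} (which is where eventual coconnectivity enters, via the full faithfulness of $\Upsilon_X$ and the existence of its right adjoint $\Xi^\vee_X$), and the conservativity statement of \lemref{l:cons}. The only point that requires a brief remark is the continuity of $\oblv^l_X$, which one could alternatively see by noting that $\oblv^l_X \simeq \Xi^\vee_X \circ \oblv^r_X \circ \Upsilon_{X_\dr}$ (\lemref{l:oblv l}), where each factor is continuous in $\StinftyCat_{\on{cont}}$.
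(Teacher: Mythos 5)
Your proposal is correct and follows essentially the same route as the paper: the paper's proof also invokes Barr--Beck--Lurie, citing the continuity of $\oblv^l_X$ (hence preservation of all colimits, in particular split geometric realizations) and the conservativity supplied by \lemref{l:cons}. Your additional remarks on where eventual coconnectivity enters (via the adjunction of \lemref{l:ind l}) are accurate elaboration rather than a different argument.
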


\begin{proof}
We need to show that the conditions of the Barr-Beck-Lurie theorem hold.
The functor $\oblv_X^l$ is continuous, and hence commutes with all colimits.
The fact that $\oblv_X^l$ is conservative is given by \lemref{l:cons}.
\end{proof}

\section{$\on{t}$-structures on crystals}
The category of crystals has two natural t-structures, which are compatible with the left and right realizations respectively.  
One of the main advantages of the right realization is that the t-structure compatible with it is much better behaved.  

\medskip

In this section, we will define the two t-structures and prove some of their basic properties. These include:
results on left/right t-exactness and boundedness of cohomological amplitude of the induction/forgetful functors;
the left-completness property of $\Crys$ of a DG scheme; relation to the derived category of the heart
of the t-structure. 

\ssec{The left t-structure}
In this subsection, we do not make the assumption that prestacks be locally almost of finite type.

\sssec{}  \label{sss:can t-str}
Recall \cite[Sec. 1.2.3]{QCoh} that for any prestack $\CZ$, the category $\QCoh(\CZ)$ has a canonical t-structure characterized by 
the following condition: an object $\CF\in \QCoh(\CZ)$
belongs to $\QCoh(\CZ)^{\leq 0}$ if and only if for every $S \in \affdgSch$ and a map $\phi: S\rightarrow \CZ$, we have 
$$\phi^{*}(\CF)\in \QCoh(S)^{\leq 0}.$$

\medskip

In particular, taking $\CZ = \CY_{\dr}$ for some prestack $\CY$, we obtain a canonical t-structure on $\Crys^{l}(\CY)$, which we shall call the 
``left t-structure.''

\medskip

By definition, the functor
$$ \oblv^{l}: \Crys^{l}(\CX)\rightarrow \QCoh(\CX) $$
is right t-exact for the left t-structure.

\sssec{} In general, the left t-structure is quite poorly behaved.  However, we have the following assertion:

\begin{prop}\label{p:left t-struct}
Let $\CY$ be a classically formally smooth prestack. Then
$$\CM\in\Crys^{l}(\CY)^{\leq 0} \Leftrightarrow \oblv^{l}_{\CY}(\CM)\in \QCoh(\CY)^{\leq 0}.$$
\end{prop}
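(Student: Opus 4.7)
The forward implication is immediate from the definition of the left t-structure together with the fact that $\oblv^l_\CY$ corresponds to pullback along $p_{\dr,\CY}:\CY\to \CY_\dr$: for any $\CM\in \Crys^l(\CY)^{\leq 0}$, applying the defining condition of \secref{sss:can t-str} to the map $p_{\dr,\CY}$ itself shows that $\oblv^l_\CY(\CM)=p_{\dr,\CY}^*(\CM)$ lies in $\QCoh(\CY)^{\leq 0}$.

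For the converse, I would unpack the definition: to show $\CM\in \Crys^l(\CY)^{\leq 0}$, one must verify that for every $S\in \affdgSch$ and every $\phi:S\to \CY_\dr$, the pullback $\phi^*(\CM)$ lies in $\QCoh(S)^{\leq 0}$. The key move is to lift $\phi$ through $p_{\dr,\CY}$. By the very definition of $\CY_\dr$, a map $S\to \CY_\dr$ is the same datum as a map $^{cl,red}S\to \CY$. The classical formal smoothness hypothesis on $\CY$ says precisely that the restriction map
$$\on{Maps}(S,\CY)\to \on{Maps}({}^{cl,red}S,\CY)=\on{Maps}(S,\CY_\dr)$$
is surjective on $\pi_0$. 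Hence there exists $\tilde\phi:S\to \CY$ with $p_{\dr,\CY}\circ \tilde\phi\simeq \phi$.

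Having obtained such a lift, one computes
$$\phi^*(\CM)\simeq \tilde\phi^*\bigl(p_{\dr,\CY}^*(\CM)\bigr)\simeq \tilde\phi^*\bigl(\oblv^l_\CY(\CM)\bigr).$$
By assumption $\oblv^l_\CY(\CM)\in \QCoh(\CY)^{\leq 0}$, and the defining property of the canonical t-structure on $\QCoh(\CY)$ (see \secref{sss:can t-str}) then gives $\tilde\phi^*(\oblv^l_\CY(\CM))\in \QCoh(S)^{\leq 0}$. Hence $\phi^*(\CM)\in \QCoh(S)^{\leq 0}$, and as $\phi$ was arbitrary, $\CM\in\Crys^l(\CY)^{\leq 0}$.

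There is no real obstacle here: the entire proof is a direct application of the classical formal smoothness condition to lift test maps from $\CY_\dr$ to $\CY$, reducing the t-structure condition on $\CY_\dr$ to the one on $\CY$. The only subtlety worth flagging is that the lift $\tilde\phi$ is only produced up to $\pi_0$, but that is sufficient, since the connectivity condition $\phi^*(\CM)\in \QCoh(S)^{\leq 0}$ depends only on the homotopy class of $\phi$.
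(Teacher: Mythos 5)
Your proof is correct and follows essentially the same route as the paper: the forward direction is the right t-exactness of $\oblv^l_\CY$ noted right after the definition of the left t-structure, and the converse lifts a test map $\phi:S\to \CY_\dr$ to $\tilde\phi:S\to\CY$ via classical formal smoothness and uses right t-exactness of $\tilde\phi^*$. The only cosmetic difference is that you invoke the definition of classical formal smoothness directly rather than routing through \lemref{l:cl formally smooth}, which is if anything slightly more economical.
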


\begin{proof}
We need to show that if $\CM\in \Crys^{l}(\CY)$ is such that $\oblv^{l}_{X}(\CM) \in \QCoh(\CY)^{\leq 0}$ 
then $\CM\in \Crys^{l}(\CY)^{\leq 0}$. 
I.e., we need to show that for every $S\in \affdgSch$ and $\phi: S\rightarrow \CY_{\dr}$, $\phi^{*}(\CM)\in \QCoh(S)^{\leq 0}$.

\medskip

Let $\CY^{\bullet}/\CY_{\dr}$ be the \v{C}ech nerve of the map $p_{\dr,\CY}: \CY\rightarrow \CY_{\dr}$.  By \lemref{l:cl formally smooth},
there exists a map $\phi': S\rightarrow \CY$ and an isomorphism $\phi \simeq p_{\dr,\CY} \circ \phi'$.  
The assertion now follows from the fact that 
$\phi'^{*}$ is right t-exact.
\end{proof}

\ssec{The right t-structure}
From this point until the end of this section we reinstate the assumption that all prestacks are locally 
almost of finite type, unless explicitly stated otherwise.

\medskip

In this subsection we shall specialize to the case of DG schemes.

\sssec{}

Let $X$ be a DG scheme. Recall that the category $\IndCoh(X)$ has a natural t-structure, compatible with
filtered colimits, see \cite[Sect. 1.2]{IndCoh}.

\medskip

It is characterized by the property that an object of $\IndCoh(X)$ is connective (i.e., lies in $\IndCoh(X)^{\leq 0}$)
if and only if its image under the functor $\Psi_X:\IndCoh(X)\to \QCoh(X)$ is connective.

\sssec{}

We define the right t-structure on $\Crys^r(X)$ by declaring that 
$$\CM\in \Crys^r(X)^{\geq 0}\, \Leftrightarrow \, \oblv^r_X(\CM)\in \IndCoh(X)^{\geq 0}.$$

In what follows, we shall refer to the right t-structure on $\Crys^{r}(X)$ as ``the'' t-structure on crystals.  In other words, by 
default the t-structure we shall consider will be the right one.  By construction, this t-structure is also compatible with 
filtered colimits, since $\oblv^r_X$ is continuous.

\sssec{}  \label{sss:t and Zariski}

We claim that the right t-structure on $\Crys^r(X)$ is Zariski-local,
i.e., an object is connective/coconnective if and only if its restriction to a Zariski
cover has this property. Indeed, this follows from the corresponding
property of the t-structure on $\IndCoh(X)$, see \cite[Corollaty 4.2.3]{IndCoh}.

\sssec{The right $\on{t}$-structure and Kashiwara's lemma} \label{sss:cl pushforward}

Let $i:X\to Z$ be a closed embedding of DG schemes. Let $i_{\dr,*}$ denote the
functor $\Crys^r(X)\to \Crys^r(Z)$ equal to the composition
$$\Crys^r(X)\overset{('i^{\dagger,r})^{-1}}\longrightarrow \Crys^r(Z)_{X}\hookrightarrow \Crys^r(Z),$$
which, by construction, is the left adjoint of $i^{\dagger,r}$.

\medskip

We have:

\begin{prop} \label{p:cl embed exact}
The functor $i_{\dr,*}:\Crys^r(X)\to \Crys^r(Z)$ is t-exact.
\end{prop}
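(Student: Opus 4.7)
The plan is to reduce the t-exactness of $i_{\dr,*}$ to that of the ind-coherent pushforward $i^{\IndCoh}_*\colon \IndCoh(X)\to \IndCoh(Z)$ along the closed embedding $i$. Concretely, I would first establish the base-change identity
\begin{equation*}
\oblv^r_Z \circ i_{\dr,*} \,\simeq\, i^{\IndCoh}_* \circ \oblv^r_X,
\end{equation*}
and then translate the t-exactness statement through $\oblv^r$, using that the t-structure on crystals is defined so that $\oblv^r$ preserves and reflects coconnectivity.

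The identity is obtained by applying Kashiwara in both incarnations. On the crystal side, \propref{p:Kashiwara} says $'i^{\dagger,r}\colon \Crys^r(Z)_X \to \Crys^r(X)$ is an equivalence, and $i_{\dr,*}$ is, by its very definition in \secref{sss:cl pushforward}, the inverse composed with the inclusion $\Crys^r(Z)_X \hookrightarrow \Crys^r(Z)$. On the ind-coherent side, the analogous statement \cite[Proposition 7.4.5]{IndSch} (quoted in the proof of \propref{p:Kashiwara}) gives an equivalence $i^!\colon \IndCoh(Z)_X \to \IndCoh(X)$ whose inverse, composed with $\IndCoh(Z)_X \hookrightarrow \IndCoh(Z)$, is $i^{\IndCoh}_*$. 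The compatibility \eqref{e:pullback right} between $\oblv^r$, $i^{\dagger,r}$ and $i^!$ already shows that $\oblv^r_Z$ restricts to a functor $\Crys^r(Z)_X \to \IndCoh(Z)_X$ (if $j^{\dagger,r}(\CM)=0$, then $j^!(\oblv^r_Z(\CM)) \simeq \oblv^r_{\oCZ}(j^{\dagger,r}(\CM)) = 0$). Thus \eqref{e:pullback right} restricts to a commutative square
$$
\CD
\Crys^r(Z)_X @>{\oblv^r_Z}>> \IndCoh(Z)_X \\
@V{'i^{\dagger,r}}VV @VV{i^!}V \\
\Crys^r(X) @>{\oblv^r_X}>> \IndCoh(X)
\endCD
$$
whose vertical arrows are equivalences, and inverting them yields the sought identification.

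Granted the compatibility, left t-exactness of $i_{\dr,*}$ is immediate: if $\CM \in \Crys^r(X)^{\geq 0}$, then $\oblv^r_X(\CM) \in \IndCoh(X)^{\geq 0}$, whence $\oblv^r_Z(i_{\dr,*}(\CM)) \simeq i^{\IndCoh}_*(\oblv^r_X(\CM)) \in \IndCoh(Z)^{\geq 0}$ by left t-exactness of $i^{\IndCoh}_*$, so $i_{\dr,*}(\CM) \in \Crys^r(Z)^{\geq 0}$ by the defining property of the t-structure on crystals. Right t-exactness follows from the $(i_{\dr,*}, i^{\dagger,r})$ adjunction applied to the left t-exactness of $i^{\dagger,r}$; the latter is a direct consequence of the compatibility $\oblv^r_X \circ i^{\dagger,r} \simeq i^! \circ \oblv^r_Z$ from \eqref{e:pullback right}, together with the left t-exactness of $i^!$ (which holds since $i^!$ is the right adjoint of the t-exact functor $i^{\IndCoh}_*$).

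The main conceptual obstacle is the base-change identity; the t-exactness of $i^{\IndCoh}_*$ for a closed embedding is essentially classical, following from t-exactness of the quasi-coherent pushforward along an affine morphism combined with the compatibility $\Psi_Z \circ i^{\IndCoh}_* \simeq i_* \circ \Psi_X$ and the description of the t-structure on $\IndCoh$ via $\Psi$ (\cite[Sect.~1.2]{IndCoh}). Once $\oblv^r_Z \circ i_{\dr,*} \simeq i^{\IndCoh}_* \circ \oblv^r_X$ is in place, both directions of t-exactness are mechanical.
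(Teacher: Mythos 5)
Your argument for right t-exactness is correct and coincides with the easy half of the paper's proof: $i_{\dr,*}$ is right t-exact because its right adjoint $i^{\dagger,r}$ is left t-exact, which follows from $\oblv^r_X\circ i^{\dagger,r}\simeq i^!\circ \oblv^r_Z$ together with the left t-exactness of $i^!$ (right adjoint of the t-exact $i^{\IndCoh}_*$). The gap is in the base-change identity $\oblv^r_Z\circ i_{\dr,*}\simeq i^{\IndCoh}_*\circ \oblv^r_X$ on which your left t-exactness rests: it is false, and the step that produces it is the claim that $i^!:\IndCoh(Z)_X\to \IndCoh(X)$ is an equivalence. The Kashiwara-type statement \cite[Proposition 7.4.5]{IndSch} identifies $\IndCoh(Z)_X$ with $\IndCoh(Y)$ for $Y$ the \emph{formal completion} $Z^\wedge_X$, not with $\IndCoh(X)$; the functor $i^{\IndCoh}_*:\IndCoh(X)\to \IndCoh(Z)_X$ is fully faithful but not essentially surjective, so the right-hand vertical arrow of your square is not invertible. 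One can see directly that the identity fails: applying $i^!$ to its left-hand side gives $i^!\circ \oblv^r_Z\circ i_{\dr,*}\simeq \oblv^r_X\circ i^{\dagger,r}\circ i_{\dr,*}\simeq \oblv^r_X$, whereas applying $i^!$ to the right-hand side gives $i^!\circ i^{\IndCoh}_*\circ \oblv^r_X$, and $i^!\circ i^{\IndCoh}_*$ is not the identity (for $\on{pt}\hookrightarrow \BA^1$ it is $\CF\mapsto \CF\otimes(k\oplus k[-1])$). This is precisely the phenomenon that makes Kashiwara's lemma nontrivial: the D-module pushforward of $k$ along $\on{pt}\hookrightarrow \BA^1$ has infinite-dimensional underlying sheaf, not the skyscraper. (What \emph{is} true, by passing to left adjoints in \eqref{e:pullback right}, is $i_{\dr,*}\circ \ind^r_X\simeq \ind^r_Z\circ i^{\IndCoh}_*$, which does not help here.)

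What survives of your diagram is only that $\oblv^r_Z\circ i_{\dr,*}$ lands in $\IndCoh(Z)_X$ and satisfies $i^!\circ\oblv^r_Z\circ i_{\dr,*}\simeq \oblv^r_X$. Hence to get left t-exactness you must prove: for $\CF\in \IndCoh(Z)_X$ with $i^!(\CF)\in \IndCoh(X)^{>0}$, one has $\CF\in \IndCoh(Z)^{>0}$. This is exactly \lemref{l:ineq r} in the paper and is the real content of the proposition. It is not formal: the paper proves it by noting that the connective part of $\IndCoh(Z)_X$ is generated by $(\Coh(Z)_X)^{\leq 0}$ and that every object of the latter is a finite successive extension of objects pushed forward from $\Coh(X)^{\leq 0}$, so that right-orthogonality to $i_*(\Coh(X)^{\leq 0})$ --- which is what the hypothesis $i^!(\CF)\in\IndCoh(X)^{>0}$ provides --- implies right-orthogonality to all of $(\Coh(Z)_X)^{\leq 0}$. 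Your proposal needs this devissage step supplied; no purely adjunction-theoretic manipulation will produce it.
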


\begin{proof}

Note that the full subcategory
$$\Crys^r(Z)_{X}\subset \Crys^r(Z)$$
is compatible with the t-structure, since it is the kernel of the functor $j^{\dagger,r}$,
which is t-exact (here $j$ denotes the open embedding $Z-X\hookrightarrow Z$). 

\medskip

Hence, it remains to show that the functor
$$'i^{\dagger,r}:\Crys^r(Z)_{X}\to \Crys^r(X)$$
is t-exact.

\medskip

Thus, we need to show that for $\CM\in \Crys^r(Z)_{X}$ we have:
$$\oblv^r_Z(\CM)\in \IndCoh(Z)^{>0}\, \Leftrightarrow\, \oblv^r_X(i^{\dagger,r}(\CM))\in \IndCoh(X)^{>0}.$$

\medskip

Recall the notation
$$ \IndCoh(Z)_X:=\on{ker}\left(j^!:\IndCoh(Z)\to \IndCoh(Z-X)\right).$$

It suffices to show the following:

\begin{lem} \label{l:ineq r}
Let $i:X\to Z$ be a closed embedding. Then for $\CF\in \IndCoh(Z)_X$ we have:
$$\CF\in \IndCoh(Z)^{>0} \, \Leftrightarrow\, i^!(\CF)\in \IndCoh(X)^{>0}.$$
\end{lem}

\end{proof}

\begin{proof}[Proof of \lemref{l:ineq r}]

The $\Rightarrow$ implication follows from the fact that $i^!$ is left t-exact
(being the right adjoint of the t-exact functor, namely, $i^{\IndCoh}_*$).

\medskip

For the converse implication, note that the full subcategory 
$$\IndCoh(Z)_X\subset \IndCoh(Z)$$
is also compatible with the t-structure, since it is the kernel of the t-exact functor $j^!$.
Furthermore, it follows from \cite[Proposition 4.1.7(b)]{IndCoh} that the t-structure on $\IndCoh(Z)_X$ is generated
by the t-structure on
$$\Coh(Z)_X:=\on{ker}\left(j^*:\Coh(Z)\to \Coh(Z-X)\right).$$

\medskip

Let $\CF\in \IndCoh(Z)_X$ be such that $i^!(\CF)\in \IndCoh(X)^{>0}$. We need to show that
$\CF$ is right-orthogonal to $(\Coh(Z)_X)^{\leq 0}$. By assumption, $\CF$ is right-orthogonal
to the essential image of $\Coh(X)^{\leq 0}$ under 
$$\Coh(X)\overset{i_*}\longrightarrow \Coh(Z)_X\to \IndCoh(Z)_X.$$
However, it is easy to see that every object of $(\Coh(Z)_X)^{\leq 0}$ can be obtained
as a finite successive extension of objects in the essential image of $\Coh(X)^{\leq 0}$,
which implies the required assertion.

\end{proof}

\begin{cor}  \label{c:t structure indep}
If a map $X_1\to X_2$ of DG schemes indices an isomorphism $$^{cl,red}\!X_1\to {}^{cl,red}\!X_2,$$
then the corresponding t-structures on $\Crys^r(X_1)\simeq \Crys^r(X_2)$ coincide.
\end{cor}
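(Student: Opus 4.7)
The plan is to reduce to the case where $f$ is a closed embedding, where the result follows from \propref{p:cl embed exact} combined with Kashiwara's lemma.

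I would first handle the case when $f = i:X_1\hookrightarrow X_2$ is a closed embedding which induces an isomorphism on $^{cl,red}$. Let $j: X_2-X_1\hookrightarrow X_2$ be the complementary open embedding. Since formation of $^{cl,red}$ commutes with taking open complements, the hypothesis forces $^{cl,red}(X_2-X_1)=\emptyset$. For any $S\in \affdgSch$ with $^{cl,red}S\neq \emptyset$, the space $\on{Maps}({}^{cl,red}S, X_2-X_1)$ is empty (maps from a reduced classical scheme factor through $^{cl,red}$), so $(X_2-X_1)_\dr$ is essentially empty as a prestack and $\Crys^r(X_2-X_1)=0$. Thus $j^{\dagger,r}=0$ and $\Crys^r(X_2)_{X_1} = \ker(j^{\dagger,r})=\Crys^r(X_2)$. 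By Kashiwara (\propref{p:Kashiwara}), $i^{\dagger,r}$ is then an equivalence whose inverse is $i_{\dr,*}$, and \propref{p:cl embed exact} asserts that this inverse is t-exact. Hence $i^{\dagger,r}$ is itself a t-exact equivalence.

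For the general case, I would introduce $Y:={}^{cl,red}X_1\simeq {}^{cl,red}X_2$ and consider the units of the $^{cl,red}$-adjunction $i_j: Y\hookrightarrow X_j$, which are closed embeddings (locally given by the surjections $A\twoheadrightarrow H^0(A)/\on{nilp}H^0(A)$ of connective commutative DG algebras). Since $Y$ is reduced classical, the adjunction identifies $\on{Maps}(Y,X_2)$ with $\on{Maps}(Y,Y)$, and both $f\circ i_1$ and $i_2$ correspond to the identity of $Y$, yielding a canonical equivalence $f\circ i_1\simeq i_2$. Each $i_j$ induces the identity on $^{cl,red}$, so by the closed-embedding case already treated, each $i_j^{\dagger,r}:\Crys^r(X_j)\to \Crys^r(Y)$ is a t-exact equivalence. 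Functoriality of $(-)^{\dagger,r}$ gives $i_1^{\dagger,r}\circ f^{\dagger,r}\simeq i_2^{\dagger,r}$, so $f^{\dagger,r}\simeq (i_1^{\dagger,r})^{-1}\circ i_2^{\dagger,r}$ is a composition of t-exact equivalences, hence is itself a t-exact equivalence.

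The main obstacle lies in the first step: verifying that the open complement $X_2-X_1$ contributes nothing to right crystals, so that Kashiwara's lemma upgrades from a co-localization to a bona fide equivalence against which \propref{p:cl embed exact} can be applied directly. Once this is established, the bootstrap in the second step via the canonical closed embedding of the reduced classical truncation is formal.
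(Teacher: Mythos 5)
Your proposal is correct and follows the route the paper intends: the corollary is stated as a consequence of \propref{p:cl embed exact}, and your argument makes that derivation explicit (closed embeddings with empty open complement, where Kashiwara's lemma makes $i^{\dagger,r}$ an equivalence with t-exact inverse $i_{\dr,*}$, followed by the formal reduction of an arbitrary such map to closed embeddings by factoring both schemes through $Y={}^{cl,red}X_1\simeq{}^{cl,red}X_2$). The only step left implicit is that the inverse of a t-exact equivalence is again t-exact, which is standard.
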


\sssec{}

Let $X$ be a DG scheme. By construction, the forgetful functor $\oblv^r_X$ is left t-exact. Hence, by adjunction,
the functor $\ind^r_X$ is right t-exact. 

\medskip

We now claim:

\begin{prop} \label{p:t and crys gen}
The functor $\ind^r_X$ is t-exact.
\end{prop}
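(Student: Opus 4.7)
Since $\ind^r_X$ is already right t-exact (as the left adjoint of $\oblv^r_X$, which is left t-exact by the very definition of the right t-structure), only left t-exactness requires proof. By \propref{p:right ind}(b), this amounts to showing that the composite
$$(p_t)_*^{\IndCoh}\circ(p_s)^!\;:\;\IndCoh(X)\to\IndCoh(X)$$
preserves connectivity, where $p_s,p_t:X\underset{X_\dr}\times X\rightrightarrows X$ are the two projections from the infinitesimal groupoid.

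My first step is a Kashiwara-type reduction to the case of a smooth classical scheme. Both the right t-structure (\secref{sss:t and Zariski}) and the formation of $\ind^r_X$ (via base change for the ind-proper $p_s, p_t$) are Zariski-local, so we may assume $X$ is affine. Choose a closed embedding $i:X\hookrightarrow Z$ into a smooth classical affine scheme $Z$. Passing to right adjoints in the diagram \eqref{e:pullback right} and using the definition of $i_{\dr,*}$ from \secref{sss:cl pushforward}, we obtain a base change isomorphism
$$i_{\dr,*}\circ\ind^r_X\;\simeq\;\ind^r_Z\circ i^{\IndCoh}_*.$$
Here $i^{\IndCoh}_*$ is t-exact and conservative, while by \propref{p:cl embed exact} combined with \propref{p:Kashiwara}, $i_{\dr,*}$ is a fully faithful t-exact embedding onto the t-compatible subcategory $\Crys^r(Z)_X$ that reflects coconnectivity via $\oblv^r_Z\circ i_{\dr,*}\simeq i^{\IndCoh}_*\circ\oblv^r_X$. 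Consequently, t-exactness of $\ind^r_X$ reduces to that of $\ind^r_Z$.

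For smooth $Z$, I would pass to the left realization. First, since the t-structure on $\Crys^l(Z)=\QCoh(Z_\dr)$ is defined pointwise in terms of $*$-pullbacks to all affine DG schemes mapping to $Z_\dr$, and $\oblv^l_Z$ is itself a $*$-pullback (along $Z\to Z_\dr$), the functor $\oblv^l_Z$ is automatically t-exact in both directions. Hence by adjunction (\lemref{l:ind l}, applicable since $Z$ is eventually coconnective), so is $\ind^l_Z$, and therefore the monad $\oblv^l_Z\circ\ind^l_Z$ is t-exact. Since $\Upsilon_Z:\QCoh(Z)\to\IndCoh(Z)$ is an equivalence for smooth $Z$, one has from \secref{sss:induction for left} the identification $\ind^r_Z\simeq\Upsilon_{Z_\dr}\circ\ind^l_Z\circ\Upsilon_Z^{-1}$, which combined with the commutativity of \eqref{e:oblv left and right} yields
$$\oblv^r_Z\circ\ind^r_Z\;\simeq\;\Upsilon_Z\circ(\oblv^l_Z\circ\ind^l_Z)\circ\Upsilon_Z^{-1}.$$
Because $\omega_Z$ is a line bundle in cohomological degree $-d$ (where $d=\dim Z$), the shifts by $[-d]$ and $[+d]$ coming from $\Upsilon_Z$ and $\Upsilon_Z^{-1}$ cancel, so t-exactness of the right-hand side follows from that of the left monad. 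The main obstacle lies in this last step: verifying the precise compatibility of the equivalences $\Upsilon_Z$ and $\Upsilon_{Z_\dr}$ with the t-structures and confirming that the shift cancellation produces exactly the right t-structure on $\Crys^r(Z)$.
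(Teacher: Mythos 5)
The reduction to a smooth classical $Z$ via Kashiwara's lemma is sound (though, as the paper's own proof shows, it is unnecessary). The genuine gap is in the smooth case, at the assertion that $\oblv^l_Z$ is ``automatically t-exact in both directions'' because the left t-structure on $\QCoh(Z_\dr)$ is defined pointwise. The pointwise condition defines only the \emph{connective} part $\QCoh(Z_\dr)^{\leq 0}$; the coconnective part is its right orthogonal, and there is no formal reason that $*$-pullback along $Z\to Z_\dr$ preserves it. So right t-exactness of $\oblv^l_Z$ is indeed automatic, but left t-exactness is not --- the paper itself warns that the left t-structure is poorly behaved, and \propref{p:left t-struct} (detection of connectivity for classically formally smooth prestacks) is the most it establishes by formal means. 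Moreover, even granting that $\oblv^l_Z$ is t-exact, adjunction only yields that $\ind^l_Z$ is \emph{right} t-exact; it cannot give left t-exactness of $\ind^l_Z$, hence not of the monad $\oblv^l_Z\circ\ind^l_Z$, which is exactly what your conjugation-by-$\Upsilon_Z$ argument needs. That left t-exactness is the substantive content of the proposition --- it amounts to flatness of $\on{Diff}_Z$ over $\CO_Z$ --- and your argument never supplies it; you have also misplaced the difficulty, since the shift cancellation you flag as the main obstacle is harmless.

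The paper's proof supplies the missing input directly and for arbitrary $X$: writing $\oblv^r_X\circ\ind^r_X\simeq(p_t)^{\IndCoh}_*\circ p_s^!$ by \propref{p:right ind}(b), it observes that $p_s^!$ is left t-exact by \lemref{l:ineq r} applied to $\Delta_X$, and that $(p_t)^{\IndCoh}_*$ is t-exact by \cite[Lemma 2.7.11]{IndSch}, using ind-properness of $p_t$; no reduction to the smooth case and no passage to the left realization is needed. (The colimit-over-infinitesimal-neighborhoods argument, which is what would rescue your smooth case by exhibiting the monad as a filtered colimit of finite flat correspondences, appears in the paper only later, in the proof of \propref{p:t and crys sm}(a).) Two smaller slips: left t-exactness means preserving coconnectivity, not connectivity; and the base-change isomorphism $i_{\dr,*}\circ\ind^r_X\simeq\ind^r_Z\circ i^{\IndCoh}_*$ is obtained by passing to \emph{left} adjoints in \eqref{e:pullback right}, not right adjoints.
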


\begin{proof}

It suffices to show that the composition $\oblv^r_X\circ \ind^r_X$
is left t-exact. We deduce this from \propref{p:right ind}(b):

\medskip

The functor $p_s^!$ is left t-exact (e.g., by \lemref{l:ineq r} applied
to $\Delta_X:X\to X\times X$). The functor $(p_t)^{\IndCoh}_*$ is left t-exact (in fact, $t$-exact)
by \cite[Lemma 2.7.11]{IndSch}.

\end{proof}

\sssec{}

We now claim:

\begin{prop} \label{p:t and crys sm} \hfill

\smallskip

\noindent{\em(a)} If $X$ is a smooth classical scheme, then $\oblv^r_X$ is t-exact.

\smallskip

\noindent{\em(b)} For a quasi-compact DG scheme $X$, the functor $\oblv^r_X$ is of
bounded cohomological amplitude. 

\end{prop}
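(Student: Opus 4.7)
I would treat (a) via the monadic description of $\Crys^r(X)$, and (b) via reduction to (a) using Kashiwara's lemma.

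For (a), by \propref{p:right ind}(c) the adjunction $(\ind^r_X, \oblv^r_X)$ is monadic, so the $t$-exactness of $\oblv^r_X$ will follow once the monad $T := \oblv^r_X \circ \ind^r_X$ is $t$-exact. From \propref{p:right ind}(b),
$$T \simeq (p_t)^{\IndCoh}_* \circ p_s^!,$$
where $p_s, p_t : X \underset{X_\dr}{\times} X \to X$ are the two projections from the formal neighborhood of the diagonal in $X \times X$. Both projections are ind-proper, so $(p_s)^{\IndCoh}_*$ and $(p_t)^{\IndCoh}_*$ are $t$-exact by \cite[Lemma 2.7.11]{IndSch}. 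The standard fact that in a continuous adjunction $L \dashv R$ one has $L$ $t$-exact iff $R$ $t$-exact then gives $p_s^!$ $t$-exact. Composing, $T$ is $t$-exact, and hence $\oblv^r_X$ is $t$-exact. The smoothness of $X$ enters to ensure that the formal neighborhood has a clean local product description, which makes the argument transparent.

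For (b), I would reduce to (a) via Kashiwara. Both the right $t$-structure on $\Crys^r(X)$ (\secref{sss:t and Zariski}) and the $t$-structure on $\IndCoh(X)$ are Zariski local, so covering $X$ by finitely many affine opens reduces the statement to $X$ affine almost of finite type. By the technique in the proof of \propref{p:dr laft}(b), I choose a closed embedding $i : X \hookrightarrow Z$ with $Z$ a smooth classical affine scheme. Kashiwara's lemma (\propref{p:Kashiwara}) provides an equivalence $'i^{\dagger,r} : \Crys^r(Z)_X \to \Crys^r(X)$. The commutative diagram $\oblv^r_X \circ i^{\dagger,r} \simeq i^! \circ \oblv^r_Z$, restricted to $\Crys^r(Z)_X$ (whose image under $\oblv^r_Z$ lies in $\IndCoh(Z)_X$ by a compatibility analogous to the one for $j_{\dr,*}$), identifies $\oblv^r_X$ up to this equivalence with the composite of the restrictions of $\oblv^r_Z$ and $i^!$. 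Part (a) gives $\oblv^r_Z$ $t$-exact; and the restriction of $i^!$ to $\IndCoh(Z)_X$ is the inverse of the $t$-exact equivalence $i^{\IndCoh}_* : \IndCoh(X) \to \IndCoh(Z)_X$ (Kashiwara for $\IndCoh$, a consequence of \cite[Prop. 7.4.5]{IndSch}), hence also $t$-exact. Composing, $\oblv^r_X$ is of bounded cohomological amplitude.

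The main obstacle is the monadic $t$-structure argument in (a): one needs the general fact that for a $t$-exact monad $T$ on a presentable stable $\infty$-category with $t$-structure, the category of $T$-modules inherits a $t$-structure compatible with the forgetful functor, which lets the $t$-exactness of $T$ descend to $t$-exactness of $\oblv^r_X$. Beyond this, the argument reduces to facts already established in this paper or cited from \cite{IndCoh} and \cite{IndSch}.
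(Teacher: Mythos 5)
Your overall architecture (reduce (a) to the t-exactness of the monad $\oblv^r_X\circ\ind^r_X$; reduce (b) to (a) via a closed embedding into a smooth $Z$ and Kashiwara) matches the paper's, but both parts rest on incorrect exactness claims. In (a), the asserted ``standard fact'' that in an adjunction $L\dashv R$ one has $L$ t-exact iff $R$ t-exact is false: the correct statement is that $L$ is right t-exact iff $R$ is left t-exact. From the t-exactness of $(p_s)^{\IndCoh}_*$ you therefore only get that $p_s^!$ is \emph{left} t-exact, not t-exact. A telltale sign is that your argument never actually uses smoothness of $X$, so it would prove t-exactness of $\oblv^r_X$ for an arbitrary DG scheme, which is false (this is exactly why part (b) only asserts bounded amplitude; $!$-pullback to a singular diagonal shifts degrees). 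The paper closes this gap by writing $(X\times X)^\wedge_X$ as $\underset{n}{colim}\,X_n$ over infinitesimal neighborhoods of the diagonal, so that $(p_t)^{\IndCoh}_*\circ p_s^!\simeq \underset{n}{colim}\,(p_t\circ i_n)^{\IndCoh}_*\circ(p_s\circ i_n)^!$, and then uses that $p_s\circ i_n:X_n\to X$ is finite \emph{and flat} -- this is precisely where smoothness enters -- to get t-exactness of each $(p_s\circ i_n)^!$.

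In (b), the claim that $i^{\IndCoh}_*:\IndCoh(X)\to\IndCoh(Z)_X$ is a t-exact equivalence with inverse $i^!|_{\IndCoh(Z)_X}$ is also wrong: the Kashiwara-type equivalence of \cite[Proposition 7.4.5]{IndSch} identifies $\IndCoh(Z)_X$ with $\IndCoh(Y)$ for $Y=Z^\wedge_X$ the \emph{formal completion} (via $\wh{i}{}^!$), not with $\IndCoh(X)$; the functor $i^{\IndCoh}_*$ out of $\IndCoh(X)$ is not essentially surjective onto $\IndCoh(Z)_X$. Moreover, were your argument correct it would again give t-exactness (not merely bounded amplitude) of $\oblv^r_X$ for every quasi-compact $X$, contradicting, e.g., the discussion around \lemref{l:Drinfeld}. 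The paper's actual route is: write $\oblv^r_X\simeq i^!\circ\oblv^r_Z\circ i_{\dr,*}$, use that $i_{\dr,*}$ is t-exact (\propref{p:cl embed exact}), that $\oblv^r_Z$ is t-exact by part (a), and that $i^!:\IndCoh(Z)\to\IndCoh(X)$ has bounded cohomological amplitude because $Z$ is regular. You should replace your final step by this boundedness statement for $i^!$ on all of $\IndCoh(Z)$ rather than attempting to prove t-exactness of its restriction to $\IndCoh(Z)_X$.
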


\begin{proof}

Let $X$ be a smooth classical scheme. By the definition of the t-structure on $\Crys^r(X)$, the essential
image of $\IndCoh(X)^{\leq 0}$ under $\ind^r_X$ generates $\Crys^r(X)^{\leq 0}$
by taking colimits. Hence, in order to show that $\oblv^r_X$ is right t-exact,
it suffices to show the same for the functor $\oblv^r_X\circ \ind^r_X$. We will deduce this
from \propref{p:right ind}(b): 

\medskip

We can write
$$(X\times X)^\wedge_X\simeq \underset{n}{colim}\, X_n,$$
where $X_n\overset{i_n}\to X\times X$ is the $n$-th infinitesimal neighborhood of the diagonal. 
Hence, by \cite[Equation (2.2)]{IndSch},
$$(p_t)^{\IndCoh}_*\circ p_s^!\simeq \underset{n}{colim}\,(p_t\circ i_n)^{\IndCoh}_*\circ (p_s\circ i_n)^!.$$
Now, each of the functors $(p_t\circ i_n)^{\IndCoh}_*$ is t-exact by \cite[Lemma 2.7.11]{IndSch}, and each of
the functors $(p_s\circ i_n)^!$ is t-exact because $p_s\circ i_n:X_n\to X$ is finite and flat.

\medskip

Now, let $X$ be a quasi-compact DG scheme, and let us show that $\oblv^r_X$ is of bounded 
cohomological amplitude. The question readily reduces to the case when $X$ is affine,
and let $i:X\hookrightarrow Z$ be a closed embedding, where $Z$ is smooth. By 
\propref{p:cl embed exact} and point (a), it suffices to show that the functor
$i^!:\IndCoh(Z)\to \IndCoh(X)$ is of bounded cohomological amplitude, but the
latter follows easily from the fact that $Z$ is regular.

\end{proof}

\ssec{Right t-structure on crystals on indschemes}

\sssec{}

Let $\CX$ be a DG indscheme.  Fix a presentation of $\CX$
\begin{equation}\label{e:pres indscheme}
\CX = \underset{\alpha}{colim}\  X_\alpha
\end{equation}
as in \cite[Prop. 1.7.6]{IndSch}.  For each $\alpha$, let $i_\alpha$ denote the corresponding closed embedding 
$X_\alpha\rightarrow \CX$, and for each $\alpha_1\to \alpha_2$ let $i_{\alpha_1,\alpha_2}$ denote the closed
embedding $X_{\alpha_1}\to X_{\alpha_2}$. 

\medskip

We have:
$$\Crys^r(\CX)\simeq  \underset{\alpha}{lim}\ \Crys^r(X_\alpha),$$
where for $\alpha_1\to \alpha_2$, the functor $\Crys^r(X_{\alpha_2})\rightarrow \Crys^r(X_{\alpha_1})$ is 
given by $i_{\alpha_1,\alpha_2}^{\dagger,r}$.

\medskip

Hence, by \cite[Sect. 1.3.3]{DG}, we have that
$$ \Crys^r(\CX) \simeq \underset{\alpha}{colim}\ \Crys^r(X_\alpha), $$
where for $\alpha_1\to \alpha_2$, the functor $\Crys^r(X_{\alpha_1})\rightarrow \Crys^r(X_{\alpha_2})$ is 
given by $(i_{\alpha_1,\alpha_2})_{\dr,*}$.

\medskip

In particular, for each $\alpha$, we obtain a pair of adjoint functors
$$(i_{\alpha})_{\dr,*}: \Crys^r(X) \rightleftarrows \Crys^r(\CX) : i_\alpha^{\dagger,r}.$$

\sssec{}
Recall from \cite[Sect. 2.5]{IndSch} that $\IndCoh(\CX)$ has a natural t-structure compatible with
filtered colimits.

\medskip

Using this t-structure on $\IndCoh(\CX)$, we can define the right t-structure on $\Crys^r(\CX)$.  Namely, we have
$$\CM \in \Crys^r(\CX)^{\geq 0} \ \Leftrightarrow\  \oblv^r_{\CX}(\CM) \in \IndCoh(\CX)^{\geq 0} $$
Since $\oblv^r_\CX$ preserves colimits, this t-structure is compatible with filtered colimits.
We can describe this t-structure more explicitly using the presentation \eqref{e:pres indscheme}, in a 
way analogous to \cite[Lemma 2.5.3]{IndSch} for the t-structure on $\IndCoh(\CX)$:

\begin{lem}
Under the above circumstances, we have:

\smallskip

\noindent{\em(a)} An object $\CF\in \Crys^r(\CX)$ belongs to $\Crys^r(\CX)^{\geq 0}$ if and only if for
every $\alpha$, the object $i_\alpha^{\dagger,r}(\CF)\in \Crys^r(X_\alpha)$ belongs to $\Crys^r(X_\alpha)^{\geq 0}$.

\smallskip

\noindent{\em(b)} The category $\Crys^r(\CX)^{\leq 0}$ is generated under colimits by the essential images of 
the functors $(i_\alpha)_{\dr,*}\left(\Crys^r(X_\alpha)^{\leq 0}\right)$.

\end{lem}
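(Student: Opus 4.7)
The plan is to deduce part (a) from the analogous statement for $\IndCoh$, then establish t-exactness of the insertion functors $(i_\alpha)_{\dr,*}$, and finally use the filtered colimit description of $\Crys^r(\CX)$ to conclude part (b).

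For part (a): by definition of the right t-structure, $\CF \in \Crys^r(\CX)^{\geq 0}$ if and only if $\oblv^r_\CX(\CF) \in \IndCoh(\CX)^{\geq 0}$. By \cite[Lemma 2.5.3(a)]{IndSch} applied to the presentation \eqref{e:pres indscheme}, the latter is equivalent to $i_\alpha^!(\oblv^r_\CX(\CF)) \in \IndCoh(X_\alpha)^{\geq 0}$ for every $\alpha$. The commutative square \eqref{e:pullback right} identifies $i_\alpha^! \circ \oblv^r_\CX$ with $\oblv^r_{X_\alpha} \circ i_\alpha^{\dagger,r}$, so this condition is equivalent to $i_\alpha^{\dagger,r}(\CF) \in \Crys^r(X_\alpha)^{\geq 0}$ for every $\alpha$.

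The key step for part (b) is showing that each $(i_\alpha)_{\dr,*} : \Crys^r(X_\alpha) \to \Crys^r(\CX)$ is t-exact. Right t-exactness follows from the adjunction, since $i_\alpha^{\dagger,r}$ is left t-exact by part (a). For left t-exactness, take $\CN \in \Crys^r(X_\alpha)^{\geq 0}$; by part (a), one must show $i_\beta^{\dagger,r}((i_\alpha)_{\dr,*}(\CN)) \in \Crys^r(X_\beta)^{\geq 0}$ for every $\beta$. By filteredness of the poset indexing \eqref{e:pres indscheme}, pick a common upper bound $\gamma$ for $\alpha$ and $\beta$. The factorizations $i_\alpha = i_\gamma \circ i_{\alpha,\gamma}$ and $i_\beta = i_\gamma \circ i_{\beta,\gamma}$, combined with Kashiwara's lemma (\propref{p:Kashiwara}) applied to $i_\gamma$, which gives that $(i_\gamma)_{\dr,*}$ is fully faithful so $i_\gamma^{\dagger,r} \circ (i_\gamma)_{\dr,*} \simeq \on{id}$, yield
\[ i_\beta^{\dagger,r} \circ (i_\alpha)_{\dr,*} \simeq (i_{\beta,\gamma})^{\dagger,r} \circ (i_{\alpha,\gamma})_{\dr,*}. \]
By \propref{p:cl embed exact}, both $(i_{\alpha,\gamma})_{\dr,*}$ and $(i_{\beta,\gamma})_{\dr,*}$ are t-exact; in particular, $(i_{\beta,\gamma})^{\dagger,r}$ is left t-exact as the right adjoint of a t-exact functor, and the composition is left t-exact as desired.

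With t-exactness established, the conclusion of part (b) is formal. For any $\CM \in \Crys^r(\CX)$, the colimit description $\Crys^r(\CX) \simeq \underset{\alpha}{colim}\, \Crys^r(X_\alpha)$ (with transition functors $(i_{\alpha_1,\alpha_2})_{\dr,*}$) provides a canonical isomorphism
\[ \CM \simeq \underset{\alpha}{colim}\, (i_\alpha)_{\dr,*}(i_\alpha^{\dagger,r}(\CM)). \]
If $\CM \in \Crys^r(\CX)^{\leq 0}$, then $\CM \simeq \tau^{\leq 0}(\CM)$; since the right t-structure is compatible with filtered colimits, $\tau^{\leq 0}$ preserves filtered colimits, and the t-exactness of $(i_\alpha)_{\dr,*}$ gives
\[ \CM \simeq \underset{\alpha}{colim}\, (i_\alpha)_{\dr,*}(\tau^{\leq 0}(i_\alpha^{\dagger,r}(\CM))), \]
which exhibits $\CM$ as a colimit of objects in the essential image of $(i_\alpha)_{\dr,*}\!\left(\Crys^r(X_\alpha)^{\leq 0}\right)$. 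The main subtlety lies in the base change manipulation for the left t-exactness of $(i_\alpha)_{\dr,*}$, which depends crucially on Kashiwara's lemma and the filteredness of the indexing poset.
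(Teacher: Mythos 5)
Your proof is correct. Part (a) is exactly the paper's argument: the definition of the right t-structure, the commutation of $\oblv^r$ with $!$-pullback, and \cite[Lemma 2.5.3(a)]{IndSch}. For part (b) the paper simply says it ``follows formally from point (a)'', meaning the orthogonality argument: by (a) and the $((i_\alpha)_{\dr,*}, i_\alpha^{\dagger,r})$-adjunction, an object lies in $\Crys^r(\CX)^{\geq 1}$ if and only if it is right-orthogonal to all objects $(i_\alpha)_{\dr,*}(\CP)$ with $\CP\in \Crys^r(X_\alpha)^{\leq 0}$, so these objects generate the connective part. You instead take a more constructive route: you first prove full t-exactness of the insertion functors $(i_\alpha)_{\dr,*}$ (via Kashiwara's lemma, a common refinement $\gamma$, and \propref{p:cl embed exact}) and then truncate the canonical presentation of $\CM$ as a filtered colimit of $(i_\alpha)_{\dr,*}\,i_\alpha^{\dagger,r}(\CM)$, using compatibility of the t-structure with filtered colimits to pull $\tau^{\leq 0}$ inside. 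Both arguments are valid; yours costs more (it requires the identity $i_\beta^{\dagger,r}\circ (i_\alpha)_{\dr,*}\simeq (i_{\beta,\gamma})^{\dagger,r}\circ (i_{\alpha,\gamma})_{\dr,*}$ and the colimit presentation of objects from \cite[Sect. 1.3.3]{DG}), but it buys, as a byproduct, precisely the t-exactness statement that the paper records separately as the next lemma (that $i_{\dr,*}$ is t-exact for a closed embedding of a DG scheme into a DG indscheme), whereas the paper's formal argument for (b) only uses the right t-exactness of $(i_\alpha)_{\dr,*}$, which is immediate from (a) by adjunction.
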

\begin{proof}
Point (a) follows from the definition and \cite[Lemma 2.5.3(a)]{IndSch}.  Point (b) follows formally from point (a).
\end{proof}

\sssec{}
Suppose that $i: X \to \CX$ is a closed embedding of a DG scheme into a DG indscheme. By the exact same argument as in 
\cite[Lemma 2.5.5]{IndSch}, we have:

\begin{lem}
The functor $i_{\dr,*}$ is t-exact.
\end{lem}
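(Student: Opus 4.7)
The plan is to reduce the t-exactness of $i_{\dr,*}$ to that of the ind-coherent pushforward $i^{\IndCoh}_*$, which holds by \cite[Lemma 2.5.5]{IndSch}, by exploiting the fact that the right t-structure on $\Crys^r$ is defined via the forgetful functor $\oblv^r$. The bridge is a pair of commutative squares relating the crystal-level functors $i_{\dr,*}$, $i^{\dagger,r}$ to the $\IndCoh$-level functors $i^{\IndCoh}_*$, $i^!$ under $\oblv^r$.

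First I would verify the commutative diagrams
$$
\CD
\Crys^r(X) @>{\oblv^r_X}>> \IndCoh(X)   \\
@V{i_{\dr,*}}VV    @VV{i^{\IndCoh}_*}V  \\
\Crys^r(\CX) @>{\oblv^r_\CX}>> \IndCoh(\CX)
\endCD
$$
and
$$
\CD
\Crys^r(\CX) @>{\oblv^r_\CX}>> \IndCoh(\CX)   \\
@V{i^{\dagger,r}}VV    @VV{i^!}V  \\
\Crys^r(X) @>{\oblv^r_X}>> \IndCoh(X).
\endCD
$$
The second square is precisely \eqref{e:pullback right} applied to the map $f = i$, so it commutes by construction. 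The first is obtained by passing to left adjoints, using that $i_{\dr,*}\dashv i^{\dagger,r}$ (by definition of $i_{\dr,*}$ as the left adjoint to the pullback along the ind-proper closed embedding $i$) and $i^{\IndCoh}_*\dashv i^!$ (by \cite[Corollary 2.8.3]{IndSch}). In the indscheme setting, establishing this adjointness and compatibility reduces, via a factorization $X\hookrightarrow X_\alpha\hookrightarrow \CX$ through a member of a presentation $\CX=\underset{\alpha}{colim}\, X_\alpha$, to the scheme case handled in the proof of \propref{p:cl embed exact} together with the colimit description of $\Crys^r(\CX)$ from the previous subsection.

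Given these squares, left t-exactness is immediate: for $\CM\in\Crys^r(X)^{\geq 0}$ one has $\oblv^r_X(\CM)\in\IndCoh(X)^{\geq 0}$ by definition, so $\oblv^r_\CX(i_{\dr,*}(\CM))\simeq i^{\IndCoh}_*(\oblv^r_X(\CM))\in\IndCoh(\CX)^{\geq 0}$ by the t-exactness of $i^{\IndCoh}_*$, whence $i_{\dr,*}(\CM)\in\Crys^r(\CX)^{\geq 0}$. For right t-exactness I would pass to the adjoint: it is equivalent to check that $i^{\dagger,r}$ is left t-exact. For $\CN\in\Crys^r(\CX)^{\geq 0}$, the second square identifies $\oblv^r_X(i^{\dagger,r}(\CN))$ with $i^!(\oblv^r_\CX(\CN))$, and $i^!$ is left t-exact as the right adjoint of the (right) t-exact functor $i^{\IndCoh}_*$; hence $i^{\dagger,r}(\CN)\in\Crys^r(X)^{\geq 0}$.

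The main obstacle is the rigorous justification of the first square in the indscheme setting: $i_{\dr,*}$ is not directly the Kashiwara ``extension by zero'' of the DG scheme case of \propref{p:cl embed exact}, and one must patch the scheme-case compatibility with the analogous compatibility for the structural closed embeddings $X_\alpha\to\CX$ that is built into the colimit description of $\Crys^r(\CX)$. Once this compatibility is in place, the t-exactness follows by a formal argument parallel to \cite[Lemma 2.5.5]{IndSch}.
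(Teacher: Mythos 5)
Your first commutative square, $\oblv^r_\CX\circ i_{\dr,*}\simeq i^{\IndCoh}_*\circ \oblv^r_X$, is false, and this breaks the left t-exactness half of your argument. Passing to left adjoints in the square \eqref{e:pullback right} interchanges \emph{both} pairs of arrows, so what you actually obtain is the compatibility of pushforward with the \emph{induction} functors, $\ind^r_\CX\circ i^{\IndCoh}_*\simeq i_{\dr,*}\circ \ind^r_X$, not with the forgetful functors. From \eqref{e:pullback right} one only gets a natural transformation $i^{\IndCoh}_*\circ \oblv^r_X\to \oblv^r_\CX\circ i_{\dr,*}$ (via the unit of $(i_{\dr,*},i^{\dagger,r})$ and the counit of $(i^{\IndCoh}_*,i^!)$), and it is not invertible: the relevant base-change square fails because $X_\dr\underset{\CX_\dr}\times \CX$ is the formal completion of $\CX$ along $X$, not $X$ itself (contrast with the open embedding $j$ in the Kashiwara subsection, where the analogous square is Cartesian and the paper does record that $\oblv^r_\CZ\circ j_{\dr,*}\to j^{\IndCoh}_*\circ\oblv^r_{\oCZ}$ is an isomorphism). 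Concretely, for $\on{pt}\hookrightarrow \BA^1$ and $k\in\Crys^r(\on{pt})=\Vect$, the object $\oblv^r_{\BA^1}(i_{\dr,*}(k))$ is the underlying sheaf of the delta D-module, an infinite-dimensional skyscraper, whereas $i^{\IndCoh}_*(k)$ is one-dimensional. So you cannot read off connectivity of $\oblv^r_\CX(i_{\dr,*}\CM)$ from the t-exactness of $i^{\IndCoh}_*$.

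The right t-exactness half of your argument (deduced from left t-exactness of $i^{\dagger,r}$, which uses only the genuine square \eqref{e:pullback right} and the fact that $i^!$ is right adjoint to a right t-exact functor) is correct. But left t-exactness is precisely the non-formal direction: already for a closed embedding of DG schemes it rests on \lemref{l:ineq r}, i.e., on the devissage of $(\Coh(Z)_X)^{\leq 0}$ into extensions of pushforwards from $X$. The paper's intended argument (the ``exact same argument'' as for $\IndCoh$) factors $i$ as $X\to X_\alpha\to \CX$ through a term of a presentation $\CX=\underset{\alpha}{colim}\,X_\alpha$, applies \propref{p:cl embed exact} to $X\to X_\alpha$ and to the transition maps $i_{\alpha_1,\alpha_2}$, and then handles $(i_\alpha)_{\dr,*}$ using the preceding lemma: part (b) gives right t-exactness, while part (a) reduces left t-exactness to the isomorphism $i_\beta^{\dagger,r}\circ(i_\alpha)_{\dr,*}\simeq (i_{\alpha,\beta})_{\dr,*}$ for $\beta\geq\alpha$, which follows from Kashiwara's lemma. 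You should replace your first square with this reduction.
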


\sssec{}

As an illustration of the behavior of the above t-structure on right crystals over a DG indscheme, let us consider the following
situation. Let $i:X\to Z$ be a closed embedding of quasi-compact DG schemes. Let $Y$ denote the formal completion
of $X$ in $Z$, considered as an object of $\dgindSch_{\on{laft}}$; let $'i$ denote the resulting map $X\to Y$.

\medskip

We claim:

\begin{lem}   \label{l:t structure on form compl}
The equivalence $\Crys^r(X)\simeq \Crys^r(Y)$, induced by the isomorphism $'i_\dr:X_\dr\to Y_\dr$,
is compatible with the t-structures.
\end{lem}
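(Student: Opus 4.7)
The plan is to reduce the statement to the t-exactness result for $i_{\dr,*}$ stated just above in this subsection. The equivalence $\Crys^r(X)\simeq \Crys^r(Y)$ induced by the isomorphism $'i_\dr:X_\dr\to Y_\dr$ is nothing but the functor $'i^{\dagger,r}=({'i_\dr})^!:\Crys^r(Y)\to \Crys^r(X)$. So I need to verify that $'i^{\dagger,r}$ is t-exact as a map between the right t-structures on $\Crys^r(X)$ and $\Crys^r(Y)$ defined, respectively, via $\oblv^r_X$ and $\oblv^r_Y$.

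The key observation is that $'i:X\to Y$ is a closed embedding of a DG scheme into a DG indscheme: the formal completion $Y=Z^\wedge_X$ has underlying reduced prestack equal to that of $X$, and any $S\in (\affSch)_{/Y}$ factors through some $n$-th infinitesimal neighborhood $X_n$ of $X$ in $Z$, so that $S\times_Y X\to S$ is a closed embedding. Consequently the adjunction
$$'i_{\dr,*}:\Crys^r(X)\rightleftarrows \Crys^r(Y):{'i}^{\dagger,r}$$
from the discussion preceding \propref{p:Kashiwara} (extended to the indscheme setting as in the previous subsection) is available. Since $'i^{\dagger,r}$ is already known to be an equivalence, its left adjoint $'i_{\dr,*}$ must be the inverse equivalence.

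Now I invoke the lemma preceding the present one, which asserts that for any closed embedding $i$ of a DG scheme into a DG indscheme the functor $i_{\dr,*}$ is t-exact. Applied to our $'i$, this gives that $'i_{\dr,*}$ is t-exact. But a t-exact equivalence has a t-exact inverse (it restricts to equivalences on both $\Crys^{r,\leq 0}$ and $\Crys^{r,\geq 0}$), so $'i^{\dagger,r}$ is also t-exact, which is exactly the compatibility claimed.

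The argument is essentially formal given the earlier results, so there is no serious obstacle; the only point that requires a sentence of justification is that $'i:X\to Y$ qualifies as a closed embedding in the sense of the subsection \emph{Fppf and h-descent for crystals}, so that the adjunction $('i_{\dr,*},{'i}^{\dagger,r})$ and the preceding t-exactness lemma both apply.
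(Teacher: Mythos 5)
Your proof is correct, and it is a legitimate alternative to the two proofs the paper gives. The paper's first proof combines \propref{p:cl embed exact} (t-exactness of $i_{\dr,*}:\Crys^r(X)\to\Crys^r(Z)$ into the ambient scheme $Z$) with the t-compatibility of the equivalence $\IndCoh(Y)\simeq\IndCoh(Z)_Y$ from \cite{IndSch}; its second proof reduces everything to the purely $\IndCoh$-level statement that $\CF\in \IndCoh(Y)^{>0}\Leftrightarrow {}'i^!(\CF)\in\IndCoh(X)^{>0}$. You instead stay at the level of crystals on $Y$ itself: you apply the lemma immediately preceding (t-exactness of $i_{\dr,*}$ for a closed embedding of a DG scheme into a DG indscheme) to $'i:X\to Y$, observe that $'i_{\dr,*}$, being left adjoint to the equivalence $'i^{\dagger,r}$, is its inverse, and conclude by the formal fact that a t-exact equivalence has a t-exact inverse (which one justifies via uniqueness of truncation triangles, rather than by the parenthetical you give, which is really the conclusion restated). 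This buys brevity and avoids the detour through $\IndCoh(Z)_Y$ and \cite[Lemma 7.4.8]{IndSch}, at the cost of the one verification you rightly flag: that $'i:X\to Y$ is a closed embedding in the sense of the paper. This does hold --- for $S\in(\affSch)_{/Y}$ one has $S\underset{Y}\times X\simeq S\underset{Z}\times X$, because $X_\dr\to Z_\dr$ is a monomorphism of prestacks --- so the preceding lemma applies. Note finally that your ingredient is not logically independent of the paper's: the t-exactness of $i_{\dr,*}$ into an indscheme is itself deduced from \propref{p:cl embed exact}, so your argument is in the end a clean repackaging of the same content as the paper's first proof.
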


\begin{proof}[Proof 1]
Follows from \propref{p:cl embed exact} and the fact that the equivalence
$$\IndCoh(Y)\simeq \IndCoh(Z)_Y$$
of \cite[Proposition 7.4.5]{IndSch} is compatible with the t-structures
(see \cite[Lemma 7.4.8]{IndSch}).
\end{proof}

\begin{proof}[Proof 2]
From the commutative diagram
$$
\CD
\Crys^r(X)  @<{'i^{\dagger,r}}<< \Crys^r(Y)  \\
@V{\oblv^r_X}VV   @VV{\oblv^r_Y}V   \\
\IndCoh(X)   @<{'i^!}<<  \IndCoh(Y).
\endCD
$$
we obtain that it suffices to show that for $\CF\in \IndCoh(Y)$ we have
$$\CF\in \IndCoh(Y)^{>0} \, \Leftrightarrow\, {}'i^!(\CF)\in \IndCoh(X)^{>0},$$
which follows formally from \cite[Lemma 7.4.8]{IndSch} and 
\lemref{l:ineq r} (or can be easily proved directly).
\end{proof}

\ssec{Further properties of the left t-structure}  \label{ss:more on left t}

\sssec{}

First, let us describe the relation between the left and the right t-structures on crystals in the case of a smooth classical scheme.

\begin{prop}\label{p:left to right t-struct}
Let $X$ be a smooth classical scheme of dimension $n$.  Then
$$\CF\in \Crys^{l}(X)^{\leq 0} \Leftrightarrow \CF\in \Crys^{r}(X)^{\leq -n}.$$
I.e., the left t-structure agrees with the right t-structure up to a shift by the dimension of $X$.
\end{prop}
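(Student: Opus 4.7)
The plan is to reduce the statement to the observation that, for $X$ smooth of dimension $n$, the functor $\Upsilon_X:\QCoh(X)\to \IndCoh(X)$ shifts the t-structure by $-n$.

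First, since $X$ is smooth it is in particular classically formally smooth, so \propref{p:left t-struct} applies and characterizes the connective part of the left t-structure via the forgetful functor: $\CF\in \Crys^l(X)^{\leq 0}$ if and only if $\oblv^l_X(\CF)\in \QCoh(X)^{\leq 0}$. On the right side, \propref{p:t and crys sm}(a) asserts that $\oblv^r_X$ is t-exact for smooth $X$; combined with its conservativity (\lemref{l:cons for r}), this yields that $\CF\in \Crys^r(X)^{\leq -n}$ if and only if $\oblv^r_X(\CF)\in \IndCoh(X)^{\leq -n}$ (the nontrivial implication is that if $\tau^{>-n}\CF$ has vanishing image under the t-exact conservative functor $\oblv^r_X$, then $\tau^{>-n}\CF=0$).

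Second, the commutative diagram \eqref{e:2categ diag} gives $\oblv^r_X\simeq \Upsilon_X\circ \oblv^l_X$, so it suffices to show that for $\CG\in \QCoh(X)$ one has $\CG\in \QCoh(X)^{\leq 0}$ if and only if $\Upsilon_X(\CG)\in \IndCoh(X)^{\leq -n}$. For smooth $X$ the functor $\Upsilon_X$ is an equivalence (as used in the proof of \propref{p:left to right}, via duality with the equivalence $\Psi_X$), and by definition $\Upsilon_X(\CG)\simeq \CG\otimes \omega_X$. For a smooth classical scheme of dimension $n$, the dualizing complex $\omega_X$ is canonically isomorphic to $\Omega^n_{X/k}[n]$, i.e.\ to a line bundle placed in cohomological degree $-n$. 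Tensoring with a line bundle is a t-exact equivalence on $\QCoh(X)$, while the cohomological shift by $[n]$ shifts the t-structure by $-n$; hence $\Upsilon_X$ restricts to an equivalence $\QCoh(X)^{\leq 0}\simeq \IndCoh(X)^{\leq -n}$, completing the argument.

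There is no serious obstacle: the only ingredient not directly quoted from the preceding material is the identification $\omega_X\simeq \Omega^n_{X/k}[n]$, which is the standard computation of the dualizing complex for a smooth variety, and this is essentially the only place the smoothness assumption is used beyond the inputs above.
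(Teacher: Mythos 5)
Your proof is correct and follows essentially the same route as the paper's: the paper likewise combines \propref{p:left t-struct}, \propref{p:t and crys sm}(a), the conservativity of $\oblv^r_X$, and the fact that $\Upsilon_X$ is an equivalence carrying $\QCoh(X)^{\leq 0}$ onto $\IndCoh(X)^{\leq -n}$. You merely spell out the details (the conservativity argument for the right connective part, and the identification $\omega_X\simeq \Omega^n_{X/k}[n]$) that the paper leaves implicit.
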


\begin{proof}
Recall that the two forgetful functors are related by the commutative diagram
$$
\xy
(-20,0)*+{\QCoh(X)}="A";
(20,0)*+{\IndCoh(X)}="B";
(0,20)*+{\Crys(X).}="C";
{\ar@{->}_{\oblv^l_X} "C";"A"};
{\ar@{->}^{\oblv^r_X} "C";"B"};
{\ar@{->}_{\Upsilon_X} "A";"B"};
\endxy
$$
In the case that $X$ is a smooth classical scheme of dimension $n$, the functior $\Upsilon_X$ is an equivalence and maps 
$\QCoh(X)^{\leq 0}$ isomorphically to $\IndCoh(X)^{\leq -n}$.  
The assertion now follows from \propref{p:left t-struct} and \propref{p:t and crys sm}(a), combined with the fact that
$\oblv^r_X$ is conservative. 

\end{proof}

\sssec{}

The next proposition compares the ``left" and ``right" t-structures on $\Crys(X)$ for an arbitrary DG scheme $X$.

\begin{prop} \label{p:left t vs right t}
Let $X$ be quasi-compact. Then the identity functor
$$ \Crys^l(X)\rightarrow \Crys^r(X) $$
has bounded amplitude, i.e. the difference between the left and right t-structures is bounded.
\end{prop}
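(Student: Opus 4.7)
The plan is to reduce to $X$ affine by Zariski-locality, embed $X$ into a smooth scheme, and leverage \propref{p:left to right t-struct}. Both t-structures on $\Crys(X)$ are Zariski-local: the right by \secref{sss:t and Zariski}, the left because the canonical $t$-structure on $\QCoh$ of a prestack is detected by pullback to affines and this is compatible with open covers. So assume $X$ affine; since it is almost of finite type, choose a closed embedding $i\colon X\hookrightarrow Z$ into a smooth classical scheme $Z$ of dimension $n$. By naturality of $\Upsilon$ together with Kashiwara's lemma (\propref{p:Kashiwara}), one has the identities
$$
\Upsilon_{X_\dr}=i^{\dagger,r}\circ\Upsilon_{Z_\dr}\circ i_{\dr,*}^l
\quad\text{and}\quad
\Upsilon_{X_\dr}^{-1}=i^{\dagger,l}\circ\Upsilon_{Z_\dr}^{-1}\circ i_{\dr,*}^r,
$$
where $\Upsilon_{Z_\dr}$ is a shift by $n$ (\propref{p:left to right t-struct}) and $i_{\dr,*}^r$ is $t$-exact (\propref{p:cl embed exact}). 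The goal is to bound each of $\Upsilon_{X_\dr}$ and $\Upsilon_{X_\dr}^{-1}$ on its $\leq 0$ side; together these yield the required amplitude bound on $\Upsilon_{X_\dr}$.

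For $\Upsilon_{X_\dr}^{-1}$ every factor is right $t$-exact up to a bounded shift: $i_{\dr,*}^r$ is $t$-exact, $\Upsilon_{Z_\dr}^{-1}$ sends $\Crys^r(Z)^{\leq 0}$ into $\Crys^l(Z)^{\leq n}$, and $i^{\dagger,l}$ is right $t$-exact as a pullback on $\QCoh$ of prestacks along $i_\dr\colon X_\dr\to Z_\dr$. Composing gives $\Upsilon_{X_\dr}^{-1}(\Crys^r(X)^{\leq 0})\subset\Crys^l(X)^{\leq n}$.

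For $\Upsilon_{X_\dr}$ the main step is to bound $i_{\dr,*}^l$ in the left $t$-structures. Using \lemref{l:oblv l} (valid for the smooth, hence eventually coconnective, $Z$), naturality of $\Upsilon$, and the compatibility $\oblv^r_Z\circ i_{\dr,*}^r=i_*^{\IndCoh}\circ\oblv^r_X$, one obtains
$$
\oblv^l_Z\circ i_{\dr,*}^l \;=\; \Xi^\vee_Z\circ i_*^{\IndCoh}\circ \Upsilon_X\circ\oblv^l_X.
$$
For $\CM\in\Crys^l(X)^{\leq 0}$, right $t$-exactness of $\oblv^l_X$ and bounded amplitude of $\Upsilon_X$ (valid because $\omega_X$ has bounded cohomological amplitude on the quasi-compact almost finite type $X$) give $\Upsilon_X\oblv^l_X(\CM)\in\IndCoh(X)^{\leq c_1}$ for some $c_1=c_1(X)$; then $t$-exactness of $i_*^{\IndCoh}$ and the shift by $n$ effected by $\Xi^\vee_Z=\Upsilon_Z^{-1}$ yield $\oblv^l_Z(i_{\dr,*}^l\CM)\in\QCoh(Z)^{\leq c_1+n}$. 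Since $Z$ is classically formally smooth, \propref{p:left t-struct} gives $i_{\dr,*}^l(\CM)\in\Crys^l(Z)^{\leq c_1+n}$; combined with the shift by $-n$ of $\Upsilon_{Z_\dr}$ and the $t$-exact fully faithful $i_{\dr,*}^r$ (used in reverse via Kashiwara), the first identity produces $\Upsilon_{X_\dr}(\CM)\in\Crys^r(X)^{\leq c_1}$. The principal obstacle—the lack of direct control on $i_{\dr,*}^l$ in left $t$-structures—is thus circumvented by the detour through $\IndCoh$ on the smooth $Z$, where \lemref{l:oblv l} and \propref{p:left t-struct} together pin down the t-structure behavior.
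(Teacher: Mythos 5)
Your reduction to affine $X$ and your treatment of $\Upsilon_{X_\dr}^{-1}$ are correct and recover the paper's estimate $\CM^r\in(\Crys^r(X))^{\leq 0}\Rightarrow \CM^l\in(\Crys^l(X))^{\leq n}$. The gap is in the other direction: the ``compatibility'' $\oblv^r_Z\circ i_{\dr,*}^r=i_*^{\IndCoh}\circ\oblv^r_X$, from which you derive $\oblv^l_Z\circ i_{\dr,*}^l=\Xi^\vee_Z\circ i_*^{\IndCoh}\circ\Upsilon_X\circ\oblv^l_X$, is false. The forgetful functor does not commute with de Rham pushforward along a closed embedding; what commutes with $i_*^{\IndCoh}$ is the \emph{induction} functor (the diagram $\ind^r_Z\circ i_*^{\IndCoh}\simeq i_{\dr,*}\circ\ind^r_X$ from the introduction), not the forgetful one. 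Concretely, for $i:\on{pt}\hookrightarrow\BA^1$ and $k\in\Crys^r(\on{pt})=\Vect$, the object $\oblv^r_{\BA^1}(i_{\dr,*}(k))$ is the underlying sheaf of the delta D-module, which is infinite-dimensional (it contains all normal derivatives of the delta function), whereas $i_*^{\IndCoh}(k)$ is the one-dimensional skyscraper. What is actually true is that $\oblv^r_Z\circ i_{\dr,*}^r$ lands in $\IndCoh(Z)_X\simeq\IndCoh(Y)$ for $Y=Z^\wedge_X$ the formal completion and identifies with $\wh{i}{}^{\IndCoh}_*\circ\oblv^r_Y$; the thickening in the normal directions is exactly what your identity discards.

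This is not cosmetic: bounding $i_{\dr,*}^l$ in the left t-structures is essentially equivalent to the proposition itself, so the gap sits at the heart of that half of the argument. The paper's proof avoids it by never leaving the formal completion: it measures the left t-structure of $\CM^l$ via $\oblv^l_Y$ on $Y$ (legitimate by \propref{p:left t-struct}, since $Y$ is classically formally smooth), identifies $\oblv^r_Z(i_{\dr,*}(\CM^r))$ with $\Upsilon_Z(\CF)$ where $\CF\in\QCoh(Z)_X$ corresponds to $\oblv^l_Y(\CM^l)$ under the equivalence $\wh{i}{}^*:\QCoh(Z)_X\simeq\QCoh(Y)$, and then compares the two t-structures on $\QCoh(Z)_X\simeq\QCoh(Y)$ directly (\lemref{l:comparing t}, where the amplitude $n$ enters via the cohomological amplitude of $j_*$ for the complementary open embedding). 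To repair your argument you would need to replace your identity by this correct one, or otherwise supply an independent bound on $i_{\dr,*}^l$ with respect to the left t-structures.
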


\begin{proof}

Without loss of generality, we can assume that $X$ is affine. Let $Z$ be a smooth classical scheme of dimension $n$;
$i:X\hookrightarrow Z$ a closed embedding. We claim that for $\CM^l\in \Crys^l(X)$ and the
corresponding object $\CM^r\in \Crys^r(X)$ we have
\begin{multline} \label{e:estimates}
\CM^l\in (\Crys^l(X))^{\leq 0}\, \Rightarrow\, \CM^r\in (\Crys^r(X))^{\leq 0} \text{ and } \\
\CM^r\in (\Crys^r(X))^{\leq 0}\, \Rightarrow\, \CM^l\in (\Crys^l(X))^{\leq n}.
\end{multline}

\medskip

Let $U\overset{j}\hookrightarrow Z$ denote the 
complementary open embedding. Let $Y$ denote the formal completion of
$X$ in $Z$; let $\wh{i}$ denote map $Y\to Z$. 

\medskip

The map $X\to Y$ defines an isomorphism $X_\dr\to Y_\dr$, which allows to identify
$\Crys^l(X)\simeq \Crys^l(Y)$. Applying \propref{p:left t-struct}, we have:
\begin{equation} \label{e:est 1}
\CM^l\in (\Crys^l(X))^{\leq 0}\, \Leftrightarrow\, \oblv_Y^l(\CM^l)\in \QCoh(Y)^{\leq 0},
\end{equation}
where the t-structure on $\QCoh(Y)$ is that of \secref{sss:can t-str}.

\medskip

Consider the subcategory $\QCoh(Z)_X\subset \QCoh(X)$ which is by definition equal to
$$\on{ket}(j^*:\QCoh(Z)\to \QCoh(U)).$$
This subcategory is compatible with the t-structure on $\QCoh(Z)$, since the functor $j^*$
is t-exact.

\medskip

Recall (see \cite[Proposition 7.1.3]{IndSch}) that the functor $\wh{i}^*$ defines an equivalence
$$\QCoh(Z)_X\to \QCoh(Y).$$

\medskip

Let $\CF$ be the object of $\QCoh(Z)_X$ corresponding to $\oblv_Y^l(\CM^l)\in \QCoh(Y)$.
We have:
$$\Upsilon_Z(\CF)\simeq \oblv^r_Z(i_{\dr,*}(\CM^r)).$$

Since the functor $i_{\dr,*}$ is t-exact (\propref{p:cl embed exact}), and since $\Upsilon_Z$ shifts 
cohomological degrees by $[-n]$, we have:
\begin{equation} \label{e:est 2}
\CM^r\in (\Crys^r(X))^{\leq 0} \, \Leftrightarrow \CF\in (\QCoh(Z)_X)^{\leq n}.
\end{equation}

\medskip

Combining \eqref{e:est 1} and \eqref{e:est 2}, 
the implications in \eqref{e:estimates} follow from the next assertion:

\begin{lem}  \label{l:comparing t}
The equivalence $\wh{i}^*:\QCoh(Z)_X\simeq \QCoh(Y)$ has the following properties with respect to the
t-structure on $\QCoh(Z)_X$ inherited from $\QCoh(Z)$ and the t-structure on $\QCoh(Y)$ of
\secref{sss:can t-str}:

\smallskip

\noindent{\em(a)} If $\CF\in (\QCoh(Z)_X)^{\leq 0}$ then $\wh{i}^*(\CF)\in \QCoh(Y)^{\leq 0}$. 

\smallskip

\noindent{\em(b)} If $\wh{i}^*(\CF)\in \QCoh(Y)^{\leq 0}$, then $\CF\in (\QCoh(Z)_X)^{\leq n}$.
\end{lem}

\end{proof}

\begin{proof}[Proof of \lemref{l:comparing t}]

Point (a) follows from the fact that the functor $\wh{i}^*$ is right t-exact. 

\medskip

To prove point (b),
we note that the category $\QCoh(Y)^{\leq 0}$ is generated under taking colimits by
the essential image of $\QCoh(Z)^{\leq 0}$ under the functor $\wh{i}^*$, see
\cite[Proposition 7.3.5]{IndSch}. Hence, it is sufficient to show the the functor
$$\QCoh(Z)\overset{\wh{i}^*}\longrightarrow \QCoh(Y)\simeq \QCoh(Z)_X$$
has cohomological amplitude bounded by $n$. However, the above functor is
the right adjoint to the embedding
$$\QCoh(Z)_X\hookrightarrow \QCoh(Z),$$
and is given by
$$\CF'\mapsto \on{Cone}(\CF'\to j_*\circ j^*(\CF'))[-1].$$
Now, $j^*$ is t-exact, and $j_*$ is of cohomological amplitude bounded by $n-1$.
This implies the required assertion.

\end{proof}

\sssec{}

Let $X$ be an arbitrary quasi-compact DG scheme. We have:

\begin{prop}  \label{p:amplitude for oblv l} \hfill

\smallskip

\noindent{\em(a)}
The functor $\oblv^l_X:\Crys(X)\to \QCoh(X)$ has bounded cohomological amplitude.

\smallskip

\noindent{\em(b)}
If $X$ is eventually coconnective, the functor $\ind^l_X:\QCoh(X)\to \Crys(X)$
has cohomological amplitude bounded from above. 

\end{prop}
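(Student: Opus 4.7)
For part (a), the question reduces to the case of $X$ affine. Choose a closed embedding $i : X \hookrightarrow Z$ into a smooth affine classical scheme $Z$ and set $Y = Z^\wedge_X$, with factorization $i = \wh{i} \circ {}'i$. Since ${}'i_\dr : X_\dr \to Y_\dr$ is an isomorphism, the pullback ${}'i^{\dagger,l}$ is a t-exact equivalence $\Crys^l(Y) \simeq \Crys^l(X)$, and from the diagram \eqref{e:pullback left}, under this equivalence $\oblv^l_X$ identifies with ${}'i^* \circ \oblv^l_Y$. Since $Y$ is classically formally smooth (as $Z$ is smooth), $\oblv^l_Y$ is t-exact by \propref{p:left t-struct}, so the claim reduces to bounding the amplitude of ${}'i^* : \QCoh(Y) \to \QCoh(X)$. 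The equivalence $\wh{i}^* : \QCoh(Z)_X \simeq \QCoh(Y)$ of \cite[Proposition 7.1.3]{IndSch} turns ${}'i^*$ into the restriction of $i^* : \QCoh(Z) \to \QCoh(X)$ to $\QCoh(Z)_X$; since $Z$ is regular of finite dimension, $i^*$ has bounded amplitude, and \lemref{l:comparing t} shows that the t-structures on $\QCoh(Y)$ and $\QCoh(Z)_X$ differ by bounded shift.

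For part (b), I use the factorization of \secref{sss:induction for left},
\[ \ind^l_X \;=\; (\Upsilon_{X_\dr})^{-1} \circ \ind^r_X \circ \Upsilon_X, \]
and check that each of the three factors has cohomological amplitude bounded from above. The factor $\ind^r_X$ is t-exact by \propref{p:t and crys gen}. For $\Upsilon_X = (-) \otimes \omega_X$, when $X$ is quasi-compact, eventually coconnective, and locally almost of finite type, $\omega_X$ is bounded: choosing an embedding $i : X \hookrightarrow Z$ with $Z$ smooth, one has $\omega_X = i^!(\omega_Z)$ and $i^!$ has bounded amplitude in this setting (as in the proof of \propref{p:t and crys sm}(b)), so $\Upsilon_X$ is of amplitude bounded from above. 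Finally, $(\Upsilon_{X_\dr})^{-1}$ is an equivalence by \propref{p:left to right}, and has bounded amplitude since by \propref{p:left t vs right t} the identity $\Crys^l(X) \to \Crys^r(X)$ does.

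The principal technical point in (a) is the bookkeeping of the different t-structures on $\QCoh(X)$, $\QCoh(Y)$, $\QCoh(Z)$, and $\QCoh(Z)_X$, resolved by \lemref{l:comparing t}. In (b), the essential input is the boundedness of $\omega_X$, which is precisely where the eventual coconnectivity hypothesis is used.
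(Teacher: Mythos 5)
Your part (a) follows the same basic route as the paper (reduce to affine, embed into a smooth classical $Z$, push the problem onto $Z$ and use that $i^*$ has bounded amplitude since $Z$ is regular), but the step ``$\oblv^l_Y$ is t-exact by \propref{p:left t-struct}'' is a genuine gap. \propref{p:left t-struct} asserts only the equivalence $\CM\in\Crys^l(Y)^{\leq 0}\Leftrightarrow \oblv^l_Y(\CM)\in\QCoh(Y)^{\leq 0}$; this gives right t-exactness of $\oblv^l_Y$ (for the left t-structure) and that it reflects connectivity, but it does \emph{not} give left t-exactness. The coconnective part of $\QCoh(Y_\dr)$ is defined as a right orthogonal, and there is no a priori reason that pullback along $Y\to Y_\dr$ preserves it; indeed, left t-exactness of $\oblv^l$ up to a shift is exactly the nontrivial half of the amplitude bound you are trying to prove (the connective half is free, since $\oblv^l$ is right t-exact by definition), so as written you are assuming the substance of the statement. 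The paper sidesteps this by writing $\oblv^l_X\simeq i^*\circ\oblv^l_Z\circ i_{\dr,*}$ with $Z$ itself smooth: there $i_{\dr,*}$ is t-exact by \propref{p:cl embed exact}, and $\oblv^l_Z$ is t-exact up to a shift by $\dim Z$ because of \propref{p:left to right t-struct} together with the t-exactness of $\oblv^r_Z$ (\propref{p:t and crys sm}(a)) and the fact that $\Upsilon_Z$ is a shift equivalence. Your detour through the formal completion $Y$ can be repaired by the same device (factor $\oblv^l_Y$ through $\Crys^l(Z)_X\subset\Crys^l(Z)$ and invoke the boundedness of $\oblv^l_Z$ and of $\wh{i}{}^*$ via \lemref{l:comparing t}), but then the argument collapses back onto the paper's.

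Your part (b), by contrast, is correct and takes a genuinely different route. The paper deduces (b) from (a) in one line via the $(\ind^l_X,\oblv^l_X)$-adjunction of \lemref{l:ind l}: if $\oblv^l_X$ carries $\Crys(X)^{\geq 0}$ into $\QCoh(X)^{\geq -N}$, then its left adjoint carries $\QCoh(X)^{\leq 0}$ into $\Crys(X)^{\leq N}$. Your direct argument through the factorization $\ind^l_X=(\Upsilon_{X_\dr})^{-1}\circ\ind^r_X\circ\Upsilon_X$ is independent of (a), uses only \propref{p:t and crys gen}, \propref{p:left to right} and \propref{p:left t vs right t}, and correctly isolates where eventual coconnectivity enters: $\omega_X$ is bounded precisely when $X$ is eventually coconnective (this is cleaner to cite as $\omega_X\in\Coh(X)$, per the criterion recalled before \lemref{l:Drinfeld}, rather than via the amplitude of $i^!$). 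This matches the paper's remark that Example \ref{ex:deg 2} is a counterexample without that hypothesis.
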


\begin{proof}

For point (a) we can assume that $X$ is affine and find a closed embedding
$i:X\hookrightarrow Z$, where $Z$ is a smooth classical scheme. In this
case, the assertion follows from \propref{p:cl embed exact} and the fact that
the functor
$$i^*:\QCoh(Z)\to \QCoh(X)$$
has a bounded cohomological amplitude.

\medskip

Point (b) follows from point (a) by the $(\ind^l_X,\oblv^l_X)$-adjunction.

\end{proof}

\begin{rem}
The assumption that $X$ be eventually coconnective in point (b) is essential; otherwise
a counterexample can be provided by the DG scheme from Example \ref{ex:deg 2}. In addition,
is it easy to show that $\ind^l_X$ has a cohomological amplitude bounded from
below if and only if $X$ is Gorenstein (see \lemref{l:Drinfeld}).
\end{rem}

\ssec{Left completeness}

\sssec{}

Let $X$ be an affine smooth classical scheme. We observe that in this case 
the category $\Crys^r(X)$ contains a canonical object
$$\ind_X^r(\CO_X),$$
which lies in the heart of the t-structure (see \propref{p:t and crys gen}), and is \emph{projective}, i.e.,
$$H^0(\CN)=0 \, \Rightarrow \Hom_{\Crys^r(X)}(\ind_X^r(\CO_X),\CN)=0.$$
Moreover, $\ind_X^r(\CO_X)$ is a compact generator of $\Crys^r(X)$. This implies:

\begin{cor}  \label{c:derived category of heart smooth a}
Let $X$ be an affine smooth classical scheme. Then the 
category $\Crys^r(X)$ is left-complete in its t-structure. 
\end{cor}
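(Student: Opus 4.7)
The plan is to leverage the fact, recalled just before the Corollary, that $P := \ind_X^r(\CO_X)$ is a compact generator of $\Crys^r(X)$ which lies in the heart of the right t-structure and is projective there. Left-completeness of the t-structure on $\Crys^r(X)$ says that for every $\CM \in \Crys^r(X)$ the natural map
$$\CM \to \underset{n}{lim}\, \tau^{\geq -n}(\CM)$$
is an isomorphism, or equivalently that the fiber of this map -- which identifies with $\underset{n}{lim}\, \tau^{\leq -n-1}(\CM)$ -- vanishes. Since $P$ is a compact generator, an object $\CN \in \Crys^r(X)$ is zero if and only if $\on{Maps}_{\Crys^r(X)}(P, \CN[k]) = 0$ for all $k \in \bZ$. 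Because $\on{Maps}(P,-)$ preserves arbitrary limits, I am reduced to showing that for each fixed $k$,
$$\underset{n}{lim}\, \on{Maps}_{\Crys^r(X)}\bigl(P, \tau^{\leq -n-1}(\CM)[k]\bigr) = 0.$$

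The second step is to exploit projectivity of $P$ in $\Crys^r(X)^\heartsuit$ to establish the formula
$$\on{Maps}_{\Crys^r(X)}(P, Q[k]) \simeq \on{Maps}_{\Crys^r(X)^\heartsuit}\bigl(P, H^k(Q)\bigr)$$
for every $Q$ bounded above in the t-structure. This follows by induction on the length of the Postnikov tower of $Q$ using the fiber sequences $H^i(Q)[-i] \to \tau^{\leq i}(Q) \to \tau^{\leq i-1}(Q)$: projectivity of $P$ makes the associated long exact sequences degenerate, so only the cohomology in the single degree $k$ contributes. Applied to $Q = \tau^{\leq -n-1}(\CM)$, the formula gives $\on{Maps}(P, \tau^{\leq -n-1}(\CM)[k]) = \on{Maps}_{\Crys^r(X)^\heartsuit}(P, H^k(\tau^{\leq -n-1}(\CM)))$, and the latter vanishes as soon as $k > -n-1$, i.e., $n \geq -k$. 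Thus for each fixed $k$ the tower is eventually zero, so its limit is zero, which is what we needed.

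The main point of delicacy is the formula $\on{Maps}(P, Q[k]) \simeq \on{Maps}_{\Crys^r(X)^\heartsuit}(P, H^k(Q))$; it is used only for $Q$ cohomologically bounded above, where the Postnikov induction is finite and rigorous. A more conceptual alternative, which avoids this verification, is to invoke the Schwede--Shipley--Lurie theorem to identify $\Crys^r(X) \simeq A\mmod$ for $A := \on{End}_{\Crys^r(X)}(P)^{\on{op}}$; the projectivity of $P$, combined with the fact that $P$ lies in the heart, forces $\on{Maps}_{\Crys^r(X)}(P, P[k]) = 0$ for all $k \neq 0$, so $A$ is a discrete ring. Under this equivalence the right t-structure on $\Crys^r(X)$ corresponds to the standard t-structure on the derived $\infty$-category of the ordinary ring $A$, which is classically known to be left-complete. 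Either route concludes the proof.
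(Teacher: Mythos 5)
Your argument is essentially the paper's own: the proof in the text consists of observing that $\ind^r_X(\CO_X)$ is a compact generator of $\Crys^r(X)$ lying in the heart and projective there, followed by ``This implies'' --- and you have supplied precisely that implication (your Schwede--Shipley packaging is an equally valid way to finish, since the same facts identify $\Crys^r(X)$ with modules over the discrete ring $\End(\ind^r_X(\CO_X))$ compatibly with the t-structures). One small correction: the displayed formula $\on{Maps}(P,Q[k])\simeq \on{Maps}_{\Crys^r(X)^\heartsuit}(P,H^k(Q))$ is only an isomorphism on $\pi_0$; the Postnikov induction actually yields $\pi_j\on{Maps}(P,Q[k])\cong \Hom_{\Crys^r(X)^\heartsuit}(P,H^{k-j}(Q))$ for $j\geq 0$, so the terms of your tower are not contractible for $n\geq -k$ as claimed. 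This does not damage the proof: for each fixed $j$ and $k$ the group $\pi_j\on{Maps}\bigl(P,\tau^{\leq -n-1}(\CM)[k]\bigr)$ vanishes once $n\geq j-k$, so for every $j$ both $\on{lim}$ and ${\on{lim}}^1$ of the tower of homotopy groups vanish, and the limit of the tower of mapping spaces is still contractible.
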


\sssec{}

The above corollary implies left-completeness for any DG scheme $X$:

\begin{cor}  \label{c:left complete}
For any DG scheme $X$, the category $\Crys^r(X)$ is left-complete in the ``right" t-structure.
\end{cor}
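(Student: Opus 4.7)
The plan is to reduce to the situation of \corref{c:derived category of heart smooth a} by combining Zariski descent with Kashiwara's lemma.

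\textbf{Step 1: Reduction to affine $X$.} Choose an affine Zariski cover $\{U_i\}$ of $X$. By \propref{p:descent for crystals} (or just Zariski descent), $\Crys^r(X)$ is computed as the totalization of $\Crys^r$ on the Čech nerve of $\sqcup U_i \to X$. By the Zariski locality of the right t-structure (\secref{sss:t and Zariski}), the restriction functors to the cover are t-exact and jointly conservative, so the t-structure on $\Crys^r(X)$ is the one inherited from this limit. Since the truncation functors $\tau^{\geq -n}$ are right adjoints and therefore commute with limits of DG categories along t-exact functors, left-completeness is preserved under such limits. Hence it suffices to treat the affine case (the terms $U^\bullet/X$ themselves being DG schemes, one may iterate this step until the pieces are affine if needed, or else observe that left-completeness only needs to be checked on a cofinal family).

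\textbf{Step 2: Embedding into a smooth classical affine scheme.} Assuming $X$ is affine, choose a closed embedding $i:X\hookrightarrow Z$ with $Z$ a smooth classical affine scheme; this exists because $X$ is locally almost of finite type. By Kashiwara's lemma (\propref{p:Kashiwara}), the de Rham pushforward $i_{\dr,*}$ is a fully faithful embedding identifying $\Crys^r(X)$ with the full subcategory $\Crys^r(Z)_X \subset \Crys^r(Z)$, namely $\ker(j^{\dagger,r})$ for $j:Z\setminus X\hookrightarrow Z$. This subcategory is closed under limits (being the kernel of a continuous functor) and compatible with the t-structure, since $j^{\dagger,r}$ is t-exact. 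Moreover, by \propref{p:cl embed exact}, $i_{\dr,*}$ is itself t-exact, so the identification is compatible with the right t-structures on the two sides.

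\textbf{Step 3: Conclusion.} By \corref{c:derived category of heart smooth a}, $\Crys^r(Z)$ is left-complete. A full subcategory of a left-complete t-category which is closed under limits and compatible with the t-structure is automatically left-complete: for $\CM \in \Crys^r(Z)_X$, the limit $\lim_n \tau^{\geq -n}\CM$ formed in $\Crys^r(Z)$ equals $\CM$, and it lies in $\Crys^r(Z)_X$ since this subcategory is closed under limits. Hence $\Crys^r(X)$ is left-complete, as desired.

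The step I expect to require most care is Step 1, where one must verify that the t-structure on the totalization is genuinely compatible with descent so that truncations can be computed cover-wise and limits of left-complete t-categories are left-complete. The remaining steps are essentially a direct combination of Kashiwara's lemma, t-exactness of $i_{\dr,*}$, and the smooth affine base case.
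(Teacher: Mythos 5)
Your proof is correct and follows essentially the same route as the paper: reduce to the affine case by Zariski-locality of left-completeness, embed into a smooth classical affine scheme $Z$, and transport left-completeness from $\Crys^r(Z)$ (\corref{c:derived category of heart smooth a}) along the fully faithful, t-exact functor $i_{\dr,*}$ supplied by Kashiwara's lemma and \propref{p:cl embed exact}. (One small imprecision in Step 2/3: continuity of $j^{\dagger,r}$ gives closure of its kernel under \emph{colimits}, not limits; but your argument does not actually need closure under limits, since the relevant limit $\underset{n}{lim}\,\tau^{\geq -n}\CM$ computed in $\Crys^r(Z)$ already equals $\CM$, which lies in the full subcategory.)
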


\begin{proof}

First, we note that the property of being left-complete is Zariski-local (proved by the same argument
as \cite[Proposition 5.2.4]{QCoh}). Hence, we can assume without loss of generality that $X$ is affine. 
Choose a closed embedding
$i:X\hookrightarrow Z$, where $Z$ is a smooth classical scheme. Now the assertion
follows formally from the fact that the functor $i_{\dr,*}$ is continuous, fully faithful
(by \propref{p:Kashiwara}), t-exact (by \propref{p:cl embed exact}),
and the fact that $\Crys^r(Z)$ is left-complete (by the previous corollary). 

\medskip

Here is an alternative argument:

\medskip

By \corref{c:t structure indep}, we can assume that $X$ is eventually coconnective. In this case,
the functor $\oblv^l_X$ commutes with limits, as it admits a left adjoint. Moreover, by \lemref{l:cons},
$\oblv^l_X$ is conservative, and by \propref{p:amplitude for oblv l} it has bounded cohomological
amplitude. Therefore, the fact that $\QCoh(X)$ is left-complete in its t-structure 
implies the corresponding fact for $\Crys^r(X)$.

\end{proof}

\begin{rem}
The question of right completeness is not an issue: since our t-structures are compatible with filtered
colimits, right completeness is equivalent to the t-structure being separated on the coconnective
subcategory, which is evident since $\oblv^r_X$ is left t-exact and conservative, and the t-structure
on $\IndCoh(X)^+\overset{\Psi_X}\simeq \QCoh(X)^+$ has this property.
\end{rem}

\sssec{}

Combining \corref{c:left complete} with \propref{p:left t vs right t}, we obtain:

\begin{cor}
For a quasi-compact DG scheme $X$, the category $\Crys(X)$ is also left-complete 
in the ``left" t-structure.
\end{cor}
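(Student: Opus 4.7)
The plan is to bootstrap from left-completeness of $\Crys(X)$ in the right t-structure (\corref{c:left complete}) by using the bounded amplitude of the identity functor between the two t-structures given by \propref{p:left t vs right t}. The strategy is to show that the two towers of truncations $\{\tau_l^{\geq -n}\CM\}_n$ and $\{\tau_r^{\geq -n}\CM\}_n$ are mutually cofinal, so that their limits agree.

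Fix an integer $N\geq 0$ such that $\Crys^l(X)^{\leq 0}\subset \Crys^r(X)^{\leq N}$ and $\Crys^r(X)^{\leq 0}\subset \Crys^l(X)^{\leq N}$, and denote the truncation functors for the left and right t-structures by $\tau_l^{\geq m}$ and $\tau_r^{\geq m}$ respectively. For any $\CM\in\Crys(X)$ and every $n$, I would construct a natural map $\tau_r^{\geq -n}\CM\to \tau_l^{\geq -n+N}\CM$ as follows: the fiber $F$ of $\CM\to \tau_r^{\geq -n}\CM$ lies in $\Crys^r(X)^{\leq -n-1}\subset \Crys^l(X)^{\leq -n-1+N}$, so $\tau_l^{\geq -n+N}(F)=0$. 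Applying $\tau_l^{\geq -n+N}$ to the fiber sequence thus gives an isomorphism $\tau_l^{\geq -n+N}\CM\simeq \tau_l^{\geq -n+N}\tau_r^{\geq -n}\CM$, and composing its inverse with the canonical map $\tau_r^{\geq -n}\CM\to \tau_l^{\geq -n+N}\tau_r^{\geq -n}\CM$ yields the desired map $\tau_r^{\geq -n}\CM\to \tau_l^{\geq -n+N}\CM$. The symmetric construction produces maps $\tau_l^{\geq -n}\CM\to \tau_r^{\geq -n+N}\CM$.

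One then verifies that these maps intertwine the structure maps of the two towers, which is a formal consequence of the universal property of truncation. Consequently the towers are cofinal with one another, and passage to limits gives
\[ \lim_n \tau_l^{\geq -n}\CM \simeq \lim_n \tau_r^{\geq -n}\CM \simeq \CM, \]
where the final equivalence is \corref{c:left complete}. This is exactly left-completeness in the left t-structure. The only real work is the tower-compatibility check in the previous paragraph; since each map is canonically defined through universal properties, this reduces to a diagram chase with no substantive content.
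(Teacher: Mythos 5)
Your argument is correct and is essentially the paper's own: the text derives this corollary precisely by ``combining \corref{c:left complete} with \propref{p:left t vs right t}'', and your interleaving of the two truncation towers (using the bound $N$ from \propref{p:left t vs right t} to kill the fiber of $\CM\to\tau_r^{\geq -n}\CM$ after applying $\tau_l^{\geq -n+N}$) is the standard way to flesh that out. The deferred compatibility check is indeed formal, so nothing is missing.
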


\ssec{The ``coarse" induction and forgetful functors}   \label{ss:coarse}

\sssec{}

Let $X$ be a DG scheme. Recall that the functor $\Psi_X$ identifies the 
category $\QCoh(X)$ with the left-completion of
$\IndCoh(X)$ (see \cite[Proposition 1.3.4]{IndCoh}). 

\medskip

Since the category $\Crys^r(X)$ is left-complete in its t-structure, and the functor $\ind^r_X$ is t-exact,
by the universal property of left completions, we obtain:

\begin{cor}
The functor $\ind^r_X$ canonically factors as
$$\IndCoh(X)\overset{\Psi_X}\longrightarrow \QCoh(X)\overset{'\ind^r_X}\longrightarrow \Crys^r(X).$$
\end{cor}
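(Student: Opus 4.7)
The plan is to invoke the universal property of the left completion of a t-structure in essentially one line, after noting that all necessary hypotheses have just been established. By \cite[Proposition 1.3.4]{IndCoh}, the functor $\Psi_X\colon \IndCoh(X) \to \QCoh(X)$ exhibits $\QCoh(X)$ as the left completion of $\IndCoh(X)$ with respect to the natural t-structure. The universal property of this construction says that any continuous t-exact functor $F\colon \IndCoh(X) \to \CD$ into a cocomplete DG category $\CD$ whose t-structure is left-complete extends canonically to a continuous functor $\wh F\colon \QCoh(X) \to \CD$ with $\wh F\circ \Psi_X \simeq F$.

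To apply this with $F = \ind^r_X$ and $\CD = \Crys^r(X)$, I would verify the three hypotheses in turn. First, $\ind^r_X$ is continuous because it is the left adjoint to $\oblv^r_X$ (\propref{p:right ind}(a)). Second, $\ind^r_X$ is t-exact by \propref{p:t and crys gen}. Third, the target $\Crys^r(X)$ is left-complete in its t-structure by \corref{c:left complete}. One may then set $'\ind^r_X := \wh{\ind^r_X}$ to obtain the factorization, and the fact that the extension produced by the universal property is unique (up to canonical isomorphism) accounts for the word ``canonically'' in the statement.

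There is essentially no obstacle: the substantive inputs (t-exactness of the right induction functor, left-completeness of $\Crys^r(X)$) have already been assembled in Sections~4.2--4.4, and the universal property of left completion is a purely formal piece of t-structure theory. If one wanted to expand the argument, the only part that would merit more discussion is a precise formulation of that universal property at the level of continuous functors between presentable DG categories; but as it appears in \cite[Sect.~1.3]{IndCoh} this is a standard fact, and the paper legitimately treats the corollary as immediate.
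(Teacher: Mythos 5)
Your proposal matches the paper's own argument exactly: the paper likewise cites \cite[Proposition 1.3.4]{IndCoh} to identify $\Psi_X$ with the left-completion map, and then invokes the universal property of left completions together with the t-exactness of $\ind^r_X$ (\propref{p:t and crys gen}) and the left-completeness of $\Crys^r(X)$ (\corref{c:left complete}). Your explicit verification of continuity and the remark on uniqueness are a slight elaboration of what the paper leaves implicit, but the route is the same.
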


\sssec{}

We can also consider the functor 
$$'\oblv_X^r:\Crys^r(X)\to \QCoh(X),$$
given by $\Psi_X\circ \oblv_X^r$, where $\Psi_X:\IndCoh(X)\to \QCoh(X)$ is the functor
of \cite[Sect. 1.1.5]{IndCoh}.

\medskip

It is clear that the functor $'\oblv_X^r$ has a finite cohomological amplitude. Indeed, the follows 
from the corresponding fact for $\oblv_X^r$ and the fact that $\Psi_X$ is t-exact
(see \cite[Lemma 1.2.2]{IndCoh}).

\begin{prop}  \label{p:coarse cons}
The functor $'\oblv_X^r$ is conservative.
\end{prop}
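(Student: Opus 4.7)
The plan is to combine four ingredients: conservativity of $\oblv^r_X$ (\lemref{l:cons for r}); left-completeness of $\Crys^r(X)$ (\corref{c:left complete}); the bounded cohomological amplitude of $'\oblv^r_X$ established just above; and the fact, recalled just before \corref{c:left complete}, that $\Psi_X$ restricts to an equivalence $\IndCoh(X)^+ \simeq \QCoh(X)^+$, and so is conservative on bounded-below objects. Since $\ind^r_X$ is t-exact by \propref{p:t and crys gen}, its right adjoint $\oblv^r_X$ is left t-exact, and hence sends $\Crys^r(X)^+$ into $\IndCoh(X)^+$. It follows immediately that the restriction of $'\oblv^r_X$ to $\Crys^r(X)^+$ is conservative: this is the ``easy'' case.

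Now suppose $\CM \in \Crys^r(X)$ satisfies $'\oblv^r_X(\CM) = 0$. I would reduce to showing that every cohomology sheaf $H^k(\CM) \in \Crys^r(X)^\heartsuit$ vanishes: left-completeness of $\Crys^r(X)$, combined with separation of the t-structure on bounded-below objects (which follows from compatibility with filtered colimits), forces any object with all cohomologies zero to be zero itself. Since each $H^k(\CM)$ belongs to $\Crys^r(X)^+$, the previous paragraph reduces the task to proving $'\oblv^r_X(H^k(\CM)) = 0$ for every $k$.

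For this last vanishing, let $[0, b]$ be the amplitude of $'\oblv^r_X$. I would prove by induction on $j \geq 0$ that $'\oblv^r_X(H^k(\CM)) \in \QCoh(X)^{[0,\, b-j]}$ for every $k$; taking $j = b+1$ makes the target range empty and finishes the argument. The base case is the amplitude statement. For the inductive step, the inductive hypothesis applied to every heart object $H^i(\CM)$, together with the Postnikov filtration and continuity of $'\oblv^r_X$ (and right-completeness of the t-structure, which presents the a priori unbounded-below $\tau^{\leq \ell}\CM$ as a filtered colimit of bounded sub-truncations), yields the sharpened upper bound $'\oblv^r_X(\tau^{\leq \ell}\CM) \in \QCoh(X)^{\leq \ell + b - j}$ for every $\ell$. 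Combining this with the isomorphism $'\oblv^r_X(\tau^{\geq \ell+1}\CM) \simeq '\oblv^r_X(\tau^{\leq \ell}\CM)[1]$, extracted from the fiber sequence $\tau^{\leq \ell}\CM \to \CM \to \tau^{\geq \ell+1}\CM$ and the hypothesis $'\oblv^r_X(\CM) = 0$, and examining the long exact sequence associated to $\tau^{\leq k-1}\CM \to \tau^{\leq k}\CM \to H^k(\CM)[-k]$ in top degree $k+b-j$, one concludes $H^{b-j}('\oblv^r_X(H^k(\CM))) = 0$, as required. The main technical obstacle is precisely this bookkeeping: one must simultaneously track amplitude bounds on $\tau^{\leq \ell}\CM$ and $\tau^{\geq \ell+1}\CM$ as they are interchanged by the shift arising from $'\oblv^r_X(\CM)=0$, and ensure that the unbounded-below truncations are controlled through the filtered-colimit presentation.
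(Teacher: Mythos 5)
Your opening reductions are sound: ${}'\oblv^r_X$ is conservative on $\Crys^r(X)^+$ (left t-exactness of $\oblv^r_X$ together with the equivalence $\Psi_X:\IndCoh(X)^+\simeq \QCoh(X)^+$), and by left-completeness and separatedness on bounded-below objects it suffices to show that ${}'\oblv^r_X(\CM)=0$ forces ${}'\oblv^r_X(H^k(\CM))=0$ for all $k$. The gap is in the inductive step. Applying ${}'\oblv^r_X$ to $\tau^{\leq k-1}\CM\to \tau^{\leq k}\CM\to H^k(\CM)[-k]$ and looking at degree $k+b-j$, the tail term $H^{k+b-j+1}({}'\oblv^r_X(\tau^{\leq k-1}\CM))$ does vanish by your sharpened upper bound, but the long exact sequence then only exhibits $H^{b-j}({}'\oblv^r_X(H^k(\CM)))$ as a \emph{quotient} of $H^{k+b-j}({}'\oblv^r_X(\tau^{\leq k}\CM))$. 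That group is the top cohomology of an object you only know to lie in $\QCoh(X)^{[k+2,\,k+b-j]}$ (the lower bound $k+2$ coming from the shift isomorphism and the left t-exactness of ${}'\oblv^r_X$ applied to $\tau^{\geq k+1}\CM$), so it vanishes only when $b-j<2$. Hence the induction cannot get started unless $b\leq 1$, whereas for singular $X$ the amplitude $b$ of $\oblv^r_X$ is typically $\geq 2$.

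This is not repaired by more careful bookkeeping: if one tracks both the lower and upper bounds on ${}'\oblv^r_X(H^k(\CM))$, the window shrinks only until it has length $2$, and the residual configuration (each ${}'\oblv^r_X(H^k(\CM))$ having exactly two cohomologies, sitting in the bottom and top of a length-$2$ window and linked across consecutive $k$) is formally consistent with all of your hypotheses; the connecting maps between the single-degree objects ${}'\oblv^r_X(\tau^{\leq n}\CM)$ even vanish for degree reasons, so continuity gives no contradiction either. What a purely formal argument would have to rule out is that $\oblv^r_X(\CM)$ lands in $\ker(\Psi_X)$, which is a nonzero subcategory of $\IndCoh(X)$ whenever $X$ is not smooth --- and that requires geometric input. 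The paper supplies it by Zariski-localizing, embedding $X$ into a smooth classical $Z$, and using Kashiwara's lemma together with the identification $\Psi_X\circ i^!\simeq i^{\QCoh,!}\circ \Psi_Z$ to reduce to the conservativity of $i^{\QCoh,!}$ on $\QCoh(Z)_X$, which is proved by showing that $i_*(\QCoh(X))$ generates $\QCoh(Z)_X$ by devissage through coherent heart objects; the point is that over the smooth $Z$ the functor $\Psi_Z$ is an equivalence, which is precisely the information your argument cannot see.
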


\begin{proof}
The assertion is Zariski-local, so we can assume that $X$ is affine. Choose a closed
embedding $i:X\to Z$, where $Z$ is a smooth classical affine scheme.

\medskip

Let $i^{\QCoh,!}:\QCoh(Z)\to \QCoh(X)$ denote the right adjoint 
of $i_*:\QCoh(X)\to \QCoh(Z)$.\footnote{Although this is irrelevant for us, we note that
the $i^{\QCoh,!}$ is continuous. This is because the functor $i_*:\QCoh(X)\to \QCoh(Z)$ sends 
compact objects to compacts (since $Z$ is regular, any coherent sheaf on it is perfect).}
It is easy to see that we have a canonical isomorphism of functors
$$\Psi_X\circ i^!\simeq i^{\QCoh,!}\circ \Psi_Z.$$

Hence, for $\CM\in \Crys^r(Z)$, we have
$$'\oblv_X^r(i^{\dagger,r}(\CM))\simeq i^{\QCoh,!}({}'\oblv^r_Z(\CM)).$$

Applying Kashiwara's lemma, the assertion of the proposition follows from the next lemma:

\begin{lem} \label{l:ugly cons}
The functor $i^{\QCoh,!}:\QCoh(Z)\to \QCoh(X)$ is conservative when restricted to $\QCoh(Z)_X$.
\end{lem}

\end{proof}

\begin{proof}[Proof of \lemref{l:ugly cons}]
We need to show that the essential image of the functor $i_*:\QCoh(X)\to \QCoh(Z)$ generates
$\QCoh(Z)_X$. 

\medskip

First, we claim that $\QCoh(Z)_X$ is generated by the subcategory of bounded objects, denoted
$(\QCoh(Z)_X)^b$. This
follows from the corresponding fact for $\QCoh(Z)$ and the fact that the inclusion
$\QCoh(Z)_X\hookrightarrow \QCoh(Z)$ has a right adjoint of bounded cohomological 
amplitude. By devissage, we obtain that $\QCoh(Z)_X$ is generated by 
$(\QCoh(Z)_X)^\heartsuit$, and further by $(\QCoh(Z)_X)^\heartsuit\cap \Coh(Z)$.

\medskip

However, it is clear that every object of $(\QCoh(Z)_X)^\heartsuit\cap \Coh(Z)$ is a finite
extension of objects lying in the essential image of $\Coh(X)^\heartsuit$.

\end{proof}

\begin{rem}
In the case when $X$ is eventually coconnective we will give a cleaner proof of \propref{p:coarse cons}, below. 
\end{rem}

\sssec{}

Assume now that $X$ is eventually coconnective. Recall that in this case the functor $\Psi_X$ 
admits a fully faithful left adjoint $\Xi_X$ (see \cite[Proposition 1.5.3]{IndCoh}).

\medskip

We observe:

\begin{lem} \label{l:coarse adj}
There exists a canonical isomorphism $'\ind^r_X\simeq \ind^r_X\circ \Xi_X$.
\end{lem}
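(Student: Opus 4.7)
The proof should be a direct manipulation using the defining property of ${}'\ind^r_X$ and the fact that $\Xi_X$ is a fully faithful left adjoint to $\Psi_X$. Recall that ${}'\ind^r_X:\QCoh(X)\to \Crys^r(X)$ was defined by the property
$$
\ind^r_X \simeq {}'\ind^r_X \circ \Psi_X,
$$
obtained from the universal property of left completions, using that $\ind^r_X$ is t-exact (\propref{p:t and crys gen}), that $\QCoh(X)\simeq \widehat{\IndCoh(X)}$ via $\Psi_X$, and that $\Crys^r(X)$ is already left-complete (\corref{c:left complete}). Since $X$ is eventually coconnective, $\Xi_X$ is fully faithful, which means the unit map $\on{Id}_{\QCoh(X)}\to \Psi_X\circ \Xi_X$ is an isomorphism.

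Combining these, the plan is to simply compute
$$
\ind^r_X \circ \Xi_X \;\simeq\; ({}'\ind^r_X\circ \Psi_X)\circ \Xi_X \;\simeq\; {}'\ind^r_X\circ (\Psi_X\circ \Xi_X) \;\simeq\; {}'\ind^r_X,
$$
where the first isomorphism is the definition of ${}'\ind^r_X$, and the last uses full faithfulness of $\Xi_X$. This gives the desired canonical isomorphism.

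As a sanity check (and to confirm naturality), one can also verify the statement via uniqueness of adjoints: the right adjoint of $\ind^r_X\circ \Xi_X$ is manifestly $\Psi_X\circ \oblv^r_X = {}'\oblv^r_X$, and an identical chain
$$
\Hom({}'\ind^r_X(\CF),\CM)\simeq \Hom(\ind^r_X\circ \Xi_X(\CF),\CM)\simeq \Hom(\CF, \Psi_X\circ \oblv^r_X(\CM))
$$
shows the same for ${}'\ind^r_X$. There is no real obstacle here; the only conceptual point to keep track of is that one uses $\Psi_X\circ \Xi_X\simeq \on{Id}$ (which requires the eventual coconnectivity hypothesis) and not the other composition $\Xi_X\circ \Psi_X$, which is \emph{not} the identity.
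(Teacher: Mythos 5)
Your proof is correct and is essentially identical to the paper's: the paper also deduces the lemma directly from the two isomorphisms $\ind^r_X\simeq {}'\ind^r_X\circ \Psi_X$ and $\Psi_X\circ \Xi_X\simeq \on{Id}_{\QCoh(X)}$ (the latter being the full faithfulness of $\Xi_X$ in the eventually coconnective case). Your additional adjunction sanity check is exactly the content of the subsequent Corollary~\ref{c:coarse adj}, so nothing is amiss.
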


\begin{proof}
Follows from the isomorphisms $\ind^r_X\simeq {}'\ind^r_X\circ \Psi_X$ and 
$\Psi_X\circ \Xi_X\simeq \on{Id}_{\QCoh(X)}$.
\end{proof}

\begin{cor} \label{c:coarse adj}
The functors $('\ind^r_X,{}'\oblv_X^r)$ form an adjoint pair.
\end{cor}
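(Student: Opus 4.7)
The plan is to assemble the adjunction by composition, using the identification provided by \lemref{l:coarse adj}. Since $X$ is eventually coconnective, the cited \cite[Proposition 1.5.3]{IndCoh} furnishes the fully faithful left adjoint $\Xi_X$ to $\Psi_X$, so we have an adjoint pair
$$\Xi_X : \QCoh(X) \rightleftarrows \IndCoh(X) : \Psi_X.$$
On the other hand, \propref{p:right ind}(a) gives an adjoint pair
$$\ind^r_X : \IndCoh(X) \rightleftarrows \Crys^r(X) : \oblv^r_X.$$

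Now I would invoke \lemref{l:coarse adj} to rewrite
$$'\ind^r_X \simeq \ind^r_X \circ \Xi_X,$$
and the definition
$$'\oblv^r_X = \Psi_X \circ \oblv^r_X.$$
Composing the two adjunctions above yields that $\ind^r_X \circ \Xi_X$ is left adjoint to $\Psi_X \circ \oblv^r_X$, which is exactly the claim.

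There is no real obstacle here, as the result is a formal consequence of combining two adjunctions already established; the only content is in the identification of $'\ind^r_X$ supplied by \lemref{l:coarse adj}, which was itself deduced from the relations $\ind^r_X \simeq {}'\ind^r_X \circ \Psi_X$ and $\Psi_X \circ \Xi_X \simeq \on{Id}_{\QCoh(X)}$.
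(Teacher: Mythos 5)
Your proposal is correct and is precisely the argument the paper intends: its proof reads ``Follows formally from \lemref{l:coarse adj} by adjunction,'' and you have simply spelled out that composing the adjunctions $(\Xi_X,\Psi_X)$ and $(\ind^r_X,\oblv^r_X)$ yields $\ind^r_X\circ \Xi_X \dashv \Psi_X\circ \oblv^r_X$, which by \lemref{l:coarse adj} and the definition of $'\oblv^r_X$ is the claim.
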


\begin{proof}
Follows formally from \lemref{l:coarse adj} by adjunction.
\end{proof}

\begin{rem}
The functors $('\ind^r_X,{}'\oblv_X^r)$ are \emph{not} adjoint unless $X$ is eventually coconnective.
Indeed, if $X$ is not eventually coconnective, the functor $'\ind^r_X$ does not preserve compact objects: it sends $\CO_X\in \QCoh(X)$
to a non-compact object of $\Crys^r(X)$.
\end{rem}

\begin{proof}[Alternate proof of \propref{p:coarse cons}]

By \corref{c:coarse adj}, the assertion of \propref{p:coarse cons} (in the eventually coconnective case)
is equivalent to the fact that the essential image of the functor $'\ind_X^r$ generates $\Crys^r(X)$.
However, the latter is tautological from the corresponding fact for $\ind^r_X$.

\end{proof}

\sssec{}

Let $X$ be an eventually coconnective DG scheme, and consider the pair of adjoint functors
$$\Xi_X:\QCoh(X)\rightleftarrows \IndCoh(X):\Psi_X$$
with $\Xi_X$ being fully faithful (see \cite[Sect. 1.4]{IndCoh}).

\medskip

We have seen that the functor $\ind^r_X$ factors through the colocalization functor
$\Psi_X$. However, it is \emph{not} true in general that the functor $\oblv^r_X$
factors through $\Xi_X$, i.e., that it takes values in $\QCoh(X)$, considered as a 
full subcategory of $\IndCoh(X)$ via $\Xi_X$.

\medskip

In fact, the following holds:

\begin{lem}[Drinfeld]  \label{l:Drinfeld}
The functor $\oblv^r_X$ factors through the essential image of $\QCoh(X)$ under $\Xi_X$
if and only if $X$ is Gorenstein.
\end{lem}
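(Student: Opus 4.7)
The plan is to reduce both implications to the single question of whether $\omega_X \in \IndCoh(X)$ lies in the essential image of $\Xi_X$. The essential image of $\Xi_X$ is closed under colimits (since $\Xi_X$ preserves them) and, more importantly, is stable under the $\QCoh(X)$-action on $\IndCoh(X)$: for $\CP\in\QCoh(X)$ and $\CG\in\QCoh(X)$, the $\QCoh$-linearity of $\Xi_X$ (inherited from $\QCoh$-linearity of its right adjoint $\Psi_X$) gives a canonical isomorphism
\[
\CP\otimes \Xi_X(\CG) \simeq \Xi_X(\CP\otimes \CG).
\]
Moreover, from diagram \eqref{e:oblv left and right} together with the fact that $\Upsilon_{X_\dr}$ is an equivalence (\propref{p:left to right}), we have the factorization
\[
\oblv^r_X \;\simeq\; \Upsilon_X\circ \oblv^l_X \circ (\Upsilon_{X_\dr})^{-1},
\]
so the essential image of $\oblv^r_X$ is contained in the essential image of $\Upsilon_X$.

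For the ``if'' direction, assume $X$ is Gorenstein, so that $\omega_X$ is a shift of a line bundle Zariski-locally, and in particular $\omega_X$ is a perfect complex. Then $\omega_X$ lies in the essential image of $\Xi_X$, say $\omega_X \simeq \Xi_X(\wt\omega_X)$ with $\wt\omega_X\in\QCoh(X)$ perfect. For any $\CF\in\QCoh(X)$, the $\QCoh$-linearity displayed above gives
\[
\Upsilon_X(\CF) \;=\; \CF\otimes \omega_X \;\simeq\; \Xi_X(\CF\otimes \wt\omega_X),
\]
which shows that the essential image of $\Upsilon_X$ lies in that of $\Xi_X$. Combined with the factorization of $\oblv^r_X$ above, we conclude $\oblv^r_X$ lands in the essential image of $\Xi_X$.

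For the ``only if'' direction, consider the unit crystal $\omega_{X_\dr}\in \IndCoh(X_\dr)=\Crys^r(X)$; by construction $\oblv^r_X(\omega_{X_\dr}) = p_{\dr,X}^!(\omega_{X_\dr}) \simeq \omega_X$. Hence, if $\oblv^r_X$ lands in the essential image of $\Xi_X$, so does $\omega_X$. Since $\omega_X\in\Coh(X)\subset\IndCoh(X)$, one then uses the characterization (recalled from \cite{IndCoh}) that a coherent object $\CF$ lies in the image of $\Xi_X$ if and only if $\CF$ is perfect, i.e.\ locally dualizable. Applied to the dualizing complex, this gives that $\omega_X$ is perfect, which is exactly the condition that $X$ is Gorenstein (locally $\omega_X$ is a shift of a line bundle).

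The main obstacle, and the only step that is not formal from the structure established earlier in the paper, is the characterization ``image$(\Xi_X)\cap \Coh(X) = \Coh(X)^{\perf}$'' used in the last paragraph; this is really an assertion about $\IndCoh$ on eventually coconnective DG schemes and must be imported from \cite{IndCoh}. If one wishes to avoid that black box, an alternative is to argue directly that $\omega_X \simeq \Xi_X(\wt\omega_X)$ forces $\wt\omega_X$ to be a dualizing complex in $\QCoh(X)$ whose tensor powers interact correctly with $\Psi_X$, and to deduce perfectness (hence Gorensteinness) by a local computation at each classical point of $X$.
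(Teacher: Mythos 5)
Your proposal is correct and follows essentially the same route as the paper: the ``only if'' direction evaluates $\oblv^r_X$ on the unit crystal to place $\omega_X$ in the essential image of $\Xi_X$ and then invokes the characterization from \cite{IndCoh} (the paper cites Lemma 1.5.8 there) that a coherent object in that image is perfect, while the ``if'' direction uses the factorization $\oblv^r_X\simeq \Upsilon_X\circ\oblv^l_X$ together with the $\QCoh(X)$-linearity of $\Xi_X$ and the perfectness of $\omega_X$. The only cosmetic difference is that you make explicit the identification $\oblv^r_X(\omega_{X_\dr})\simeq\omega_X$, which the paper leaves implicit.
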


Recall that a DG scheme $X$ is said to be Gorenstein if:

\medskip 

\noindent (a) $\omega_X\in \Coh(X)$
(which is equivalent to $X$ being eventually coconnective, see \cite[Proposition 9.6.11]{IndCoh}); 

\medskip

\noindent(b) When considered as a coherent sheaf, $\omega_X$ is a graded line bundle
(which is equivalent to $\omega_X\in \QCoh(X)^{\on{perf}}$, see \cite[Corollary 7.4.3]{IndCoh}).

\medskip

\begin{proof}

Suppose that $\oblv^r_X$ factors through $\QCoh(X)$. In
particular, we obtain that $\omega_X\in \Coh(X)$ lies in the essential image of
$\Xi_X$. Now the assertion follows from \cite[Lemma 1.5.8]{IndCoh}.

\medskip

For the opposite implication, we write $\oblv^r_X(\CM)$ as 
$$\Upsilon_X(\CM)=\oblv^l_X(\CM)\otimes \omega_X,$$
where the tensor product is understood in the sense of the action of
$\QCoh(X)$ on $\IndCoh(X)$, see \cite[Sect. 1.4]{IndCoh}. Recall also
that the functor $\Xi_X$ is tautologically compatible with the above action of
$\QCoh(X)$. Hence, if $\omega_X$, being perfect, lies in the essential image of 
$\Xi_X$, then so does $\oblv^l_X(\CM)\otimes \omega_X$

\end{proof}

\ssec{Relation to the abelian category}

In this subsection we let $X$ be an affine DG scheme.  We will relate the category $\Crys^r(X)$ to a more 
familiar object. 

\sssec{}

Since the t-structure on $\Crys^r(X)$ is compatible with filtered colimits, we obtain that 
$\Crys^r(X)^\heartsuit$ is a Grothendieck abelian category.

\medskip

Using the fact that $\Crys^r(X)$ is \emph{right-complete} in its structure, by reversal of arrows in 
\cite[Theorem 1.3.2.2]{Lu2}, we obtain a canonically
defined t-exact functor
\begin{equation} \label{e:from derived +}
D\left(\Crys^r(X)^\heartsuit\right)^+\to \Crys^r(X)^+,
\end{equation}
where $D(-)^+$ denotes the eventually coconnective part of the derived category of the abelian category. 
 
\sssec{}

We are going to prove:

\begin{prop} \label{p:relation to abelian}
The functor \eqref{e:from derived +} uniquely extends to an equivalence of categories
$$D\left(\Crys^r(X)^\heartsuit\right)\to \Crys^r(X).$$
\end{prop}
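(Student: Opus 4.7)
The strategy is to exhibit a compact projective generator $P$ of the heart $\Crys^r(X)^\heartsuit$ whose endomorphism algebra $A := \on{End}_{\Crys^r(X)}(P)$ is concentrated in degree zero. Once this is in hand, standard $\infty$-categorical arguments (Schwede--Shipley on one side, Morita theory plus derived-category formation on the other) identify both $\Crys^r(X)$ and $D(\Crys^r(X)^\heartsuit)$ with the category of DG modules over $A$ regarded as a DG algebra concentrated in degree zero; since both identifications send $P$ to $A$ itself and the functor \eqref{e:from derived +} sends $P$ (in the heart) to $P$ (in $\Crys^r(X)$), compatibility is automatic and the desired equivalence follows.

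First I would reduce to the case that $X$ is a smooth classical affine scheme. Choose a closed embedding $i: X \hookrightarrow Z$ into such a $Z$. By \propref{p:Kashiwara} and \propref{p:cl embed exact}, $i_{\dr,*}$ is a t-exact, fully faithful embedding identifying $\Crys^r(X)$ with the full subcategory $\Crys^r(Z)_X \subset \Crys^r(Z)$, and hence $\Crys^r(X)^\heartsuit$ with the Serre subcategory $\Crys^r(Z)^\heartsuit_X = \on{ker}(j^{\dagger,r}|_\heartsuit) \subset \Crys^r(Z)^\heartsuit$. Granting the proposition for $Z$, one is reduced to checking that the subcategory $\Crys^r(Z)_X$ corresponds, under $\Crys^r(Z) \simeq D(\Crys^r(Z)^\heartsuit)$, to $D(\Crys^r(Z)^\heartsuit_X) \subset D(\Crys^r(Z)^\heartsuit)$; this should follow from t-exactness of $j^{\dagger,r}$ together with a description of both subcategories as the kernel of the t-exact derived functor induced by $j^{\dagger,r}|_\heartsuit$.

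For $X$ smooth classical affine, take $P := \ind^r_X(\CO_X)$. By \propref{p:t and crys gen} and \propref{p:t and crys sm}(a), both $\ind^r_X$ and $\oblv^r_X$ are t-exact, so the $(\ind^r_X, \oblv^r_X)$-adjunction yields
\[
\on{Ext}^i_{\Crys^r(X)^\heartsuit}(P, \CM) \simeq \on{Ext}^i_{\QCoh(X)^\heartsuit}(\CO_X, \oblv^r_X \CM) \qquad \text{for } \CM \in \Crys^r(X)^\heartsuit,
\]
which vanishes for $i > 0$ since $\CO_X$ is projective in $\QCoh(X)^\heartsuit$ for $X$ classical affine. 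Continuity of $\oblv^r_X$ implies $P$ is compact in $\Crys^r(X)$, while conservativity of $\oblv^r_X$ (\lemref{l:cons for r}) implies that $P$ generates $\Crys^r(X)^\heartsuit$ and, being compact, also all of $\Crys^r(X)$. The endomorphism algebra $A$ is therefore concentrated in degree zero, and we conclude as in the first paragraph.

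The main obstacle is the Kashiwara-type reduction: matching $\Crys^r(Z)_X$ with $D(\Crys^r(Z)^\heartsuit_X)$ once the smooth-case equivalence is known. This is essentially a statement about Grothendieck abelian categories and the passage from t-exact localizations on the heart to kernels on the unbounded derived category, and requires some care—especially since one must ensure that unbounded complexes supported on $X$ are not missed. A route that sidesteps this obstacle would be to construct a compact projective generator of $\Crys^r(X)^\heartsuit$ directly via the monadic description of $\Crys^r(X)$ over $\IndCoh(X)$ given by \propref{p:right ind}(c), using the bounded cohomological amplitude of $\oblv^r_X$ (\propref{p:t and crys sm}(b)) in place of t-exactness; this is less transparent in the singular setting but avoids subtleties about localizing subcategories of Grothendieck abelian categories.
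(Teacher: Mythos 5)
Your treatment of the smooth classical affine case is correct and is exactly the paper's Step 1: $\ind^r_X(\CO_X)$ is a compact projective generator there because both $\ind^r_X$ and $\oblv^r_X$ are t-exact. The gap is in the reduction to that case, and you have correctly located it but not closed it. Identifying $\Crys^r(Z)_X$ with $D\bigl(\Crys^r(Z)^\heartsuit_X\bigr)$ is not a formal consequence of the t-exactness of $j^{\dagger,r}$: one must show that $\on{Ext}$-groups between objects of the Serre subcategory, computed inside $\Crys^r(X)^\heartsuit$, agree with those computed in $\Crys^r(Z)$, and then one must control unbounded complexes. The paper resolves the first point by producing enough injectives in $\Crys^r(X)^\heartsuit$ directly: embed $i_{\dr,*}(\CM)$ into an injective $\CJ$ of $\Crys^r(Z)^\heartsuit$ (available by the smooth case) and check that $\CI:=i^{\dagger,r}(\CJ)$ lies in the heart and is injective, which gives the equivalence on eventually coconnective parts. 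It resolves the second point by showing both sides are left-complete: $\Crys^r(X)$ by \corref{c:left complete}, and $D\bigl(\Crys^r(X)^\heartsuit\bigr)$ because $\ind^r_X(\CO_X)$ is a generator of \emph{bounded Ext-dimension}, which is where \propref{p:t and crys sm}(b) enters. Your proposal contains neither of these two steps.

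Your suggested alternative route rests on a false premise. For singular $X$ the functor $\oblv^r_X$ is not t-exact (only of bounded cohomological amplitude), so the adjunction gives
$$\on{Ext}^i_{\Crys^r(X)}\bigl(\ind^r_X(\CO_X),\CM\bigr)\simeq H^i\bigl(\Gamma(X,\Psi_X\,\oblv^r_X(\CM))\bigr),$$
which need not vanish for $i>0$ when $\CM$ is in the heart; hence $\ind^r_X(\CO_X)$ is not projective and its endomorphism algebra is not concentrated in degree zero, so the Schwede--Shipley/Morita identification with modules over a classical ring is unavailable. (That no single projective generator should be expected in the singular case is also signalled by the paper's use of a \emph{pro}-projective generator in the twisted analogue, \propref{p:left complete twisted}.) Substituting bounded amplitude for t-exactness buys you exactly a generator of bounded Ext-dimension --- i.e., the input to the left-completeness comparison of the paper's Step 3 --- and nothing more; it does not give projectivity, so neither of your routes, as written, completes the proof.
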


The rest of this subsection is devoted to the proof of \propref{p:relation to abelian}. Without
loss of generality, we can assume that $X$ is classical. 

\sssec{Step 1}

Assume first that $X$ is a smooth classical scheme. In this case the assertion is
obvious from the fact that
$$\ind^r_X(\CO_X)$$
is a compact projective generator for both categories. 

\sssec{Step 2}

Let us show that the functor
$$D\left(\Crys^r(X)^\heartsuit\right)^+\to \Crys^r(X)^+$$
is an equivalence. For this, it suffices to show that every object $\CM\in \Crys^r(X)^\heartsuit$
can be embedded in an \emph{injective} object, i.e., an object $\CI\in \Crys^r(X)^\heartsuit$ such that
$$H^0(\CN)=0 \, \Rightarrow \Hom_{\Crys^r(X)}(\CN,\CI)=0.$$

Let $i:X\hookrightarrow Z$ be a closed embedding, where $Z$ is a smooth classical scheme. 
Choose an embedding $i_{\dr,*}(\CM)\hookrightarrow \CJ$, where $\CJ$ is an injective object
(in the same sense) in $\Crys^r(Z)$; it exists by Step 1.  

\medskip

Since the functor 
$\ind^r_Z$ is t-exact, we obtain that $\oblv^r_Z(\CJ)$ is an injective object of $\QCoh(Z)^\heartsuit$.
This implies that $\CI:=i^{\dagger,r}(\CJ)$ belongs to $\Crys^r(X)^\heartsuit$ and has the 
required property.

\sssec{Step 3}

We note that by \corref{c:left complete}, the category $\Crys^r(X)$ identifies with the
left completion of $\Crys^r(X)^+$. Hence, it is enough to show that the canonical
embedding
$$D\left(\Crys^r(X)^\heartsuit\right)^+\hookrightarrow D\left(\Crys^r(X)^\heartsuit\right)$$
identifies $D\left(\Crys^r(X)^\heartsuit\right)$ with the left completion of 
$D\left(\Crys^r(X)^\heartsuit\right)^+$.

\medskip

For that it suffices to exhibit a generator $\CP$ of $\Crys^r(X)^\heartsuit$ of \emph{bounded Ext dimension}.

\medskip

Consider the object 
$$\CP:=\ind^r_X(\CO_X).$$ 
It has the required property by \propref{p:t and crys sm}(b). 

\qed

\begin{rem}
A standard argument allows us to extend the statement of \propref{p:relation to abelian}
to the case when $X$ is a quasi-compact DG scheme with an affine diagonal.
\end{rem}

\begin{rem} \label{r:fin c d}
Once we identify crystals with D-modules on smooth affine classical schemes,
we will obtain many other properties of $\Crys^r(X)$ on quasi-compact DG schemes:
e.g., the fact that the abelian category $\Crys^r(X)^\heartsuit$ is locally Noetherian\footnote{By this we mean
that $\Crys^r(X)^\heartsuit$ is generated by its compact objects, and a subobject of a compact one
is compact.} and that
$\Crys^r(X)$ has finite cohomological dimension with respect to its t-structure. 
\footnote{By this we mean that there exists $N\in \BN$
such that for $n>N$, $\Hom_{\Crys^r(X)}(\CM_1,\CM_2[n])=0$ for $\CM_1,\CM_2\in \Crys^r(X)^\heartsuit$.}
Note that by \propref{p:cl embed exact}, in order to establish both these properties, it suffices to show them 
for affine smooth classical schemes. 
\end{rem}

\section{Relation to D-modules}

In this section we will relate the monads $\oblv^r_X\circ \ind^r_X$ and 
$\oblv^l_X\circ \ind^l_X$ to the sheaf of differential operators. As a result we 
relate the category $\Crys^r$ over a DG scheme to the 
(derived) category of D-modules. 

\ssec{Crystals via an integral transform}

In this subsection we let $\CX$ be a DG indscheme locally almost of finite type. 

\sssec{}  \label{sss:diff op r}

Recall that for $\CX\in \dgindSch_{\on{laft}}$, the category $\IndCoh(\CX)$ is dualizable
and canonically self-dual, see \cite[Sect. 2.6]{IndSch}.

\medskip

Hence, for $\CX,\CY\in \dgindSch$, the category $\on{Funct}_{\on{cont}}(\IndCoh(\CX),\IndCoh(\CY))$
identifies with
$$\IndCoh(\CX)\otimes \IndCoh(\CY)\simeq \IndCoh(\CX\times \CY).$$
Expilcitly, an object $\CQ\in \IndCoh(\CX\times \CY)$ defines a functor $\sF_\CQ:\IndCoh(\CX)\to \IndCoh(\CY)$
by
\begin{equation} \label{e:functor via kernel}
\CF\mapsto (p_2)^{\IndCoh}_*\circ (\Delta_\CX\times \on{id}_\CY)^!(\CF\boxtimes \CQ),
\end{equation}
where $p_2:\CX\times \CY\to \CY$ is the projection map and $\Delta_\CX$ is 
the diagonal map $\CX\to \CX\times \CX$, 

\medskip

In particular, the endo-functor  $\oblv_\CX^r\circ \ind_\CX^r$ defines an object, denoted 
$$\cD^r_\CX\in \IndCoh(\CX\times\CX).$$ 
We will identify this object.

\sssec{}

Let
$\wh\Delta_\CX$ denote the map 
$$\CX\underset{\CX_\dr}\times \CX\simeq (\CX\times \CX)^\wedge_\CX\to \CX\times \CX.$$

\begin{prop}  \label{p:identify D}
There is a canonical isomorphism in $\IndCoh(\CX\times\CX)$
$$\cD^r_\CX\simeq(\wh\Delta_\CX)_*^{\IndCoh}(\omega_{\CX\underset{\CX_\dr}\times \CX}).$$
\end{prop}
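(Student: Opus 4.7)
The plan is to reduce the identification of $\cD^r_\CX$ to the standard computation of the integral kernel of a functor given by a correspondence.

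First, by \propref{p:right ind}(b), the endofunctor $\oblv^r_\CX\circ\ind^r_\CX$ of $\IndCoh(\CX)$ is canonically isomorphic to $(p_t)^{\IndCoh}_*\circ p_s^!$, where the projections
$$ p_s,p_t:\CX\underset{\CX_\dr}\times\CX\rightrightarrows\CX $$
are both ind-proper (each factors as a map into a formal completion of $\CX\times\CX$ along the diagonal followed by the projection). In particular both pushforwards and pullbacks in the formula \eqref{e:functor via kernel} are defined on the relevant category.

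Second, I would establish the following general fact: for any correspondence $\CX\xleftarrow{p}\CZ\xrightarrow{q}\CY$ with $q$ ind-proper (so $q_*^{\IndCoh}$ is the left adjoint of $q^!$, and base change holds for the natural squares), the kernel in $\IndCoh(\CX\times\CY)$ representing the functor $q^{\IndCoh}_*\circ p^!$ via \eqref{e:functor via kernel} is $(p\times q)^{\IndCoh}_*(\omega_\CZ)$. To prove this, set $\CQ=(p\times q)^{\IndCoh}_*(\omega_\CZ)$, rewrite
$$\CF\boxtimes\CQ\simeq(\on{id}_\CX\times(p\times q))^{\IndCoh}_*(\CF\boxtimes\omega_\CZ),$$
and apply base change along the Cartesian square
$$
\CD
\CZ @>{(p,\on{id}_\CZ)}>> \CX\times\CZ \\
@V{(p\times q)}VV @VV{\on{id}_\CX\times(p\times q)}V \\
\CX\times\CY @>{\Delta_\CX\times\on{id}_\CY}>> \CX\times\CX\times\CY
\endCD
$$
together with the identification $(p,\on{id}_\CZ)^!(\CF\boxtimes\omega_\CZ)\simeq p^!(\CF)$, which follows from the Künneth/projection-formula properties of the external product with dualizing complexes. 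Composing with $(p_2)^{\IndCoh}_*$ yields $q^{\IndCoh}_*\circ p^!$.

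Finally, I would apply this general fact to the correspondence defined by $p_s,p_t$: the product map $p_s\times p_t:\CX\underset{\CX_\dr}\times\CX\to\CX\times\CX$ is by definition $\wh\Delta_\CX$, so the kernel becomes $(\wh\Delta_\CX)^{\IndCoh}_*(\omega_{\CX\underset{\CX_\dr}\times\CX})$. The main obstacle is the general base-change step with the external product and $\omega$, but each ingredient is available in \cite{IndCoh} and \cite{IndSch} (ind-properness of $\wh\Delta_\CX$ and its constituents from \cite[Sect.~2.7--2.9]{IndSch}, and the interaction of $\boxtimes$ with $!$-pullback).
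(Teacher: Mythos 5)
Your proposal is correct and follows essentially the same route as the paper: the same general lemma identifying the kernel of a functor given by a correspondence with the $\IndCoh$-pushforward of the dualizing complex along the product map, proved by base change along the same Cartesian square (with the same caveat about ind-properness needed for \cite[Proposition 2.9.2]{IndSch}), and then applied to $p_s,p_t$ via \propref{p:right ind}(b) with $p_s\times p_t=\wh\Delta_\CX$. The only difference is cosmetic: you unwind the base-change identity starting from $\CF\boxtimes\CQ$ rather than from $(p_t)^{\IndCoh}_*\circ p_s^!(\CF)$, which is the same computation read in the opposite direction.
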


\begin{proof}

We begin with the following general observation. 

\medskip

Suppose that we have a functor $\sF\in \on{Funct}_{\on{cont}}(\IndCoh(\CX),\IndCoh(\CY))$ given by a correspondence, i.e. 
we have a diagram
$$ \xymatrix{ & \CZ\ar[dr]^{q_2}\ar[dl]_{q_{1}} & \\ \CX && \CY }$$
of DG indschemes, and $\sF:=(q_2)_*^{\IndCoh} \circ q_1^!$.  Let
$$i: \CZ \rightarrow \CX\times \CY $$
be the induced product map.

\begin{lem}\label{l:kernel of correspondence}
In the above situation, the functor
\[ (q_2)_*^{\IndCoh} \circ q_1^!: \IndCoh(\CX) \rightarrow \IndCoh(\CY) \]
is given by the kernel $\CQ = i^{\IndCoh}_{*} (\omega_{\CZ})$.
\end{lem}
\begin{proof}
We have a diagram, whose inner square is Cartesian
$$
\CD
\CZ @>{q_1\times \on{id}_{\CZ}}>> \CX\times \CZ @>>>  \CX \\
@V{i}VV    @VV{\on{id}_{X}\times i}V   \\
\CX\times \CY   @>{\Delta_{X}\times \on{id}_{\CY}}>>  \CX\times \CX\times \CY \\
@V{p_2}VV  \\
\CY.
\endCD
$$

For $\CF \in \IndCoh(\CX)$, we have
$$ (q_{2})^{\IndCoh}_{*}\circ q_{1}^{!}(\CF) \simeq (p_{2})_{*}^{\IndCoh}\circ i^{\IndCoh}_{*} 
\circ (q_1\times \on{id}_{\CZ})^{!}(\CF\boxtimes \omega_{Z}) $$

\medskip

By \cite[Proposition 2.9.2]{IndSch},\footnote{Strictly speaking, the base change isomorphism was stated in 
\cite[Proposition 2.9.2]{IndSch} only in the case when the vertical arrow is ind-proper, which translates into $i$
being proper. For the proof of \propref{p:identify D} we will apply it in such a situation.}
$$ i^{\IndCoh}_{*}\circ(q_{1}\times \on{id}_{\CZ})^{!}(\CF\boxtimes \omega_{Z}) \simeq 
(\Delta_{\CX}\times \on{id}_{\CY})^{!} (\CF \boxtimes i^{\IndCoh}_{*} (\omega_{\CZ})) .$$
\end{proof}

We apply this lemma to prove  \propref{p:identify D} as follows:

\medskip

By \propref{p:right ind}(b), we have that the functor $\oblv_\CX^r\circ \ind_\CX^r$ is given by the correspondence
$$ \xymatrix{ & (\CX\times \CX)^\wedge_\CX\ar[dr]^{p_t}\ar[dl]_{p_{s}} & \\ \CX && \CX. }$$
The assertion now follows from \lemref{l:kernel of correspondence}.

\end{proof}

\sssec{}

As a corollary of \propref{p:identify D} we obtain:

\begin{cor}
There exists a canonical isomorphism $\sigma(\cD^r_\CX)\simeq \cD^r_\CX$, where $\sigma$ is the
transpoistion of factors acting on $\CX\times \CX$.
\end{cor}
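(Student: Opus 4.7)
The plan is to use the description of $\cD^r_\CX$ provided by \propref{p:identify D} and exploit the intrinsic symmetry of the fiber product $\CX \underset{\CX_\dr}\times \CX$.

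First, observe that since the map $\CX \to \CX_\dr$ is the same on both sides of the fiber product
$\CX \underset{\CX_\dr}\times \CX$, the swap of factors defines a canonical involutive automorphism
$$\tau: \CX \underset{\CX_\dr}\times \CX \to \CX \underset{\CX_\dr}\times \CX$$
fitting into a commutative square
$$
\CD
\CX \underset{\CX_\dr}\times \CX @>{\tau}>> \CX \underset{\CX_\dr}\times \CX \\
@V{\wh\Delta_\CX}VV @VV{\wh\Delta_\CX}V \\
\CX \times \CX @>{\sigma}>> \CX \times \CX.
\endCD
$$

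Next, I would note that $\tau$ preserves the dualizing sheaf. Indeed, writing $\pi$ for the structure map
$\CX \underset{\CX_\dr}\times \CX \to \on{pt}$, we have $\pi \circ \tau = \pi$, so
$\tau^!(\omega_{\CX \underset{\CX_\dr}\times \CX}) = \tau^! \pi^!(k) = \pi^!(k) = \omega_{\CX \underset{\CX_\dr}\times \CX}$. Since $\tau$ is an isomorphism, $\tau_*^{\IndCoh}$ is inverse to $\tau^!$, so we also have a canonical isomorphism
$\tau_*^{\IndCoh}(\omega_{\CX \underset{\CX_\dr}\times \CX}) \simeq \omega_{\CX \underset{\CX_\dr}\times \CX}$.

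Combining these two ingredients with the functoriality of $(-)_*^{\IndCoh}$ and \propref{p:identify D}, we can compute
$$
\sigma_*^{\IndCoh}(\cD^r_\CX)
\simeq \sigma_*^{\IndCoh} (\wh\Delta_\CX)_*^{\IndCoh}(\omega_{\CX \underset{\CX_\dr}\times \CX})
\simeq (\wh\Delta_\CX \circ \tau)_*^{\IndCoh}(\omega_{\CX \underset{\CX_\dr}\times \CX})
\simeq (\wh\Delta_\CX)_*^{\IndCoh}(\omega_{\CX \underset{\CX_\dr}\times \CX})
\simeq \cD^r_\CX,
$$
where in the middle isomorphism we used the commutativity of the square, and in the last-but-one we used $\tau_*^{\IndCoh}(\omega) \simeq \omega$. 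Since $\sigma$ is an involutive isomorphism, $\sigma_*^{\IndCoh} \simeq \sigma^!$, so this gives the desired identification regardless of which convention of ``$\sigma$ acting on $\IndCoh(\CX \times \CX)$'' is in use.

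The only mildly delicate point is verifying that the chain of isomorphisms is genuinely canonical (rather than only existing up to some ambiguous automorphism of $\omega$); this amounts to checking that the identification $\tau^! \omega \simeq \omega$ coming from $\pi \circ \tau = \pi$ is the canonical one, which is built into the construction of $\omega$ as the !-pullback of $k$. There are no serious analytic or categorical obstacles; the argument is entirely formal once \propref{p:identify D} is in hand.
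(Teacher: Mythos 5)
Your argument is correct and is precisely the one the paper intends: the corollary is stated as an immediate consequence of Proposition \ref{p:identify D}, the point being exactly that $\CX\underset{\CX_\dr}\times \CX$ carries a canonical swap involution intertwining $\wh\Delta_\CX$ with $\sigma$ and preserving the dualizing sheaf. Your extra care about $\sigma_*^{\IndCoh}\simeq \sigma^!$ and the canonicity of $\tau^!\omega\simeq\omega$ is sound and fills in what the paper leaves implicit.
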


\ssec{Explicit formulas for other functors}

In this subsection we let $X$ be an eventually coconnective quasi-compact DG scheme almost of finite type.

\sssec{} 

Recall now that the category $\QCoh(X)$ is also compactly generated and self-dual.
Under the identifications
$$\QCoh(X)^\vee\simeq \QCoh(X) \text{ and } \IndCoh(X)^\vee\simeq \IndCoh(X),$$
the dual of the functor $\Upsilon_X$ is the functor $\Psi_X$ of \cite[Sect. 1.1.5]{IndCoh}
(see \cite[Proposition 9.3.3]{IndCoh} for the duality statement). 

\medskip

In particular, for $\bC'\in \StinftyCat_{\on{cont}}$, we have
$$\on{Funct}_{\on{cont}}(\QCoh(X),\bC')\simeq \QCoh(X)\otimes \bC',$$
by a formula similar to \eqref{e:functor via kernel}.

\sssec{}  \label{sss:supp on diag}

Let $\bC$ be any of the categories
$$\QCoh(X\times X)\simeq \QCoh(X)\otimes \QCoh(X),\,\, 
\IndCoh(X\times X)\simeq \IndCoh(X)\otimes \IndCoh(X),$$\
$$\QCoh(X)\otimes \IndCoh(X) \text{ or } \IndCoh(X)\otimes \QCoh(X).$$

\medskip

Then $\bC$ is a module over $\QCoh(X\times X)$, and we define an endo-functor of $\bC$,
denoted
$$\CF\mapsto \CF_{\{X\}}$$
given by tensor product with the object
$$\on{Cone}(\CO_{X\times X}\to j_*\circ j^*(\CO_{X\times X}))[-1],$$
where $j$ is the open embedding $X\times X-X\hookrightarrow X\times X$. 

\medskip

Note that by \cite[Proposition 4.1.7 and Corollary 4.4.3]{IndCoh}, for $\bC=\IndCoh(X\times X)$ this functor identifies with
$$(\wh\Delta_X)^{\IndCoh}_*\circ (\wh\Delta_X)^!,$$
where we recall that $\wh\Delta_X$ denotes the map
$$X\underset{X_\dr}\times X\simeq (X\times X)^\wedge_X\to X\times X.$$

\sssec{}

We claim:

\begin{prop} \label{p:who is who} \hfill

\smallskip

\noindent{\em(a)} The object of $$\QCoh(X)\otimes \QCoh(X)\simeq \on{Funct}_{\on{cont}}(\QCoh(X),\QCoh(X)),$$
corresponding to $\oblv^l_X\circ \ind^l_X$, is canonically identified with 
$$(\Psi_X(\omega_X)\boxtimes \CO_X)_{\{X\}}.$$

\smallskip

\noindent{\em(b)} The object of 
$$\QCoh(X)\otimes \IndCoh(X)\simeq \on{Funct}_{\on{cont}}(\QCoh(X),\IndCoh(X)),$$
corresponding to $\oblv^r_X\circ \Upsilon_{X_\dr} \circ \ind^l_X$, is canonically identified with
$$(\Psi_X(\omega_X)\boxtimes \omega_X)_{\{X\}}.$$

\smallskip

\noindent{\em(c)} The object of 
$$\IndCoh(X)\otimes \QCoh(X)\simeq \on{Funct}_{\on{cont}}(\IndCoh(X),\QCoh(X)),$$
corresponding to $\oblv^l_X\circ (\Upsilon_{X_\dr})^{-1}  \circ \ind^r_X$, is canonically identified with
$$(\omega_X\boxtimes \CO_X)_{\{X\}}.$$

\smallskip

\noindent{\em(d)} The object of $$\QCoh(X)\boxtimes \IndCoh(X)\simeq \on{Funct}_{\on{cont}}(\QCoh(X),\IndCoh(X)),$$
corresponding to $\oblv^r_X\circ {}'\ind^r_X$, is canonically identified with 
$$(\CO_X\boxtimes \omega_X)_{\{X\}}.$$

\smallskip

\noindent{\em(e)} The object of $$\IndCoh(X)\boxtimes \QCoh(X)\simeq \on{Funct}_{\on{cont}}(\IndCoh(X),\QCoh(X)),$$
corresponding to $'\oblv^r_X\circ \ind^r_X$, is canonically identified with 
$$(\omega_X\boxtimes \Psi_X(\omega_X))_{\{X\}}.$$

\smallskip

\noindent{\em(f)} The object of $$\QCoh(X\times X)\simeq \on{Funct}_{\on{cont}}(\QCoh(X),\QCoh(X)),$$
corresponding to $'\oblv^r_X\circ {}'\ind^r_X$, is canonically identified with
$$(\CO_X\boxtimes \Psi_X(\omega_X))_{\{X\}}.$$

\smallskip

\noindent{\em(g)} The object of $$\QCoh(X\times X)\simeq \on{Funct}_{\on{cont}}(\QCoh(X),\QCoh(X)),$$
corresponding to $'\oblv^r_X\circ \Upsilon_{X_\dr} \circ \ind^l_X$, is canonically identified with
$$(\Psi_X(\omega_X)\boxtimes \Psi_X(\omega_X))_{\{X\}}.$$

\smallskip

\noindent{\em(h)} The object of $$\QCoh(X\times X)\simeq \on{Funct}_{\on{cont}}(\QCoh(X),\QCoh(X)),$$
corresponding to $\oblv^l_X\circ (\Upsilon_{X_\dr})^{-1}  \circ {}'\ind^r_X$, is canonically identified with
$$(\CO_X\boxtimes \CO_X)_{\{X\}}.$$

\end{prop}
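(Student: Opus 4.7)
All eight identifications will be deduced from \propref{p:identify D} by a systematic bookkeeping of integral kernels under the self-dualities of $\QCoh(X)$ and $\IndCoh(X)$. First, I would rephrase \propref{p:identify D} in the form
\[
\cD^r_X \;=\; (\omega_X \boxtimes \omega_X)_{\{X\}} \;\in\; \IndCoh(X\times X),
\]
which follows by applying the endo-functor $(\wh\Delta_X)^{\IndCoh}_* \circ (\wh\Delta_X)^!$ — identified in the last paragraph of \secref{sss:supp on diag} with $(-)_{\{X\}}$ — to $\omega_{X\times X} = \omega_X \boxtimes \omega_X$, together with the observation that $(\wh\Delta_X)^!(\omega_{X\times X}) = \omega_{X\underset{X_\dr}\times X}$. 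This is the ``baseline'' kernel, corresponding to part (e) in the purely $\IndCoh$ setting before twisting.

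Next, using \lemref{l:coarse adj}, \lemref{l:oblv l}, and the definition $\ind^l_X := (\Upsilon_{X_\dr})^{-1} \circ \ind^r_X \circ \Upsilon_X$, I would factor each of the eight functors as a composition $G \circ (\oblv^r_X \circ \ind^r_X) \circ H$, where $G \in \{\on{Id}, \Psi_X, \Xi^\vee_X\}$ records the realization on the target side and $H \in \{\on{Id}, \Upsilon_X, \Xi_X\}$ the realization on the source side. For example, (a) becomes $\Xi^\vee_X \circ \oblv^r_X \circ \ind^r_X \circ \Upsilon_X$, (d) becomes $\oblv^r_X \circ \ind^r_X \circ \Xi_X$, (e) becomes $\Psi_X \circ \oblv^r_X \circ \ind^r_X$, and the remaining five cases are analogous.

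For each factorization I would apply the standard rule: if $F$ has kernel $K$, then $G \circ F \circ H$ has kernel $(\check H \otimes G)(K)$, where $\check H$ denotes the dual of $H$ under the self-dualities $\QCoh(X)^\vee \simeq \QCoh(X)$ and $\IndCoh(X)^\vee \simeq \IndCoh(X)$. The relevant dualities are
\[
\Upsilon_X^\vee = \Psi_X,\qquad \Psi_X^\vee = \Upsilon_X,\qquad \Xi_X^\vee = \Xi^\vee_X,\qquad (\Xi^\vee_X)^\vee = \Xi_X,
\]
the first being recalled in \secref{sss:diff op r} and the rest following formally by dualizing the adjunctions $(\Xi_X, \Psi_X)$ and $(\Upsilon_X, \Xi^\vee_X)$ and invoking the uniqueness of adjoints. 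Since the completion operator $(-)_{\{X\}}$ is $\QCoh(X\times X)$-linear and each of $\Psi_X, \Upsilon_X, \Xi_X, \Xi^\vee_X$ preserves the $\QCoh$-module structure on the relevant factor, these operators can be moved inside the completion.

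The main obstacle is the last step: matching the resulting abstract expressions (which involve $\Psi_X(\omega_X)$, $\Xi^\vee_X(\omega_X)$, $\Xi_X(\omega_X)$, etc., on one or both factors) with the specific right-hand sides appearing in (a)--(h), which involve only $\CO_X$, $\omega_X$, and $\Psi_X(\omega_X)$. I would handle this by a direct application of \lemref{l:kernel of correspondence} to the diagonal correspondence $X \xleftarrow{p_s} (X\times X)^\wedge_X \xrightarrow{p_t} X$: interposing $\Xi_X$ or $\Xi^\vee_X$ at a leg converts the $\IndCoh$-pullback (resp.\ pushforward) to its $\QCoh$-counterpart, and the pushforward of $\omega_{(X\times X)^\wedge_X}$ from the formal neighborhood factors through $\CO_X$ on the corresponding side, yielding precisely the stated kernel. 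The eventual coconnectivity of $X$ is used here, via \cite[Sect.~1.4]{IndCoh}, to ensure that $\Xi_X$ is fully faithful and $\Xi^\vee_X$ is defined, so that these manipulations make sense.
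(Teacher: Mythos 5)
Your plan is correct and is essentially the paper's own argument: both start from \propref{p:identify D} rewritten as $\cD^r_X\simeq(\omega_X\boxtimes\omega_X)_{\{X\}}$, factor each functor as a pre/post-composition of $\oblv^r_X\circ\ind^r_X$ with $\Upsilon_X,\Psi_X,\Xi_X,\Xi^\vee_X$ (via \lemref{l:oblv l}, \lemref{l:coarse adj} and the definition of $\ind^l_X$), and transport the kernel by $\sF^\vee\otimes\sG$ using $\Upsilon_X^\vee=\Psi_X$ and $\Xi_X^\vee=\Xi^\vee_X$. The only place you are more roundabout than necessary is the final identification producing $\CO_X$: the only new object that arises is $\Xi^\vee_X(\omega_X)$ (never $\Xi_X(\omega_X)$), and the paper disposes of it by the one-line observation that the unit $\CO_X\to\Xi^\vee_X\circ\Upsilon_X(\CO_X)=\Xi^\vee_X(\omega_X)$ is an isomorphism because $\Upsilon_X$ is fully faithful for eventually coconnective $X$, rather than by re-running the correspondence computation.
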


\begin{proof}

Let $\bC$ and $\bD$ be objects of $\StinftyCat_{\on{cont}}$ with $\bC$ dualizable, so that
$$\on{Funct}_{\on{cont}}(\bC,\bD)\simeq \bC^\vee\otimes \bD.$$
Let $\sF:\bC_1\to \bC$ and $\sG:\bD\to \bD_1$ be continuous functors. Then the resulting functor
$$\on{Funct}_{\on{cont}}(\bC,\bD)\to \on{Funct}_{\on{cont}}(\bC_1,\bD_1)$$
is given by
$$(\sF^\vee\otimes \sG):\bC^\vee\otimes \bD\to \bC^\vee_1\otimes \bD_1.$$

With this in mind, we have:

\medskip

\noindent Points (a) and (c) follow by combining \propref{p:identify D}, \lemref{l:oblv l},  
and the following assertion:

\begin{lem} \label{l:Xi vee of omega}
The unit of the adjunction $\on{Id}_{\QCoh(X)}\to \Xi^\vee_X\circ \Upsilon_X$
defines an isomorphism
$$\CO_X\to \Xi^\vee(\omega_X).$$
\end{lem}

\medskip

\noindent Point (b) follows from \propref{p:identify D} using $\ind^l_X\simeq (\Upsilon_{X_\dr})^{-1}\circ
\ind^r_X\circ \Upsilon_X$. 

\medskip

\noindent Point (d) follows from \propref{p:identify D} using the isomorphism $'\ind^r_X\simeq \ind^r_X\circ \Xi^\vee_X$
and \lemref{l:Xi vee of omega}.

\medskip

\noindent Point (e) follows from \propref{p:identify D}. Point (f) follows from point (d). Point (g) follows from point (b).

\medskip

\noindent Point (h) follows from point (d) using \lemref{l:oblv l} and \lemref{l:Xi vee of omega}.

\end{proof}

\sssec{}

Let $\cD^l_X$,  $\cD^{l\to r'}_X$, $\cD^{r'\to l}_X$ and $\cD^{r'}_X$ denote the objects of 
$$\QCoh(X)\otimes \QCoh(X)\simeq \QCoh(X\times X)$$
corresponding to the functors
$$\oblv^l_X\circ \ind^l_X,\,\, {}'\oblv^r_X\circ \Upsilon_{X_\dr}\circ \ind^l_X,\,\,
\oblv^l_X \circ (\Upsilon_{X_\dr})^{-1}  \circ {}'\ind^r_X \text{ and } '{}\oblv^r_X\circ {}'\ind^r_X,$$
respectively. 

\medskip

We have:

\begin{prop}  \label{p:other D}
The objects $\cD^r_X$, $\cD^l_X$, $\cD^{l\to r'}_X$, $\cD^{r'\to l}_X$ and $\cD_X^{r'}$ are related by

\medskip

\noindent{\em(i)} $\cD^l_X\simeq (\Psi_X\boxtimes \Xi^\vee_X)(\cD^r_X)\in \QCoh(X\times X)$;

\medskip

\noindent{\em(ii)} $(\Psi_X\boxtimes \on{Id}_{\IndCoh(X)})(\cD^r_X)\simeq (\on{Id}_{\QCoh(X)}\boxtimes \Upsilon_X)(\cD^l_X) 
\in \QCoh(X)\otimes \IndCoh(X)$;

\medskip

\noindent{\em(iii)} $\cD^{l\to r'}_X\simeq  (\Psi_X\boxtimes \Psi_X)(\cD^r_X)\in \QCoh(X\times X)$;

\medskip

\noindent{\em(iii')} $\cD^{l\to r'}_X\simeq 
(\on{Id}_{\QCoh(X)}\boxtimes \Psi_X\circ \Upsilon_X)(\cD^l_X)\simeq 
(\CO_X\boxtimes \Psi_X(\omega_X))\underset{\CO_{X\times X}}\otimes \cD^l_X \in \QCoh(X\times X)$;

\medskip

\noindent{\em(iv)} $\cD^{r'\to l}_X\simeq (\Xi^\vee_X\boxtimes \Xi^\vee_X)(\cD^r_X)$.

\medskip

\noindent{\em(v)} $\cD^{r'}\simeq (\Xi^\vee_X\boxtimes \Psi_X)(\cD^r_X)\in \QCoh(X\times X)$.

\end{prop}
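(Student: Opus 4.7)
The strategy is to translate each claimed kernel identity into an identity of the functors these kernels represent, using the general principle that if $F: \bC \to \bD$ is represented by a kernel $\CK \in \bC^{\vee} \otimes \bD$, then for $G: \bC' \to \bC$ and $H: \bD \to \bD'$ the composition $H \circ F \circ G$ is represented by $(G^{\vee} \boxtimes H)(\CK) \in {\bC'}^\vee \otimes \bD'$. I would apply this using the duality identities $\Upsilon^{\vee}_X = \Psi_X$ (recalled at the start of \secref{ss:coarse}) and $\Xi_X^\vee = \Xi^{\vee}_X$; the latter is a consequence of the fact that taking duals under the self-dualities of $\QCoh(X)$ and $\IndCoh(X)$ swaps left and right adjoints, so the adjunction $\Xi_X \dashv \Psi_X$ dualizes to $\Upsilon_X \dashv \Xi_X^\vee$, identifying $\Xi_X^\vee$ with the right adjoint $\Xi^{\vee}_X$ of $\Upsilon_X$.

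The key functor identities I would invoke are: $\ind^{l}_X = (\Upsilon_{X_\dr})^{-1} \circ \ind^{r}_X \circ \Upsilon_X$ (from \secref{sss:induction for left}), $\oblv^{l}_X \simeq \Xi^{\vee}_X \circ \oblv^{r}_X \circ \Upsilon_{X_\dr}$ (\lemref{l:oblv l}), $\Upsilon_X \circ \oblv^{l}_X \simeq \oblv^{r}_X \circ \Upsilon_{X_\dr}$ (diagram \eqref{e:oblv left and right}), ${}'\oblv^{r}_X = \Psi_X \circ \oblv^{r}_X$, and ${}'\ind^{r}_X \simeq \ind^{r}_X \circ \Xi_X$ (from \secref{ss:coarse} together with $\Psi_X \circ \Xi_X \simeq \on{Id}$). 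With these in hand, each part reduces to a short manipulation: for (i), the composite $\oblv^{l}_X \circ \ind^{l}_X$ simplifies to $\Xi^{\vee}_X \circ (\oblv^{r}_X \circ \ind^{r}_X) \circ \Upsilon_X$, whose kernel is $(\Psi_X \boxtimes \Xi^{\vee}_X)(\cD^{r}_X)$; for (ii), both sides represent the functor $\Upsilon_X \circ (\oblv^{l}_X \circ \ind^{l}_X) \simeq (\oblv^{r}_X \circ \ind^{r}_X) \circ \Upsilon_X$ from $\QCoh(X)$ to $\IndCoh(X)$; and analogously for (iii), (iv), (v) by tracing pre- and post-compositions with $\Xi_X$, $\Xi^{\vee}_X$, $\Upsilon_X$, and $\Psi_X$.

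For (iii'), the first isomorphism follows from the identity ${}'\oblv^{r}_X \circ \Upsilon_{X_\dr} \circ \ind^{l}_X \simeq \Psi_X \circ \Upsilon_X \circ (\oblv^{l}_X \circ \ind^{l}_X)$, obtained by rewriting $\oblv^{r}_X \circ \Upsilon_{X_\dr}$ as $\Upsilon_X \circ \oblv^{l}_X$ via \eqref{e:oblv left and right}. The second isomorphism in (iii') follows because the endofunctor $\Psi_X \circ \Upsilon_X$ of $\QCoh(X)$ is naturally tensoring with $\Psi_X(\omega_X)$ (since $\Psi_X$ is $\QCoh(X)$-linear and $\Upsilon_X(\CF) = \CF \otimes \omega_X$), so applying it on the second factor equals tensoring with $\CO_X \boxtimes \Psi_X(\omega_X)$.

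The main bookkeeping obstacle is verifying the identification $\Xi_X^\vee = \Xi^{\vee}_X$ under the self-dualities of $\QCoh(X)$ and $\IndCoh(X)$; once this is in place, the rest of the argument is a careful but routine application of the kernel transformation rule. As a consistency check, each identity can be verified independently from the explicit formulas of \propref{p:who is who}, using that the subscript $(-)_{\{X\}}$ is tensor product with a fixed object of $\QCoh(X \times X)$ and hence commutes with all tensor operations --- for instance, (iii') becomes the direct calculation $(\CO_X \boxtimes \Psi_X(\omega_X)) \otimes (\Psi_X(\omega_X) \boxtimes \CO_X)_{\{X\}} \simeq (\Psi_X(\omega_X) \boxtimes \Psi_X(\omega_X))_{\{X\}}$.
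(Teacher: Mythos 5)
Your proposal is correct and follows essentially the same route as the paper: each point is reduced, via the kernel-transformation rule $(\sF^\vee\otimes\sG)$ stated before \propref{p:who is who}, to the functor identities $\ind^l_X=(\Upsilon_{X_\dr})^{-1}\circ\ind^r_X\circ\Upsilon_X$, \lemref{l:oblv l}, and ${}'\ind^r_X\simeq\ind^r_X\circ\Xi_X$, ${}'\oblv^r_X=\Psi_X\circ\oblv^r_X$, together with $\Upsilon_X^\vee=\Psi_X$ and the (correct) observation that dualizing the adjunction $\Xi_X\dashv\Psi_X$ identifies $\Xi_X^\vee$ with the right adjoint $\Xi^\vee_X$ of $\Upsilon_X$. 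The paper's proof is just a terser version of the same argument (e.g.\ it deduces (iii') by combining (ii) and (iii) rather than by your direct $\QCoh(X)$-linearity computation, which is an equivalent minor variation).
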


\begin{proof}

Point (i) follows from \lemref{l:oblv l}. 

\medskip

\noindent Point (ii) follows from the (tautological) isomorphism of functors
$$\oblv^r_X\circ \ind^r_X\circ \Upsilon_X\simeq \oblv^r_X\circ \Upsilon_{X_\dr}\circ \ind^l_X\simeq
\Upsilon_X\circ \oblv^l_X\circ \ind^l_X.$$

\noindent Point (iii) is tautological.

\medskip

\noindent Point (iii') follows by combining points (ii) and (iii).

\medskip

\noindent Point (iv) follows from \lemref{l:oblv l}. 

\medskip

\noindent Point (v) is tautological.

\end{proof}

\ssec{Behavior with respect to the t-structure}

We continue to assume that $X$ is a quasi-compact DG scheme almost of finite type.

\sssec{}

We note:
\begin{lem}
The object $\cD^r_X$ is bounded below, i.e., belongs to $\IndCoh(X\times X)^+$. 
\end{lem}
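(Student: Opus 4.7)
The plan is to combine the explicit description of $\cD^r_X$ from \propref{p:identify D} with standard boundedness facts for dualizing complexes, reducing stepwise from the formal completion down to a smooth ambient classical scheme. First, \propref{p:identify D} gives
\[ \cD^r_X \simeq (\wh\Delta_X)^{\IndCoh}_*(\omega_{X\underset{X_\dr}\times X}), \]
so it suffices to check that this pushforward is bounded below. Set $Y := (X\times X)^\wedge_X$.

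Next I would invoke \cite[Lemma 2.7.11]{IndSch} (already used in the proof of \propref{p:t and crys sm}): the map $\wh\Delta_X : Y \to X\times X$ is an ind-closed embedding, in particular ind-proper, so its $\IndCoh$-pushforward is $t$-exact. This reduces the problem to showing that $\omega_Y \in \IndCoh(Y)^+$. For that, I would write $\omega_Y \simeq \wh\Delta_X^!(\omega_{X\times X})$; since $\wh\Delta_X^!$ is the right adjoint of the $t$-exact functor $(\wh\Delta_X)^{\IndCoh}_*$, it is left $t$-exact, so it further suffices to show $\omega_{X\times X}\in \IndCoh(X\times X)^+$.

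Finally, since $X$ is quasi-compact and almost of finite type, so is $X\times X$, and we can choose a closed embedding $j : X\times X \hookrightarrow Z$ into a smooth classical scheme $Z$. Then $\omega_{X\times X} \simeq j^!(\omega_Z)$; here $\omega_Z$ is a line bundle placed in degree $-\dim Z$ (as $Z$ is smooth), and $j^!$ is again left $t$-exact, as the right adjoint of the $t$-exact functor $j^{\IndCoh}_*$. Combining the estimates yields $\cD^r_X \in \IndCoh(X\times X)^{\geq -\dim Z}$.

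There is no genuinely hard step here: the whole argument is a chain of applications of left $t$-exactness of $!$-pullback and $t$-exactness of pushforward along proper or ind-proper maps, bootstrapped off the trivial boundedness of $\omega$ for a smooth classical scheme. The only mild subtlety is that $t$-exactness of pushforward needs to be invoked both for an honest closed embedding and for the formal-completion map $\wh\Delta_X$, but \cite[Lemma 2.7.11]{IndSch} covers both.
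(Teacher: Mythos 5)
Your argument is correct and is essentially the paper's own proof: the paper likewise deduces the lemma from \propref{p:identify D} together with the fact that $\omega_X\in\IndCoh(X)^+$ and the behavior on eventually coconnective objects of the functor $\CF\mapsto\CF_{\{X\}}\simeq(\wh\Delta_X)^{\IndCoh}_*\circ(\wh\Delta_X)^!(\CF)$ of \secref{sss:supp on diag}; you have simply unwound that composite into its two factors (t-exact ind-proper pushforward, left t-exact $!$-pullback) and made the bookkeeping explicit. The one loose step is the claim that $X\times X$ admits a global closed embedding into a smooth classical scheme: for a quasi-compact but non-affine (e.g.\ non-separated) $X$ this need not hold, but since the t-structure on $\IndCoh$ is Zariski-local (\secref{sss:t and Zariski}), the assertion $\omega_{X\times X}\in\IndCoh(X\times X)^+$ reduces to the affine case, where your embedding argument applies verbatim.
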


\begin{proof}
Follows from \propref{p:identify D}, using the fact that $\omega_X\in \IndCoh(X)^+$, and the fact that the functor
$$\CF\mapsto \CF_{\{X\}},\quad \IndCoh(X\times X)\to \IndCoh(X\times X)$$
is right t-exact.
\end{proof}

\sssec{}

Assume now that $X$ is eventually coconnective. We claim:

\begin{prop} The objects $\cD^l_X$, $\cD^{l\to r'}_X$, $\cD^{r'\to l}_X$ and $\cD^{r'}_X$ of $\QCoh(X\times X)$
are all eventually coconnective, i.e., belong to $\QCoh(X\times X)^+$.
\end{prop}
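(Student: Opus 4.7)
The strategy is to reduce everything to the previous lemma ($\cD^r_X\in \IndCoh(X\times X)^+$) via the formulas of \propref{p:other D}, which express each of the four objects in question as the image of $\cD^r_X$ under an external product of two functors from the set $\{\Psi_X,\, \Xi^\vee_X\}$. Thus it is enough to show that each of the endofunctors $\Psi_X$ and $\Xi^\vee_X$ (viewed as functors $\IndCoh(X)\to \QCoh(X)$) has cohomological amplitude bounded from below, and then to check that external products of such functors still preserve the subcategory of eventually coconnective objects under the identifications $\QCoh(X\times X)\simeq \QCoh(X)\otimes \QCoh(X)$ and $\IndCoh(X\times X)\simeq \IndCoh(X)\otimes \IndCoh(X)$.

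The first functor is easy: $\Psi_X$ is t-exact by \cite[Lemma 1.2.2]{IndCoh}, hence sends $\IndCoh(X)^+$ to $\QCoh(X)^+$. The interesting one is $\Xi^\vee_X$, the right adjoint of $\Upsilon_X$. Since $X$ is eventually coconnective and almost of finite type, \cite[Proposition 9.6.11]{IndCoh} gives $\omega_X\in \Coh(X)$; in particular $\omega_X$ is of bounded cohomological amplitude. As $\Upsilon_X$ is tensor product with $\omega_X$ (via the $\QCoh(X)$-action on $\IndCoh(X)$ from \cite[Sect.~1.4]{IndCoh}), it follows that $\Upsilon_X$ has bounded cohomological amplitude. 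By the standard adjunction principle (a continuous left adjoint of amplitude bounded above by $n$ has a right adjoint of amplitude bounded below by $-n$), we conclude that $\Xi^\vee_X$ has cohomological amplitude bounded from below, and in particular sends $\IndCoh(X)^+$ to $\QCoh(X)^+$.

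It remains to check that for $F,G\in\{\Psi_X,\Xi^\vee_X\}$, the external product $F\boxtimes G$, viewed as a functor $\IndCoh(X\times X)\to \QCoh(X\times X)$, sends $\IndCoh(X\times X)^+$ to $\QCoh(X\times X)^+$. This follows from general properties of t-structures on tensor products of presentable DG categories: if $F$ and $G$ each have cohomological amplitude bounded from below (by $-n_F$ and $-n_G$ respectively), then $F\boxtimes G$ has cohomological amplitude bounded from below by $-(n_F+n_G)$, hence preserves eventual coconnectivity. Combining this with $\cD^r_X\in \IndCoh(X\times X)^+$ and applying \propref{p:other D}(i), (iii), (iv), (v) yields the four assertions for $\cD^l_X$, $\cD^{l\to r'}_X$, $\cD^{r'\to l}_X$ and $\cD^{r'}_X$ respectively.

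The main point requiring care is the amplitude bound for $\Xi^\vee_X$. Everything else is routine: t-exactness of $\Psi_X$ is already in \cite{IndCoh}, and the statement about external products reduces, by duality and compact generation, to the existence of finite-amplitude bounds on the individual factors. No new geometric input about $X$ beyond eventual coconnectivity (to ensure $\omega_X\in\Coh(X)$) and quasi-compactness (to ensure $\omega_X$ has globally bounded amplitude) is needed.
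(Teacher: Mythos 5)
Your route is genuinely different from the paper's. The paper reads the statement directly off \propref{p:who is who}: each of the four objects is exhibited there as $(\CA\boxtimes \CB)_{\{X\}}$ with $\CA,\CB\in\{\CO_X,\Psi_X(\omega_X)\}\subset \QCoh(X)^+$, and since external products over $k$ obey K\"unneth and the functor $(-)_{\{X\}}=\on{fib}(\on{id}\to j_*\circ j^*)$ preserves $\QCoh(X\times X)^+$, the conclusion is immediate, with no analysis of functor amplitudes. You instead start from the preceding lemma ($\cD^r_X\in\IndCoh(X\times X)^+$) and push it through the formulas of \propref{p:other D}. That is a perfectly legitimate reduction, and your treatment of $\Xi^\vee_X$ via the adjunction with $\Upsilon_X$ is the right idea; but be aware that your intermediate claim that $\Upsilon_X$ has \emph{bounded} cohomological amplitude is false in general: $\omega_X\in\Coh(X)$ gives only an upper bound on the amplitude, while a lower bound would force $\omega_X$ to be perfect, i.e.\ $X$ Gorenstein (cf.\ \lemref{l:Drinfeld}). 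Since only the upper bound feeds into your adjunction step, this is an overstatement rather than an error in the chain of deductions.

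The genuine gap is the external-product step. The assertion that $F\boxtimes G$ has amplitude bounded below whenever $F$ and $G$ do is \emph{not} a general property of t-structures on tensor products: lower amplitude bounds, unlike upper ones, cannot be verified on a set of colimit-generators of the connective part, and a general object of $\IndCoh(X\times X)^{\geq 0}$ is not assembled from external products of coconnective objects in any way that lets you apply the bounds for $F$ and $G$ separately. The phrase ``by duality and compact generation'' is exactly where the missing content sits. The repair is to run your adjunction trick once more, at the level of $X\times X$: note $\Psi_X\boxtimes\Psi_X\simeq\Psi_{X\times X}$ (t-exact) and $\Xi^\vee_X\boxtimes\Xi^\vee_X\simeq\Xi^\vee_{X\times X}$ (since $\omega_{X\times X}\simeq\omega_X\boxtimes\omega_X$), while the mixed functors $\Psi_X\boxtimes\Xi^\vee_X$ and $\Xi^\vee_X\boxtimes\Psi_X$ are the right adjoints of $\Xi_X\boxtimes\Upsilon_X$ and $\Upsilon_X\boxtimes\Xi_X$ respectively. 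These left adjoints have amplitude bounded above — this \emph{can} be checked on generators $\CF\boxtimes\CG$ of $\QCoh(X\times X)^{\leq 0}$, using that $\Xi_X$ is right t-exact and $\Upsilon_X(\QCoh(X)^{\leq 0})\subset\IndCoh(X)^{\leq N}$ — and hence their right adjoints have amplitude bounded below, which is what you need to apply them to $\cD^r_X\in\IndCoh(X\times X)^+$. With that substitution your argument closes; it remains longer than the paper's one-line deduction from \propref{p:who is who}.
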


\begin{proof}
Follows from \propref{p:who is who}, using the fact that $\Psi_X(\omega_X),\CO_X\in \QCoh(X)^+$
and the fact that the functor
$$\CF\mapsto \CF_{\{X\}},\quad \QCoh(X\times X)\to \QCoh(X\times X)$$
is right t-exact.
\end{proof}

\sssec{}

Finally, let us assume that $X$ is a smooth classical scheme. We claim:

\begin{prop} \label{p:diff in heart}
The object $\cD^l_X\in \QCoh(X\times X)$ lies in the heart of the t-structure.
\end{prop}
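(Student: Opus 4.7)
The plan is to use the explicit formula $\cD^l_X \simeq (\Psi_X(\omega_X) \boxtimes \CO_X)_{\{X\}}$ provided by \propref{p:who is who}(a), and to observe that for $X$ smooth of dimension $n$ the two cohomological shifts involved --- one coming from the dualizing complex and one coming from local cohomology along the diagonal --- cancel out perfectly.

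First I would recall that for a smooth classical scheme $X$ of dimension $n$, the dualizing complex in the $\IndCoh$ conventions of this paper is $\omega_X \simeq \Omega^n_X[n]$ (consistent with the assertion in the proof of \propref{p:left to right t-struct} that $\Upsilon_X$ sends $\QCoh(X)^{\leq 0}$ to $\IndCoh(X)^{\leq -n}$). Since $\Psi_X$ is an equivalence in the smooth case, this becomes $\Psi_X(\omega_X) \simeq \Omega^n_X[n]$, i.e., a line bundle sitting in cohomological degree $-n$. Consequently
$$\Psi_X(\omega_X)\boxtimes \CO_X \simeq p_1^*(\Omega^n_X)[n],$$
where $p_1:X\times X\to X$ is the first projection; in particular this object is the shift by $[n]$ of a line bundle on $X\times X$, and is $\CO_{X\times X}$-flat.

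Next, by the definition in \secref{sss:supp on diag}, the functor $(-)_{\{X\}}$ is tensor product (over $\CO_{X\times X}$) with $K:=\on{Cone}(\CO_{X\times X}\to j_*\circ j^*(\CO_{X\times X}))[-1]$, which is the local cohomology complex $R\Gamma_{\Delta(X)}(\CO_{X\times X})$. Since $X\times X$ is smooth of dimension $2n$ and the diagonal $\Delta_X:X\hookrightarrow X\times X$ is a regular closed embedding of codimension $n$ (with conormal sheaf $\Omega^1_X$), a Koszul computation shows that $K$ is concentrated in cohomological degree precisely $n$.

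Combining these two facts, the tensor product over $\CO_{X\times X}$ of a flat object concentrated in degree $-n$ with an object concentrated in degree $n$ has no higher $\on{Tor}$'s and lies in degree $-n+n=0$. Hence $\cD^l_X$ belongs to $\QCoh(X\times X)^\heartsuit$, as desired. I do not foresee a real obstacle; the only step requiring a small argument is the cohomological concentration of local cohomology along a regular embedding of smooth schemes, which is a classical consequence of the Koszul resolution of $(\Delta_X)_*\CO_X$. Everything else is bookkeeping with shifts.
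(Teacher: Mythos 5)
Your argument is correct, but it is genuinely different from the one in the paper. The paper reduces to the affine case, observes that it suffices to show that $(p_2)_*(\cD^l_X)\simeq \oblv^l_X\circ\ind^l_X(\CO_X)$ lies in the heart, rewrites this as $\Xi^\vee_X\circ(\oblv^r_X\circ\ind^r_X)\circ\Upsilon_X(\CO_X)$, and then invokes the t-exactness of the right-crystal monad (\propref{p:t and crys gen} together with \propref{p:t and crys sm}(a)) plus the fact that $\Upsilon_X$ and its inverse $\Xi^\vee_X$ shift degrees by opposite amounts; the actual geometric input is hidden in the proof of \propref{p:t and crys sm}(a), which filters $(X\times X)^\wedge_X$ by the infinitesimal neighborhoods $X_n$ of the diagonal and uses that $p_s\circ i_n$ is finite flat. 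You instead work directly with the kernel via \propref{p:who is who}(a), and the degree count $-n+n=0$ comes from the concentration of $R\Gamma_{\Delta(X)}(\CO_{X\times X})$ in degree $n$ (Koszul resolution of the regular codimension-$n$ diagonal) against $\Psi_X(\omega_X)\simeq\Omega^n_X[n]$. Your route is more explicit and self-contained at the price of a separate local-cohomology computation, and it identifies $\cD^l_X$ concretely as a twist of $\underline{H}^n_{\Delta}(\CO_{X\times X})$; the paper's route recycles already-established t-exactness statements and needs no new geometry at this point. Two small points to make explicit: the statement is Zariski-local (and local on connected components), which is how one handles non-equidimensional $X$ so that ``$n$'' and the codimension of the diagonal match component by component; and the identification $\Psi_X(\omega_X)\simeq\Omega^n_X[n]$ uses that $\Psi_X$ is an equivalence for smooth $X$, as cited in the proof of \propref{p:left to right}.
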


\begin{proof}
The assertion is Zariski-local, hence, we can assume that $X$ is affine. It is sufficient
to show that
$$(p_2)_*(\cD^l_X)\in \QCoh(X)$$
lies in the heart of the t-structure. We have, 
$$(p_2)_*(\cD^l_X)\simeq \oblv^l_X\circ \ind^l_X(\CO_X)\simeq \Xi^\vee_X \circ (\oblv^r_X\circ \ind^r_X) \circ \Upsilon_X(\CO_X).$$
Now, the functor $\oblv^r_X\circ \ind^r_X$ is t-exact (see \propref{p:t and crys sm}), the functor 
$\Upsilon_X$ is an equivalence that shifts degrees by $[n]$, and $\Xi^\vee_X$ is the inverse of 
$\Upsilon_X$.
\end{proof}

\ssec{Relation to the sheaf of differential operators}  \label{ss:rel to diff op}

In this subsection we shall take $X$ to be a smooth classical scheme.
We are going to identify $\cD^l_X$ with the object of $\QCoh(X\times X)$ 
underlying the classical sheaf of differential operators $\on{Diff}_X$. 

\sssec{}

For any $\CQ\in \QCoh(X\times X)^\heartsuit$, which is set-theoretically supported on the diagonal, and
$\CF_1,\CF_2\in \QCoh(X)^\heartsuit$, a datum of a map
$$(p_2)_*(p_1^*(\CF_1)\otimes \CQ)\to \CF_2$$
is equivalent to that of a map 
$$\CQ\to \on{Diff}_X(\CF_1,\CF_2).$$

\medskip

Furthermore, this assignment is compatible with the monoidal structure on $\QCoh(X\times X)^\heartsuit$,
given by convolution and composition of differential operators.

\sssec{}

Taking $\CQ=\cD^l_X$ and $\CF_1=\CF_2=\CO_X$, from the action of the monad $\oblv^l_X\circ \ind^l_X$ 
on $\CO_X$, we obtain the desired map
\begin{equation} \label{e:left diff}
\cD^l_X\to \on{Diff}_X,
\end{equation}
compatible with the algebra structure.

\medskip

We claim:

\begin{lem} \label{l:left diff}
The map \eqref{e:left diff} is an isomorphism of algebras. 
\end{lem}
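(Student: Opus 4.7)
The plan is to check the isomorphism Zariski-locally and compare natural ``order'' filtrations on the two sides. Since $\on{Diff}_X$ tautologically lies in $\QCoh(X \times X)^\heartsuit$ and $\cD^l_X$ does so by \propref{p:diff in heart}, the map \eqref{e:left diff} is a morphism in the heart; its compatibility with the algebra structures is built into its construction, so it is enough to show that the underlying map of quasi-coherent sheaves is an isomorphism. The assertion is Zariski-local on $X$, so I may assume $X = \Spec(A)$ is affine and admits an \'etale coordinate system $x_1,\dots,x_n$, trivializing $\Omega^1_X$ and $T_X$.

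Both sides carry natural increasing filtrations indexed by ``order,'' and the map \eqref{e:left diff} respects them. For $\on{Diff}_X$, this is the classical order filtration: $\on{Diff}^{\leq m}_X = \uHom_{\CO_X}(\CO_{(X \times X)_m},\CO_X)$, where $(X \times X)_m$ is the $m$-th infinitesimal neighborhood of the diagonal, and the PBW theorem identifies $\on{gr}^m \on{Diff}_X \simeq \on{Sym}^m_{\CO_X}(T_X)$ via the symbol. For $\cD^l_X$, the filtration arises from the presentation
$$X\underset{X_\dr}\times X \simeq (X \times X)^\wedge_X \simeq \underset{m}{\on{colim}}\,(X \times X)_m$$
together with the description of $\oblv^l_X \circ \ind^l_X$ via the infinitesimal groupoid (using \lemref{l:cl formally smooth} and \lemref{l:left on cl formally smooth}, valid because $X$ is classically formally smooth); under $(p_2)_*$, this translates into an exhaustion of $\oblv^l_X \circ \ind^l_X(\CO_X)$ by subobjects of ``order $\leq m$.''

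It remains to identify the associated gradeds. For $\cD^l_X$, in the \'etale coordinates $x_i$, the finite-order neighborhoods $(X\times X)_m$ are classical and the ideal $I$ of the diagonal satisfies $I^m/I^{m+1} \simeq \on{Sym}^m_{\CO_X}(\Omega^1_X)$; dualizing as prescribed by the passage from $(X \times X)^\wedge_X$ to the $(p_2)_*$-pushforward yields $\on{Sym}^m_{\CO_X}(T_X)$, and a direct computation in coordinates shows that the map \eqref{e:left diff} induces the identity on $\on{gr}^m \simeq \on{Sym}^m_{\CO_X}(T_X)$, matching the symbol identifications on both sides. Therefore the filtered map is an isomorphism on graded pieces, hence itself an isomorphism, and it is automatically an isomorphism of algebras by construction. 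The main obstacle will be rigorously setting up the filtration on $\cD^l_X$ compatibly with the order filtration on $\on{Diff}_X$ and performing the local symbol computation; the smoothness of $X$ is what makes all derived-scheme subtleties collapse, so the whole identification reduces to a classical commutative-algebra calculation.
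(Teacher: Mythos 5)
Your argument is correct in outline, but it takes a genuinely different route from the paper's. The paper's proof is two lines: since \eqref{e:left diff} is an algebra map by construction, it suffices to check that it is an isomorphism of the underlying objects of $\QCoh(X\times X)$, and this is read off from the explicit kernel formula of \propref{p:who is who}(a), which exhibits $\cD^l_X$ as $(\Psi_X(\omega_X)\boxtimes \CO_X)_{\{X\}}$, i.e., as a local-cohomology object along the diagonal; for smooth $X$ this is the classical ``delta-function along the diagonal'' presentation of $\on{Diff}_X$. You instead filter both sides by order and compare associated gradeds via the symbol map, which amounts to re-deriving Grothendieck's description $\on{Diff}_X\simeq \underset{m}{\on{colim}}\, \uHom_{\CO_X}(\CO_{(X\times X)_m},\CO_X)$ and matching it against the colimit presentation $(p_t)^{\IndCoh}_*\circ p_s^!\simeq \underset{m}{\on{colim}}\,(p_t\circ i_m)^{\IndCoh}_*\circ (p_s\circ i_m)^!$ that the paper uses elsewhere (in the proof of \propref{p:t and crys sm}(a)). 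Both approaches reduce to classical commutative algebra on a smooth scheme; yours is more self-contained, since it bypasses \propref{p:who is who}, at the cost of having to rigorously set up the exhaustive order filtration on $\cD^l_X$ and perform the graded comparison --- which is where the real work lies and which you correctly flag. One small correction: the description of the monad $\oblv^l_X\circ\ind^l_X$ that your filtration relies on comes from \propref{p:right ind}(b) together with the definition of $\ind^l_X$ in \secref{sss:induction for left} (conjugation by $\Upsilon$), not from \lemref{l:cl formally smooth} and \lemref{l:left on cl formally smooth}, which concern descent along the infinitesimal groupoid rather than the formula for the monad.
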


\begin{proof}
It suffices to show that \eqref{e:left diff} is an isomorphism at the level of the underlying
objects of $\QCoh(X\times X)$. The latter follows, e.g., from the description of
$\cD^l_X$ as a quasi-coherent sheaf given by \propref{p:who is who}. 
\end{proof}

\ssec{Relation between crystals and D-modules}  \label{ss:crys and Dmod}

Let $X$ be a classical scheme of finite type.  We will show that the category $\Crys^{r}(X)$ can be canonically identified with the 
(derived) category $\Dmod^{r}(X)$ of right D-modules on $X$.

\begin{rem}
The category $\Dmod^{r}(X)$ satisfies Zariski descent.  Therefore, in what follows, by \propref{p:descent for crystals}, it will suffice to establish a canonical equivalence for affine schemes.
\end{rem}

\sssec{}

Let $Z$ be a smooth classical affine scheme, and let $i:X\hookrightarrow Z$ be a closed embedding. By the 
classical Kashiwara's lemma and \propref{p:Kashiwara}, in order to construct an equivalence 
$$\Crys^r(X)\simeq \Dmod^r(X),$$
it suffices to do so for $Z$. 

\medskip

Hence, we can assume that $X$ itself is a smooth classical affine scheme.
 We shall construct the equivalence
in question together with the commutative diagram of functors
$$
\CD
\Crys^r(X)   @>>>  \Dmod^r(X) \\
@V{\oblv^r_X}VV       @VVV  \\
\IndCoh(X)   @>{\Psi_X}>>  \QCoh(X),
\endCD
$$
where the right vertical arrow is the natural forgetful functor, and the functor $\Psi_X$
is the equivalence of \cite[Lemma 1.1.6]{IndCoh}.

\medskip

By \propref{p:left to right}, constructing an equivalence $\Crys^r(X)\simeq \Dmod^r(X)$ as above is 
the same as constructing an equivalence between {\em left} crystals and {\em left} D-modules together 
with the commutative diagram of functors

\begin{equation}\label{e:crys vs Dmod on smooth}
\begin{split}
\xymatrix{ \Crys^{l}(X) \ar[rr]\ar[dr]_{\oblv^{l}_{X}} && \Dmod^{l}(X)\ar[dl] \\ & \QCoh(X) & }.
\end{split}
\end{equation}

\sssec{}   \label{sss:with Dmod}

By Propositions \ref{p:relation to abelian}, \ref{p:left to right} and \ref{p:left to right t-struct}, 
the category $\Crys^l(X)$ identifies with the derived
category of the heart of its t-structure. The category $\Dmod^l(X)$ is by definition the derived category
of $\Dmod^l(X)^\heartsuit$. Moreover, the vertical arrows in Diagram \eqref{e:crys vs Dmod on smooth}
are t-exact.

\medskip

Hence, it suffices to construct the desired equivalence at the level of the corresponding abelian categories
\begin{equation} \label{e:crys vs Dmod on smooth abelian}
\CD
\Crys^l(X)^\heartsuit   @>>>  \Dmod^l(X)^\heartsuit \\
@V{\oblv^l_X}VV       @VVV  \\
\QCoh(X)^\heartsuit  @>{\on{Id}}>>  \QCoh(X)^\heartsuit
\endCD
\end{equation}

\sssec{}  \label{sss:Groth}

The latter is a classical calculation, due to Grothendieck:

\medskip

Namely, one interprets $\Crys^{l}(X)^{\heartsuit}$
as the heart of the category of quasi-coherent sheaves on the truncated simplicial object
$$\xymatrix{(X\times X\times X)^{\wedge}_{X}\ar@<2.4ex>[r]^-{p_{12}} \ar[r]^-{p_{13}} \ar@<-2.4ex>[r]^-{p_{23}}& 
(X \times X)^{\wedge}_{X}\ar@<.7ex>[r]^-{p_{1}} \ar@<-.7ex>[r]_-{p_{2}} & X}.$$
I.e., explicitly, an object of $\Crys^{l}(X)^{\heartsuit}$ is a quasi-coherent sheaf $\CF \in \QCoh(X)^{\heartsuit}$ 
together with an isomorphism
$$ \phi: p_{2}^{*}(\CF) \overset{\sim}{\rightarrow} p_{1}^{*}(\CF) $$
which restricts to the identity on the diagonal and satisfies the cocycle condition
$$p_{13}^{*}(\phi) = p_{12}^{*}(\phi)\circ p_{23}^{*}(\phi).$$

\medskip

Below we give an alternative approach to establishing the equivalence in
\eqref{e:crys vs Dmod on smooth abelian}.

\sssec{}
The abelian categories $\Crys^l(X)^\heartsuit$ and $\Dmod^l(X)^\heartsuit$ are given as modules over the monads
$\sM_{\Crys^l(X)}$ and $\sM_{\Dmod(X)}$, respectively, acting on the category $\QCoh(X)^\heartsuit$.  

\medskip

By definition, $\sM_{\Dmod(X)}$ is given by the algebra of differential operators $\on{Diff}_X$. The monad
$\sM_{\Crys^l(X)}$ is given by $\oblv^l_X\circ \ind^l_X$. Now, the desired equivalence follows from
\lemref{l:left diff}.

\begin{rem}
It follows from the construction that the equivalence
$$ \Crys^{l}(X) \rightarrow \Dmod^{l}(X) $$
is compatible with pull-back for maps $f:Y\rightarrow X$ between smooth classical schemes.
\end{rem}

\section{Twistings}

In this section, we do {\em not} assume that the prestacks and DG schemes that we consider
are locally almost of finite type. We will reinstate this assumption in \secref{ss:tw on indsch}.

\ssec{Gerbes}

\sssec{}   \label{sss:gerbes}

Let $\on{pt}/\BG_m$ be the classifying stack of the group $\BG_m$. In other words, $\on{pt}/\BG_m$ is
the algebraic stack that represents the functor which assigns to an affine DG scheme
$S$, the $\infty$-groupoid of line bundles on $S$. 

\medskip

In fact, since $\BG_m$ is an abelian
group, the stack $\on{pt}/\BG_m$ has a natural abelian group structure. The multiplication map
on $\on{pt}/\BG_m$ represents tensor product of line bundles. This structure upgrades $\on{pt}/\BG_m$
to a functor from affine DG schemes to $\infty$-Picard groupoids, i.e. connective
spectra.

\medskip

For our purposes, a $\BG_m$-gerbe will be a presheaf $\CG$ of 
$\on{pt}/\BG_m$-torsors, which satisfies any of the following three (non-equivalent)
conditions:

\medskip

\noindent(i)  $\CG$ is locally non-empty in the \'etale topology \footnote{By To\"en's theorem, this is equivalent
to local non-emptyness in the fppf topology.}.

\medskip

\noindent(ii) $\CG$ is locally non-empty in the Zariski topology.

\medskip

\noindent(iii) $\CG$ is globally non-empty.

\medskip

Specifically, let
$B^{\on{naive}}(\on{pt}/\BG_m)$ be the classifying prestack of $\on{pt}/\BG_m$. It is given by the
geometric realization of the simplicial prestack
$$ B^{\on{naive}}(\on{pt}/\BG_m) := \left|
\xymatrix{\cdots\ \on{pt}/\BG_m \times \on{pt}/\BG_m \ar@<-1.4ex>[r]\ar[r]\ar@<1.4ex>[r] & \on{pt}/\BG_m
\ar@<.7ex>[r] \ar@<-.7ex>[r] & \on{pt}} \right| . $$
Let $B^{\on{Zar}}(\on{pt}/\BG_m)$ (resp. $B^{\on{et}}(\on{pt}/\BG_m)$) 
be the Zariski (resp. \'etale) sheafification of the prestack $B^{\on{naive}}(\on{pt}/\BG_m)$.  

\medskip

The prestacks $B^{\on{et}}(\on{pt}/\BG_m)$, $B^{\on{Zar}}(\on{pt}/\BG_m)$ and $B^{\on{naive}}(\on{pt}/\BG_m)$ represent 
$\BG_m$-gerbes satisfying the above conditions (i), (ii), and (iii) respectively.

\medskip

Let $(\Ge_{\BG_m})_{\affdgSch}$  be the functor
$$(\affdgSch)^{\on{op}}\to \inftypic$$
that associates to an affine DG scheme $S$, the groupoid of $\BG_m$-gerbes, where we consider any of the three notions of gerbe 
defined above.  

\begin{rem}
While these three versions do not give equivalent notions of $\BG_m$-gerbe, we will see shortly that they do lead to 
the same definition of twisting, since the relevant gerbes will be those whose restrictions to $^{cl,red}S$ are trivialized.
\end{rem}

\sssec{}

We define the functor
$$(\Ge_{\BG_m})_{\on{PreStk}}:(\on{PreStk})^{\on{op}}\to \inftypic$$
as the right Kan extension of $(\Ge_{\BG_m})_{\affdgSch}$ along
$$(\affdgSch)^{\on{op}}\hookrightarrow (\on{PreStk})^{\on{op}}.$$

I.e., for $\CY\in \on{PreStk}$,
$$\Ge_{\BG_m}(\CY):=\underset{S\in (\affdgSch_{/\CY})^{\on{op}}}{lim}\, \Ge_{\BG_m}(S).$$

\medskip

Equivalently,
$$\Ge_{\BG_m}(\CY)=\Maps_{\on{PreStk}}(\CY,B^?(\on{pt}/\BG_m))$$
for $?=\on{naive},\,\, \on{Zar}$ or $\on{et}$. 

\medskip

Thus, informally, a $\BG_m$-gerbe on $\CY$ is an assignment of a $\BG_m$-gerbe
on every $S\in \affdgSch$ mapping to $\CY$, functorial in $S$.

\medskip

For a subcategory $\bC\subset \on{PreStk}$, let $(\Ge_{\BG_m})_\bC$ denote the restriction
$(\Ge_{\BG_m})_{\on{PreStk}}|_\bC$.

\ssec{The notion of twisting}

\sssec{}

Let $\CY$ be a prestack. The Picard groupoid of twistings on $\CY$ defined as 
$$\Tw(\CY):=\on{\ker}\left(p_{\dr,\CY}^*:\Ge_{\BG_m}(\CY_{\dr})\to \Ge_{\BG_m}(\CY)\right),$$
where $\Ge_{\BG_m}$ is understood in any of the three versions: $\on{naive}$, $\on{Zar}$ or
$\on{et}$. As we shall see shortly (see \secref{ss:twist reformulation}), all three versions are equivalent.

\medskip

Informally, a twisting $T$ on $\CY$ is the following data: for every $S\in \affdgSch$
equipped with a map $^{cl,red}S\to \CY$ we specify an object 
$\CG_S\in \Ge_{\BG_m}(S)$, which behaves compatibly under the maps $S_1\to S_2$.
Additionally, for every extension of the above map to a map $S\to \CY$
we specify a trivialization of $\CG_S$, which also behaves functorially with
respect to maps $S_1\to S_2$.

\begin{rem} When we write $\on{ker}(\CA_1\to \CA_2)$, where $\CA_1\to \CA_2$ is a map
in $\inftypic$, we mean 
$$\CA_1\underset{\CA_2}\times \{*\},$$
where the fiber product is taken in $\inftypic$. I.e., this the same as the connective truncation
of the fiber product taken in the category of all (i.e., not necessarily connective) spectra.
\end{rem}

\sssec{Example}  \label{sss:twisting by line bundle}

Let $\CL$ be a line bundle on $\CY$. We define a twisting $T(\CL)$ on $\CY$ as follows:
it assigns to every $S\in \affdgSch$ with a map $^{cl,red}S\to \CY$ the trivial $\BG_m$-gerbe.
For a map $S\to \CY$, we trivialize the above gerbe by multiplying
the tautological trivialization by $\CL$.

\sssec{}

It is clear that twistings form a functor
$$\Tw_{\on{PreStk}}:\on{PreStk}^{\on{op}}\to \inftypic.$$

For a morphism $f:\CY_1\to \CY_2$ we
let $f^*$ denote the corresponding functor
$$\Tw(\CY_2)\to \Tw(\CY_1).$$

If $\bC$ is a subcategory of $\on{PreStk}$  (e.g., $\bC=\affdgSch$ or $\dgSch$), we let 
$\Tw_\bC$ denote the restriction of $\Tw_{\on{PreStk}}$ to $\bC^{\on{op}}$.

\sssec{}

By construction, the functor $\Tw_{\on{PreStk}}$ takes colimits in $\on{PreStk}$
to limits in $\inftypic$. Hence, from \corref{c:dr from Sch}, we obtain:

\begin{lem}  \label{c:twistings as RKE}
The functor $\Tw_{\on{PreStk}}$ maps isomorphically to the right Kan extension of
$\Tw_\bC$ along $$\bC^{\on{op}}\hookrightarrow \on{PreStk}^{\on{op}}$$
for $\bC$ being one of the categories
$$\affdgSch,\,\, \dgSch_{\on{qs-qc}},\,\, \dgSch.$$
\end{lem}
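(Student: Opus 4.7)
My plan is to reduce this to the general principle already used in the proof of \corref{c:dr from Sch}: any contravariant functor on $\on{PreStk}$ that converts colimits in $\on{PreStk}$ to limits in its target is automatically isomorphic to the right Kan extension of its restriction to any full subcategory $\bC\subset \on{PreStk}$ for which every prestack is canonically a colimit of objects of $\bC$.

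First, I would verify that $\Tw_{\on{PreStk}}$ indeed sends colimits in $\on{PreStk}$ to limits in $\inftypic$. Unwinding the definition
$$\Tw(\CY) = \on{ker}\bigl(\Ge_{\BG_m}(\CY_\dr) \to \Ge_{\BG_m}(\CY)\bigr),$$
this reduces to three ingredients: (i) the functor $(\Ge_{\BG_m})_{\on{PreStk}}$ is by construction the right Kan extension of $(\Ge_{\BG_m})_{\affdgSch}$ and hence converts colimits to limits; (ii) the de Rham functor $\dr:\on{PreStk}\to\on{PreStk}$ preserves colimits by \lemref{l:dr and colimits}, so $\CY\mapsto \Ge_{\BG_m}(\CY_\dr)$ inherits the same property; and (iii) kernels in $\inftypic$ are computed as fiber products over the basepoint and therefore commute with arbitrary outer limits.

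Next, I would check that for each of the subcategories $\bC\in\{\affdgSch,\,\dgSch_{\on{qs-qc}},\,\dgSch\}$ and every $\CY\in\on{PreStk}$, the canonical map
$$\underset{X \in \bC_{/\CY}}{\on{colim}}\, X \longrightarrow \CY$$
is an isomorphism in $\on{PreStk}$. For $\bC=\affdgSch$ this is the tautological presentation of any presheaf as a colimit of representables. For the two larger cases, the inclusion $\affdgSch\hookrightarrow\bC$ is dense since every DG scheme is a colimit of its affine opens; this makes $\affdgSch_{/\CY}\hookrightarrow \bC_{/\CY}$ cofinal and so the two colimits agree.

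Combining the two steps, for any $\CY\in\on{PreStk}$ one obtains
$$\Tw(\CY)\;\simeq\;\Tw\bigl(\underset{X \in \bC_{/\CY}}{\on{colim}}\, X\bigr)\;\simeq\;\underset{X \in (\bC_{/\CY})^{\on{op}}}{\on{lim}}\, \Tw(X),$$
and the right-hand side is by definition the value at $\CY$ of the right Kan extension of $\Tw_\bC$ along $\bC^{\on{op}}\hookrightarrow \on{PreStk}^{\on{op}}$; chasing definitions shows this is the comparison map induced by the identification $\Tw_{\on{PreStk}}|_\bC = \Tw_\bC$. The only real subtlety is in step one — keeping track that $\Tw$ is valued in connective spectra and that the relevant ``kernel'' is a genuine limit of connective spectra (namely a fiber, not a cofiber) — and once this is settled the remainder is purely formal and runs in strict parallel to the proof of \corref{c:dr from Sch}.
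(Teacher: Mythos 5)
Your overall strategy is exactly the paper's: the proof given there is a one-line observation that $\Tw_{\on{PreStk}}$ takes colimits in $\on{PreStk}$ to limits in $\inftypic$ by construction, after which the statement follows from the density assertions of Section 1 (the reasoning behind \corref{c:dr from Sch} and \corref{c:abstract LKE}). Your first step — unwinding $\Tw=\on{ker}\bigl(\Ge_{\BG_m}((-)_\dr)\to \Ge_{\BG_m}(-)\bigr)$ into the right Kan extension property of $\Ge_{\BG_m}$, colimit-preservation of $\dr$ via \lemref{l:dr and colimits}, and commutation of the fiber product over the basepoint with outer limits — is correct and simply makes explicit what the paper leaves implicit.

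The one genuine problem is your justification of $\underset{X\in \bC_{/\CY}}{\on{colim}}\, X\simeq \CY$ for $\bC=\dgSch_{\on{qs-qc}}$ and $\bC=\dgSch$. The functor $\affdgSch_{/\CY}\to \bC_{/\CY}$ is \emph{not} cofinal: cofinality would require, for each $(X,X\to\CY)$, contractibility of the category of affine $S\to\CY$ equipped with a map $X\to S$ over $\CY$, and this category can be empty — take $\CY=X=\BP^1$ with the identity map; any map $\BP^1\to S$ with $S$ affine factors through a point, so $\BP^1$ cannot be a retract of an affine over itself. (Relatedly, "every DG scheme is a colimit of its affine opens" is not a valid density statement in $\on{PreStk}$, where no sheafification has been performed; the correct input is the co-Yoneda identity $X\simeq \underset{S\in \affdgSch_{/X}}{\on{colim}}\, S$ over \emph{all} affines mapping to $X$.) The conclusion you need is nevertheless true: since $\affdgSch\subset\bC$ and $\affdgSch$ is dense in $\on{PreStk}$, transitivity of left Kan extensions gives $\on{LKE}_{\bC\hookrightarrow\on{PreStk}}(\iota_\bC)\simeq \on{LKE}_{\bC\hookrightarrow\on{PreStk}}\circ\on{LKE}_{\affdgSch\hookrightarrow\bC}(\iota_{\affdgSch})\simeq \on{Id}$, i.e., a full subcategory containing a dense one is dense. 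This is precisely what \corref{c:abstract LKE} packages, so the fix is to replace the cofinality claim by a citation of that corollary (or the transitivity argument); with that substitution the rest of your argument goes through verbatim.
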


Concretely, this lemma says that the map
$$\Tw(\CY)\to \underset{S\in (\affdgSch_{/\CY})^{\on{op}}}{lim}\, \Tw(S)$$
is an isomorphism (and that $\affdgSch$ can be replaced by $\dgSch_{\on{qs-qc}}$ or
$\dgSch$.)

\medskip

Informally, this means that to specify a twisting on a prestack $\CY$
is equivalent to specifying a compatible family of twistings on affine DG schemes $S$
mapping to $\CY$.

\ssec{Variant: other structure groups}\label{ss:other groups}

\sssec{} 
Let $S$ be an affine DG scheme. Consider the Picard groupoid
$$\Ge_{\BG_m}^{/red}(S):=\on{ker}\left(\Ge_{\BG_m}(S)\to \Ge_{\BG_m}({}^{cl,red}S)\right).$$

\medskip

Let $(\Ge_{\BG_m}^{/red})_{\affdgSch}$ denote the resulting functor
$$(\dgSch)^{\on{op}}\to \inftypic.$$

\sssec{}

By definition, we can think of $\Ge_{\BG_m}^{/red}(S)$ as gerbes (in any of the three versions of \secref{sss:gerbes})
with respect to the presheaf of abelian groups
$$(\CO^{\times})_S^{/red}:\on{ker}(\CO^\times_S\to \CO^\times_{^{cl,red}S}).$$

\sssec{}

In addition to $\BG_m$-gerbes, we can also consider $\BG_a$-gerbes.  We have the functor
$$ (\Ge_{\BG_a})_{\affdgSch}: (\affdgSch)^{\on{op}}\to \inftypic$$
which assigns to an affine DG scheme $S$ the groupoid of $\BG_a$-gerbes on $S$.  

\medskip

Note that unlike the case of $\BG_m$-gerbes, the three notions of gerbes discussed in \secref{sss:gerbes} are equivalent for 
$\BG_a$-gerbes.  This is due to the fact that for an affine DG scheme $S$, 
$$H^2_{\on{Zar}}(S,\BG_a)=H^2_{\on{et}}(S,\BG_a)=0.$$

\medskip

Thus, we have that $(\Ge_{\BG_a})_{\affdgSch}$ is represented by $B^2(\BG_a)$, which is the geometric 
realization of the corresponding simplicial prestack.

\sssec{}

By definition, for an affine DG scheme $S$, we have

$$ \Ge_{\BG_a}(S) = B^2(\Maps(S,\BG_a))\simeq B^2(\Gamma(S,\CO_S)).$$
In particular, viewed as a connective spectrum, $\Ge_{\BG_a}(S)$ has a natural structure of a module over the ground field $k$.  
This upgrades $(\Ge_{\BG_a})_{\affdgSch}$ to a functor
$$(\dgSch)^{\on{op}}\to \inftypic_k,$$
where $\inftypic_k$ denotes the category of $k$-modules
in connective spectra. Note that by the Dold-Kan correspondence, we have
$$\inftypic_k\simeq \Vect^{\leq 0}.$$

\medskip

We define the functor
$$(\Ge_{\BG_a})_{\on{PreStk}}:\on{PreStk}^{\on{op}}\to \inftypic_k$$
as the right Kan extension of the functor $(\Ge_{\BG_a})_{\affdgSch}$ along
$$(\affdgSch)^{\on{op}}\hookrightarrow \on{PreStk}^{\on{op}}.$$

\sssec{}

As with $\BG_m$-gerbes, we can consider the Picard groupoid
$$\Ge_{\BG_a}^{/red}(S):=\on{ker}\left(\Ge_{\BG_a}(S)\to \Ge_{\BG_a}({}^{cl,red}S)\right),$$
and let $(\Ge_{\BG_a}^{/red})_{\affdgSch}$ denote the resulting functor
$$(\dgSch)^{\on{op}}\to \inftypic_k.$$

\medskip

By definition, for an affine DG scheme $S$, $\Ge_{\BG_a}^{/red}(S)$ is given by gerbes for the presheaf of connective spectra
$$\CO_S^{/red}:=\on{ker}(\CO_S\to \CO_{^{cl,red}S}).$$
Explicitly, 
$$\Ge_{\BG_a}^{/red}(S) \simeq B^2(\Gamma(S,\CO_S^{/red})).$$

\sssec{} \label{sss:exp}
Recall from \cite[Sect. 6.8.8]{IndSch} that the exponential map defines an isomorphism
$$\on{exp}:\CO_S^{/red}\to (\CO^{\times})_S^{/red}.$$

Hence, we obtain:

\begin{cor} \label{c:gerbes as B2}
The exponential map defines an isomorphism of functors
\begin{equation}\label{e:gerbes as B2}
\on{exp}: (\Ge_{\BG_a}^{/red})_{\affdgSch} \to (\Ge_{\BG_m}^{/red})_{\affdgSch}
\end{equation}
for any of the three versions \emph{(}$\on{naive}$, $\on{Zar}$ or $\on{et}$\emph{)} of $(\Ge_{\BG_m}^{/red})_{\affdgSch}$.
\end{cor}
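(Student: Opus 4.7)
The plan is to obtain the statement as a direct consequence of the cited presheaf-level isomorphism $\on{exp}\colon \CO_S^{/red} \to (\CO^\times)_S^{/red}$ by identifying both functors with $B^2$ applied to the global sections of the corresponding presheaves of spectra.

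First I would unpack the additive side. By the displayed formula just preceding the corollary, $\Ge_{\BG_a}(S) \simeq B^2(\Gamma(S, \CO_S))$, and since $(-)^{/red}$ is defined as a kernel (fiber product with the point) in $\inftypic_k$, one immediately gets
$$\Ge_{\BG_a}^{/red}(S) \simeq B^2(\Gamma(S, \CO_S^{/red})),$$
as already recorded in the text of \secref{ss:other groups}.

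Next, and this is the crucial point, I would establish the analogous identification
$$\Ge_{\BG_m}^{/red}(S) \simeq B^2(\Gamma(S, (\CO^\times)_S^{/red}))$$
for each of the three versions ($\on{naive}$, $\on{Zar}$, $\on{et}$) of $\Ge_{\BG_m}$. In general the three versions of $\Ge_{\BG_m}(S)$ are genuinely different, differing by higher Zariski/\'etale cohomology of $\BG_m$, which is nontrivial on affines. However, on the kernel modulo reduced, the obstructions lie in higher cohomology of the presheaf $(\CO^\times)_S^{/red}$ in the respective topology. Via the cited exp isomorphism, this presheaf of spectra is isomorphic to $\CO_S^{/red}$, which, being (the $/red$ part of) a quasi-coherent sheaf of $k$-vector spaces, has vanishing higher Zariski and \'etale cohomology on affines. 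Hence all three versions of $\Ge_{\BG_m}^{/red}(S)$ coincide and are computed by the naive $B^2\Gamma$ construction.

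Finally, applying the functor $B^2\Gamma(S,-)$ to the presheaf-level isomorphism $\on{exp}$, and invoking its naturality in $S\in\affdgSch$, produces the desired isomorphism of functors. The main obstacle I expect is the identification in the second step: one must genuinely check that on the ``$/red$'' part the Zariski and \'etale notions of $\BG_m$-gerbe reduce to the naive one, which is not formal but relies on cohomology vanishing deduced from the $\on{exp}$ identification with a quasi-coherent sheaf. Once this is in place, the rest is pure functoriality of $B^2\Gamma(S,-)$.
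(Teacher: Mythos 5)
Your proposal is correct and matches the paper's (very terse) argument: both rest on the presheaf-level isomorphism $\on{exp}\colon \CO_S^{/red}\to(\CO^{\times})_S^{/red}$ from \cite[Sect. 6.8.8]{IndSch} together with the vanishing of higher Zariski/\'etale cohomology of the quasi-coherent coefficients on affine DG schemes, which collapses all three versions to $B^2(\Gamma(S,-))$. The paper simply transports the gerbe construction along $\on{exp}$ version by version and invokes the already-noted collapse of the three versions on the $\BG_a$ side, whereas you collapse the three versions on the $\BG_m^{/red}$ side first --- the same two ingredients in a slightly different order.
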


Thus, if we realize $\Ge_{\BG_m}^{/red}(S)$ as gerbes in the \'etale or Zariski topology,
this category has trivial $\pi_0$ and $\pi_1$. In other words, any such gerbe on an affine DG scheme is globally
non-empty, and any automorphism is non-canonically isomorphic to
identity.  

\sssec{}

The isomorphism \eqref{e:gerbes as B2}
endows $\Ge_{\BG_m}^{/red}(S)$, viewed as a connective spectrum,
with a structure of module over the ground field $k$.
This upgrades $(\Ge_{\BG_m}^{/red})_{\affdgSch}$ to a functor
$$(\dgSch)^{\on{op}}\to \inftypic_k.$$

\medskip

We define the functor
$$(\Ge_{\BG_m}^{/red})_{\on{PreStk}}:\on{PreStk}^{\on{op}}\to \inftypic_k$$
as the right Kan extension of the functor $(\Ge_{\BG_m}^{/red})_{\affdgSch}$ along
$$(\affdgSch)^{\on{op}}\hookrightarrow \on{PreStk}^{\on{op}}.$$

\sssec{}

By definition, for $\CY\in \on{PreStk}$
$$\Ge_{\BG_m}^{/red}(\CY):=\underset{S\in (\affdgSch_{/\CY})^{\on{op}}}{lim}\, \Ge_{\BG_m}^{/red}(S).$$

Informally, for $\CY\in \on{PreStk}$, an object $\CG\in \Ge_{\BG_m}^{/red}(\CY)$
is an assignment for every $S\in \affdgSch_{/\CY}$ 
of an object $\CG_S\in \Ge_{\BG_m}^{/red}(S)$, and for every $S'\to S$ of an isomorphism
$$f^*(\CG_S)\simeq \CG_{S'}.$$

The following results from the definitions:

\begin{lem} \label{l:mod red via red}
For $\CY\in \on{PreStk}$, the natural map
$$\Ge_{\BG_m}^{/red}(\CY)\to \on{ker}\left(\Ge_{\BG_m}(\CY)\to \Ge_{\BG_m}({}^{cl,red}\CY)\right)$$
is an isomorphism, where 
$$^{cl,red}\CY:=\on{LKE}_{({}^{red}\!\affSch)^{\on{op}}\hookrightarrow (\on{PreStk})^{\on{op}}}
(\CY|_{{}^{red}\!\affSch}).$$
\end{lem}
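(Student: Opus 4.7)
The plan is threefold: unwind the right Kan extension defining $(\Ge_{\BG_m}^{/red})_{\on{PreStk}}$, commute limits with the kernel operation, and identify what remains via a cofinality argument. First, by construction as a right Kan extension, one has
\[ \Ge_{\BG_m}^{/red}(\CY) = \lim_{S\in (\affdgSch_{/\CY})^{\on{op}}} \on{ker}\bigl(\Ge_{\BG_m}(S)\to \Ge_{\BG_m}({}^{cl,red}S)\bigr). \]
Since the kernel is a finite limit in $\inftypic_k$, it commutes with the outer limit, and using that $\Ge_{\BG_m}(\CY)=\lim_S \Ge_{\BG_m}(S)$ (the defining formula of $\Ge_{\BG_m}$ on prestacks as a right Kan extension from $\affdgSch$), this rewrites as
\[ \Ge_{\BG_m}^{/red}(\CY) \simeq \on{ker}\bigl(\Ge_{\BG_m}(\CY) \to \lim_{S\in(\affdgSch_{/\CY})^{\on{op}}} \Ge_{\BG_m}({}^{cl,red}S)\bigr). \]

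Second, I would identify the above ``denominator'' with $\Ge_{\BG_m}({}^{cl,red}\CY)$. The LKE definition of ${}^{cl,red}\CY$ presents it as the colimit $\on{colim}_{T\in {}^{red}\!\affSch_{/\CY}}\, T$ in $\on{PreStk}$, and since $\Ge_{\BG_m}$ converts colimits in $\on{PreStk}$ to limits, we obtain $\Ge_{\BG_m}({}^{cl,red}\CY) \simeq \lim_{T\in ({}^{red}\!\affSch_{/\CY})^{\on{op}}} \Ge_{\BG_m}(T)$. Thus it remains to show the cofinality equivalence
\[ \lim_{T\in ({}^{red}\!\affSch_{/\CY})^{\on{op}}} \Ge_{\BG_m}(T) \simeq \lim_{S\in(\affdgSch_{/\CY})^{\on{op}}} \Ge_{\BG_m}({}^{cl,red}S) \]
for the functor $F:\affdgSch_{/\CY}\to {}^{red}\!\affSch_{/\CY}$ sending $S\mapsto {}^{cl,red}S$.

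The substantive point is this cofinality. The key observation is that $F$ is right adjoint to the fully faithful inclusion $\on{incl}:{}^{red}\!\affSch_{/\CY}\hookrightarrow \affdgSch_{/\CY}$, coming from the sliced version of the adjunction $\on{incl}\dashv {}^{cl,red}$ between ${}^{red}\!\affSch$ and $\affdgSch$ (any CDGA map from $A$ to a reduced classical ring factors uniquely through $\pi_0(A)/\on{nilp}$). Since $F\circ \on{incl}$ is the identity, the unit of this adjunction is itself the identity, and consequently for every $T\in {}^{red}\!\affSch_{/\CY}$ the comma category $(T\downarrow F)=\{(S,\phi\colon T\to {}^{cl,red}S)\}$ has an initial object $(T,\on{id}_T)$, hence is weakly contractible; this is the standard cofinality criterion ensuring that $F^{\on{op}}$ is final, so that restriction along it preserves limits. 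Applying this to $\Ge_{\BG_m}$ completes the identification, and hence the proof. The main obstacle is verifying this cofinality; the rest is formal manipulation of Kan extensions and commutation of limits with finite limits.
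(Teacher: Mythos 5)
Your argument is correct, and it is precisely the unwinding that the paper has in mind: the paper offers no proof beyond the assertion that the lemma ``results from the definitions.'' Your three steps --- commuting the kernel (a finite limit) past the limit defining the right Kan extension, and then using that $S\mapsto {}^{cl,red}S$ is right adjoint to the inclusion ${}^{red}\!\affSch_{/\CY}\hookrightarrow \affdgSch_{/\CY}$ to get the cofinality identifying $\underset{S}{lim}\, \Ge_{\BG_m}({}^{cl,red}S)$ with $\Ge_{\BG_m}({}^{cl,red}\CY)$ --- supply exactly the missing details.
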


\ssec{Twistings: reformulations}\label{ss:twist reformulation}

We are going to show that the notion of twisting can be formulated in terms of
$$ (\Ge_{\BG_m}^{/red})_{\on{PreStk}}, (\Ge_{\BG_a})_{\on{PreStk}} \mbox{ or } (\Ge_{\BG_a}^{/red})_{\on{PreStk}},$$
instead of
$(\Ge_{\BG_m})|_{\on{PreStk}}$.

\sssec{}

Consider the functors 
$$\Tw^{/red}, \Tw_a, \Tw_a^{/red}:\on{PreStk}^{\on{op}}\to \inftypic$$ given by
$$\Tw^{/red}(\CY):=\on{\ker}\left(p_{\dr,\CY}^*:\Ge_{\BG_m}^{/red}(\CY_{\dr})\to \Ge_{\BG_m}^{/red}(\CY)\right),$$
$$\Tw_a(\CY):=\on{\ker}\left(p_{\dr,\CY}^*:\Ge_{\BG_a}(\CY_{\dr})\to \Ge_{\BG_a}(\CY)\right)$$
and
$$\Tw_a^{/red}(\CY):=\on{\ker}\left(p_{\dr,\CY}^*:\Ge_{\BG_a}^{/red}(\CY_{\dr})\to \Ge_{\BG_a}^{/red}(\CY)\right).$$

\medskip

We have the following diagram of functors given by the exponential map and the evident forgetful functors.
\begin{equation} \label{e:var to usual}
\begin{split}
\xymatrix{\Tw_a^{/red} \ar[r]^{exp}\ar[d] & \Tw^{/red}\ar[d] \\ \Tw_a & \Tw }
\end{split}
\end{equation}

\begin{prop} \label{p:var to usual}
The functors in \eqref{e:var to usual} are equivalences.
\end{prop}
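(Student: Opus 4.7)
The plan is to check the three arrows of \eqref{e:var to usual} one at a time. The top arrow will be handled by \corref{c:gerbes as B2}, while the two vertical forgetful arrows will be handled by the fundamental observation that $\CY_\dr$ has the same underlying reduced classical prestack as $\CY$.

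First, I would record the identification $^{cl,red}(\CY_\dr)\simeq {}^{cl,red}\CY$ for any prestack $\CY$. Indeed, for any reduced classical affine scheme $\Spec(R)$ one has
$$\CY_\dr(\Spec R)=\CY({}^{cl,red}\Spec R)=\CY(\Spec R),$$
so the restrictions of $\CY$ and $\CY_\dr$ to $^{red}\!\affSch$ coincide; taking left Kan extension yields the asserted isomorphism. Moreover, by the universal property of left Kan extension, the composite
$$^{cl,red}\CY\to \CY\xrightarrow{\,p_{\dr,\CY}\,}\CY_\dr$$
agrees with the canonical map $^{cl,red}(\CY_\dr)\to \CY_\dr$.

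Next, for the two vertical arrows, I would invoke \lemref{l:mod red via red} together with its evident $\BG_a$-analog (which holds by the same argument, since right Kan extension commutes with taking kernels). This yields a map of fiber sequences of connective spectra
$$
\CD
\Ge^{/red}(\CY_\dr)  @>>>  \Ge(\CY_\dr)  @>>>  \Ge({}^{cl,red}(\CY_\dr))  \\
@V{p_{\dr,\CY}^*}VV   @V{p_{\dr,\CY}^*}VV   @V{\on{id}}VV  \\
\Ge^{/red}(\CY)  @>>>  \Ge(\CY)  @>>>  \Ge({}^{cl,red}\CY)
\endCD
$$
valid for both $\Ge=\Ge_{\BG_m}$ and $\Ge=\Ge_{\BG_a}$, where the rightmost vertical arrow is the identity by the first step. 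Taking fibers of the three vertical maps produces a fiber sequence
$$\Tw^{/red}(\CY)\to \Tw(\CY)\to 0,$$
and similarly with subscript $a$, so the two vertical arrows in \eqref{e:var to usual} are equivalences. Finally, the top horizontal arrow is handled by \corref{c:gerbes as B2}: that corollary gives an isomorphism of functors $(\Ge_{\BG_a}^{/red})_{\affdgSch}\xrightarrow{\,\sim\,}(\Ge_{\BG_m}^{/red})_{\affdgSch}$, and since the corresponding functors on $\on{PreStk}$ are by definition the right Kan extensions of these restrictions, the map extends to an isomorphism of functors $(\Ge_{\BG_a}^{/red})_{\on{PreStk}}\xrightarrow{\,\sim\,}(\Ge_{\BG_m}^{/red})_{\on{PreStk}}$; taking fibers of $p_{\dr,\CY}^*$ gives the desired equivalence of twistings.

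The main point to be careful about is the 2-categorical coherence of the identification $^{cl,red}(\CY_\dr)\simeq {}^{cl,red}\CY$ with the structure map $p_{\dr,\CY}$, which is what makes the right-hand column of the above map of fiber sequences literally an identity rather than merely an equivalence; everything else is formal manipulation of fiber sequences in the stable setting of connective spectra.
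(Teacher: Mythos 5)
Your proof is correct and follows essentially the same route as the paper's: both rest on the identification ${}^{cl,red}(\CY_\dr)\simeq{}^{cl,red}\CY$, on \lemref{l:mod red via red}, and on the exponential isomorphism of \corref{c:gerbes as B2}, with the telescoping of iterated kernels that you package as a map of fiber sequences appearing in the paper as an explicit chain of identifications of fiber products (carried out after first reducing to affine $S$ via the right Kan extension property). The differences are purely presentational.
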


\begin{proof}
The functor given by the exponential map is an equivalence by \secref{sss:exp}. 

\medskip

Let us show that the right vertical map in \eqref{e:var to usual} is an equivalence. This
is in fact tautological:

\medskip

Both functors are right Kan extensions under
$$(\affdgSch)^{\on{op}}\to (\on{PreStk})^{\on{op}},$$
so it is enough to show that the map in question is an isomorphism when evaluated on objects
$S\in \affdgSch$.

\medskip

We have:
\begin{multline*}
\Tw^{/red}(S)=\Ge^{/red}_{\BG_m}(S_\dr)\underset{\Ge^{/red} _{\BG_m}(S)}\times\{*\}\overset{\text{\lemref{l:mod red via red}}}\simeq \\
\simeq \on{ker}\left(\Ge_{\BG_m}(S_\dr)\to \Ge_{\BG_m}({}^{cl,red}(S_\dr))\right)
\underset{\on{ker}\left(\Ge_{\BG_m}(S)\to \Ge_{\BG_m}({}^{cl,red}S)\right)}\times \{*\}=\\
=\on{ker}\left(\Ge_{\BG_m}(S_\dr)\to \Ge_{\BG_m}({}^{cl,red}S)\right)
\underset{\on{ker}\left(\Ge_{\BG_m}(S)\to \Ge_{\BG_m}({}^{cl,red}S)\right)}\times \{*\}\simeq \\
\simeq 
\Ge_{\BG_m}(S_\dr) \underset{\Ge_{\BG_m}(S)}\times \{*\}=\Tw(S).
\end{multline*} 

The fact that the left vertical arrow in \eqref{e:var to usual} is an equivalence is proved similarly.

\end{proof}

\sssec{}

As a consequence of \propref{p:var to usual}, we obtain:

\begin{cor}
The notions of twisting in all three versions: $\on{naive}$, $\on{Zar}$ and $\on{et}$ are equivalent.
\end{cor}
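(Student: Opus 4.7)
The plan is to deduce the corollary as an immediate consequence of \propref{p:var to usual}, combined with the observation that the theory of $\BG_a$-gerbes is insensitive to the choice of topology.

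For each convention $? \in \{\on{naive}, \on{Zar}, \on{et}\}$ on $\BG_m$-gerbes, \propref{p:var to usual} produces a canonical chain of equivalences
\[
\Tw^? \;\xleftarrow{\sim}\; (\Tw^{/red})^? \;\xleftarrow[\exp]{\sim}\; \Tw_a^{/red} \;\xrightarrow{\sim}\; \Tw_a,
\]
where the superscript $?$ records which version of $\BG_m$-gerbe was used to build $\Tw^?$ and $(\Tw^{/red})^?$, whereas $\Tw_a^{/red}$ and $\Tw_a$ are constructed from $\Ge_{\BG_a}$ and hence carry no such choice. The left equivalence is the forgetful map, shown to be an isomorphism via \lemref{l:mod red via red}; the middle equivalence is the exponential isomorphism of \corref{c:gerbes as B2} applied in version $?$; and the right equivalence is the evident forgetful map.

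The next input is the observation, recalled in \secref{ss:other groups}, that the naive, Zariski, and étale versions of $\Ge_{\BG_a}$ all coincide, because $H^2_{\on{Zar}}(S,\BG_a) = H^2_{\on{et}}(S,\BG_a) = 0$ for any affine DG scheme $S$. Therefore $\Tw_a$ (and likewise $\Tw_a^{/red}$) is unambiguously defined, and the zig-zag above exhibits each of $\Tw^{\on{naive}}, \Tw^{\on{Zar}}, \Tw^{\on{et}}$ as canonically equivalent to the \emph{same} functor $\Tw_a$. Composing these equivalences in pairs yields the asserted canonical equivalences among the three versions of $\Tw$.

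The only point requiring any verification beyond \propref{p:var to usual} is naturality with respect to the forgetful maps $\Tw^{\on{naive}} \to \Tw^{\on{Zar}} \to \Tw^{\on{et}}$ induced by sheafification of $B^{\on{naive}}(\on{pt}/\BG_m)$; but every arrow in the zig-zag is canonical, and the exponential identification of \corref{c:gerbes as B2} is natural in the choice of convention, so this compatibility is automatic. Thus no real obstacle arises: the corollary is essentially a bookkeeping consequence of \propref{p:var to usual}, with the substantive content lying in the vanishing of higher cohomology of $\BG_a$ and in the exponential isomorphism for the ``mod red'' variants.
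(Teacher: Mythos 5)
Your proposal is correct and follows essentially the same route as the paper: the corollary is stated there as an immediate consequence of \propref{p:var to usual}, precisely because the $\BG_a$-side of the square \eqref{e:var to usual} is independent of the choice of topology (by the vanishing of $H^2_{\on{Zar}}(S,\BG_a)$ and $H^2_{\on{et}}(S,\BG_a)$ on affines), so each version of $\Tw$ is identified with the same functor $\Tw_a$. Your additional remarks on naturality of the zig-zag are a harmless elaboration of what the paper leaves implicit.
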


In addition:

\begin{cor}
The functor $\Tw:(\on{PreStk})^{\on{op}}\to \inftygroup$ canonically upgrades to a functor
$$(\on{PreStk})^{\on{op}}\to \inftypic_k.$$
\end{cor}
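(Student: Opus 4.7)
The plan is to transport the $k$-linear structure from $\Tw_a^{/red}$ back to $\Tw$ via the chain of equivalences established in \propref{p:var to usual}. So the first step is to observe that $(\Ge_{\BG_a})_{\affdgSch}$ already takes values in $\inftypic_k$, as noted in the discussion preceding \corref{c:gerbes as B2}: for an affine DG scheme $S$, we have $\Ge_{\BG_a}(S) \simeq B^2(\Gamma(S,\CO_S))$, which inherits a natural $k$-module structure from $\Gamma(S,\CO_S) \in \Vect$.

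Next, I would lift everything in sight to $\inftypic_k$. Since the forgetful functor $\inftypic_k \to \inftypic$ preserves limits (it is a right adjoint, with left adjoint given by free $k$-module), right Kan extension along $(\affdgSch)^{\on{op}}\hookrightarrow (\on{PreStk})^{\on{op}}$ can be carried out in $\inftypic_k$, giving a canonical $k$-linear refinement of $(\Ge_{\BG_a})_{\on{PreStk}}$. The same reasoning applies to the subfunctor $(\Ge_{\BG_a}^{/red})_{\on{PreStk}}$, which is defined as a kernel (i.e., pullback against $\{*\}$) of a map in $\inftypic_k$. Then, defining $\Tw_a^{/red}$ again as a kernel in $\inftypic_k$ of the pullback map $p_{\dr,\CY}^*$ yields a functor
$$\Tw_a^{/red}:(\on{PreStk})^{\on{op}}\to \inftypic_k,$$
whose composition with the forgetful functor $\inftypic_k \to \inftypic$ is the one we had before.

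The final step is to transport the $k$-linear structure across the equivalences
$$\Tw_a^{/red}\xrightarrow{\on{exp}} \Tw^{/red}\to \Tw$$
supplied by \propref{p:var to usual}. Since these are equivalences of functors to $\inftypic$, and the source carries a canonical lift to $\inftypic_k$, the composed equivalence uniquely endows $\Tw$ with the desired $\inftypic_k$-valued refinement.

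The only subtle point is compatibility: one must check that the $k$-linear structure on $\Tw$ obtained this way is independent of auxiliary choices, i.e., canonical. But this is immediate from the fact that \eqref{e:var to usual} is a diagram of natural transformations of functors on $\on{PreStk}$, so the induced upgrade factors through the $2$-functorial construction of $\Tw_a^{/red}$; no further coherence data must be checked by hand. There is no genuine obstacle — the construction is essentially a bookkeeping exercise, with the only content being the $k$-linearity of $\Ge_{\BG_a}$, which was already established.
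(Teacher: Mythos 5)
Your proposal is correct and follows essentially the same route as the paper: the $k$-module structure comes from $\Ge_{\BG_a}(S)\simeq B^2(\Gamma(S,\CO_S))$, is preserved under right Kan extension and kernels because the forgetful functor $\inftypic_k\to\inftypic$ preserves limits, and is then transported to $\Tw$ along the equivalences of \propref{p:var to usual}. The paper leaves exactly this argument implicit, so nothing further is needed.
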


\sssec{Example}  \label{e:twist by line bundle}

We can use the natural $k$-module structure on $\Tw$ to produce additional examples of twistings. Let $\CL$ be a line
bundle on $\CY$, and let $T(\CL)$ be the twisting of \secref{sss:twisting by line bundle}.
Now, for $a\in k$, the $k$-module structure on $\Tw(\CY)$ gives us
a new twisting $T(\CL^{\otimes a})$.

\begin{rem} \label{r:non loc triv}
Note that it is \emph{not} true that any twisting $T$ on an affine DG scheme $X$ is trivial, even locally in the Zariski
or \'etale topology. It is true that for any $S\in \affdgSch$ with a map $S\to X_\dr$, the corresponding $\BG_m$-gerbe
on $S$ can be non-canonically trivialized; but such a trivialization can not necessarily be made compatible
for the different choices of $S$. 

\medskip

An example of such a gerbe for a smooth classical $X$ can be given by a choice 
of a closed $2$-form (see \secref{sss:TDO}) which is not \'etale-locally exact. 

\medskip

Note, however, that the gerbes described in Example \ref{e:twist by line bundle} \emph{are}
Zariski-locally trivial, because of the corresponding property of line bundles.

\end{rem}

\sssec{Convergence}

We now claim:

\begin{prop} \label{p:twistings convergent}
The functor $\Tw:(\affdgSch)^{\on{op}}\to \inftypic$ is \emph{convergent}.\footnote{See \secref{sss:convergence},
where the notion of convergence is recalled.}
\end{prop}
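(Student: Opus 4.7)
The plan is to replace $\Tw$ by its $\BG_a$-incarnation $\Tw_a$ (using \propref{p:var to usual}), where the underlying connective spectrum becomes linear and the convergence reduces to that of the structure sheaf. In particular, I want to use the formula $\Tw_a(S) = \on{ker}\bigl(\Ge_{\BG_a}(S_\dr)\to \Ge_{\BG_a}(S)\bigr)$, which expresses a twisting as a fiber (limit) of a simple diagram of connective spectra, and then commute this limit with the tower $\{{}^{\leq n}S\}$.

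First, I would observe that $S_\dr$ is invariant under truncation, i.e.\ the natural map $({}^{\leq n}S)_\dr \to S_\dr$ is an isomorphism in $\on{PreStk}$. Indeed, by \eqref{e:dR on all}, $S_\dr$ depends only on ${}^{cl,red}S$, and for $n\ge 0$ one has ${}^{cl,red}({}^{\leq n}S)={}^{cl,red}S$ (since $\pi_0$ is preserved by $n$-coconnective truncation, and the nilradical is formed from $\pi_0$). Consequently, in the fiber sequence defining $\Tw_a$, the term $\Ge_{\BG_a}(S_\dr)$ is the same for $S$ and for every ${}^{\leq n}S$, and the restriction map reduces to $\Ge_{\BG_a}(S_\dr)\to \Ge_{\BG_a}({}^{\leq n}S)$.

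Next, I would establish convergence of $\Ge_{\BG_a}$ on $\affdgSch$. By the description $\Ge_{\BG_a}(S) \simeq B^{2}\bigl(\Gamma(S,\CO_S)\bigr)$ from \secref{ss:other groups}, and the fact that every connective DG algebra $A$ is the limit of its Postnikov tower, $A\simeq \underset{n}{\lim}\,\tau^{\le n}A$, the functor $S\mapsto \Ge_{\BG_a}(S)$ carries the filtered limit $S\simeq \underset{n}{\mathrm{colim}}\,{}^{\leq n}S$ (in $\affdgSch$) to the limit $\underset{n}{\lim}\,\Ge_{\BG_a}({}^{\leq n}S)$: indeed, $\Gamma(-,\CO)$ and $B^2$ (the latter being a shift in the stable setting, restricted to connective objects) both preserve limits of connective spectra.

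Putting these two steps together: kernels in $\inftypic$ are limits, so they commute with the sequential limit over $n$. I therefore get
\[
\underset{n}{\lim}\,\Tw({}^{\leq n}S) \;\simeq\; \underset{n}{\lim}\,\on{ker}\!\bigl(\Ge_{\BG_a}(S_\dr)\to \Ge_{\BG_a}({}^{\leq n}S)\bigr)
\;\simeq\; \on{ker}\!\bigl(\Ge_{\BG_a}(S_\dr)\to \underset{n}{\lim}\,\Ge_{\BG_a}({}^{\leq n}S)\bigr),
\]
and by the previous paragraph the right-hand side identifies with $\on{ker}\bigl(\Ge_{\BG_a}(S_\dr)\to \Ge_{\BG_a}(S)\bigr) = \Tw_a(S) \simeq \Tw(S)$. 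The only point requiring any care is verifying that the natural comparison map from $\Tw(S)$ really agrees with the identification above; this is a matter of unwinding definitions and, in each variable, comes down to the universal property of the Postnikov limit, so I do not expect a genuine obstacle.
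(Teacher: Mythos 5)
Your argument is correct and follows essentially the same route as the paper: both identify $\Tw$ with another incarnation from \eqref{e:var to usual} (the paper uses $\Tw^{/red}$ via $\BG_m$-gerbes, you use $\Tw_a$ via $\BG_a$-gerbes), observe that the de Rham term is constant along the Postnikov tower since $^{cl,red}({}^{\leq n}S)={}^{cl,red}S$, and reduce to convergence of the gerbe functor on affine DG schemes, which holds because it is $B^2$ of maps into a convergent object ($\BG_m$ there, $\Gamma(S,\CO_S)$ here). The difference is cosmetic.
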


\begin{proof}

We will show that the functor $\Tw^{/red}$ is convergent. For this, it is enough to show that
the functors
$$S\mapsto \Ge^{/red}_{\BG_m}(S_\dr) \text{ and } \Ge^{/red}_{\BG_m}(S)$$
are convergent. 

\medskip

The convergence of $\Ge^{/red}_{\BG_m}((-)_\dr)$ is obvious, as this functor only depends on the
underlying reduced classical scheme.  Thus, it remains to prove the convergence of $\Ge^{/red}_{\BG_m}(-)$.


\medskip

We have:
$$\Ge^{/red}_{\BG_m}(S)=\Ge_{\BG_m}(S)\underset{\Ge_{\BG_m}({}^{cl,red}S)}\times \{*\}.$$
Hence, it is sufficient to show that the functor $\Ge_{\BG_m}(-)$ is convergent. The latter follows
from the fact that 
$$\Ge_{\BG_m}(-)=B^2(\Maps(S,\BG_m)),$$
while $\BG_m$ is convergent, being a DG scheme.

\end{proof}

We can reformulate \propref{p:twistings convergent} tautologically as follows:

\begin{cor} \label{c:twistings convergent}
The functor $\Tw_{\on{PreStk}}$ maps isomorphically to the right Kan extension of
$$\Tw_{^{<\infty}\!\affdgSch}:=\Tw_{\affdgSch}|_{^{<\infty}\!\affdgSch}$$
along 
$$({}^{<\infty}\!\affdgSch)^{\on{op}}\hookrightarrow (\affdgSch)^{\on{op}} \hookrightarrow (\on{PreStk})^{\on{op}}.$$
\end{cor}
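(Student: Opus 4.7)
The plan is to deduce this corollary from \propref{p:twistings convergent} via transitivity of right Kan extensions. Since $\Tw_{\on{PreStk}}$ is, by construction, the right Kan extension of $\Tw_{\affdgSch}$ along $(\affdgSch)^{\on{op}} \hookrightarrow (\on{PreStk})^{\on{op}}$, it suffices to show that $\Tw_{\affdgSch}$ is itself isomorphic to the right Kan extension of $\Tw_{^{<\infty}\!\affdgSch}$ along the inclusion
$$({}^{<\infty}\!\affdgSch)^{\on{op}}\hookrightarrow (\affdgSch)^{\on{op}}.$$

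For $S\in \affdgSch$, evaluating this right Kan extension at $S$ gives the limit of $\Tw(T)$ taken over the comma category of maps $S\to T$ with $T\in {}^{<\infty}\!\affdgSch$. The key step is to observe that the truncation tower $\{S\to {}^{\leq n}\!S\}_{n\geq 0}$ is initial in this indexing category. Indeed, for each $n$ the truncation functor $S\mapsto {}^{\leq n}\!S$ is left adjoint to the inclusion ${}^{\leq n}\!\affdgSch\hookrightarrow \affdgSch$, so any map $S\to T$ with $T$ being $n$-coconnective factors uniquely (up to a contractible space of choices) through ${}^{\leq n}\!S$; passing to $n$ large enough accommodates every $T\in {}^{<\infty}\!\affdgSch$. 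Hence the right Kan extension evaluated at $S$ is computed by the simpler limit $\underset{n}{lim}\,\Tw({}^{\leq n}\!S)$, and \propref{p:twistings convergent} identifies this with $\Tw(S)$.

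I expect no substantive obstacle: the cofinality of the truncation tower is a standard fact about the adjunction between truncation and inclusion, and the rest is a direct unwinding of the definition of a right Kan extension together with the convergence already established. The entire content of the corollary is a reformulation of convergence in the language of Kan extensions.
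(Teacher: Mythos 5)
Your argument is correct and is exactly the unwinding the paper has in mind: the paper gives no proof, introducing the corollary with the phrase that it is a ``tautological reformulation'' of \propref{p:twistings convergent}, and your combination of transitivity of right Kan extensions with cofinality of the truncation tower $\{{}^{\leq n}S\}$ in the relevant comma category is precisely the standard content of that tautology. One small variance quibble: in $\affdgSch$ itself (as opposed to its opposite, the category of connective algebras, in which you are implicitly computing) the truncation $S\mapsto {}^{\leq n}\!S$ is the \emph{right} adjoint of the inclusion ${}^{\leq n}\!\affdgSch\hookrightarrow \affdgSch$; the factorization property you actually invoke is nonetheless the correct one.
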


\begin{rem} \label{r:gerbes conv}
We can use \propref{p:twistings convergent} to show that the functor $\Ge_{\BG_m}$ 
is also convergent (in any of the three versions).
\end{rem}

\sssec{Twistings in the locally almost of finite type case}

\corref{c:twistings convergent} implies that we ``do not need to know"
about DG schemes that are not locally almost of finite type in order to know what twistings
on $\CY\in \on{PreStk}$ if $\CY$ is locally almost of finite type. 

\begin{cor}  \label{c:twistings laft} \hfill

\smallskip

\noindent{\em(a)}
For $\CY\in \on{PreStk}_{\on{laft}}$, the naturally defined map
$$\Tw(\CY)\to \underset{S\in (({}^{<\infty}\!\affdgSch_{\on{aft}})_{/\CY})^{\on{op}}}{lim}\, \Tw(S)$$
is an equivalence.

\medskip

\noindent{\em(b)}
The functor $\Tw_{\on{PreStk}_{\on{laft}}}$ 
maps isomorphically to the right Kan extension of $\Tw_{^{<\infty}\!\affdgSch_{\on{aft}}}$ 
along the inlcusions
$$({}^{<\infty}\!\affdgSch_{\on{aft}})^{\on{op}}\hookrightarrow (\affdgSch_{\on{aft}})^{\on{op}}\hookrightarrow (\on{PreStk}_{\on{laft}})^{\on{op}}.$$

\end{cor}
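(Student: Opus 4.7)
The plan is to deduce both statements from \corref{c:twistings convergent} by a cofinality argument driven by the lft part of the laft condition on $\CY$. By \corref{c:twistings convergent}, for any $\CY\in \on{PreStk}$ one has
$$\Tw(\CY)\simeq \underset{S\in ({}^{<\infty}\!\affdgSch_{/\CY})^{\on{op}}}{lim}\, \Tw(S),$$
so to prove (a) it suffices to show that for $\CY\in \on{PreStk}_{\on{laft}}$ the inclusion
$$\iota:{}^{<\infty}\!\affdgSch_{\on{aft},/\CY}\hookrightarrow {}^{<\infty}\!\affdgSch_{/\CY}$$
is cofinal in Lurie's sense; by duality $\iota^{\on{op}}$ is then initial, and the limits of $\Tw$ over the two indexing categories agree.

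For the cofinality of $\iota$, I need to show that for each $(S,\phi)\in {}^{<\infty}\!\affdgSch_{/\CY}$, the comma category
$${}^{<\infty}\!\affdgSch_{\on{aft},/\CY}\times_{{}^{<\infty}\!\affdgSch_{/\CY}} \bigl({}^{<\infty}\!\affdgSch_{/\CY}\bigr)_{(S,\phi)/}$$
is weakly contractible. Its objects are factorizations $S\to T\to \CY$ of $\phi$ with $T\in {}^{<\infty}\!\affdgSch_{\on{aft}}$. This is precisely the content of the lft condition on $\CY$: since at each truncation level $\CY$ is the left Kan extension of its restriction to aft schemes, one has
$$\Maps(S,\CY)\simeq \underset{(T, S\to T),\, T \text{ aft}}{colim}\, \Maps(T,\CY),$$
and by the standard description of colimits of spaces as nerves of Grothendieck constructions, the homotopy fiber over $\phi$ of this equivalence coincides with the nerve of the factorization category. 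Since the map is an equivalence, this fiber is contractible. A routine bookkeeping step passes from the $^{\leq n}\!\affdgSch_{\on{aft}}$ of the truncation-level lft condition to ${}^{<\infty}\!\affdgSch_{\on{aft}}$, using that any $S\to T$ with $S$ being $n$-coconnective factors uniquely through $^{\leq n}T$, which remains aft.

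Part (b) then follows formally: (a) exhibits $\Tw_{\on{PreStk}_{\on{laft}}}$ as the right Kan extension of $\Tw_{{}^{<\infty}\!\affdgSch_{\on{aft}}}$ along the direct inclusion $({}^{<\infty}\!\affdgSch_{\on{aft}})^{\on{op}}\hookrightarrow (\on{PreStk}_{\on{laft}})^{\on{op}}$, and factoring this RKE through $(\affdgSch_{\on{aft}})^{\on{op}}$ is transitivity of Kan extensions, combined with the special case of (a) applied to $\CY\in \affdgSch_{\on{aft}}$ viewed as a laft prestack, which identifies $\Tw_{\affdgSch_{\on{aft}}}$ with the intermediate RKE. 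The principal technical hurdle lies in the cofinality step, specifically the unstraightening identification of the comma category with the homotopy fiber of the LKE colimit map; once this is established, the remainder is formal Kan extension bookkeeping.
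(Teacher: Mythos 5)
Your proposal is correct and follows essentially the same route as the paper: the paper's entire proof is the remark that the statement holds for any convergent functor $(\affdgSch)^{\on{op}}\to\inftygroup$ in place of $\Tw$, i.e., it combines \corref{c:twistings convergent} with the standard cofinality of $({}^{<\infty}\!\affdgSch_{\on{aft}})_{/\CY}\hookrightarrow ({}^{<\infty}\!\affdgSch)_{/\CY}$ for $\CY$ laft, which is exactly your two steps. You have merely unpacked the cofinality argument (via contractibility of the factorization categories, using descent for colimits in $\inftygroup$) that the paper leaves implicit as part of the general laft formalism.
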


\begin{proof}
This is true for $\Tw$ replaced by any convergent prestack $(\affdgSch)^{\on{op}}\to \inftygroup$.
\end{proof}

\begin{rem}
It follows from Remark \ref{r:gerbes conv} that the functor $\Ge_{\BG_m}$ (in any of the three versions), 
viewed as a presheaf, belongs to $\on{PreStk}_{\on{laft}}$. Indeed, this is evident in the $\on{naive}$ 
version, since $\on{pt}/\BG_m$ belongs to $\on{PreStk}_{\on{laft}}$. For the $\on{Zar}$ and $\on{et}$
versions, this follows from \cite[Corollary 2.5.7]{Stacks} that says that the condition of being
locally of finite type in the context of $n$-connective prestacks survives sheafification, once we restrict
ourselves to \emph{truncated} prestacks.
\end{rem}

\ssec{Identification of the Picard groupoid of twistings}  

We can use the description of twistings in terms of $\BG_a$-gerbes to give a cohomological description of the groupoid of twistings.

\sssec{De Rham cohomology}
Let $\CY$ be a prestack.  Recall that the coherent cohomology of $\CY$ is defined as
$$ H(\CY) := \Gamma(\CY, \CO_\CY) = \CMaps_{\QCoh(\CY)}(\CO_\CY,\CO_\CY) .$$
We define the de Rham cohomology of $\CY$ to be the coherent cohomology of $\CY_{\dr}$; i.e.,
$$ H_{\dr}(\CY) := H(\CY_{\dr}) = \CMaps_{\QCoh(\CY_{\dr})}(\CO_{\CY_{\dr}}, \CO_{\CY_{\dr}}) .$$
Note that since $\QCoh(\CY_{\dr})$ is a stable $\infty$-category, the ${\mathcal Maps}$ above gives a (not necessarily connective) 
spectrum.

\medskip

Let $X$ be a smooth classical scheme.  In this case, by \secref{ss:crys and Dmod}, we have
$$ H_{\dr}(X) = {\mathcal Maps}_{\Dmod^l(X)}(\CO_X,\CO_X) .$$
In particular, our definition of de Rham cohomology agrees with the usual one for smooth classical schemes.

\sssec{}

Consider the functor $B^2(\BG_a)$, which represents $\BG_a$-gerbes.  By definition, for a prestack $\CY$, we have
an isomorphism of connective spectra:
$$ \Maps(\CY, B^2(\BG_a)) \simeq 
\tau^{\leq 0}\left(\CMaps_{\QCoh(\CY)}(\CO_{\CY}, \CO_{\CY})[2]\right) \simeq \tau^{\leq 0}(H(\CY)[2]).$$

\medskip

Thus by \propref{p:var to usual}, we obtain:

\begin{cor}
For a prestack $\CY$, groupoid of twistings is given by
$$\Tw(\CY) \simeq \tau^{\leq 2}\left(H_{\dr}(\CY)\underset{H(\CY)}\times \{*\}\right)[2].$$
\end{cor}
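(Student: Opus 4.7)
The plan is to reduce to $\BG_a$-gerbes via \propref{p:var to usual}, which gives a canonical isomorphism
$$\Tw(\CY) \simeq \Tw_a(\CY) = \on{ker}\left(p_{\dr,\CY}^*:\Ge_{\BG_a}(\CY_\dr)\to \Ge_{\BG_a}(\CY)\right).$$
Using the identification recalled just before the corollary, namely
$$\Ge_{\BG_a}(\CZ) \simeq \Maps(\CZ,B^2(\BG_a)) \simeq \tau^{\leq 0}(H(\CZ)[2]),$$
applied to both $\CZ=\CY$ and $\CZ=\CY_\dr$ (where the latter produces $\tau^{\leq 0}(H_\dr(\CY)[2])$ by the very definition of $H_\dr$), it remains to compute the fiber
$$\tau^{\leq 0}(H_\dr(\CY)[2]) \underset{\tau^{\leq 0}(H(\CY)[2])}\times \{*\}$$
taken in $\inftypic$.

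The key observation is that the inclusion $\on{Sp}^{\leq 0}\hookrightarrow \on{Sp}$ of connective spectra is a left adjoint (its right adjoint being $\tau^{\leq 0}$), so by the convention recalled in the paper, the above fiber product in $\inftypic$ is obtained as $\tau^{\leq 0}$ of the fiber taken in all spectra. Moreover, $\tau^{\leq 0}$, being a right adjoint, preserves fibers. Applying this to the fiber sequence $F\to H_\dr(\CY)[2]\to H(\CY)[2]$ (where $F$ is the fiber in spectra), we conclude
$$\tau^{\leq 0}\left(\tau^{\leq 0}(H_\dr(\CY)[2]) \underset{\tau^{\leq 0}(H(\CY)[2])}\times \{*\}\right) \simeq \tau^{\leq 0}(F),$$
and since fiber commutes with the shift $[2]$, we have $F \simeq (H_\dr(\CY)\underset{H(\CY)}\times \{*\})[2]$.

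Finally, combining these identifications and using the evident relation $\tau^{\leq 0}(X[2]) \simeq \tau^{\leq 2}(X)[2]$ between truncation and shift yields the claim. The only nontrivial step is the commutation of $\tau^{\leq 0}$ with the fiber; all other steps are direct assembly of preceding results. I anticipate no serious obstacle, since the identification of $\Ge_{\BG_a}$ with $\tau^{\leq 0}(H(-)[2])$ was already spelled out and \propref{p:var to usual} does the heavy lifting of passing from $\BG_m$ to $\BG_a$.
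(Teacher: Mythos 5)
Your proposal is correct and follows the same route as the paper: reduce to $\BG_a$-gerbes via \propref{p:var to usual}, identify $\Ge_{\BG_a}(-)$ with $\tau^{\leq 0}(H(-)[2])$ for both $\CY$ and $\CY_\dr$, and compute the kernel using the paper's convention that fiber products in $\inftypic$ are connective truncations of fiber products in spectra. The paper leaves the truncation-versus-fiber bookkeeping implicit, and your observation that $\tau^{\leq 0}$, being a right adjoint, commutes with the fiber is exactly the correct way to fill it in.
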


\sssec{}  \label{sss:TDO}
Now, suppose that $X$ is a smooth classical scheme.  In this case, we have 
$$ H_{\dr}(X) \simeq \Gamma(X, \Omega^\bullet) $$
where $\Omega^\bullet$ is the complex of de Rham differentials on $X$.  The natural map
$$H_{\dr}(X) \rightarrow H(X)$$
is given by global sections of the projection map $\Omega^\bullet \rightarrow \CO_X$.  Therefore, we have
\begin{multline*}
\Tw(X) \simeq \tau^{\leq 2}\left(H_{\dr}(X)\underset{H(X)}\times \{*\}\right)[2]\simeq  \tau^{\leq 2} 
\left(\Gamma(X, \Omega^\bullet \underset{\CO_X}\times 0)\right)[2] \simeq \\
\simeq \tau^{\leq 2}\left(\Gamma(X,\tau^{\leq 2} (\Omega^\bullet \underset{\CO_X}\times 0))\right)[2].
\end{multline*}

\medskip

The complex $\tau^{\leq 2} (\Omega^\bullet \underset{\CO_X}\times 0)$ identifies with the complex
$$ \Omega^1 \to \Omega^{2,cl} $$
where $\Omega^{2,cl}$ is the sheaf of closed 2-forms (placed in cohomological degree $2$) and the map is the de Rham differential. 

\medskip

Thus, we have that the Picard groupoid of twistings on $X$ is given by
$$ \Tw(X) \simeq \tau^{\leq 2} \left(\Gamma(X, \Omega^1 \to \Omega^{2,cl})\right)[2].$$
In particular, our definition of twistings agrees with the notion of TDO of \cite{BB} for smooth classical schemes.

\ssec{Twisting and the infinitesimal groupoid}

\sssec{}  \label{sss:central ext}

Let $$\CY^1\rightrightarrows \CY^0$$ be a groupoid object in $\on{PreStk}$,
and let $\CY^\bullet$ be the corresponding simplicial object. Let us recall
the notion of central extension of this groupoid object by $\BG_m$. 
(Here $\BG_m$ can be replaced by any commutative
group-object $H\in \on{PreStk}$).

\medskip

By definition, a central extension of $\CY^1\rightrightarrows \CY^0$
by $\BG_m$ is an object of $\Ge_{\BG_m}(|\CY^\bullet|)$, equipped 
with a trivialization of its restriction under 
$$\CY^0\to |\CY^\bullet|.$$

\sssec{}

Informally, the data of such a central extension is a line bundle $\CL$ on $\CY^1$, 
whose pullback under the degeneracy map $\CY^0\to \CY^1$ is trivialized, and 
such that for the three maps 
$$p_{1,2},p_{2,3},p_{1,2}:\CY^2\to \CY^1,$$
we are given an isomorphism
$$p_{1,2}^*(\CL)\otimes p_{2,3}^*(\CL)\simeq p_{1,3}^*(\CL),$$
and such that the further coherence conditions are satisfied. 

\sssec{}

For $\CY\in \on{PreStk}$ consider its infinitesimal groupoid 
$$\CY\underset{\CY_\dr}\times \CY\rightrightarrows \CY.$$

By definition, a twisting on $\CY$ gives rise to a central extension
of its infinitesimal groupoid by $\BG_m$. 

\medskip

Conversely, from
\lemref{l:cl formally smooth}, we obtain:

\begin{cor}  \label{c:tw via inf}
Assume that $\CY$ is classically formally smooth. Then the above functor
$$\Tw(\CY)\to \{\text{{\rm Central extensions of the infinitesimal groupoid of $\CY$ by $\BG_m$}}\}$$
is an equivalence.
\end{cor}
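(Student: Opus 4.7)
The plan is to unwind the definition of a central extension of the infinitesimal groupoid and observe that under the classical formal smoothness hypothesis the relevant geometric realization already computes $\CY_\dr$, so the two Picard groupoids become tautologically identified.

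First I would recall the data in each side. By \secref{sss:central ext}, a central extension of
$$\CY\underset{\CY_\dr}\times \CY \rightrightarrows \CY$$
by $\BG_m$ is an object of $\Ge_{\BG_m}(|\CY^\bullet/\CY_\dr|)$ equipped with a trivialization of its pullback along the degeneracy map $\CY \to |\CY^\bullet/\CY_\dr|$. On the other side, by definition
$$\Tw(\CY) = \ker\!\bigl(p_{\dr,\CY}^*:\Ge_{\BG_m}(\CY_\dr)\to \Ge_{\BG_m}(\CY)\bigr),$$
i.e.\ an object of $\Ge_{\BG_m}(\CY_\dr)$ together with a trivialization of its pullback to $\CY$. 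The comparison functor in the statement sends a twisting $T$ to the central extension obtained by pulling $T$ back along the canonical map $|\CY^\bullet/\CY_\dr| \to \CY_\dr$ (and keeping the same trivialization on the degenerate piece $\CY^0 = \CY$).

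Next I would invoke the classical formal smoothness assumption: by \lemref{l:cl formally smooth}, the canonical map
$$|\CY^\bullet/\CY_\dr|\to \CY_\dr$$
is an isomorphism in $\on{PreStk}$. Since $\Ge_{\BG_m}$, viewed as a functor $\on{PreStk}^{\on{op}}\to \inftypic$, is a right Kan extension from $(\affdgSch)^{\on{op}}$ (equivalently, converts colimits in $\on{PreStk}$ to limits in $\inftypic$), it sends this isomorphism to an equivalence
$$\Ge_{\BG_m}(\CY_\dr)\xrightarrow{\ \simeq\ } \Ge_{\BG_m}(|\CY^\bullet/\CY_\dr|).$$
Moreover this equivalence is compatible with pullback along $\CY \hookrightarrow |\CY^\bullet/\CY_\dr|$ and $p_{\dr,\CY}:\CY\to \CY_\dr$, since the composition of the former with the isomorphism above is precisely $p_{\dr,\CY}$.

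Finally, taking fibers over the point $\{*\}\in \Ge_{\BG_m}(\CY)$ on both sides, the equivalence of the previous paragraph descends to an equivalence between $\Tw(\CY)$ and the Picard groupoid of $\BG_m$-central extensions of the infinitesimal groupoid, which is exactly the content of the corollary. There is essentially no obstacle here beyond checking that the comparison map of the statement agrees with the one induced by \lemref{l:cl formally smooth}; this is immediate from functoriality of $\Ge_{\BG_m}$ together with the fact that the augmentation $|\CY^\bullet/\CY_\dr|\to \CY_\dr$ and the degeneracy $\CY\to |\CY^\bullet/\CY_\dr|$ compose to $p_{\dr,\CY}$.
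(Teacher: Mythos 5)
Your proposal is correct and is exactly the paper's (largely implicit) argument: the corollary is deduced directly from \lemref{l:cl formally smooth}, since the comparison functor is pullback of gerbes along the augmentation $|\CY^\bullet/\CY_\dr|\to \CY_\dr$, which that lemma makes an isomorphism of prestacks, and one then passes to kernels of restriction to $\CY$ on both sides. The only superfluous step is invoking the colimit-to-limit property of $\Ge_{\BG_m}$ — once the augmentation is an isomorphism in $\on{PreStk}$, any functor sends it to an equivalence.
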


\ssec{Twistings on indschemes}  \label{ss:tw on indsch}

\sssec{}

Let $\CX$ be an object of $\dgindSch_{\on{laft}}$. 
We will show that the assertion of \corref{c:tw via inf} holds for $\CX$:

\begin{prop} \label{p:tw on indsch}
The functor 
$$\Tw(\CX)\to \{\text{{\rm Central extensions of the infinitesimal groupoid of $\CX$ by $\BG_m$}}\}$$
is an equivalence.
\end{prop}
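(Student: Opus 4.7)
The plan is to reduce the proposition to a descent statement for coherent cohomology along the infinitesimal groupoid, which can then be established using the descent for right crystals from \propref{p:inf descent for right}.

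First I would unwind the two sides. A twisting on $\CX$ is a $\BG_m$-gerbe on $\CX_\dr$ equipped with a trivialization of its pullback to $\CX$, while a central extension of the infinitesimal groupoid by $\BG_m$ is a $\BG_m$-gerbe on $|\CX^\bullet/\CX_\dr|$ with a trivialization on $\CX$. The natural map between them is induced by the canonical map $\iota:|\CX^\bullet/\CX_\dr|\to \CX_\dr$. Taking fibers over the common target $\Ge_{\BG_m}(\CX)$, showing that this map is an equivalence reduces to showing
$$\Ge_{\BG_m}(\CX_\dr) \to \on{Tot}(\Ge_{\BG_m}(\CX^\bullet/\CX_\dr))$$
is an equivalence. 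By \propref{p:var to usual}, the exponential lets me replace $\BG_m$-gerbes by $\BG_a$-gerbes throughout; and using $\Ge_{\BG_a}(\CY) \simeq \tau^{\leq 0}(H(\CY)[2])$, the problem becomes showing that $H(\CX_\dr) \to \on{Tot}(H(\CX^\bullet/\CX_\dr))$ is an equivalence in cohomological degrees $\leq 2$ (and in fact in all degrees).

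To prove this last statement, I would combine \propref{p:left to right} with \propref{p:inf descent for right}. The first says that $\Upsilon_{\CX_\dr}:\Crys^l(\CX)\to \Crys^r(\CX)$ is an equivalence, giving
$$H(\CX_\dr) = \End_{\QCoh(\CX_\dr)}(\CO_{\CX_\dr}) \simeq \End_{\IndCoh(\CX_\dr)}(\omega_{\CX_\dr}).$$
The second says that $\Crys^r(\CX)\simeq \on{Tot}(\IndCoh(\CX^\bullet/\CX_\dr))$, giving
$$\End_{\IndCoh(\CX_\dr)}(\omega_{\CX_\dr}) \simeq \on{Tot}\bigl(\End_{\IndCoh(\CX^i/\CX_\dr)}(\omega_{\CX^i/\CX_\dr})\bigr).$$

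The main obstacle is then to identify $\End_{\IndCoh(\CX^i/\CX_\dr)}(\omega_{\CX^i/\CX_\dr})$ with $H(\CX^i/\CX_\dr)$ for each $i$, i.e., to show that $\Upsilon$ is fully faithful on the structure sheaf of the DG indscheme $\CX^i/\CX_\dr = (\CX^i)^\wedge_\CX$. I would attack this by presenting $\CX^i/\CX_\dr$ as an ind-colimit of formal completions of eventually coconnective DG schemes along closed subschemes. For each such formal completion, the fully faithfulness of $\Upsilon$ is known by combining \cite[Corollary 9.6.3]{IndCoh} with \cite[Propositions 7.1.3 and 7.4.5]{IndSch}; one then needs to commute the appropriate limits and colimits. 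The pathology of Example \ref{ex:deg 2}, where $\QCoh$-descent fails for non-eventually-coconnective DG schemes, does not obstruct the argument since at each finite level in the ind-presentation one works with eventually coconnective schemes.
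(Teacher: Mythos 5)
Your argument is correct and follows essentially the same route as the paper: linearize via the exponential to reduce to a statement about $\Gamma(-,\CO)$, convert endomorphisms of $\CO$ in $\QCoh$ to endomorphisms of $\omega$ in $\IndCoh$, and conclude by descent for $\IndCoh$ along the infinitesimal groupoid. The only difference is packaging: the paper first base-changes to an affine $S\to\CX_\dr$ via \corref{c:twistings laft}(b) and applies ind-proper descent directly, whereas you work globally over $\CX_\dr$ and invoke \propref{p:left to right} and \propref{p:inf descent for right} as black boxes; your remaining step identifying $\CMaps_{\IndCoh(\CX^i/\CX_\dr)}(\omega,\omega)$ with $H(\CX^i/\CX_\dr)$ is exactly the paper's Step 3, established there by the same reduction to \lemref{l:Xi vee of omega}.
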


The rest of this subsection is devoted to the proof of \propref{p:tw on indsch}. 

\sssec{Step 1}

By \corref{c:twistings laft}(b), we have to show the following: 

\medskip

\noindent For every 
$S\in (\affdgSch_{\on{aft}})_{/\CX_\dr}$ a datum of $\BG_m$-gerbe on $S$, equipped
with a trivialization of its pullback to $S\underset{\CX_\dr}\times \CX$, is equivalent to that
of a $\BG_m$-gerbe on the simplicial prestack 
$$S^\bullet:=S\underset{\CX_\dr}\times (\CX^\bullet/\CX_\dr),$$
equipped with a trivialization over $0$-simplices, i.e., $S\underset{\CX_\dr}\times \CX$. 

\sssec{Step 2}

Note that the simplicial prestack $^{cl,red}(S^\bullet)$ is constant with value $^{red,cl}S$.
Hence, by \lemref{c:gerbes as B2}, we can consider $\BG_a$-gerbes instead of $\BG_m$-gerbes.

\medskip

Hence, it is enough to show that the map
$$\CMaps_{\QCoh(S)}(\CO_S,\CO_S)\to \on{Tot}\left(\CMaps_{\QCoh(S^\bullet)}(\CO_{S^\bullet},
\CO_{S^\bullet})\right)$$
is an isomorphism in $\Vect$. 

\sssec{Step 3}

Note that for any $\CX'\in \dgindSch_{\on{laft}}$, the canonical map
$$\CMaps_{\QCoh(\CX')}(\CO_{\CX'},\CO_{\CX'})\to 
\CMaps_{\IndCoh(\CX')}(\omega_{\CX'},\omega_{\CX'})$$
is an isomorphism. This follows, e.g., from the corresponding assertion
for DG schemes, i.e., \lemref{l:Xi vee of omega}.

\medskip

Hence, it is enough to show that the map
$$\CMaps_{\IndCoh(S)}(\omega_S,\omega_S)\to \on{Tot}\left(\CMaps_{\IndCoh(S^\bullet)}(\omega_{S^\bullet},
\omega_{S^\bullet})\right)$$
is an isomorphism.

\sssec{Step 4}

Note that $S^\bullet$ identifies with the \v{C}ech nerve of
the map 
\begin{equation} \label{e:base change of inf}
S\underset{\CX_\dr}\times \CX\to S.
\end{equation}

As in the proof of \propref{p:inf descent for right}, all $S^i$ belong to $\dgindSch$,
and the morphism \eqref{e:base change of inf} is ind-proper and surjective. 

\medskip

Now, the desired assertion follows from the descent for $\IndCoh$ under ind-proper and surjective
maps of DG indschemes, see \cite[Lemma 2.10.3]{IndSch}.

\section{Twisted crystals}

In this section we will show how a data of a twisting allows to modify the categories
of left and right crystals. The main results say that ``not much really changes."

\ssec{Twisted left crystals}

In this subsection we do not assume that our DG schemes and prestacks are locally
almost of finite type.

\sssec{}

Let $\CY$ be a prestack. Consider the category
$\on{PreStk}_{/\CY}$, and the functor
$$\QCoh_{\affdgSch_{/\CY}}:(\affdgSch_{/\CY})^{\on{op}}\to \StinftyCat_{\on{cont}}.$$
The group-stack $\on{pt}/\BG_m$ acts on $\QCoh$ via tensoring by line bundles.

\medskip

Let $\CG$ be a $\BG_m$-gerbe on $\CY$.  Then $\CG$ gives a twist of the functor $\QCoh_{\affdgSch_{/\CY}}$ via the action of 
$\on{pt}/\BG_m$ on $\QCoh$.  This defines a functor
$$\QCoh^\CG_{\affdgSch_{/\CY}}:(\affdgSch_{/\CY})^{\on{op}}\to \StinftyCat_{\on{cont}}.$$

\sssec{}

In particular, if $T$ is a twisting on $\CY$, we obtain a functor
$$\QCoh^T_{\affdgSch_{/\CY_\dr}}:(\affdgSch_{/\CY_\dr})^{\on{op}}\to \StinftyCat_{\on{cont}}.$$

Let $\QCoh^T_{\affdgSch_{/\CY}}$ be its restriction
along the map
$$(\affdgSch_{/\CY})^{\on{op}}\to (\affdgSch_{/\CY_\dr})^{\on{op}}.$$

By construction, $\QCoh^T_{\affdgSch_{/\CY}}$
is canonically isomorphic to $\QCoh_{\affdgSch_{/\CY}}$.

\sssec{}

More generally, we can consider the functor
$$\QCoh^T_{\on{PreStk}_{/\CY_\dr}}:(\on{PreStk}_{/\CY_\dr})^{\on{op}}\to \StinftyCat_{\on{cont}},$$
which is the right Kan extension of $\QCoh^T_{\affdgSch_{/\CY_\dr}}$
along
$$(\affdgSch_{/\CY_\dr})^{\on{op}}\hookrightarrow (\on{PreStk}_{/\CY_\dr})^{\on{op}}.$$

The restriction $\QCoh^T_{\on{PreStk}_{/\CY}}$ of $\QCoh^T_{\on{PreStk}_{/\CY_\dr}}$ along
$$\on{PreStk}_{/\CY}\to \on{PreStk}_{/\CY_\dr}$$
is canonically isomorphic to $\QCoh_{\on{PreStk}_{/\CY}}$.

\sssec{}

For a twisting $T$ on a prestack $\CY$, the category
of $T$-twisted left crystals on $\CY$ is defined as
$$\Crys^{T,l}(\CY):= \QCoh^T(\CY_\dr).$$
Explicitly, we have
$$\Crys^{T,l}(\CY)=\underset{S\in (\affdgSch_{/\CY_\dr})^{\on{op}}}{lim}\, \QCoh^T(S).$$

\sssec{}

More generally, we define the functor
$$\Crys^{T,l}_{\on{PreStk}_{/\CY_\dr}}:(\on{PreStk}_{/\CY_\dr})^{\on{op}}\to \StinftyCat_{\on{cont}},$$
as the composite $\QCoh^T_{\on{PreStk}_{/\CY_\dr}} \circ\  \dr$.
The analogue of \corref{c:RKE for left} holds for this functor.

\sssec{}

We have a canonical natural transformation
$$\oblv^{T,l}:\Crys^{T,l}_{\on{PreStk}_{/\CY_\dr}}\to \QCoh^T_{\on{PreStk}_{/\CY_\dr}}.$$

For an individual $\CY'\in \on{PreStk}_{/\CY_\dr}$, we denote the resulting functor
$$\Crys^{T,l}(\CY')\to \QCoh^T(\CY')$$
by $\oblv^{T,l}(\CY')$.

\sssec{}

Let $\Crys^{T,l}_{\on{PreStk}_{/\CY}}$ denote the restriction of $\Crys^{T,l}_{\on{PreStk}_{/\CY_\dr}}$
along $\on{PreStk}_{/\CY}\to \on{PreStk}_{/\CY_\dr}$. 

\medskip

By a slight abuse of notation we shall use the same symbol $\oblv^{T,l}$ to denote the resulting
natural transformation
$$\Crys^{T,l}_{\on{PreStk}_{/\CY}}\to \QCoh_{\on{PreStk}_{/\CY}}.$$

\ssec{Twisted right crystals}

At this point, we reinstate the assumption that all DG schemes and prestacks
are locally almost of finite type for the rest of the paper.

\sssec{}

Let $\CY$ be an object of $\on{PreStk}_{\on{laft}}$, and let $\CG$ be a $\BG_m$-gerbe on $\CY$.

\medskip

The action of $\QCoh_{\on{PreStk}_{\on{laft}}}$ on $\IndCoh_{\on{PreStk}_{\on{laft}}}$ (see \cite[Sect. 10.3]{IndCoh})
allows to define the functor
$$\IndCoh^\CG_{(\on{PreStk}_{\on{laft}})_{/\CY}}:((\on{PreStk}_{\on{laft}})_{/\CY})^{\on{op}}\to \StinftyCat_{\on{cont}},$$
with properties analogous to those of 
$$\IndCoh_{(\on{PreStk}_{\on{laft}})_{/\CY}}:=\IndCoh_{\on{PreStk}_{\on{laft}}}|_{(\on{PreStk}_{\on{laft}})_{/\CY}}.$$

\sssec{}

In particular, for $T\in \Tw(\CY)$, we have the functor
$$\Crys^{T,r}_{(\on{PreStk}_{\on{laft}})_{/\CY_\dr}}:((\on{PreStk}_{\on{laft}})_{/\CY_\dr})^{\on{op}}\to  \StinftyCat_{\on{cont}},$$
and the natural transformations $\oblv^{T,l}$
$$\Crys^{T,r}_{(\on{PreStk}_{\on{laft}})_{/\CY_\dr}}\to \IndCoh^T_{(\on{PreStk}_{\on{laft}})_{/\CY_\dr}} \text{ and }
\Crys^{T,r}_{(\on{PreStk}_{\on{laft}})_{/\CY}}\to \IndCoh_{(\on{PreStk}_{\on{laft}})_{/\CY}}.$$  

The analogues of Corollaries \ref{c:only aft right} and \ref{c:RKE for right aft} and
Lemmas \ref{l:right on cl formally smooth} and \ref{l:cons for r} hold
for $\CY'\in (\on{PreStk}_{\on{laft}})_{/\CY_\dr}$, with the same proofs. 

\ssec{Properties of twisted crystals}

As was mentioned above, all DG schemes and prestacks are assumed locally almost of finite type.

\medskip

Let $\CY$ be a fixed object of $\on{PreStk}_{\on{laft}}$, and $T\in \Tw(\CY)$.

\begin{rem}  \label{r:loc triv}
In general, results about crystals do not automatically hold for twisted crystals.  In some of our proofs, we needed to embed a given affine DG scheme $X$ into a smooth classical scheme $Z$.  In the case of twisted crystals, the problem is that we might not
be able to find such a $Z$ which also maps to $\CY$ (or even $\CY_\dr$).

\medskip

However, there is a large family of examples (which covers all the cases that have appeared in applications so far), 
where the extension of the results is automatic: namely, when $T$ is such that
its restriction to any $S\in (\affdgSch_{\on{aft}})_{/\CY}$ is locally trivial in the Zariski or \'etale topology
(see also Remark \ref{r:non loc triv}). This is the case for twistings of the form 
$\CL^{\otimes a}$ for $\CL\in \Pic(\CY)$ and $a\in k$, and tensor products thereof.

\end{rem}

\sssec{}

The analogues of Corollaries \ref{c:dr laft} and \ref{c:RKE for left aft} and Lemma
\ref{l:left on cl formally smooth} hold for twisted left crystals,
with the same proofs.

\medskip

Furthermore, Kashiwara's lemma holds for both left and right twisted crystals,
also with the same proof.

\medskip

Finally, note that there exists a canonical natural transformation
\begin{equation} \label{e:twisted upsilon}
\Upsilon:
\Crys^{T,l}_{(\on{PreStk}_{\on{laft}})_{/\CY_\dr}}\to \Crys^{T,r}_{(\on{PreStk}_{\on{laft}})_{/\CY_\dr}}.
\end{equation}

\begin{prop}
The natural transformation \eqref{e:twisted upsilon} is an equivalence.
\end{prop}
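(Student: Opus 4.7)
The plan is to adapt the argument of \propref{p:left to right} to the twisted setting. First, I would verify the twisted analogs of Corollaries \ref{c:RKE for left aft} and \ref{c:RKE for right aft}, whose statements and proofs carry over verbatim, reducing the problem to showing that
\[
\Upsilon^T_{X_\dr}:\QCoh^T(X_\dr)\to \IndCoh^T(X_\dr)
\]
is an equivalence for each affine DG scheme $X$ almost of finite type equipped with a map $X\to \CY_\dr$ (with $T$ still denoting the pulled-back twisting). Pick a closed embedding $X\hookrightarrow Z$ into a smooth classical scheme $Z$ and set $Y:=Z^\wedge_X$; the isomorphism $X_\dr\simeq Y_\dr$ transfers the question to $Y$, and it suffices to show $\Upsilon^T_{Y_\dr}$ is an equivalence.

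The prestack $Y$ is formally smooth, hence classically formally smooth, so the twisted analogs of Lemmas \ref{l:left on cl formally smooth} and \ref{l:right on cl formally smooth}---again proved identically, since their untwisted proofs use only \lemref{l:cl formally smooth} together with formal descent properties---furnish equivalences
\[
\Crys^{T,l}(Y)\simeq\on{Tot}(\QCoh^T(Y^\bullet/Y_\dr)),\qquad \Crys^{T,r}(Y)\simeq\on{Tot}(\IndCoh^T(Y^\bullet/Y_\dr)).
\]
By naturality of $\Upsilon$ in the prestack argument, the problem reduces to showing that $\Upsilon^T_{Y^i/Y_\dr}$ is an equivalence for every $i\geq 0$, where each $Y^i/Y_\dr$ is the formal completion of the smooth classical scheme $Z^i$ along the diagonal copy of $X$.

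For this final step, I would follow the structure of the proof of \propref{p:left to right}: present $\QCoh(Y^i/Y_\dr)$ and $\IndCoh(Y^i/Y_\dr)$ as the kernels of the restriction functors to the open complement $U_i\subset Z^i$ via \cite[Propositions 7.1.3 and 7.4.5]{IndSch}, then appeal to the fact that $\Upsilon$ is an equivalence on the smooth $Z^i$ and $U_i$ by \cite[Proposition 9.3.3]{IndCoh} and \cite[Lemma 1.1.6]{IndCoh}. The main obstacle is that the twisting on $Y^i/Y_\dr$---a $\BG_m$-gerbe on $(Y^i/Y_\dr)_\dr\simeq X_\dr$---need not a priori extend to a twisting on the ambient $Z^i$. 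I expect to resolve this by choosing an étale cover $\wt{Z}^i\to Z^i$ over which the restriction of the gerbe to the image of $X$ trivializes (such a cover exists by local non-emptiness of $\BG_m$-gerbes in the étale topology), reducing to the untwisted short-exact-sequence argument on $\wt{Z}^i$ and its base-changed formal completion and complement, and gluing the resulting equivalences via étale descent for $\QCoh^T$ and $\IndCoh^T$---which is inherited from the étale descent for $\QCoh$ and $\IndCoh$ established in \secref{s:descent for crystals} combined with the $\QCoh$-linearity of the twisting operation recorded in \secref{ss:other groups}.
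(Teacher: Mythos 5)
Your reductions follow the paper's route almost verbatim: the paper's own proof opens with ``the argument is the same as that of \propref{p:left to right},'' and like you it passes to an affine $X$, embeds it in a smooth classical $Z$, replaces $X$ by the formal completion $Y=Z^\wedge_X$, and uses the twisted versions of \lemref{l:left on cl formally smooth} and \lemref{l:right on cl formally smooth} to reduce to showing that $\Upsilon$ is an equivalence on each $Y^i/Y_\dr$. You also correctly isolate the one genuinely new point, namely that the gerbe lives only on the formal completion and need not extend to the ambient $Z^i$. The problem is with your proposed fix for that point. Local non-emptiness of $\BG_m$-gerbes in the \'etale topology lets you trivialize a gerbe over an \'etale cover of a (reduced, classical) scheme, but a trivialization over the image of $X$ does not by itself propagate to the formal and derived thickenings that make up $Y^i/Y_\dr$ --- and that propagation is exactly the issue at stake, not something you can assume. (Note also Remark \ref{r:non loc triv}: the twisting itself is in general \emph{not} \'etale-locally trivial, so one has to be careful about which gerbe is being trivialized where.) As written, the step ``such a cover exists by local non-emptiness of $\BG_m$-gerbes'' does not deliver a trivialization on the base-changed formal completion, which is what your short-exact-sequence argument on $\wt{Z}^i$ requires.

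The paper's resolution is both simpler and supplies precisely the missing ingredient. The gerbe on $Y^i/Y_\dr$ pulled back from $\CY_\dr$ carries a \emph{canonical} trivialization over $^{cl,red}X$ (this is built into the definition of a twisting), hence defines an object of $\Ge^{/red}_{\BG_m}(Y^i/Y_\dr)$. By the exponential isomorphism of \secref{ss:other groups}, this Picard groupoid is a $B^2$ of a connective spectrum on (ind-)affine schemes, so it has trivial $\pi_0$ and $\pi_1$: the gerbe is globally, non-canonically trivial on all of $Y^i/Y_\dr$. Choosing such a trivialization identifies $\QCoh^T(Y^i/Y_\dr)\simeq \QCoh(Y^i/Y_\dr)$ and $\IndCoh^T(Y^i/Y_\dr)\simeq \IndCoh(Y^i/Y_\dr)$ compatibly with $\Upsilon$, and the assertion reduces at once to the untwisted statement already established in the proof of \propref{p:left to right} --- no \'etale cover, no descent, and no need to extend anything to $Z^i$. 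If you replace your final step by this observation, your argument closes.
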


\begin{proof}

The argument is the same as that of \propref{p:left to right}: 

\medskip

We do not need the smooth classical scheme $Z$ to map to $\CY$. 
Rather, we use the fact that if $Y$ is the completion of a smooth
classical scheme $Z$ along a Zariski-closed subset, and $\CG$ is a 
$\BG_m$-gerbe on $Y$, which is trivial over $^{cl,red}Y$, then the functor
$$\Upsilon_Y:\QCoh^\CG(Y)\to \IndCoh^\CG(Y)$$
is an equivalence. The latter follows from the corresponding fact
in the non-twisted situation (proved in the course of the proof of
\propref{p:left to right}), since $\CG$ is (non-canonically) trivial.

\end{proof} 

As a corollary, we obtain that the analog of \lemref{l:cons} holds 
in the twisted case as well.  

\sssec{}

Hence, for $\CY'\in (\on{PreStk}_{\on{laft}})_{/\CY_\dr}$ we can regard crystals on $\CY'$ as a single category, 
$\Crys^T(\CY')$, endowed with two forgetful
functors

\begin{gather} 
\xy
(-20,0)*+{\QCoh^T(\CY')}="A";
(20,0)*+{\IndCoh^T(\CY')}="B";
(0,20)*+{\Crys^T(\CY').}="C";
{\ar@{->}_{\oblv^{T,l}_{\CY'}} "C";"A"};
{\ar@{->}^{\oblv^{T,r}_{\CY'}} "C";"B"};
{\ar@{->}_{\Upsilon_{\CY'}} "A";"B"};
\endxy
\end{gather}

For $\CY'\in (\on{PreStk}_{\on{laft}})_{/\CY}$, the above forgetful functors map to non-twisted sheaves:

\begin{gather} 
\xy
(-20,0)*+{\QCoh(\CY')}="A";
(20,0)*+{\IndCoh(\CY')}="B";
(0,20)*+{\Crys^T(\CY').}="C";
{\ar@{->}_{\oblv^{T,l}_{\CY'}} "C";"A"};
{\ar@{->}^{\oblv^{T,r}_{\CY'}} "C";"B"};
{\ar@{->}_{\Upsilon_{\CY'}} "A";"B"};
\endxy
\end{gather}

\sssec{}

Let $\CX\in (\dgindSch_{\on{laft}})_{/\CY_\dr}$. The analogue of \propref{p:inf descent for right}
holds with no change. In particular, we obtain a functor
$$\ind^{T,r}_\CX:\IndCoh^{T}(\CX)\to \Crys^{T,r}(\CX)$$
left adjoint to $\oblv^{T,r}_\CX$. 

\medskip

Similarly, the analogue of \propref{p:groupoud for left} holds in the present context as well.

\sssec{}

The following observation will be useful in the sequel:

\medskip

Let $X$ be an affine DG scheme (or an ind-affine DG indscheme) over $\CY_\dr$. Choose
a trivialization of the resulting $\BG_m$-gerbe on $X$. This choice defines an identification
$$\IndCoh^T(X)\overset{\alpha}\simeq \IndCoh(X).$$

\begin{lem}  \label{l:twisted monad}
The monad $\oblv^{T,r}_X\circ \ind^{T,r}_X$, regarded as a functor (without the monad structure) 
$$\IndCoh(X)\overset{\alpha^{-1}}\simeq \IndCoh^T(X)\to \IndCoh^T(X)\overset{\alpha}\simeq \IndCoh(X),$$
is non-canonically isomorphic to $\oblv^{r}_X\circ \ind^{r}_X$.
\end{lem}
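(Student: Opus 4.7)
The plan is to use the formula for the monad from \propref{p:right ind}(b) (whose proof goes through verbatim in the twisted setting), write the composition explicitly under the trivialization $\alpha$, and show that the twisted and untwisted versions differ by tensoring with a line bundle $\CL$ on $X\underset{X_\dr}\times X$ whose restriction to the diagonal is canonically trivial. The twisted analog of \propref{p:right ind}(b) gives
\[
\oblv^{T,r}_X\circ \ind^{T,r}_X \simeq (p_t)^{\IndCoh}_* \circ p_s^!,
\]
where $p_s,p_t:X\underset{X_\dr}\times X\rightrightarrows X$ are the two projections and $p_s^!$, $(p_t)^{\IndCoh}_*$ are the corresponding functors between the appropriate twisted $\IndCoh$-categories.

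To transfer this composition to the untwisted world via $\alpha$, I would use the observation that the gerbes $p_s^*T$ and $p_t^*T$ on $X\underset{X_\dr}\times X$ are both canonically identified with the pullback of the gerbe $\CG$ on $X_\dr$ underlying $T$ along the common composite $X\underset{X_\dr}\times X\to X\to X_\dr$. The two projections, however, equip this single gerbe with two a priori distinct trivializations $p_s^*\alpha$ and $p_t^*\alpha$, and their discrepancy (an automorphism of the trivial gerbe over the base) is a canonical line bundle $\CL$ on $X\underset{X_\dr}\times X$. Since $p_s$ and $p_t$ agree on the diagonal, $\Delta_X^*(\CL)$ is canonically trivial. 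Tracing through the identifications, the monad, viewed as an endofunctor of $\IndCoh(X)$ via $\alpha$, becomes
\[
\CF \mapsto (p_t)^{\IndCoh}_*\bigl(p_s^!(\CF)\otimes \CL\bigr),
\]
which differs from $\oblv^r_X\circ\ind^r_X$ precisely by tensoring the intermediate sheaf with $\CL$.

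It therefore suffices to produce a (non-canonical) isomorphism $\CL\simeq \CO_{X\underset{X_\dr}\times X}$. Since $X$ is (ind-)affine, $X\underset{X_\dr}\times X = (X\times X)^\wedge_X$ is an ind-affine DG indscheme, and the closed embedding $\Delta_X:X\hookrightarrow X\underset{X_\dr}\times X$ induces an isomorphism on underlying reduced classical subschemes; in particular, the ideal defining $\Delta_X$ is locally nilpotent. The obstruction to trivializing a line bundle that is already trivial on $X$ lies in $H^1$ of the kernel sheaf $\BG_m^{/X}:=\ker(\BG_m\to \Delta_{X,*}(\BG_m))$ on $X\underset{X_\dr}\times X$. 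Applying the exponential isomorphism (\secref{sss:exp}) to this locally nilpotent ideal yields $\BG_m^{/X}\simeq \CO^{/X}$, and the relevant $H^1$ vanishes by (ind-)affine vanishing of coherent cohomology: writing $X\underset{X_\dr}\times X$ as the colimit of the infinitesimal thickenings $X_n$ of $X$ in $X\times X$, one trivializes $\CL|_{X_n}$ and lifts compatibly step by step, using the additive identification to see that the lifting obstructions are killed at each stage.

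The main obstacle is the bookkeeping in the kernel comparison — verifying that the two twists introduced by $p_s^*T$ and $p_t^*T$ really cancel against the identification of trivializations, and that the residual discrepancy is exactly the line bundle $\CL$ trivial on the diagonal. Once this is pinned down, the non-canonical triviality of $\CL$ is a formal consequence of characteristic zero and the (ind-)affineness of $X\underset{X_\dr}\times X$.
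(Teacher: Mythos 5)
Your proposal is correct and follows essentially the same route as the paper: the paper likewise identifies the twisted monad's kernel as $(\wh\Delta_X)^{\IndCoh}_*\bigl(\CL\otimes \omega_{X\underset{X_\dr}\times X}\bigr)$, where $\CL$ is exactly your discrepancy line bundle (the central extension of the infinitesimal groupoid attached to $T$ and $\alpha$ as in \secref{sss:central ext}), and then trivializes $\CL$ using its triviality on the diagonal together with affineness of $X$. Your step-by-step exponential/ind-affine argument just makes explicit the trivialization that the paper asserts in one line.
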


\begin{proof}

First, we observe that the analogue of \propref{p:identify D} holds; namely,  
the object of $\IndCoh(X\times X)$ that defines the functor 
$\oblv^{T,r}_X\circ \ind^{T,r}_X$ is given by
$$(\wh\Delta_X)^{\IndCoh}_*\left(\CL\otimes (\omega_{X\underset{X_\dr}\times X})\right),$$
where $\CL$ is the line bundle on $X\underset{X_\dr}\times X$ corresponding to $T$ and $\alpha$
as in \secref{sss:central ext}.

\medskip

By construction, $\CL$ is trivial when restricted to $X\hookrightarrow X\underset{X_\dr}\times X$.
Now, since $X$ is affine, this implies that $\CL$ can be trivialized on all of 
$X\underset{X_\dr}\times X$.

\end{proof}

\ssec{t-structures on twisted crystals}
%
%

As in the previous subsection, let $\CY$ be a fixed object of $\on{PreStk}_{\on{laft}}$, and $T\in \Tw(\CY)$.

\sssec{}

If $X$ is a DG scheme and $\CG$ is a $\BG_m$-gerbe on it, the twisted categories
$\QCoh^\CG(X)$ and $\IndCoh^\CG(X)$ have natural t-structures with properties 
analogous to those of their usual counterparts $\QCoh(X)$ and $\IndCoh(X)$.

\medskip
In particular, we have the ``left'' t-structure on $\Crys^{T,l}(\CY')$ for any
$\CY'\in (\on{PreStk}_{\on{laft}})_{/\CY_\dr}$. (This t-structure can be defined without the locally almost of 
finite type assumption on 
either $\CY$ or $\CY'$.)

\medskip
The t-structure on twisted $\IndCoh$ on DG schemes allows us to define
a t-structure
on $\IndCoh^\CG(\CX)$, where $\CX$ is a DG indscheme.  We can then define the ``right'' t-structure on the category $\Crys^{T,r}(\CX)$.

\sssec{} 

We observe that \propref{p:cl embed exact} renders to the twisted context with no change. We now
claim:

\begin{prop} \label{p:t and tw crys} Let $X$ be a quasi-compact DG scheme mapping to $\CY_\dr$. 

\smallskip

\noindent{\em(a)} The functor $\ind^{T,r}_X$ is t-exact.

\smallskip

\noindent{\em(b)}  For a quasi-compact scheme $X$, the functor $\oblv^{T,r}_X$ is of
bounded cohomological amplitude. 

\end{prop}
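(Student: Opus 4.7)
The plan is to reduce both statements to their untwisted analogues (\propref{p:t and crys gen} and \propref{p:t and crys sm}(b)) by choosing a local trivialization of the gerbe. Both statements are Zariski-local on $X$: the right t-structure on $\IndCoh^T(-)$ is Zariski-local (by the analogue of \secref{sss:t and Zariski} in the twisted setting, proved the same way), and hence so is the induced t-structure on $\Crys^{T,r}(-)$, while both $\ind^{T,r}$ and $\oblv^{T,r}$ commute with restriction to Zariski opens. So I may assume $X$ is affine. In that case, \lemref{l:twisted monad} supplies a (non-canonical) trivialization of the gerbe on $X$, giving a t-exact equivalence $\alpha:\IndCoh^T(X)\simeq \IndCoh(X)$ under which the endofunctor $\oblv^{T,r}_X\circ \ind^{T,r}_X$ becomes non-canonically isomorphic to the untwisted monad $\oblv^r_X\circ \ind^r_X$ (as plain functors, though not necessarily as monads).

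For part (a), the functor $\ind^{T,r}_X$ is automatically right t-exact by adjunction, since $\oblv^{T,r}_X$ is left t-exact by the definition of the right t-structure. For left t-exactness, the definition of that t-structure shows that $\ind^{T,r}_X(\CF)\in \Crys^{T,r}(X)^{\geq 0}$ if and only if $(\oblv^{T,r}_X\circ \ind^{T,r}_X)(\CF)\in \IndCoh^T(X)^{\geq 0}$. Under the identification furnished by \lemref{l:twisted monad}, this reduces to the statement that $\oblv^r_X\circ \ind^r_X$ preserves $\IndCoh(X)^{\geq 0}$, which is part of \propref{p:t and crys gen}.

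For part (b), part (a) together with the monadic description of $\Crys^{T,r}(X)$ (the twisted analogue of \propref{p:right ind}(c), proved in the same manner) implies, by the argument used in the proof of \propref{p:t and crys sm}(a), that $\Crys^{T,r}(X)^{\leq 0}$ is generated under colimits by the essential image of $\IndCoh^T(X)^{\leq 0}$ under $\ind^{T,r}_X$. Since $\oblv^{T,r}_X$ is continuous and the target t-structure is compatible with filtered colimits, bounding its cohomological amplitude reduces to bounding that of $\oblv^{T,r}_X\circ \ind^{T,r}_X$ on $\IndCoh^T(X)^{\leq 0}$. Via \lemref{l:twisted monad}, this is the amplitude of the untwisted $\oblv^r_X\circ \ind^r_X$, which is finite by composing the t-exactness of $\ind^r_X$ (\propref{p:t and crys gen}) with the bounded amplitude of $\oblv^r_X$ (\propref{p:t and crys sm}(b)). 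The only subtlety to watch is that \lemref{l:twisted monad} compares the twisted and untwisted monads only as plain functors, not as monads, but this weaker comparison is enough to transfer both t-exactness and boundedness, which depend only on the underlying endofunctor.
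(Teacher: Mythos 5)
Your argument is correct and follows essentially the same route as the paper's proof: right t-exactness by adjunction, reduction of left t-exactness and of the amplitude bound to the composite monad $\oblv^{T,r}_X\circ \ind^{T,r}_X$, Zariski-localization to the affine case, and transfer to the untwisted statements via \lemref{l:twisted monad}. The extra care you take about the comparison being only an isomorphism of plain functors (not of monads) is a correct observation and indeed suffices for both claims.
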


\begin{proof}

The functor $\ind^{T,r}_X$ is right t-exact, since its right adjoint $\oblv^{T,r}_X$
is left t-exact. By the definition of the ``right" t-structure, the 
left t-exactness of $\ind^{T,r}_X$ is equivalent to the same property of
the composition $\oblv^{T,r}_X\circ \ind^{T,r}_X$. 

\medskip

The assertion is Zariski-local, so we can assume that $X$ is affine. Now, the fact that
the functor $\oblv^{T,r}_X\circ \ind^{T,r}_X$ is left t-exact follows
from \lemref{l:twisted monad} and the fact that the analogous assertion
holds in the non-twisted case.

\medskip

Since $\IndCoh^T(X)^{\leq 0}$ generates $\Crys^{T,l}(X)^{\leq 0}$
via the functor $\ind^{T,r}_X$, in order to show that the cohomological
amplitude of $\oblv^{T,r}_X$ is bounded from above, it suffices to show
the same for $\oblv^{T,r}_X\circ \ind^{T,r}_X$. Again, the assertion follows
from \lemref{l:twisted monad}.

\end{proof}

\sssec{}

We now claim:

\begin{prop} \label{p:tw bounded}
Let $X$ be a quasi-compact DG scheme mapping to $\CY_\dr$. 

\smallskip

\noindent{\em(a)} The ``left" and ``right" t-structures on $\Crys^T(X)$ differ by finite cohomological amplitude.

\smallskip

\noindent{\em(b)} The functor $\oblv_X^{T,l}:\Crys^T(X)\to \QCoh(X)$ is of bounded
cohomological amplitude.\footnote{By point (a) this statement does not depend on which
of the two t-structures we consider on $\Crys^T(X)$.}

\end{prop}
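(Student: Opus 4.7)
The plan is to follow the strategies of \propref{p:left t vs right t} and \propref{p:amplitude for oblv l}, with the necessary adaptations for the twisting $T$. Both t-structures on $\Crys^T(X)$ are Zariski-local in $X$ (the right one by the twisted analogue of \secref{sss:t and Zariski}; the left one by its detection through the forgetful functor $\oblv^{T,l}_X$ into $\QCoh^T(X)$ in the classically formally smooth case below), so I may assume $X$ is affine. I would then choose a closed embedding $i:X\hookrightarrow Z$ into a smooth classical affine scheme $Z$ of dimension $n$, and form the formal completion $Y$ of $Z$ along $X$, carrying the pullback of the twisting along $Y\to Y_\dr\simeq X_\dr$. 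Since $'i_\dr:X_\dr\to Y_\dr$ is an isomorphism, the map $'i:X\to Y$ induces an equivalence $\Crys^T(X)\simeq \Crys^T(Y)$ which is compatible with both t-structures, by the twisted analogue of \lemref{l:t structure on form compl} (proved as in the untwisted case, using the twisted versions of \propref{p:cl embed exact} and of \cite[Lemma 7.4.8]{IndSch}). Hence it suffices to prove the assertions for $Y$.

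Since $Y$ is classically formally smooth, the twisted analogues of \propref{p:left t-struct} and \lemref{l:right on cl formally smooth} apply, so both t-structures on $\Crys^T(Y)$ are detected by the forgetful functors into $\QCoh^T(Y)$ and $\IndCoh^T(Y)$, respectively. Consequently, part (a) reduces to showing that $\Upsilon_Y:\QCoh^T(Y)\to \IndCoh^T(Y)$ has bounded cohomological amplitude, and part (b) reduces to showing that the restriction functor $'i^{*}:\QCoh^T(Y)\to \QCoh^T(X)$ has bounded cohomological amplitude (the target $\QCoh^T(X)$ agreeing with $\QCoh(X)$ for the purposes of boundedness, as the gerbe trivializes \'etale-locally on the affine $X$).

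The main obstacle is that the gerbe on $X_\dr$ does not in general extend to $Z_\dr$, so the identifications $\QCoh^T(Y)\simeq \QCoh^T(Z)_X$ used in the proof of \propref{p:left t vs right t} are not directly available. I would circumvent this by working \'etale-locally on $Z$, where any $\BG_m$-gerbe trivializes, and transporting the (uniform) cohomological bounds from the untwisted case: on a formal completion of a smooth scheme $Z$ of dimension $n$, the functor $\Upsilon$ shifts t-structures by $[-n]$, and $'i^{*}$ is of bounded amplitude because $Z$ is regular (so $\CO_X$ has finite Tor-dimension as an $\CO_Z$-module). Alternatively, one can trivialize the gerbe on the affine $X$ itself and apply \lemref{l:twisted monad} to identify (as plain functors) the twisted monads on $\IndCoh^T(X)$ with their untwisted counterparts, thereby inheriting the bounds directly from \propref{p:left t vs right t} and \propref{p:amplitude for oblv l}.
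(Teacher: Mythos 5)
Your proposal is correct and follows essentially the same route as the paper's proof: Zariski-localize to affine $X$, embed in a smooth classical $Z$, pass to the formal completion $Y$ via the twisted analogue of \lemref{l:t structure on form compl}, use the classically-formally-smooth detection of both t-structures on $Y$ (the right one via the t-exactness of $\oblv^{T,r}_Y$, established by trivializing the gerbe and invoking \lemref{l:twisted monad}), and reduce (a) to the bounded amplitude of $\Upsilon_Y:\QCoh^T(Y)\to\IndCoh^T(Y)$ and (b) to that of $'i^*:\QCoh^T(Y)\to\QCoh^T(X)$, each transported from the untwisted case by non-canonical trivialization of the gerbe. The paper handles the twist exactly as in your second alternative, so no further changes are needed.
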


\sssec{}

We shall first prove the following:

\medskip

Let $i:X\to Z$ be a closed embedding, where $Z$ is a smooth classical scheme. 
Let $Y$ be the formal completion of $Z$ along $X$. 

\begin{lem}  \label{l:tw on formally smooth}
The functor
$$\oblv^{T,r}_Y:\Crys^{T,r}(Y)\to \IndCoh^T(Y)$$
is t-exact.
\end{lem}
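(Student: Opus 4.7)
The plan is to reduce the statement to the non-twisted case on the smooth scheme $Z$, using Kashiwara's lemma together with Lemma \ref{l:twisted monad} to strip off the twisting. Since the right t-structure on $\Crys^{T,r}(Y)$ is Zariski-local in $Y$ (hence in $X$), I would first pass to the affine case: assume $X$ is affine, choose $Z$ affine and smooth, so that $Y = Z^\wedge_X$ is an ind-affine DG indscheme. Left t-exactness of $\oblv^{T,r}_Y$ holds by the very definition of the right t-structure on $\Crys^{T,r}(Y)$, so everything reduces to proving right t-exactness.

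Next, I would reduce from the forgetful functor to the monad. Because the twisted adjunction $(\ind^{T,r}_Y, \oblv^{T,r}_Y)$ is monadic (the ind-affine version of \propref{p:right ind}, discussed in Section~7.3), $\oblv^{T,r}_Y$ is continuous and conservative, and $\ind^{T,r}_Y$ is right t-exact by adjunction; hence $\Crys^{T,r}(Y)^{\leq 0}$ is generated under colimits by the essential image of $\IndCoh^T(Y)^{\leq 0}$ under $\ind^{T,r}_Y$. Consequently, right t-exactness of $\oblv^{T,r}_Y$ follows once we know that the composite $\oblv^{T,r}_Y \circ \ind^{T,r}_Y$ is right t-exact on $\IndCoh^T(Y)^{\leq 0}$.

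Now I invoke \lemref{l:twisted monad} in its ind-affine form: since $Y$ is ind-affine over $\CY_\dr$, a choice of trivialization of the gerbe on $Y$ produces a non-canonical isomorphism of plain endofunctors $\oblv^{T,r}_Y \circ \ind^{T,r}_Y \simeq \oblv^r_Y \circ \ind^r_Y$ on $\IndCoh(Y) \simeq \IndCoh^T(Y)$, and this identification is compatible with the t-structures. So it suffices to show the non-twisted monad $\oblv^r_Y \circ \ind^r_Y$ is right t-exact. For this, I apply Kashiwara: using $\Crys^r(Y) \simeq \Crys^r(Z)_X$ (from \propref{p:Kashiwara} via $Y_\dr \simeq X_\dr$) and $\IndCoh(Y) \simeq \IndCoh(Z)_X$ (from \cite[Proposition 7.4.5]{IndSch}), the forgetful functor $\oblv^r_Y$ becomes the restriction of $\oblv^r_Z$ to the full subcategory $\Crys^r(Z)_X$. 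Both subcategories are compatible with the ambient t-structures (by \propref{p:cl embed exact} and \cite[Lemma 7.4.8]{IndSch}), matching the t-structures on $\Crys^r(Y)$ and $\IndCoh(Y)$ (the latter being \lemref{l:t structure on form compl}). Since $Z$ is a smooth classical scheme, \propref{p:t and crys sm}(a) says $\oblv^r_Z$ is t-exact, so its restriction $\oblv^r_Y$ is t-exact, and in particular $\oblv^r_Y \circ \ind^r_Y$ is right t-exact.

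The main obstacle, and the only place where a genuine check is needed, is the application of \lemref{l:twisted monad} to the ind-affine DG indscheme $Y$ rather than an affine DG scheme. The lemma is stated to cover this case, but it relies on the fact that the line bundle $\CL$ on $Y \underset{Y_\dr}{\times} Y \simeq (Z\times Z)^\wedge_X$ encoding the central extension, which is trivialized on the diagonal copy of $Y$, is also trivializable on the whole of $(Z\times Z)^\wedge_X$. This is where ind-affineness of $Y$ enters, via the exponential identification of \secref{sss:exp} together with vanishing of $H^{\geq 1}$ of quasi-coherent sheaves on each affine infinitesimal thickening $(Z \times Z)_n$ of $X$ inside $Z \times Z$.
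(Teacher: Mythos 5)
Your proof is correct and follows essentially the same route as the paper's: reduce to t-exactness of the monad $\oblv^{T,r}_Y\circ\ind^{T,r}_Y$, work Zariski-locally so that $X$ is affine, strip the twisting with \lemref{l:twisted monad}, and quote the untwisted case. The only difference is that you spell out, via Kashiwara's lemma and the comparison of t-structures on the formal completion, why the untwisted monad on $Y$ is t-exact, whereas the paper simply cites \propref{p:t and crys sm}(a); your elaboration is a faithful filling-in of that citation rather than a different argument.
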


\begin{proof}

As in the proof of \propref{p:t and tw crys}, it suffices to show that the functor 
$$\oblv^{T,r}_Y\circ \ind^{T,r}_Y:\IndCoh^T(Y)\to \IndCoh^T(Y)$$
is t-exact. The assertion is Zariski-local, so we can assume that $X$ is affine.
Now, as in the proof of \propref{p:t and tw crys}, the
functor in question is non-canonically isomorphic to the non-twisted version:
$\oblv^{r}_Y\circ \ind^{r}_Y$, and the latter is known to be t-exact by
\propref{p:t and crys sm}(a).

\end{proof}

\sssec{Proof of \propref{p:tw bounded}}

The assertion is Zariski-local, so we can assume that $X$ is affine and embed it into a smooth classical 
scheme $Z$. Let $Y$ denote the formal completion of $X$ in $Z$. By definition, $T$ defines
a $\BG_m$-gerbe $\CG$ on $Y$. Let $'i$ denote the corresponding map $X\to Y$.

\medskip

To prove point (a), by \lemref{l:t structure on form compl} (whose 2nd proof is applicable in the twisted case),
we can replace $X$ by $Y$, and it suffices to show that the discrepancy
between the two t-structures on $\Crys^T(Y)$ is finite. By \propref{p:left t-struct}
(applied in the twisted case) and \lemref{l:tw on formally smooth}, it suffices to show
that the functor
$$\Psi_Y:\QCoh^T(Y)\to \IndCoh^T(Y)$$
is of bounded cohomological amplidude. This is equivalent to the corresponding fact for 
$$\Psi_Y:\QCoh(Y)\to \IndCoh(Y),$$
which in turn follows from the corresponding fact for $Z$.

\medskip

Point (b) follows from the fact that the functor
$$'i^*:\QCoh^T(Y)\to \QCoh^T(X)$$
is of bounded amplitude, which is again equivalent to the corresponding fact for
$$'i^*:\QCoh(Y)\to \QCoh(X),$$
and the latter follows from the corresponding fact for $Z$.

\qed

\sssec{}

The results concerning the ``coarse" forgeftul and induction functors, 
established in \secref{ss:coarse} for untwisted crystals, render automatically to the twisted situation.

\sssec{}

Our current goal is to show:

\begin{prop} \label{p:left complete twisted} Let $X$ be a quasi-compact DG scheme mapping to $\CY_\dr$. 

\smallskip

\noindent{\em(a)} 
The ``right" t-structure on $\Crys^{T,r}(X)$ is left-complete. 

\smallskip

\noindent{\em(b)} For $X$ affine, the 
natural functor $D(\Crys^{T,r}(X)^\heartsuit)^+\to \Crys^{T,r}(X)^+$,  
where the heart is taken with respect to the ``right" t-structure, 
uniquely extends to an an equivalence
$$D(\Crys^{T,r}(X)^\heartsuit)\to \Crys^{T,r}(X).$$ 

\end{prop}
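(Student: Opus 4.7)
The plan is to imitate the proofs of \corref{c:left complete} and \propref{p:relation to abelian} in the twisted setting, using the twisted analogues of the auxiliary results established earlier in this section.

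For part (a), I would follow the second proof of \corref{c:left complete}. By the twisted analogue of \corref{c:t structure indep}, the category $\Crys^{T,r}(X)$ depends only on ${}^{cl,red}X$, so we may assume $X$ is eventually coconnective. The twisted version of \lemref{l:ind l} then provides a left adjoint $\ind^{T,l}_X$ to $\oblv^{T,l}_X$, so in particular $\oblv^{T,l}_X$ commutes with limits. Combined with the conservativity of $\oblv^{T,l}_X$ (twisted \lemref{l:cons}), its bounded cohomological amplitude (\propref{p:tw bounded}(b)), and the left-completeness of $\QCoh^T(X)$ for quasi-compact $X$ (a routine extension of \cite[Proposition 5.2.4]{QCoh} to the gerbe-twisted setting), this yields left-completeness of $\Crys^{T,l}(X)$ in its left t-structure. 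Since the left and right t-structures differ by a bounded shift by \propref{p:tw bounded}(a), left-completeness in the right t-structure follows.

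For part (b), I would run the three steps of the proof of \propref{p:relation to abelian}. In Step 1, for $X$ smooth classical affine over $\CY_\dr$, the map $X \to \CY_\dr$ uniquely factors as $X = {}^{cl,red}X \to \CY \to \CY_\dr$, so the pullback gerbe on $X$ is canonically trivialized, giving a canonical equivalence $\IndCoh^T(X) \simeq \IndCoh(X)$. The twisted analogue of $\ind^r_X(\CO_X)$ is then a compact projective generator of $\Crys^{T,r}(X)^\heartsuit$ by the same argument as in the untwisted case, using t-exactness of $\ind^{T,r}_X$ (\propref{p:t and tw crys}(a)), the twisted analogue of \propref{p:t and crys sm}(a), and the conservativity of $\oblv^{T,r}_X$. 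In Step 2, for general affine $X$ over $\CY_\dr$, the heart $\Crys^{T,r}(X)^\heartsuit$ is Grothendieck (being the heart of an accessible t-structure compatible with filtered colimits on a presentable stable $\infty$-category) and hence has enough injectives, giving $D(\Crys^{T,r}(X)^\heartsuit)^+ \simeq \Crys^{T,r}(X)^+$. In Step 3, combining part (a) (which identifies $\Crys^{T,r}(X)$ with the left-completion of $\Crys^{T,r}(X)^+$) with a generator $\ind^{T,r}_X(F)$ of $\Crys^{T,r}(X)^\heartsuit$ of bounded Ext dimension (the bound coming by adjunction from the bounded cohomological amplitude of $\oblv^{T,r}_X$ in \propref{p:tw bounded}(b)) extends the equivalence to the full unbounded derived category.

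The main obstacle is expected to be Step 2 of part (b): the untwisted proof of this step constructs injectives explicitly by embedding $X$ into a smooth classical affine scheme $Z$ and invoking Kashiwara's lemma, but here $Z$ need not admit a map to $\CY_\dr$, so the twisting does not extend to $Z$. Either this step must be replaced by the abstract Grothendieck-category argument indicated above, or $Z$ must be replaced by the formal completion $Y = Z^\wedge_X$, using $Y_\dr \simeq X_\dr$ to identify $\Crys^{T,r}(Y) \simeq \Crys^{T,r}(X)$ and building injectives via the tower of infinitesimal thickenings of $X$ inside $Z$, each of which carries a canonical map to $\CY_\dr$ through $Y$.
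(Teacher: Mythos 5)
Your treatment of part (a) is a legitimate alternative to the paper's. The paper does not argue via $\oblv^{T,l}$ at all: it directly exhibits generators of bounded Ext dimension, namely $\ind^{T,r}_Y(\CF)$ for $\CF\in \Coh^T(Y)^\heartsuit$ where $Y=Z^\wedge_X$, the point being that $\IndCoh^T(Y)$ is non-canonically equivalent to $\IndCoh(Y)$ and hence has finite cohomological dimension bounded by that of $\IndCoh(Z)$. Your route (reduce to eventually coconnective $X$, then transport left-completeness from $\QCoh^T(X)$ along the conservative, limit-preserving, bounded-amplitude functor $\oblv^{T,l}_X$) mirrors the paper's second proof of \corref{c:left complete} and should work, but it leans on the twisted analogue of \lemref{l:ind l} (the $(\ind^{T,l}_X,\oblv^{T,l}_X)$-adjunction for eventually coconnective $X$), which the paper never states in the twisted setting; you would need to check that its proof, which goes through $\Xi^\vee_X$ and the compatibility of $\Upsilon$ with the $\QCoh$-action, survives the twist. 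The paper's route avoids this entirely.

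For part (b) your primary route has a real gap. Knowing that $\Crys^{T,r}(X)^\heartsuit$ is a Grothendieck abelian category gives enough injectives in the heart, but that alone does not make $D(\Crys^{T,r}(X)^\heartsuit)^+\to \Crys^{T,r}(X)^+$ an equivalence: one must additionally verify that the injectives of the heart are acyclic for the mapping spaces of the ambient stable category, i.e.\ that $\Hom_{\Crys^{T,r}(X)}(\CN,\CI[k])=0$ for $k>0$. In the untwisted Step 2 this is exactly what the explicit construction (push forward to a smooth $Z$ and pull back an injective) delivers, and it is the step that breaks when $Z$ does not map to $\CY_\dr$. The paper's actual fix is neither of your two options verbatim: it bypasses injectives and instead produces a \emph{pro-projective} generator, the inverse system $\CP_n=\ind^{T,r}_Y(\CO_{X_n})$ indexed by the infinitesimal neighborhoods $X_n$ of $^{cl,red}X$ in $Z$, for which $\underset{n}{colim}\,\CMaps(\CP_n,-)$ is t-exact and conservative; this directly gives the comparison of $\on{Ext}^k$ in the heart with $\Hom$ in the stable category. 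Your second fallback (work on $Y$ and use the tower of thickenings) is the right idea and is essentially what the paper does, but it needs to be formulated as above rather than as a construction of injectives. Finally, a small but substantive point in your Step 3: the bounded Ext dimension of $\ind^{T,r}_X(\CF)$ does not follow merely from the bounded cohomological amplitude of $\oblv^{T,r}_X$ (which is \propref{p:t and tw crys}(b), not \propref{p:tw bounded}(b)); by adjunction one still needs $\CMaps_{\IndCoh^T(-)}(\CF,-)$ to vanish on sufficiently coconnective objects, which fails on a singular $X$ for general $\CF\in\Coh^T(X)$ and is the reason the paper works on $Y$, where the cohomological dimension is controlled by $Z$.
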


\sssec{Proof of \propref{p:left complete twisted}(a)}

Again, the assertion is Zariski-local, and we retain the setting of the proof
of \propref{p:tw bounded}.

\medskip

It suffices to exhibit a collection of objects 
$$\CP_\alpha\in \Crys^{T,r}(X)$$ that generate $\Crys^{T,r}(X)$ and that are of
bounded Ext dimension, i.e., if for each $\alpha$ there exists an integer $k_\alpha$ such that
$$\Hom_{\Crys^{T}(X)}(\CP_\alpha,\CM)=0 \text{ if } \CM\in \Crys^T(X)^{<-k_\alpha}.$$

\medskip

We realize $\Crys^{T,r}(X)$ as $\Crys^{T,r}(Y)$. By \lemref{l:t structure on form compl}
and \lemref{l:tw on formally smooth},
the t-structure on $\Crys^{T,r}(X)\simeq \Crys^{T,r}(Y)$ is characterized by the property that
$$\CM\in \Crys^{T,r}(Y)^{\geq 0}\, \Leftrightarrow \, \oblv_Y^{T,r}(\CM)\in \IndCoh^T(\CY)^{\geq 0}.$$

\medskip

We take $\CP_\alpha$ to be of the form $\ind^{T,r}(\CF)$ for $\CF\in \Coh^T(Y)^\heartsuit$. 
To prove the required vanishing of Exts, we need to show that for $\CM\in \Crys^{T,r}(Y)^{\ll 0}$,
$$\Hom_{\IndCoh^T(Y)}(\CF,\oblv_Y^{T,r}(\CM))=0.$$

\medskip

However, this follows from the fact that the category $\IndCoh^T(Y)$
has finite cohomological dimension with respect to its t-structure:\footnote{We refer the reader to the footnone in
Remark \ref{r:fin c d} where we explain what we mean by this.} indeed, the category in question in non-canonically
equivalent to $\IndCoh(Y)$, and the cohomological dimension of the latter is bounded
by that of $\IndCoh(Z)$. 

\sssec{Proof of \propref{p:left complete twisted}(b)}

We keep the notations from the proof of point (a).

\medskip

As in the proof of \propref{p:relation to abelian}, given what we have
shown in point (a), we only have to verify that for $\CM_1,\CM_2\in \Crys^T(X)^\heartsuit$ and any $k\geq 0$, the map
$$\on{Ext}^k_{\Crys^T(X)^\heartsuit}(\CM_1,\CM_2)\to \Hom_{\Crys^T(X)}(\CM_1,\CM_2[k])$$
is an isomorphism. 

\medskip

For that it suffices to show that the category $\Crys^{T,r}(X)^\heartsuit$ contains
a pro-projective generator of $\Crys^{T,r}(X)$, i.e., that there exists a filtered inverse family with surjective
maps $\CP_\alpha\in \Crys^{T,r}(X)^\heartsuit$, such that the functor 
$$\underset{\alpha}{colim}\, \CMaps_{\Crys^{T,r}(X)}(\CP_\alpha,-)$$
is t-exact and conservative on $\Crys^{T,r}(X)$. 

\medskip

We take $\CP_\alpha$ to be 
$$\ind^{T,r}_Y(\CO_{X_n})\in \Crys^{T,r}(Y)^\heartsuit\simeq \Crys^{T,r}(X)^\heartsuit,$$
where $X_n$ is the $n$-th infinitesimal neighborhood of $^{cl,red}X$ in $Z$. 

\qed

\ssec{Other results}

\sssec{Twisted crystals and twisted D-modules}

Let $X$ be a smooth classical scheme. We have seen in \secref{sss:TDO} that the Picard category of
twistings on $X$ is equivalent to that of TDO's on $X$.

\medskip

Given a twisting $T$, and the corresponding TDO, denoted $\on{Diff}^T_X$,
there exists a canonical equivalence 
$$\Crys^{T,l}(X)\simeq \Dmod^{T,l}(X),$$
which commutes with the forgetful functors to $\QCoh(X)$, and similarly for twisted right crystals.
The proof is either an elaboration of the strategy indicated in \secref{sss:Groth}, or one using
\secref{ss:rel to diff op}.  

\sssec{}

The relation between twisted D-modules and modules over a TDO can be extended to the
case when instead of a smooth classical scheme $X$, we are dealing with a formal completion $Y$ 
of a DG scheme $X$ inside a smooth classical scheme $Z$. 

\medskip

This allows to prove:

\begin{prop}  \label{p:behavior twisted crystals} Let $X$ be a quasi-compact DG scheme over $\CY_\dr$.

\smallskip

\noindent{\em(a)}
The abelian category $\Crys^{T,r}(X)^\heartsuit$ is locally Noetherian.

\smallskip

\noindent{\em(b)}
$\Crys^{T,r}(X)$ has finite cohomological dimension with respect to its t-structure. 
\end{prop}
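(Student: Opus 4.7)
The plan is to reduce to the case of a formal completion in a smooth classical scheme, where twisted crystals become modules over a TDO, and then invoke classical Noetherianness and finite cohomological dimension of TDOs. Both (a) and (b) are Zariski-local properties of the DG scheme $X$ (for (a), local Noetherianness of a Grothendieck abelian category can be checked on a Zariski cover using the fact that the pullback to an open has a conservative, t-exact right adjoint; for (b), this is clear from \secref{sss:t and Zariski}, applied in the twisted setting). Hence we may assume $X$ is affine.

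Next, choose a closed embedding $i \colon X \hookrightarrow Z$ into a smooth classical affine scheme $Z$, and let $Y$ denote the formal completion of $Z$ along $X$, with $'i \colon X \to Y$ the induced map. By the twisted version of Kashiwara's lemma (\propref{p:Kashiwara}, which renders to the twisted context without change) together with the twisted analogue of \propref{p:cl embed exact}, the functor $'i_{\dr,*}$ identifies $\Crys^{T,r}(X)$ with the full subcategory $\Crys^{T,r}(Z)_X \subset \Crys^{T,r}(Z)$ of objects set-theoretically supported on $X$, and this equivalence is t-exact. Since this full subcategory is closed under subobjects, quotients, and extensions in $\Crys^{T,r}(Z)^\heartsuit$, it will suffice to prove assertions (a) and (b) for $Z$ in place of $X$.

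For $Z$ smooth classical, the paragraph preceding the proposition provides a canonical equivalence
$$\Crys^{T,r}(Z)^\heartsuit \simeq \on{Diff}^T_Z\mmod^\heartsuit,$$
compatible with the forgetful functor to $\QCoh(Z)$, where $\on{Diff}^T_Z$ is the TDO associated to $T$. Now $\on{Diff}^T_Z$ is a sheaf of Noetherian rings on $Z$ (this is classical: locally in the Zariski topology $T$ is trivializable, so locally $\on{Diff}^T_Z \simeq \on{Diff}_Z$, and $\on{Diff}_Z$ on a smooth affine classical scheme is a filtered ring whose associated graded, namely $\on{Sym}_{\CO_Z}(T_Z)$, is Noetherian). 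This proves (a): coherent $\on{Diff}^T_Z$-modules supported on $X$ form a Noetherian set of generators of the Grothendieck abelian category $\Crys^{T,r}(X)^\heartsuit$, and their subobjects are again coherent.

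For (b), since $\on{Diff}^T_Z$ is Zariski-locally isomorphic to $\on{Diff}_Z$, the Ext-dimension in $\Crys^{T,r}(Z)^\heartsuit$ between coherent generators is locally computed as Ext between $\on{Diff}_Z$-modules on a smooth classical affine $Z$ of dimension $n$, and this is bounded by $2n$ by the standard Koszul resolution of $\CO_Z$ as a $\on{Diff}_Z$-module. Thus $\Crys^{T,r}(Z)$, and hence $\Crys^{T,r}(X) \simeq \Crys^{T,r}(Z)_X$, has finite cohomological dimension in its t-structure. The main obstacle in the argument is purely bookkeeping: ensuring that the TDO identification extends from the smooth scheme $Z$ to formal completions and to the twisted Kashiwara equivalence in a way compatible with t-structures, but this is precisely what the paragraph preceding the proposition asserts, and it reduces (by faithfully flat descent for modules over the TDO, or equivalently by working Zariski-locally where $T$ trivializes as in Remark \ref{r:loc triv}) to the classical statements for untwisted D-modules on smooth schemes.
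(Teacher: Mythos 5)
Your overall strategy (reduce to a formal completion inside a smooth classical scheme and quote classical facts about TDOs) is the one the paper intends, but the execution has a genuine gap: after introducing the formal completion $Y=Z^\wedge_X$ you immediately pass to the categories $\Crys^{T,r}(Z)$, $\Crys^{T,r}(Z)_X$ and to the sheaf $\on{Diff}^T_Z$ on the ambient smooth scheme $Z$. None of these objects is defined. The twisting $T$ lives on $\CY$, i.e.\ it is a gerbe on $\CY_\dr$, and it only induces twisted categories on prestacks mapping to $\CY_\dr$. The DG scheme $X$ maps to $\CY_\dr$ by hypothesis, and hence so does $Y$ (since $Y_\dr\simeq X_\dr$), but the smooth ambient scheme $Z$ in general does not — this is exactly the obstruction the paper isolates in Remark \ref{r:loc triv}. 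So the step ``it will suffice to prove (a) and (b) for $Z$ in place of $X$'' is not available: there is no twisted Kashiwara's lemma relating $\Crys^{T,r}(X)$ to a category of twisted crystals on $Z$, because the right-hand side does not exist.

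The repair, which is how the paper actually argues throughout Section 7 (see the proofs of \propref{p:tw bounded} and \propref{p:left complete twisted}), is to stay on $Y$: one has $\Crys^{T,r}(X)\simeq\Crys^{T,r}(Y)$ tautologically, the t-structure is controlled by \lemref{l:t structure on form compl} and \lemref{l:tw on formally smooth}, and $\Crys^{T,r}(Y)^\heartsuit$ is identified with modules over a sheaf of twisted differential operators \emph{on the formal completion} (this is the content of the paragraph preceding the proposition). Since $X$ is affine, the gerbe defining the twist is non-canonically trivializable there, and the relevant line bundle on $X\underset{X_\dr}\times X$ trivializes as in \lemref{l:twisted monad}; hence this sheaf of rings is Zariski-locally isomorphic to the untwisted one, namely the completion of $\on{Diff}_Z$ along $X$, equivalently $\on{Diff}_Z$-modules supported on $X$. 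At that point your appeals to Noetherianness of $\on{Diff}_Z$ and to its finite cohomological dimension (i.e.\ the untwisted statements of Remark \ref{r:fin c d}) go through and yield (a) and (b). So the classical input you invoke is correct; what is missing is the intermediate bookkeeping that keeps every category you use defined over $\CY_\dr$.
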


(We refer the reader to the footnotes in Remark \ref{r:fin c d} where we explain what we mean
by the properties asserted in points (a) and (b) of the proposition.)

\end{document}